\numberwithin{equation}{section}
\setlist[enumerate]{label=\rm{(\arabic*)}}
\setlist[enumerate,2]{label=\rm({\it\roman*}\,)}
\setlist[itemize]{label=\raisebox{0.25ex}{\tiny$\bullet$}}
\theoremstyle{plain}
\newtheorem{theoremA}{Theorem}
\newtheorem{theorem}{Theorem}[subsection]
\newtheorem{proposition}[theorem]{Proposition}
\newtheorem{lemma}[theorem]{Lemma}
\newtheorem{corollary}[theorem]{Corollary}
\theoremstyle{definition}
\newtheorem{definition}[theorem]{Definition}
\newtheorem{notation}[theorem]{Notation}
\theoremstyle{remark}
\newtheorem{remark}[theorem]{Remark}
\newcommand{\incl}[1][r]{\ar@<-0.2pc>@{^(-}[#1] \ar@<+0.2pc>@{-}[#1]}
\renewcommand{\P}{\mathbb{P}}
\newcommand{\E}{\mathcal{E}}
\newcommand{\Spec}{\mathrm{Spec}}
\newcommand{\V}{\mathcal{V}}
\renewcommand{\SS}{\mathcal{S}}
\newcommand{\PP}{\mathcal{P}}
\newcommand{\FF}{\mathcal{F}}
\newcommand{\TT}{\hat{\mathcal{S}}}
\newcommand{\p}{\mathbb{P}}
\newcommand{\s}[1]{s_{#1}}
\renewcommand{\k}{\mathrm{k}}
\newcommand{\A}{\mathbb{A}}
\newcommand{\C}{\mathbb{C}}
\newcommand{\F}{\mathbb{F}}
\newcommand{\N}{\mathbb{N}}
\newcommand{\M}{\mathcal{M}}
\newcommand{\U}{\mathcal{U}}
\renewcommand{\V}{\mathcal{V}}
\newcommand{\Z}{\mathbb{Z}}
\newcommand{\G}{\mathbb{G}}
\renewcommand{\O}{\mathcal{O}}
\newcommand{\PQ}{\mathbb{P}^1 \times \mathbb{P}^1}
\newcommand{\OP}{\mathcal{O}_{\mathbb{P}^1}}
\newcommand{\OPP}{\mathcal{O}_{\mathbb{P}^2}}
\newcommand{\car}{\mathrm{char}}
\newcommand{\OFa}{\mathcal{O}_{\mathbb{F}_a}}
\newcommand{\OFn}{\mathcal{O}_{\mathbb{F}_n}}
\DeclareMathOperator{\SL}{SL}
\DeclareMathOperator{\Aut}{Aut}
\DeclareMathOperator{\Hom}{Hom}
\DeclareMathOperator{\Autz}{Aut^{\circ}}
\DeclareMathOperator{\PGL}{PGL}
\DeclareMathOperator{\Pic}{Pic}
\DeclareMathOperator{\GL}{GL}
\DeclareMathOperator{\Bir}{Bir}
\def\isolow{\vbox to 0pt{\vss\hbox{$\scriptstyle\simeq$}\vskip-2pt}}
\newcommand{\iso}{\xrightarrow{\isolow}}
\newcommand{\mybinom}[2]{\bigl(\genfrac{}{}{0pt}{}{#1}{#2}\bigr)}
\title{Automorphisms of $\boldsymbol{\p^1}\!$-bundles over rational surfaces}
\author{J\'er\'emy Blanc}
\address{Universit{\"a}t Basel,
Departement Mathematik und Informatik,
Spiegelgasse 1, 4051 Basel,
Switzerland}
\email{Jeremy.Blanc@unibas.ch}
\author{Andrea Fanelli}
\address{Institut de Math\'ematiques de Bordeaux, 351 Cours de la Lib\'eration
33400 Talence, France}
\email{andrea.fanelli.1@u-bordeaux.fr }
\author{Ronan Terpereau}
\address{Universit\'{e} de Bourgogne Franche-Comt\'{e}, Institut de Math\'ematiques de Bourgogne, UMR 5584 CNRS, 9 avenue Alain Savary, B.P.~47870 - 21078 Dijon Cedex, France}
\email{ronan.terpereau@u-bourgogne.fr}
\begin{document}


\maketitle

\begin{prelims}

\DisplayAbstractInEnglish

\bigskip

\DisplayKeyWords

\medskip

\DisplayMSCclass

\end{prelims}


\newpage

\setcounter{tocdepth}{1}

\tableofcontents


\section{Introduction}
\subsection{Aim and scope}
In this article, we work over an algebraically closed field $\k$ of characteristic zero (even if most of the partial results we obtain along the way are valid over any algebraically closed field $\k$, as we explain at the beginning of each section). We study $\p^1\!$-bundles $X\to S$ (always assumed to be locally trivial for the Zariski topology), where $S$ is a smooth projective rational surface (the smoothness condition is actually not a strong restriction; see \S~\ref{section:resolution}), and study the group scheme $\Aut(X)$ of automorphisms of $X$. This group scheme can have infinitely many components, but the connected component of the identity $\Autz(X)$ is a connected algebraic group; see~\cite{MO67}. The aim of this article is to study the pairs $(X,\Autz(X))$ and to classify these, up to birational conjugation. In particular, we describe the geometry of the pairs arising and the restriction of $X \to S$ to natural curves of $S$.

\bigskip

Our motivation for this study comes from the classification of connected algebraic subgroups of the Cremona group $\Bir(\p^3)$, stated by Enriques and Fano in~\cite{EF1898} and achieved by Umemura over the field $\k=\C$ in a series of four papers \cite{Ume80,Ume82a,Ume82b,Ume85} , using analytic methods. As explained in~\cite{MU83,Ume88}, these groups act on some minimal rational threefolds, and it turns out that most of the threefolds obtained are $\p^1\!$-bundles over rational surfaces. Our plan is to give a shorter geometric proof of the classification of Umemura, and this article is the first step in this direction (the second step is in~\cite{second_paper}). Instead of starting with the group action and trying to find threefolds where the group acts, we will directly study the possible varieties and their symmetries, then reduce to some simple varieties, and in the end compute the neutral component of their automorphism groups.

Our approach should be seen as the analogue of the following way to understand the classification of connected algebraic subgroups of the Cremona group $\Bir(\p^2)$. This classification was initiated by Enriques in~\cite{Enr1893} and can nowadays be easily recovered via the classification of smooth projective rational surfaces, as we now explain. One can conjugate any connected algebraic subgroup of $\Bir(\p^2)$ to a group of automorphisms of a smooth projective rational surface $S$. Contracting all $(-1)$-curves of $S$, we can moreover assume that $S$ is a minimal surface, \textit{i.e.}\ that~$S$ is isomorphic to the projective plane $\p^2$ or a Hirzebruch surface $\F_a$, with $a\ge 0$, $a\not=1$; see~\cite{Bea96} for a general survey on surfaces. One then checks that the groups of automorphisms obtained are maximal, as these have no orbit of finite size: this forbids the existence of equivariant birational maps towards other smooth projective rational surfaces. Every connected algebraic subgroup of $\Bir(\p^2)$ is thus contained in a maximal connected algebraic subgroup of $\Bir(\p^2)$, which is conjugate to the group $\Autz(S)$, where $S$ is a minimal smooth rational surface. See also~\cite{Bla09} for the classification of the (not necessarily connected) maximal algebraic subgroups of $\Bir(\p^2)$ with a similar approach.

The aim of our classification is then to proceed as in the case of surfaces and study minimal threefolds. There are many more such varieties in dimension $3$ than in dimension $2$, and not all of them  yield maximal algebraic subgroups of $\Bir(\p^3)$. Moreover, some maximal connected algebraic subgroups of $\Bir(\p^3)$ are realised by infinitely many minimal threefolds. Our aim is then to understand the geometry of the minimal threefolds obtained in this way and the equivariant birational maps between them.
	
\bigskip

Another motivation consists in unifying some well-known results (most of them over the field of complex numbers, using topological arguments) on $\p^1\!$-bundles over minimal rational surfaces, \textit{i.e.}~the projective plane $\p^2$ and the Hirzebruch surfaces $\F_a$, and to obtain these results from the perspective of automorphisms groups. See for instance Corollaries~\ref{Coro:ABV} and~\ref{Cor:SmallIsoSchwarzenberger} and Remark~\ref{Rem:kCold}.

\bigskip

We also provide moduli spaces parametrising the $\p^1\!$-bundles $X\to \F_a$ over Hirzebruch surfaces having no jumping fibre (see Corollary~\ref{Cor:modspace}), \textit{i.e.}~the $\p^1\!$-bundles such that all fibres of the natural morphism $X\to\p^1$ (induced by the structure morphism $\F_a \to \p^1$) are Hirzebruch surfaces $\F_b$ for a fixed $b$ not depending on the fibre. These $\p^1\!$-bundles can also be described using exact sequences of vector bundles (see Corollary~\ref{Coro:ABV}). The action of $\Autz(\F_a)$ on the moduli spaces, depending on some natural numerical invariants (see Definition~\ref{Defi:NumInv}), is described explicitly; see \S~\ref{section:action on the moduli space}, and more precisely Corollary~\ref{cor action on Mabc}.

Our approach uses very basic tools and is aimed to be easy to follow by any interested reader, not necessarily expert. Most of the time we give proofs which do not use any known results, and we make a reference to the literature when we re-prove a known fact.

\subsection{Summary of the classification}
\begin{definition}\label{Defi:SquareEtc}
Let $\pi\colon X\to S$ and $\pi'\colon X'\to S'$ be two $\p^1\!$-bundles over two smooth projective rational surfaces $S$ and $S'$. A birational map $\varphi\colon X\dasharrow X'$ is said to be
\begin{enumerate}
\item
a \emph{square birational map $($resp.\ square~isomorphism/square automorphism$)$} if there exists a birational map $\eta\colon S\dasharrow S'$ such that $\pi'\varphi=\eta\pi$ (and if $\varphi$ is resp.~an isomorphism/automorphism) 
\[\xymatrix@R=4mm@C=2cm{
    X \ar@{-->}[r]^{\varphi} \ar[d]_{\pi}  & X' \ar[d]^{\pi'} \\
    S \ar@{-->}[r]_{\eta} & S'\rlap{;}
  }\] 
   we say in these cases that $\varphi$ is \emph{above} $\eta$;
\item
a \emph{birational map $($resp.~isomorphism/automorphism$)$ of $\,\p^1$-bundles} if $S=S'$ and $\pi'\varphi=\pi$ (and if $\varphi$ is resp.~an isomorphism/automorphism)
\[\xymatrix@R=4mm@C=2cm{
    X \ar@{-->}[rr]^{\varphi} \ar[rd]_{\pi}  && X'\rlap{;} \ar[ld]^{\pi'} \\
    &S & 
  }\]
\item
  \emph{$\Autz(X)$-equivariant} if $\varphi \Autz(X)\varphi^{-1}\subset \Autz(X')$ (or, equivalently, if $\varphi \Autz(X)\varphi^{-1}\subset \Aut(X')$). 
\end{enumerate}
As the definition depends on $\pi$ and $\pi'$, and not only on $X$ and $X'$, we will often write $\varphi\colon (X,\pi)\dasharrow (X',\pi')$ and say that $(X,\pi)$ and $(X',\pi')$ are, respectively,  square birational/square isomorphic/birational $\p^1\!$-bundles/isomorphic $\p^1\!$-bundles if $\varphi$ satisfies the corresponding condition.
\end{definition}

\begin{remark}
In the above definition, every element of $\Autz(X)$ is a square automorphism  (see Lemma~\ref{blanchard}) but not necessarily a birational map of $\p^1\!$-bundles.
\end{remark}

\begin{definition} \label{def:max}
Let $\pi\colon X\to S$ be a $\p^1\!$-bundle over a smooth projective rational surface $S$. We say that $\Autz(X)$ is \emph{maximal} if for each $\Autz(X)$-equivariant square birational map $\varphi\colon (X,\pi)\dasharrow (X',\pi')$, we have $\varphi \Autz(X)\varphi^{-1}=\Autz(X')$. If we moreover have $(X',\pi')\simeq (X,\pi)$ (resp.\ $\varphi$ is an isomorphism of $\p^1\!$-bundles) for each such $\varphi$, we say that the $\p^1\!$-bundle $(X,\pi)$ is \emph{stiff} (resp.\ \emph{superstiff}).
\end{definition}

\begin{remark}
This definition depends on $X$ and $\pi$, and not only on $X$. For instance, taking $X=\p^1\times \F_1$ and two standard $\p^1\!$-bundle structures $\pi\colon X\to \p^1\times \p^1$ and $\pi'\colon X\to \F_1$, $\Autz(X)$ is maximal with respect to $\pi$ but not with respect to $\pi'$. (The pairs $(X,\pi)$ and $(X,\pi')$ correspond, respectively, to $\FF_0^{1,0}$ and $\FF_1^{0,0}$, see Definition~\ref{def Fabc}, so this observation follows from Theorem~\ref{Thm:MainA}). 
\end{remark}

\begin{remark}
To the best of our knowledge, the notion of stiff/superstiff is new. It is analogous to the notion of equivariant birational rigidity/superrigidity for Mori fibre spaces but is not equivalent since here we only consider $\p^1\!$-bundles. Moreover, birational rigidity for Mori fibre spaces is always up to squares, while stiffness also detects these birational maps; see~\cite{cor00} and~\cite{Puk13} to know more about the notions of rigidity and superrigidity for Mori fibre spaces.
\end{remark}

The next statement, which summarises most of our work, is our main result (the notation is explained after Theorem~\ref{Thm:MainB}).

\begin{theoremA}\label{Thm:MainA}
Let $\pi\colon X\to S$ be a $\p^1\!$-bundle over a smooth projective rational surface $S$. Then, there exists an $\Autz(X)$-equivariant square birational map $(X,\pi)\dasharrow (X',\pi')$ such that $\Autz(X')$ is maximal. Moreover, the group $\Autz(X)$ is maximal if and only if $(X,\pi)$ is square isomorphic to one of the following:
\begin{center}\begin{tabular}{llllllll}
$(a)$& a decomposable &$\p^1\!$-bundle & $\FF_a^{b,c}$&\hspace{-0.3cm}$\longrightarrow$& \hspace{-0.2cm}$\F_a$& with $a,b\ge 0$, $a\not=1$, $c\in \Z$, \\
&&&&&&$c\le 0$ if $b=0$,\\
&&&&&&and where $a=0$ or $b=c=0$\\
&&&&&&or $-a<c<ab$;\\
$(b)$& a decomposable &$\p^1\!$-bundle &$\PP_b$&\hspace{-0.3cm}$\longrightarrow$& \hspace{-0.2cm}$\p^2$& for some $b\ge 0$;\\
$(c)$& an Umemura &$\p^1\!$-bundle &$\U_a^{b,c}$&\hspace{-0.3cm}$\longrightarrow$& \hspace{-0.2cm}$\F_a$& for some $a,b\ge 1, c\ge 2$,\\
&&&&&& with $c-ab<2$ if $a \geq 2$,\\
&&&&&& and $c-ab<1$ if $a=1$;\\
$(d)$ &a Schwarzenberger\! &$\p^1\!$-bundle &$\SS_b$&\hspace{-0.3cm}$\longrightarrow$& \hspace{-0.2cm}$\p^2$& for some $b\ge 1$; or\\
$(e)$ &a &$\p^1\!$-bundle &$\V_b$&\hspace{-0.3cm}$\longrightarrow$& \hspace{-0.2cm}$\p^2$& for some $b\ge 2$.\end{tabular}\end{center}
\end{theoremA}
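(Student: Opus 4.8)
The plan is to prove Theorem~\ref{Thm:MainA} in two stages, reflecting its two assertions: first the \emph{reduction}, that every $\p^1$-bundle $(X,\pi)$ admits an $\Autz(X)$-equivariant square birational map to some $(X',\pi')$ with $\Autz(X')$ maximal; and second the \emph{classification}, that maximality holds precisely for the five families $(a)$--$(e)$. For the reduction, I would start from an arbitrary $\p^1$-bundle over a smooth projective rational surface $S$, and exploit the structure of the Mori theory of $S$: using Blanchard's lemma (Lemma~\ref{blanchard}) the group $\Autz(X)$ descends to a subgroup of $\Autz(S)$, so $S$ carries an action of a connected algebraic group. Running an $\Autz(X)$-equivariant MMP on $S$ (contracting $\Autz(S)$-invariant sets of disjoint $(-1)$-curves, or passing through conic bundle/del Pezzo contractions), one reduces to the case where $S$ is a minimal rational surface, i.e. $\p^2$ or $\F_a$ with $a\neq 1$. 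The subtlety is that contracting curves downstairs need not lift to a morphism upstairs, only to a square birational map; one must check that elementary transformations along $\Autz(X)$-invariant curves, together with the contractions on the base, strictly decrease some complexity invariant (e.g. the rank of $\Pic(X)$, or a lexicographic invariant built from the base and the numerical data of the bundle), so that the process terminates. Because any equivariant square birational map between bundles whose base is already minimal cannot decrease this invariant, one ends in a bundle whose group is maximal, essentially by definition of maximality.

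For the classification stage, I would fix $S\in\{\p^2,\F_a : a\geq 0, a\neq 1\}$ and analyse $\p^1$-bundles $\pi\colon X\to S$ with $\Autz(X)$ acting on $S$ through (a subgroup of, but in the maximal case necessarily all of) $\Autz(S)$. The case $S=\p^2$: here $\Autz(\p^2)=\PGL_3$, and a $\p^1$-bundle corresponds to a rank-$2$ vector bundle on $\p^2$ up to twist; requiring the $\PGL_3$-action to lift forces the bundle to be $\PGL_3$-homogeneous, and the homogeneous rank-$2$ bundles on $\p^2$ are (up to twist) $\O\oplus\O$, the $\O\oplus\O(b)$ (giving the $\PP_b$), the tangent/cotangent-type bundles and the Schwarzenberger bundles $\SS_b$, plus the exceptional $\V_1^b$; one then computes $\Autz$ for each and checks maximality by verifying there is no equivariant square birational map shrinking the symmetry group. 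The case $S=\F_a$ is the bulk of the work: a $\p^1$-bundle $X\to\F_a$ restricts over the base $\p^1$ to give a family of surfaces, and one splits into the \emph{decomposable} case (projectivisations of $\O\oplus\L$, which after normalisation are the $\FF_a^{b,c}$, with the stated inequalities on $(a,b,c)$ coming from when elementary transformations can or cannot reduce the invariants) and the \emph{indecomposable but with enough symmetry} case, where the Umemura bundles $\U_a^{b,c}$ arise from explicit non-split extensions compatible with $\Autz(\F_a)$. For each candidate one must (i) compute $\Autz(X)$ explicitly — typically as an extension of (a subgroup of) $\Autz(S)$ by the fibrewise automorphisms $H^0(S,\mathrm{PGL}_2\text{-sheaf})$ — and (ii) decide maximality, i.e. rule out or exhibit equivariant elementary transformations (links along invariant curves or sections).

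The main obstacle I expect is step (ii) of the classification: \emph{deciding maximality}, which amounts to a complete bookkeeping of all $\Autz(X)$-equivariant square birational maps out of each candidate $X$. Concretely, one must understand the $\Autz(X)$-orbits on $X$ and on $S$, locate all $\Autz(X)$-invariant (or invariant unions of) curves $C\subset S$ over which an elementary transformation of $X$ is defined, perform that transformation, and compare $\Autz$ of the result — this is where the delicate inequalities in cases $(a)$ and $(c)$ (e.g. $-a<c<ab$, or $c-ab<2$ when $a\geq 2$) get pinned down, since they mark exactly the numerical boundary between "an equivariant link reduces the bundle further" and "no such link exists, so the group is maximal". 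Managing this requires a careful analysis of \emph{jumping fibres} of $X\to\p^1$ and of the behaviour of the Schwarzenberger and Umemura bundles under the relevant links, and I would organise it as a sequence of lemmas, one per family, each of the form "every nontrivial equivariant square birational map from $X$ either preserves $\Autz$ or is impossible under the stated hypotheses", with the converse (non-maximality outside the stated range) handled by writing down an explicit equivariant link. The remaining steps — descending the action via Blanchard, classifying homogeneous bundles, and the explicit $\Autz$-computations — are comparatively routine given the tools set up earlier in the paper.
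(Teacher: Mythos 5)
Your overall architecture (reduce to a minimal base, then classify equivariant links out of each candidate family via invariant curves, with the numerical inequalities marking where links stop existing) matches the paper's strategy, and you correctly identify the link bookkeeping as the heart of the matter. But there is a genuine gap in your classification step over $\p^2$: you assert that in the maximal case the image of $\Autz(X)$ in $\Autz(S)$ is necessarily all of $\Autz(S)$, and you therefore propose to classify $\PGL_3$-homogeneous rank-two bundles. This premise is false, and it is exactly where the interesting families live: for $\SS_b$ with $b\ge 2$ the image is $\Aut(\p^2,\Gamma)\simeq\PGL_2\subsetneq\PGL_3$, and for $\V_1^b$ with $b\ge 2$ it is $\Aut(\p^2,[0:1:0])$, yet both have maximal $\Autz$ (Definition~\ref{def:max} compares $\Autz(X)$ with $\Autz(X')$ under equivariant square birational maps; it does not require surjectivity onto $\Autz(S)$). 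Moreover, the $\PGL_3$-homogeneous rank-two bundles on $\p^2$ are only the decomposable ones and twists of $T_{\p^2}=\SS_1$ (Van de Ven; see Lemma~\ref{Lem:LargeActionP2} and Remark~\ref{Rem:kCold}), so listing $\SS_b$ ($b\ge2$) and $\V_1^b$ among them is incorrect, and your route as stated would miss families $(d)$ and $(e)$. The paper instead classifies the possible proper images $H\subset\PGL_3$ without fixed points (Lemma~\ref{SubgroupsPGL3}), eliminates the line-preserving case (Proposition~\ref{Prop:P1bundlesP2}), and treats the conic-preserving case by pulling back along the double cover $\kappa\colon\p^1\times\p^1\to\p^2$ so as to reuse the Hirzebruch-surface analysis, while the point-fixing case is blown up to $\F_1$, which is how $\V_1^b$ appears as the contraction $\U_1^{b,2}\to\V_1^b$ (Lemma~\ref{Lem:UMeF1P2}).

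Two further points. First, in your reduction stage the conclusion that the process ends at a bundle whose group is maximal ``essentially by definition'' is not justified: even over a minimal base most bundles are not maximal (e.g. $\FF_a^{b,c}$ with $c\ge ab$ or $c\le -a$, all decomposable bundles over $\F_1$, Umemura bundles with $c-ab\ge 2$), and the existence part of Theorem~\ref{Thm:MainA} is obtained in the paper only as an output of the complete link analysis (Corollaries~\ref{Cor:MaxDecBundlesFa}, \ref{Cor:MaxUmeBundlesFa}, \ref{Cor:RigiditySch}, Lemma~\ref{Lem:DecP2NoCurve}), not from a monotone complexity invariant; also, the passage to a minimal base needs no equivariant MMP, since the descent lemma (Lemma~\ref{Lem:GoingdownMinimalsurfaces}) makes the square birational map over any chosen birational morphism $\hat S\to S$ automatically $\Autz$-equivariant. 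Second, over $\F_a$ your ``indecomposable with enough symmetry'' step needs a concrete mechanism: the paper removes jumping fibres (Proposition~\ref{Prop:NoMoreJumpingfibre}), introduces the numerical invariants $(a,b,c)$ and the moduli spaces $\M_a^{b,c}$, and uses the $\Autz(\F_a)$-action on them together with $\SL_2$-representation theory (Proposition~\ref{prop decompo and Umemura bundles}) to show the only possibilities are decomposable, Umemura, $\TT_b$ (the pull-back of $\SS_b$), or an invariant fibre; without some substitute for this fixed-point computation on the moduli space, your classification over $\F_a$ remains an unproved assertion.
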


\begin{remark} We keep the notation of Theorem~\ref{Thm:MainA}: 
\begin{itemize}
\item  $\Autz(\FF_a^{b,c})$ is described in Lemma~\ref{Lem:AutDecOnAutFa} and Remark~\ref{rk aut decompo bundles over Fa};
 \item $\Autz(\PP_b)$ is described in Lemma~\ref{lemm:surjectiveDecP2} and Remark \ref{rk: description auto of P1-bundle of dec bundles over P2};
 \item $\Autz(\U_a^{b,c})$ is described in Lemma~\ref{Lem:SurjectiveActionUmemura} and Remark~\ref{rk aut vert Umemura bundles};  
 \item $\Autz(\SS_b)$ is described in Lemma~\ref{Lemm:SchwarzJumpLines}\ref{AutoSchwarzb}; and
 \item $\Autz(\V_b)$ is described in Lemma~\ref{Lem:UMeF1P2}\ref{AutV1inP2}--\ref{AutoV1lift}.
 \end{itemize}
\end{remark}

Contrary to the dimension $2$ case, there are many $\p^1\!$-bundles $X \to S$ with  maximal $\Autz(X)$ which are birationally conjugate.
This means that the $\p^1\!$-bundles of Theorem~\ref{Thm:MainA} are not always stiff. The next result describes all the possible links between such $\p^1\!$-bundles. The construction of such links is detailed in \S~\ref{Sec:Classif}.

\begin{theoremA}\label{Thm:MainB}
The $\p^1\!$-bundles of Theorem~{\rm\ref{Thm:MainA}} are superstiff only in the cases: 
\begin{enumerate}[label=\rm{($\alph*$)}]
\item  $\FF_a^{b,c}$ with $a=0$ or $b=c=0$;
\item $\PP_b$ for $b\ge 0$; and
\item $\SS_1\simeq \P(T_{\P^2})$.
\end{enumerate}
In the other cases, the equivariant square birational maps between the $\p^1\!$-bundles of Theorem~{\rm\ref{Thm:MainA}} are given by compositions of square isomorphisms of $\,\p^1\!$-bundles and of birational maps appearing in the following list: 
\begin{enumerate}
\item
for all integers $a,b\ge 0$, $c\in \Z$ with $a\not=1$, $-a<c<0$, an infinite sequence of equivariant birational maps of $\p^1\!$-bundles
\[\FF_a^{b,c}\dasharrow \FF_a^{b+1,c+a}\dasharrow \cdots \dasharrow\FF_a^{b+n,c+an}\dasharrow \cdots;\]
\item
for all integers $a,b\ge 1$ with $(a,b)\not=(1,1)$, an infinite sequence of equivariant birational maps of $\p^1\!$-bundles
\[\U_a^{b,2}\dasharrow \U_a^{b+1,2+a}\dasharrow \cdots \dasharrow\U_a^{b+n,2+an}{\dasharrow} \cdots; \]
\item
for each $b\ge 2$, a birational involution $\SS_b\dasharrow \SS_b$;
\item
for each $b\ge2$, the equivariant birational morphisms $\U_1^{b,2}\to \V_b$ obtained by contracting the preimage of the $(-1)$-curve of $\,\F_1$ onto the fibre of a point of $\,\p^2$ in $\V_b$.
\end{enumerate}
\end{theoremA}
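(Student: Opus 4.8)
The plan is to examine an arbitrary equivariant square birational map $\varphi\colon(X,\pi)\dasharrow(X',\pi')$ between two of the $\p^1$-bundles of Theorem~\ref{Thm:MainA} by factoring it into elementary pieces---square isomorphisms, elementary transformations of $\p^1$-bundles along $\Autz(X)$-invariant curves, and the contraction $\F_1\to\p^2$---and then to read off which pieces actually occur from the explicit description of $\Autz(X)$ and of its orbits obtained in the body of the paper. By the first assertion of Theorem~\ref{Thm:MainA} and the definition of maximality I may assume that both $(X,\pi)$ and $(X',\pi')$ belong to the list of Theorem~\ref{Thm:MainA}, and then the equality $\varphi\Autz(X)\varphi^{-1}=\Autz(X')$ holds automatically.

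First I analyse the base map $\eta\colon S\dasharrow S'$ induced by $\varphi$. By Lemma~\ref{blanchard} the group $\Autz(X)$ acts on $S$ through a subgroup $G$ of $\Autz(S)$, and from the known shape of $\Autz(X)$ one reads that $G$ acts with a dense orbit when $S=\p^2$ or $S=\p^1\times\p^1$, with the exceptions that, over $\p^2$, the group $G$ equals the conic-automorphism group $\PGL_2$ (fixing a smooth conic $C$) when $X=\SS_b$, and $G$ fixes a point of $\p^2$ when $X=\V_1^b$, whereas $G$ has $s_a$ as its only invariant curve when $S=\F_a$ with $a\ge1$. Hence $\eta$ is either an isomorphism, or a composition of the equivariant contraction $\F_1\to\p^2$ with its inverse, since outside the pair $(\F_1,\p^2)$ there is no $G$-fixed point on the base along which to modify $S$ equivariantly. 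After composing $\varphi$ with square isomorphisms and with the birational morphisms of item~(4), I reduce to the case where $\eta$ is an isomorphism and $\varphi$ is a birational map of $\p^1$-bundles over a fixed base $S$; by the analysis of \S\ref{Sec:Classif}, such a $\varphi$ then factors as a composition of elementary transformations $\mathrm{elm}_\Gamma$ centred at $\Autz(X)$-invariant curves $\Gamma\subset X$ which are sections of $\pi^{-1}(C)\to C$ for a $G$-invariant curve $C\subset S$.

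It remains to list these possible centres and the corresponding targets, which simultaneously produces the links (1)--(4) and pins down the superstiff cases. If $S=\p^1\times\p^1$, or if $S=\p^2$ with $X=\PP_b$ or $X=\SS_1=\P(T_{\P^2})$, the image $G$ is all of $\PGL_2\times\PGL_2$, resp.\ of $\PGL_3$, and $X$ has no $\Autz(X)$-invariant curve, so there is no admissible centre, $\varphi$ is an isomorphism of $\p^1$-bundles, and $\PP_b$, $\SS_1$, and $\FF_a^{b,c}$ with $a=0$ are superstiff; the bundle $\FF_a^{0,0}$ with $a\ge2$ is superstiff as well, because then the only admissible base curve is $s_a$ but $\pi^{-1}(s_a)=\p^1\times s_a$ carries no $\Autz(X)$-invariant section, and $\F_a$ with $a\ge2$ admits no equivariant modification. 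If $S=\p^2$ with $X=\SS_b$ and $b\ge2$, the admissible curve is the conic $C$, the invariant section of $\pi^{-1}(C)\to C$ produces an $\mathrm{elm}_\Gamma$ whose target is again a Schwarzenberger bundle, and the induced birational self-map of $\SS_b$ is the involution of item~(3) (for $b=1$ this degenerates to an automorphism). If $S=\F_a$ with $a\ge1$, then $C=s_a$, and I read the $\Autz(X)$-invariant sections of the Hirzebruch surface $\pi^{-1}(s_a)\to s_a$ off the normal form of $X$---the two canonical sections when $X=\P(\O\oplus\L)$ is decomposable, and the analogous ones when $X$ is an Umemura bundle. For each such section $\Gamma$ that does not already yield an isomorphism, I compute the numerical invariants (Definition~\ref{Defi:NumInv}) of $\mathrm{elm}_\Gamma(X)$ and locate it in the list: this gives exactly the sequences (1) for the decomposable bundles and (2) for the Umemura bundles, while contracting $\pi^{-1}(s_1)$ over $\F_1$ gives the morphisms (4). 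In every case not listed as superstiff at least one of (1)--(4) applies, so the bundle is not superstiff; and one checks directly that each map in (1)--(4) is equivariant and carries a maximal pair to a maximal pair.

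The main difficulty is the factorisation step of the second paragraph: one must guarantee that an arbitrary equivariant square birational map decomposes into the elementary transformations above together with the single exchange $\F_1\leftrightarrow\p^2$, with no exotic equivariant modification of the base and no $\Autz(X)$-invariant section over a positive-dimensional subvariety of $S$ escaping the analysis. This relies on the structural results of \S\ref{Sec:Classif} combined with the precise description of $\Autz(X)$ and of its orbits on $X$ and on $S$. Once it is in place, the rest of the argument is an explicit---if somewhat lengthy---bookkeeping with the normal forms and the numerical invariants already at hand.
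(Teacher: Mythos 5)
Your proposal is correct and follows essentially the same route as the paper: reduce to the families of Theorem~\ref{Thm:MainA}, observe that the base admits no equivariant modification beyond the exchange $\F_1\leftrightarrow\p^2$ (respectively the blow-up of the fixed point for $\V_1^b$), factor every equivariant birational map of $\p^1$-bundles into elementary transformations centred at $\Autz(X)$-invariant curves (Lemma~\ref{Lemm:DecElLinks}), and enumerate those curves and the targets of the resulting links family by family, which is exactly how the paper assembles Proposition~\ref{Prop:FourCases} with Corollaries~\ref{Cor:MaxDecBundlesFa}, \ref{Cor:MaxUmeBundlesFa}, \ref{Cor:RigiditySch}, Lemma~\ref{Lem:UMeF1P2} and Lemma~\ref{Lem:DecP2NoCurve}. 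The one ingredient you flag but do not actually supply is the factorization statement itself, which the paper proves by a local determinant/Smith-normal-form argument; the rest of your sketch (the invariant-curve lists and the identification of the elementary transformations with the links (1)--(4)) agrees with the paper's lemmas.
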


In Theorems~\ref{Thm:MainA} and~\ref{Thm:MainB}, decomposable $\p^1\!$-bundles are simply the projectivisations of decomposable rank $2$ vector bundles. The $\p^1\!$-bundles over $\F_a$ and $\p^2$ are particularly easy to describe (see \S\S~\ref{SubSec:DecFa} and~\ref{decomposable_over_P2}). The Schwarzenberger $\p^1\!$-bundles over $\p^2$ are projectivisations of the classical rank $2$ vector bundles of the same name (see below). We describe  the notation here.

\begin{definition}[Schwarzenberger $\p^1\!$-bundles]\label{def:Schwarz}Let $b\ge -1$ be an integer, and let $\kappa\colon \p^1 \times \p^1 \to \p^2$ be the $(2:1)$-cover defined by
\[\begin{array}{rccc}
\kappa\colon & \p^1\times \p^1 & \to &\p^2\\
& \big([y_0:y_1],[z_0:z_1]\big) &\mapsto &[y_0 z_0:y_0 z_1+y_1 z_0:y_1z_1],\end{array}\]whose branch locus is the diagonal $\Delta\subset\p^1 \times \p^1$ and whose ramification locus is the smooth conic $\Gamma=\{ [X:Y:Z] \mid Y^2=4XZ\}\subset\p^2.$
The \emph{$b$-th Schwarzenberger $\p^1\!$-bundle} $\SS_b\to \p^2$ is the $\p^1\!$-bundle defined by
\[\SS_b=\P\big(\kappa_* \O_{\p^1 \times \p^1}(-b-1,0)\big)\to \p^2.\]
\end{definition}

Note that $\SS_b$ is the projectivisation of the classical Schwarzenberger vector bundle $\kappa_* \O_{\p^1 \times \p^1}(-b-1,0)$ introduced in~\cite{Sch61}. Moreover, there is a natural family of lines of $\p^2$ whose preimage by $\SS_b\to \p^2$ is isomorphic to $\F_b$  for each $b\ge 0$ (by Lemma~\ref{Lemm:SchwarzJumpLines}\ref{FnLineSchwarz}). This explains the shift in the notation.

The families of Umemura $\p^1\!$-bundles $\U_a^{b,c}$, introduced by Umemura in~\cite[\S~10]{Ume88}, need a bit more notation to be described. This is done in \S~\ref{Sec:Umemurabundles}. The $\p^1\!$-bundles $\V_1^b\to \p^2$ is a family of $\p^1\!$-bundles such that $\Autz(\V_b)$ is maximal and birationally conjugate  to certain $\Autz(\U_1^{b,c})$, but with an action on $\p^2$ fixing a point; see Lemma~\ref{Lem:UMeF1P2} for the precise relation between the two families.

\begin{remark}
Let us mention that the $\p^1\!$-bundles that appear in Theorem~\ref{Thm:MainA} are toric varieties (families (a) and (b)) or minimal smooth compactifications of $\SL_2/H$ with $H \subseteq \SL_2$ a finite subgroup (families (c), (d), and (e) and certain members of families (a) and (b)). Here we call \emph{minimal smooth compactification} of $\SL_2/H$ an $\SL_2$-equivariant embedding $\SL_2/H \hookrightarrow X$ such that $X$ is a smooth projective variety and any $\SL_2$-equivariant birational morphism $X \to X'$, with $X'$ smooth, is an isomorphism; these $\SL_2$-threefolds were studied and classified by Nakano in~\cite{Nak89} (over $\C$) as an application of Mori theory. The finite subgroups of $\SL_2(\C)$ are well known: there are the cyclic groups, the binary dihedral groups, and the binary polyhedral groups. When $H$ is non-cyclic, $\SL_2/H$ admits a unique minimal smooth compactification, namely the Fano threefolds $V_5$, $V_{22}^{MU}$, and the smooth quadric $Q_3$ for the binary polyhedral groups (see \cite[Proposition~2.5]{Nak89}), and the Schwarzenberger $\p^1\!$-bundles or $\P^2$ for the binary dihedral groups (see \cite[Theorem~2.8]{Nak89}). But when $H$ is cyclic,  $\SL_2/H$ admits several non-isomorphic minimal smooth compactifications including the $\p^1\!$-bundles corresponding to families (c) and (e) in Theorem~\ref{Thm:MainA} (see~\cite[Theorem~2.31]{Nak89} for the complete list).
\end{remark}

\subsection{Comparison with Umemura's classification}
Each of the maximal connected algebraic subgroups of $\Bir(\p^3_\C)$ given in~\cite[Theorem~2.1]{Ume85}
acts on some Mori fibre spaces (as explained in~\cite{Ume88}). The cases (P1), (P2), (E1), (E2) correspond to Fano threefolds, the cases (J10), (J12) to Del Pezzo fibrations ($\p^2$-bundles and quadric fibrations), and all other cases (J1)--(J9), (J11) correspond to $\p^1\!$-bundles. We now explain to which of the cases $(a)$--$(d)$ of Theorem~\ref{Thm:MainA} these correspond.

Note that $\FF^{b,c}_a=\P(\OFa \oplus \OFa(-b\s{a}+cf))\to \F_a$ (see Definition~\ref{def Fabc}) and that $\PP_b=\P(\OPP(b) \oplus \OPP)=\P(\OPP \oplus \OPP(-b))\to \p^2$ (see Definition~\ref{def Pb}).

(J1) is $\Autz(\p^2\times \p^1)$ and is thus given by $\PP_0\to \p^2$.

(J2) is $\Autz(\p^1\times \p^1\times \p^1)$ and is thus given by $\FF_0^{0,0}\to \F_0$.

(J3) is $\Autz(\F_m\times \p^1)$, with $m\ge 2$, which is  given either by $\FF_m^{0,0}\to \F_m$ or by $\FF_0^{0,m}\to \F_0$, which is square isomorphic to $\FF_0^{m,0}\to \F_0$ (but not isomorphic as a $\p^1\!$-bundle).

(J4) is $\Autz(\PGL_3/B)\simeq \PGL_3$ and is thus equal to $\Autz(\SS_1)$, as $\SS_1\to \p^2$, $\P(T_{\P^2})\to\p^2$, and $\PGL_3/B \to \PGL_3/P$ are isomorphic, by Remark~\ref{rk tangent bundle}.

(J5) is $\Autz(\PGL_2/D_{2n})$, with $n\ge 4$, and is thus equal to $\Autz(\SS_b)$, with $b=n-1\ge 3$, by Remark~\ref{rk PGLn mod Dn}.

(J6) is $\Autz(L_{m,n})$, where $L_{m,n}\to \F_0$ is a $\p^1\!$-bundle of bidegree $(m,n)$, with $m \geq 2$, $n\leq -2$ (see~\cite[Theorem~2.1(J6)]{Ume85} and~\cite[\S~5]{Ume88}). It thus corresponds to $\FF_0^{m,-n}\to \F_0$, given in $(a)$.

(J7) is equal to $\Autz(\mathrm{J}_m)=\Autz(\mathrm{J}'_m)$, with $m\ge 2$, where the $\p^1\!$-bundle $\mathrm{J}_m\to \p^2$ defined in~\cite[\S~6]{Ume88} is a compactification of $\mathrm{J}_m'$ of~\cite[Theorem~2.1(J7)]{Ume85}, isomorphic to $\PP_b\to \p^2$ with $m=b$, given in $(b)$.

(J8) is $\Autz(L_{m,n})$, where $L_{m,n}\to \F_0$ is a $\p^1\!$-bundle of bidegree $(m,n)$, with $m\ge n\ge 1$ (see~\cite[Theorem~2.1(J8)]{Ume85} and~\cite[\S~7]{Ume88}). It thus corresponds to $\FF_0^{m,n}\to \F_0$, which is square isomorphic to $\FF_0^{n,m}\to \F_0$, both given in $(a)$.

(J9) is $\Autz(\mathrm{F}_{m,n}')$, for some integers $m>n\ge 2$ (see~\cite[Theorem~2.1(J9)]{Ume85}), and is also equal to $\Autz(\mathrm{F}_{m,n}^k)$ for each integer $k\ge \lfloor {m}/{n}\rfloor$, where $\mathrm{F}_{m,n}^k=\P(\OFn \oplus \OFn(-k\s{-n}-mf))=\P(\OFn \oplus \OFn(-k\s{n}+(nk-m)f))=\FF_n^{k,nk-m}\to \F_n$, given in $(a)$.

(J11) is $\Autz(\mathrm{E}_m'^{\,l})$, where $m=1$, $l\ge 3$ or $m,l\ge 2$ (see \cite[Theorem~2.1(J11)]{Ume85}). Then $\mathrm{E}_m'^{\,l}$ admits a family of compactifications given in~\cite[\S~10]{Ume88} and depending on a parameter $j \geq l$; these compactifications correspond to the Umemura bundles $\U_a^{b,c}\to \F_a$, with $a=m$, $b=j$, $c=(j-l)m+2$.
 
\medskip

Let us note that the family $(e)$ in Theorem~\ref{Thm:MainA} was overlooked in the work of Umemura. These have to correspond to maximal algebraic subgroups of $\Bir(\p^3)$ and should appear in~\cite[\S~10]{Ume88}.

Some of the elements of our list do not appear in~\cite{Ume85,Ume88}. This is sometimes because these do not correspond to maximal algebraic subgroups of $\Bir(\p^3)$, as we can embed the groups in larger groups of automorphisms of Mori fibre spaces, or it is because they are equivalent to other elements of the list, by a birational map not preserving the $\p^1\!$-bundle structure. This is for instance the case of $\SS_2\to \p^2$, which is $\Aut(\SS_2)$-equivariantly birational to $\p^3$, or of $\FF_0^{0,1}\simeq\F_1\times \p^1\to \p^1\times \p^1$, which is not maximal as it is equivariantly birational to $\p^2\times \p^1$. There are many other cases, which are studied in~\cite{second_paper}.

\subsection{Overview of the proof}
Starting with a $\p^1\!$-bundle $\hat\pi\colon\hat{X}\to \hat{S}$ over a smooth projective rational surface $\hat S$, there is a birational morphism $\eta\colon \hat{S}\to S$, where $S$ is a Hirzebruch surface $\F_a$ or the projective plane $\p^2$. Applying Lemma~\ref{Lem:GoingdownMinimalsurfaces}, we obtain a square birational map $\psi\colon (\hat{X},\hat\pi)\dashrightarrow (X,\pi)$, unique up to isomorphism.  Moreover, $\psi$ is $\Autz(X)$-equivariant. We then need to study $\p^1\!$-bundles over Hirzebruch surfaces or over the projective plane.

Section~\ref{Sec:P1bundlesHirz} concerns the case of $\p^1\!$-bundles over Hirzebruch surfaces $\pi\colon X\to \F_a$, with $a\ge 0$. We denote by $\tau_a\colon \F_a\to \p^1$ a $\p^1\!$-bundle structure on $\F_a$, and we study the surfaces $S_p=(\tau_a\pi)^{-1}(p)$ with $p\in \p^1$. The situation is described by Proposition~\ref{Prop:NoMoreJumpingfibre}; we explain it now. There is an integer $b\ge 0$ such that $S_p\simeq \F_b$ for a general $p\in \p^1$. If this holds for all points of $\p^1$, we  in fact have a $\F_b$-bundle $\tau_a\pi\colon X\to \p^1$: we say that there is no \emph{jumping fibre}. If a special point $p\in \p^1$ is such that $S_p\simeq \F_{b'}$ for some $b'\not=b$, then $b'>b$ and we can blow up the exceptional curve of $S_p\simeq \F_{b'}$ and contract the strict transform of $S_p$, in an $\Autz(X)$-equivariant way. After finitely many steps, we reduce to the case where there is no jumping fibre.

We then associate to any $\p^1\!$-bundle $\pi\colon X\to \F_a$ with no jumping fibres two integers $b,c\in \Z$ such that $b\ge 0$, and $c\le 0$ if $b=0$. The integer $b$ is the one such that $\tau_a\pi\colon X\to \p^1$ is a $\F_b$-bundle, and the integer $c$ can be seen either with the transition function of $\pi: X \to \F_a$ or using exact sequences associated to the rank~$2$ vector bundle corresponding to $\pi$, with the following definition (Corollary~\ref{Coro:NumericalInv} shows the equivalence between the two points of view).

\begin{definition}\label{Defi:NumInv}
Let $a,b,c\in \Z$ be such that $a,b\ge 0$ and $c\le0$ if $b=0$.
We say that a $\p^1\!$-bundle $\pi\colon X\to \F_a$ \emph{has numerical invariants $(a,b,c)$} if it is the projectivisation of a rank $2$ vector bundle 
$\mathcal{E}$ which fits in a short exact sequence
\[ 0 \to \O_{\F_a} \to \mathcal{E} \to \O_{\F_a}(-b\s{a}+cf) \to 0,\]
where $f,\s{a}\subset \F_a$ are a fibre and a section of self-intersection $a$ of $\tau_a\colon \F_a\to\p^1$.
\end{definition}

In Proposition~\ref{Prop:EquivABC}, we show that a $\p^1\!$-bundle $\pi\colon X\to \F_a$ with numerical invariants $(a,b,c)$ has no jumping fibre and that every $\p^1\!$-bundle $\pi\colon X\to \F_a$ with no jumping fibre has numerical invariants $(a,b,c)$, for some uniquely determined integers $b,c\in \Z$, with $b\ge 0$, and $c\le0$ if $b=0$. The above numerical invariants are thus really invariant under isomorphisms.

We then prove (see Corollary~\ref{Coro:IsoClass}) that every $\p^1\!$-bundle $X \to \F_a$ with numerical invariants $(a,b,c)$ is decomposable if $b=0$ or $c<2$, and construct a moduli space $\M_{a}^{b,c}$, which is isomorphic to a projective space (see Remark~\ref{rk: dim mod spaces}), parametrising the non-decomposable $\p^1\!$-bundles $X \to \F_a$ with numerical invariants $(a,b,c)$ when $b\ge 1$ and $c\ge 2$ (see Corollary~\ref{Cor:modspace}).

The group $\Autz(\F_a)$ acts naturally on this moduli space, via an algebraic action, which is detailed in \S~\ref{section:action on the moduli space}. Using this action, we are able to describe the geometry of $\p^1\!$-bundles $X\to\F_a$ (see Proposition~\ref{prop decompo and Umemura bundles}). We prove in particular that if no surface $S_p$ (with the notation above) is invariant by $\Autz(X)$, then $\pi$ is isomorphic to a decomposable $\p^1\!$-bundle, to an Umemura $\p^1\!$-bundle, or to a $\p^1\!$-bundle $\TT_b\to \p^1\times \p^1$, which is obtained by pulling back the Schwarzenberger bundle $\SS_{b}\to\p^2$ via the double cover $\kappa \colon\p^1\times \p^1\to \p^2$ defined above (see Lemma~\ref{Lemm:LiftSchwarzIsSchwarzHat}); the last two cases correspond to natural elements of the moduli spaces $\M_a^{b,c}$ fixed by $\Autz(\F_a)$. 
 
In the case $\TT_b\to \p^1\times \p^1$ and in the case where a fibre $S_p$ is invariant, we can reduce to the case of decomposable $\p^1\!$-bundles over $\F_a$ (again by Proposition~\ref{prop decompo and Umemura bundles}). 
 
Section~\ref{section: P1 bundles over P2} concerns $\p^1\!$-bundles over $\p^2$. Despite the fact that the geometry of such bundles is quite rich and complicated (see \textit{e.g.}~\cite{OSS11} for an overview when $\k=\C$), our approach allows us to give a quite simple proof in this case, using the work done in \S~\ref{Sec:P1bundlesHirz} for $\p^1\!$-bundles over Hirzebruch surfaces. We first study the Schwarzenberger $\p^1\!$-bundles $\SS_b\to \p^2$ (see Lemmas~\ref{Lemm:TransSchwarz},~\ref{Lemm:LiftSchwarzIsSchwarzHat}, and~\ref{Lemm:SchwarzJumpLines} and Corollary~\ref{Cor:SmallIsoSchwarzenberger}). We then take a $\p^1\!$-bundle $\pi\colon X\to \p^2$ and denote by $H\subset \Aut(\p^2)$ the image of $\Autz(X)$. If $H$ fixes a point, we blow up the fibre of this point and reduce our study to the case of $\p^1\!$-bundles over $\F_1$. Otherwise, using the structure of $\Aut(\p^2)$, we see that either $H=\Aut(\p^2)=\PGL_3$ or $H\subset\Aut(\p^2,C)=\{g\in \Aut(\p^2)\mid g(C)=C\}$, for some smooth curve $C$ which is a line or a conic (see Lemma~\ref{SubgroupsPGL3}). The case where $C$ is a line cannot happen (see Proposition~\ref{Prop:P1bundlesP2}), and the case where $C$ is a conic corresponds to the Schwarzenberger case; this is proven by using the double cover $\kappa\colon \p^1\times \p^1\to \p^2$ of Definition~\ref{def:Schwarz} and the results on $\p^1\!$-bundles over Hirzebruch surfaces.  The case where $H=\PGL_3$ corresponds to the decomposable $\p^1\!$-bundles over $\p^2$ or to the special Schwarzenberger $\p^1\!$-bundle $\SS_1$, isomorphic to the projectivised tangent bundle $\p(T_{\p^2})$. Again, this is proven by  a reduction to the case of $\p^1\!$-bundles of $\F_1$ (studied in \S~\ref{Sec:P1bundlesHirz}) by blowing up a point of~$\p^2$.

In \S~\ref{Sec:Classif}, we prove Theorems~\ref{Thm:MainA} and~\ref{Thm:MainB}, which achieves our classification. Once we have reduced our study to decomposable $\p^1\!$-bundles over $\F_a$ or $\p^2$ or to Umemura or Schwarzenberger $\p^1\!$-bundles (see Proposition~\ref{Prop:FourCases}), we study birational maps of $\p^1\!$-bundles between elements of these four families, which are in fact obtained by elementary links centred at invariant curves (see Lemma~\ref{Lemm:DecElLinks}), square isomorphisms, and special contractions from $\p^1\!$-bundles over $\F_1$ to $\p^1\!$-bundles over $\p^2$ (see Remark~\ref{Rem:Overview5}). The study of the possible links is made on each family, by describing the possible invariant curves. These are naturally contained in the preimage of invariant curves of the surface $S$ over which we take the $\p^1\!$-bundles and are most of the time obtained by a negative curve in the fibres of smooth rational curves.

\subsection{Acknowledgments.}
We are grateful to the anonymous referee for the careful reading of a previous  version of this paper and for their helpful comments.
We would like to thank Michel Brion, Paolo Cascini, Ciro Ciliberto, Daniele Faenzi, Enrica Floris, Anne-Sophie Kaloghiros, Massimiliano Mella, Lucy Moser-Jauslin, Boris Pasquier, Francesco Russo, and Jean Vall\`es for interesting discussions related to this work. Thanks to Mateusz Michalek for indicating the proof of the combinatoric Lemma~\ref{Lemm:Combinatoric} to us. Thanks also to the MathOverflow community for its help regarding the proofs of Lemmas~\ref{lem:subgroups of PGL2} and~\ref{SubgroupsPGL3}.

\section{Preliminaries} \label{Sec:Preliminaries}
All the results in \S~2 are valid over an algebraically closed field $\k$ of arbitrary characteristic.

\subsection{Blanchard's lemma}
We recall a result due to Blanchard~\cite[\S~I.1]{Bla56} in the setting of complex geometry, whose proof has been adapted to the setting of algebraic geometry by Brion, Samuel, and Uma.

\begin{lemma}[\textit{cf.} \protect{\cite[Proposition~4.2.1]{BSU13}}] \label{blanchard}
Let $f\colon X \to Y$ be a proper morphism between algebraic varieties such that $f_*(\O_X)=\O_Y$. If a connected algebraic group $G$ acts on $X$, then there exists a unique action of $G$ on $Y$ such that $f$ is $G$-equivariant.
\end{lemma}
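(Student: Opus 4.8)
The plan is to reconstruct Blanchard's argument (in the form used by Brion--Samuel--Uma): let $a\colon G\times X\to X$ denote the action morphism, and try to \emph{descend} $a$ along $\mathrm{id}_G\times f\colon G\times X\to G\times Y$, i.e.\ produce a unique morphism $\bar a\colon G\times Y\to Y$ with $\bar a\circ(\mathrm{id}_G\times f)=f\circ a$, and then check that $\bar a$ is a group action. Once such a $\bar a$ exists and is an action, the equivariance of $f$ is exactly the factorization identity, and the uniqueness in the statement follows from the uniqueness of $\bar a$.

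The first step is to record two soft properties of $f':=\mathrm{id}_G\times f$. It is proper, being the base change of the proper morphism $f$ along the projection $G\times Y\to Y$. Moreover $f'_*\O_{G\times X}=\O_{G\times Y}$: the projection $G\times Y\to Y$ is faithfully flat (as $G$ is a nonempty smooth $\k$-scheme), so the equality $f_*\O_X=\O_Y$ is preserved by flat base change. In particular $f'$ is surjective and schematically dominant, hence an epimorphism in the category of schemes. This gives for free the uniqueness of $\bar a$ and, once $\bar a$ is constructed, reduces the associativity and unit axioms for $\bar a$ (equalities of morphisms out of $G\times G\times Y$, resp.\ $Y$) to the corresponding axioms for $a$, by pulling back along the relevant epimorphisms $\mathrm{id}_{G\times G}\times f$, etc.; so the content is entirely the existence of $\bar a$. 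It also gives, via Stein factorization, that the fibres of $f'$ are connected.

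For the existence, the heart of the matter is to show that $b:=f\circ a\colon G\times X\to Y$ is constant on the fibres of $f'$. The fibre of $f'$ over a point $(g,y)$ is $\{g\}\times X_y$ with $X_y:=f^{-1}(y)$ a proper connected $\k$-scheme; the point is that $b(\{g\}\times X_y)=f(g\cdot X_y)$ must be a single point of $Y$. Here one uses the connectedness of $G$ together with the behaviour at the identity: on $\{e\}\times X_y$ one has $a(e,x)=x$, so $b(e,x)=f(x)=y$ is constant. Viewing, for fixed $y$, the restriction of $b$ to $X_y\times G$ as a morphism from a complete connected scheme times the irreducible variety $G$ to $Y$ which is constant on $X_y\times\{e\}$, the rigidity lemma forces it to factor through the projection $X_y\times G\to G$; that is exactly the desired constancy of $b$ on each fibre $\{g\}\times X_y$ of $f'$. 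Granting this, a morphism from $G\times X$ to a scheme that is constant on the fibres of the proper morphism $f'$ with $f'_*\O_{G\times X}=\O_{G\times Y}$ factors uniquely through $f'$ (the standard descent property of such morphisms, proved locally over affine opens of $G\times Y$); this produces $\bar a$, and the bookkeeping above shows it is an action of $G$ making $f$ equivariant.

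The main obstacle is this rigidity/constancy step: one must apply the rigidity lemma carefully despite the fact that the fibres $X_y$ need not be reduced or irreducible (so ``complete variety'' must be replaced by ``proper connected $\k$-scheme'', using $f'_*\O=\O$ and connectedness of fibres), and the connectedness of $G$ must genuinely be used, since for disconnected $G$ the statement can fail. Everything else --- the base-change facts and the epimorphism/descent formalities --- is routine.
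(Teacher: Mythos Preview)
The paper does not give a proof of this lemma; it simply cites \cite[Prop.~4.2.1]{BSU13} and uses the result as a black box. Your proposal is essentially the standard Brion--Samuel--Uma argument from that reference (rigidity lemma applied fibrewise to $f\circ a$ using connectedness of $G$ and of the fibres of $f$, then descent along the proper morphism $\mathrm{id}_G\times f$ with trivial pushforward of the structure sheaf), and it is correct.
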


\subsection{Resolution of indeterminacies} \label{section:resolution}
The next result implies that if $\pi:X \to S$ is a $\p^1\!$-bundle over a singular surface, then there exists an $\Autz(X)$-equivariant square birational map $(X,\pi) \dashrightarrow (X',\pi')$, with $\pi':X' \to S'$ a $\p^1\!$-bundle over a smooth surface. Therefore, it is enough to consider the $\p^1\!$-bundles over smooth surfaces to determine all the maximal automorphism groups $\Autz(X)$ in the sense of Definition~\ref{def:max}.

\begin{lemma} \label{lemma:resolution}
Let $\pi\colon X \to S$ be a $\p^1\!$-bundle over a singular projective surface $S$, and let $G \subset \Autz(X)$ be a connected algebraic subgroup. Then there exist a $\p^1\!$-bundle $\pi'\colon X' \to S'$, equipped with a $G$-action, and $G$-equivariant birational morphisms $\eta\colon S' \to S$  and $\hat\eta\colon  X' \to X$ such that $\eta$ is a resolution of singularities and the  diagram below is cartesian; in particular, ${\hat\eta}^{-1} G \hat\eta$ is a subgroup of $\Autz(X')$. 
\[\xymatrix@R=4mm@C=2cm{
    X' \ar[r]^{\hat\eta} \ar[d]_{\pi'}  & X \ar[d]^{\pi} \\
    S' \ar[r]_{\eta} & S
  }\]  
\end{lemma}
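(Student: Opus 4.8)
The plan is to build $\pi'\colon X'\to S'$ simply by pulling back $\pi$ along a functorial resolution of singularities of $S$. First I would invoke a $G$-equivariant resolution of singularities $\eta\colon S'\to S$: since we are in characteristic zero (or, more carefully, we only need the statement over $k$ where it is available), one may take the resolution obtained by Hironaka's algorithm, which is canonical and hence automatically equivariant under any group acting on $S$ — in particular under the $G$-action on $S$ coming from Blanchard's Lemma~\ref{blanchard} applied to $\pi\colon X\to S$ (note $\pi_*\O_X=\O_S$ because $\pi$ is a $\p^1$-bundle, so the hypothesis of Lemma~\ref{blanchard} holds). Then set $X'\coloneqq X\times_S S'$ with $\pi'\colon X'\to S'$ and $\hat\eta\colon X'\to X$ the two projections; the square is cartesian by construction, and $\pi'$ is again a $\p^1$-bundle since being a Zariski-locally-trivial $\p^1$-bundle is stable under base change. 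Because $X'$ is smooth (it is a $\p^1$-bundle over the smooth surface $S'$), $X'$ is a variety, and $\hat\eta$ is a birational morphism: over the open locus where $\eta$ is an isomorphism, $\hat\eta$ is an isomorphism as well.

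Next I would put the $G$-action on $X'$. Since $\eta$ and $\pi$ are both $G$-equivariant, the fibre product $X'=X\times_S S'$ inherits a diagonal $G$-action, functorially, making all four maps in the diagram $G$-equivariant; this is just the universal property of the fibre product applied to the two $G$-equivariant maps $X\to S\leftarrow S'$. In particular $\hat\eta$ is $G$-equivariant and birational, so conjugation by $\hat\eta$ identifies $G$ with a subgroup of the birational transformations of $X'$; but $\hat\eta^{-1}G\hat\eta$ acts by automorphisms of $X'$ (it is literally the image of the regular $G$-action just constructed), hence lands in $\Aut(X')$, and since $G$ is connected it lands in $\Autz(X')$. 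This gives the final assertion.

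The one genuinely delicate point — the "main obstacle" — is the equivariance of the resolution. One must make sure that the resolution of $S$ chosen is functorial/canonical enough that an abstract action of the connected group $G$ (not a finite group) lifts to it; the standard reference is the functoriality of Hironaka-type resolution with respect to smooth morphisms, from which equivariance under a group action follows by applying functoriality to the action morphism $G\times S\to S$ and the two projections — see, e.g., \cite{BM97} or \cite{Kol07}. A secondary, purely bookkeeping point is to check that $S'$ is again \emph{rational} (so that we remain inside the class of surfaces under consideration): this is immediate since $\eta$ is birational. Everything else — cartesianness, $\pi'$ being a $\p^1$-bundle, smoothness of $X'$, the conjugation statement — is formal and follows from the universal property of fibre products and base-change stability of $\p^1$-bundles.
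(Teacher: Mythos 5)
Your proposal is correct and follows essentially the same route as the paper: apply Blanchard's Lemma~\ref{blanchard} to get the $G$-action on $S$, take a $G$-equivariant resolution $\eta\colon S'\to S$, pull back to get the cartesian square, and observe that $\hat\eta$ is birational so that conjugation lands $G$ in $\Autz(X')$. The only difference is how equivariance of the resolution is justified: the paper uses the classical canonical resolution of surfaces by repeatedly alternating normalization with blow-ups of the singular locus (which is automatically $G$-equivariant and, unlike the Hironaka-type functorial resolution you invoke, works in arbitrary characteristic, as claimed for all of Section~\ref{Sec:Preliminaries}).
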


\begin{proof}
By Lemma~\ref{blanchard}, the group $G$ acts biregularly on $S$.
Since $G$ is connected, we can solve the singularities of $S$ in a $G$-equivariant way by repeatedly alternating between normalization and blowing-up of the singular points (see for instance~\cite{Art86}). We denote by $\eta\colon S'\to S$ a $G$-equivariant resolution of singularities of $S$ and define $\pi'\colon X':=X \times_S S' \to S'$ to be the pull-back of $\pi$ along $\eta\colon S' \to S$. The pull-back of a $\p^1\!$-bundle along a morphism of schemes is again a $\p^1\!$-bundle (indeed, all the $\p^1\!$-bundles that we consider are Zariski locally trivial).
Also, since $S' \to S$ is $G$-equivariant, $G$ acts on $X'$, and $\hat\eta: X' \to X$ is $G$-equivariant. The last statement follows from the fact that $\eta$, and thus $\hat\eta$, is birational. 
\end{proof}

In the following, we will only consider the $\p^1\!$-bundles over smooth surfaces; in particular, we will not describe the $\p^1\!$-bundles $X \to S$ over singular surfaces such that $\Autz(X)$ is maximal. There are many such $\p^1\!$-bundles, obtained from those of Theorem~\ref{Thm:MainA} by contracting the negative curve on a Hirzebruch surface onto a singular  point. The obtained threefolds have, however, singularities which are of codimension~$2$, so it is natural to avoid them when working in birational geometry with the classical minimal model program (where varieties have terminal singularities). We also recall that a $\P^n$-bundle over a smooth variety is isomorphic to the projectivisation of a rank $n+1$ vector bundle over this variety (see \textit{e.g.}~\cite[Exercise~II.7.10(c)]{Har77}); this well-known fact will be used implicitly throughout the rest of the article.

\subsection{The descent lemma}
The following simple observation will be often used later. It  explains that $\p^1\!$-bundles $\pi: X \to S$ over smooth projective surfaces are uniquely determined by a description on an open subset with finite complement. This is for instance useful over $\p^2$ or Hirzebruch surfaces, where then only the restriction of $\pi$ to two open subsets isomorphic to $\A^2$ is needed to describe the whole $\p^1\!$-bundle.

\begin{lemma}\label{Lem:Extension}
Let $S$ be a smooth projective surface, let $\Omega\subset S$ be a finite set, and let $g\in \Aut(S)$ be an automorphism that satisfies $g(\Omega)=\Omega$. Let $\pi_1\colon X_1\to S$ and $\pi_2\colon X_2\to S$ be two $\p^1\!$-bundles, and let $\hat{g}\colon X_1\dasharrow X_2$ be a birational map that restricts to an isomorphism $(\pi_1)^{-1}(S\setminus \Omega)\iso (\pi_2)^{-1}(S\setminus \Omega)$ and satisfies $\pi_2\hat{g}=\pi_1 g$. Then $\hat{g}$ is an isomorphism of varieties $X_1\iso X_2$:\[\xymatrix@R=4mm@C=2cm{
    X_1 \ar[r]^{\hat{g}}_{\simeq} \ar[d]_{\pi_1}  & X_2 \ar[d]^{\pi_2} \\
    S \ar[r]^{g}_{\simeq} & S\rlap{.}
  }\] 
In particular, if $g$ is the identity, then $\hat{g}$ is an isomorphism of $\,\p^1\!$-bundles.
\end{lemma}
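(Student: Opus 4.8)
The plan is to reduce the global statement to a local one by covering $S$ with affine open sets over which both bundles trivialise, and then using a Hartogs-type extension argument, which is where the smoothness and the codimension-$2$ nature of the bad locus enter. First I would observe that the claim is local on the target: to prove $\hat g$ is a morphism (and similarly $\hat g^{-1}$), it suffices to prove it on the preimage of each member of an open cover of $S$. Since $\pi_1,\pi_2$ are Zariski-locally trivial, I can choose an affine open cover $\{V_i\}$ of $S$ on which both bundles are trivial, i.e.\ $\pi_j^{-1}(V_i)\simeq V_i\times\p^1$ compatibly with $\pi_j$. Fixing one such $V:=V_i$, write $W:=V\setminus g^{-1}(\Omega)$; because $\Omega$ is finite and $g$ is an automorphism, $W$ is obtained from the smooth affine surface $V$ by removing finitely many closed points, so $V\setminus W$ has codimension $2$ in the smooth variety $V$.

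Next I would make the birational map explicit in these trivialisations. Over $W$, the condition $\pi_2\hat g=\pi_1 g$ together with the trivialisations means that $\hat g$ has the form $(s,[u_0:u_1])\mapsto (g(s),\,A(s)\cdot[u_0:u_1])$ for a morphism $A\colon W\to\PGL_2$, i.e.\ $\hat g$ restricted to the $\p^1$-factor is a family of Möbius transformations parametrised by $W$. The task is then to show $A$ extends to a morphism $V\to\PGL_2$ (and symmetrically for $\hat g^{-1}$, which is governed by $s\mapsto A(g^{-1}(s))^{-1}$). Here I invoke the standard fact that a morphism from a smooth variety to a projective variety defined away from a closed subset of codimension $\ge 2$ extends to the whole variety — equivalently, a rational map from a smooth surface to a projective variety has indeterminacy locus of pure codimension $1$, so if it is already defined outside finitely many points it is defined everywhere. (One can also phrase this via $\PGL_2\subset\p^3$ as the complement of the determinant quadric: the entries of a representative matrix of $A$ are then regular functions on the normal surface $V$ away from finitely many points, hence regular on all of $V$ by normality/Hartogs, and the image stays in $\PGL_2$ since that is a closed condition's complement and the determinant of the extension cannot vanish identically, being nonzero on $W$; a short argument rules out it vanishing on a curve too, using that $\hat g$ is birational.) This extends $\hat g$ to a morphism $\pi_1^{-1}(V)\to\pi_2^{-1}(V)$ over $g|_V$.

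Then I would glue: the extensions over the various $V_i$ agree on overlaps because they agree on the dense open $\pi_1^{-1}(V_i\cap V_j\cap (S\setminus\Omega))$ and the targets are separated, so they patch to a morphism $\hat g\colon X_1\to X_2$ with $\pi_2\hat g=\pi_1 g$. Running the identical argument for $\hat g^{-1}$ (using $g^{-1}(\Omega)=\Omega$, which holds since $g(\Omega)=\Omega$ and $g$ is bijective) produces a morphism $X_2\to X_1$; the two compositions agree with the identity on a dense open set and $X_1,X_2$ are separated and reduced, so both compositions are the identity. Hence $\hat g$ is an isomorphism of varieties fitting in the displayed square, and when $g=\mathrm{id}$ it is by construction an isomorphism of $\p^1$-bundles. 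The main obstacle is the extension step: one must be careful that the Möbius family $A$ really does extend \emph{as a map into $\PGL_2$} rather than degenerating to a constant map into $\p^1$ or picking up a base point in the fibre — this is exactly where one uses that $\hat g$ is birational (so the generic fibre map is an isomorphism of $\p^1$'s, forcing $\det A\not\equiv 0$) and that the complement is codimension $2$ in a smooth, hence normal, surface, so there is no codimension-$1$ locus on which indeterminacy could concentrate.
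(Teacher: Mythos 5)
Your overall strategy (trivialise locally, view $\hat g$ as a family of M\"obius transformations, extend across the finitely many bad points, conclude by symmetry and gluing) is the same as the paper's, but the fact you lean on for the key extension step is false as stated. It is not true that a morphism from a smooth variety to a \emph{projective} variety defined off a closed subset of codimension $\ge 2$ extends, nor that rational maps from smooth surfaces to projective varieties have indeterminacy in pure codimension $1$ -- the standard fact goes the other way (indeterminacy has codimension $\ge 2$), and the map $\A^2\dasharrow \p^1$, $(x,y)\mapsto[x:y]$, defined off the origin, is a counterexample to your formulation. So the sentence that is supposed to carry the proof would not survive refereeing. The correct statements you could invoke are either (i) $\PGL_2$ is an \emph{affine} variety, and a morphism from a normal variety minus a codimension-$\ge 2$ closed subset to an affine variety does extend (Hartogs on the coordinate functions, the equations of the target being closed conditions), which makes your main line work verbatim; or (ii) the hands-on version in your parenthesis, which is essentially the paper's proof: represent $\hat g$ over a trivialising chart by a matrix $M\in\GL_2(\k(S))$, and at a point $p\in\Omega$ clear denominators and common factors using that $\O_{S,p}$ is a UFD (this is where smoothness of $S$ enters), so that the entries are regular near $p$ with no common factor; then one must show $\det$ does not vanish at $p$.

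On that last point your parenthesis is slightly off and slightly incomplete. First, to ``extend the entries by Hartogs'' you need a representative matrix with regular entries on the punctured neighbourhood, which is exactly what the UFD normalisation (or a local lift of $A$ to $\GL_2$) provides; this should be said rather than assumed. Second, ruling out $\det=0$ along a curve does not need birationality of $\hat g$: once the entries are regular with no common factor, $\det$ is nonvanishing on $W$, and by Krull its zero locus, if nonempty, is a curve, which would have to meet $W$ (since $V\setminus W$ is finite) -- contradiction; equivalently, as in the paper, such a curve would be a codimension-$1$ locus over which $\hat g$ fails to be an isomorphism, contradicting the hypothesis that $\hat g$ is an isomorphism over $S\setminus\Omega$ (the hypothesis actually used is this isomorphism over $S\setminus\Omega$, which is stronger than mere birationality). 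With either repair the rest of your argument (extension of $\hat g^{-1}$ using $g^{-1}(\Omega)=\Omega$, gluing by separatedness) is fine and matches the paper.
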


\begin{proof} It suffices to take a point $p_1\in \Omega_1$ and to show that $\hat{g}$ is a local isomorphism around every point of the curve $(\pi_1)^{-1}(p_1)\subset X_1$. We denote by $U_1\subset S$ an open neighbourhood of $p_1$ and write $U_2=g(U_1)$ for its image, which is an open neighbourhood of $p_2=g(p_1)\in \Omega$.
By shrinking $U_1$, we can assume that $\pi_1$ and $\pi_2$ are trivial $\p^1\!$-bundles over $U_1$ and $U_2$, respectively. The birational map $\hat{g}$ is then described by 
\[\begin{array}{ccccc}
 U_1 \times \p^1 & \dashrightarrow & U_2 \times \p^1 \\
 \left(x,\begin{bmatrix}
  u \\ v
 \end{bmatrix} \right) & \mapsto & \left( g(x), M(x) \cdot \begin{bmatrix}
  u \\ v
 \end{bmatrix} \right), \\
\end{array}\]    
where $M\in \GL_2(\k(S))$. Since $\k(S)$ is the function field of $\O_{p_1}(S)$, which is a UFD (because $S$ is smooth), we can multiplying $M$ with an element of $\k(S)$ and obtain a matrix $M'$ whose entries are all in $\O_{p_1}(S)$ and do not have a common factor. Denote by $f\in \O_{p_1}(S)$ the determinant of $M'$. If $f$ does not vanish at $p_1$, it is invertible in $\O_{p_1}(S)$, and $\hat{g}$ yields an isomorphism $U'\times \p^1\to g(U')\times \p^1$, where $U'\subset U$ corresponds to the open subset where $f\not=0$. This yields the result since $p_1\in U'$. 

It remains to show that $f$ cannot vanish at $p_1$. Indeed, otherwise the zero set of $f$ yields a curve $C$ of $S$ passing through $p_1$, and there is then a curve $\hat{C}\subset C$ with $\pi_1(\hat{C})=C$  on which the map $\hat{g}$ is not defined (since the matrix $M'$ is unique up to multiplication with an element of $\O_{p_1}(S)^*$ and since at least one of the entries of $M'$ is not divisible by~$f$).
\end{proof}

We  can then prove the following descent lemma, already invoked in the introduction.

\begin{lemma}[Descent lemma]\label{Lem:GoingdownMinimalsurfaces}
Let $\eta\colon\hat{S}\to S$ be a birational morphism between two smooth projective surfaces. Let $U\subset S$ and $\hat{U}\subset \hat{S}$ be two maximal open subsets such that $\eta$ induces an isomorphism $\hat U\iso U$ and $\Omega=S\setminus U$ is finite, and let $\hat\pi\colon \hat{X}\to \hat{S}$ be a $\p^1\!$-bundle.

Then, there exist a $\p^1\!$-bundle $\pi\colon X\to S$ and a birational map $\psi\colon \hat{X}\dasharrow X$ such that $\eta\hat\pi=\pi\psi$ $(\psi$ is a square birational map over $\eta)$ and such that $\psi$ induces an isomorphism $\hat\pi^{-1}(\hat{U})\iso \pi^{-1}(U)$. Moreover, $\psi$ is unique, up to composition by an isomorphism of $\,\p^1$-bundles at the target, and $\psi$ is $\Autz(\hat{X})$-equivariant, which means that $\psi \Autz(\hat{X}) \psi^{-1}$ is a subgroup of $\Autz(X)$.
\end{lemma}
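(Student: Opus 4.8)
The plan is to construct $\pi\colon X\to S$ by gluing: over the open set $U$ we already have a $\p^1$-bundle, namely $(\eta\hat\pi)$ restricted to $\hat\pi^{-1}(\hat U)\to \hat U\cong U$, and we must extend it across the finite set $\Omega=S\setminus U$. Since $S$ is smooth and $\Omega$ is finite, each point $p\in\Omega$ has a neighbourhood $V_p$ with $V_p\setminus\{p\}\subset U$; thus on $V_p\setminus\{p\}$ we have a $\p^1$-bundle, i.e.\ a rank-two vector bundle up to twist, given by transition matrices in $\GL_2(\O(V_p\setminus\{p\}))$. Because $S$ is a smooth surface, $V_p\setminus\{p\}$ has complement of codimension $2$ in $V_p$, and the local ring $\O_{p}(S)$ is a regular (hence normal, indeed a UFD) two-dimensional ring; so any vector bundle on $V_p\setminus\{p\}$ extends uniquely to $V_p$ (this is the classical statement that reflexive sheaves — in particular locally free sheaves — over a regular two-dimensional scheme extend across a point, or concretely that a matrix in $\GL_2$ of the fraction field can be cleared of denominators and its determinant made a unit exactly as in the proof of Lemma~\ref{Lem:Extension}). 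Doing this at every point of $\Omega$ and gluing with the bundle over $U$ produces a $\p^1$-bundle $\pi\colon X\to S$ together with an isomorphism $\hat\pi^{-1}(\hat U)\iso\pi^{-1}(U)$ over $\eta|_{\hat U}$; composing with $\eta$ gives the square birational map $\psi\colon\hat X\dasharrow X$ with $\eta\hat\pi=\pi\psi$.

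For uniqueness, suppose $\pi'\colon X'\to S$ and $\psi'$ form another such solution. Then $\psi'\psi^{-1}\colon X\dasharrow X'$ is a birational map restricting to an isomorphism $\pi^{-1}(U)\iso(\pi')^{-1}(U)$ and satisfying $\pi'(\psi'\psi^{-1})=\pi$; by Lemma~\ref{Lem:Extension} (applied with $g=\mathrm{id}_S$ and $\Omega$) it is an isomorphism of $\p^1$-bundles $X\iso X'$. Hence $\psi$ is unique up to post-composition by an isomorphism of $\p^1$-bundles, as claimed.

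For the equivariance statement, let $g\in\Autz(\hat X)$. By Lemma~\ref{blanchard} (since $\hat\pi_*\O_{\hat X}=\O_{\hat S}$ and $\eta\hat\pi$ has connected fibres, so also $(\eta\hat\pi)_*\O_{\hat X}=\O_S$), the group $\Autz(\hat X)$ acts on $\hat S$ and on $S$ compatibly with $\eta$; moreover $\Autz(\hat X)$, being connected, preserves the finite set $\Omega$ pointwise, and $\eta$ is $\Autz(\hat X)$-equivariant. Consider $\hat g:=\psi g\psi^{-1}\colon X\dasharrow X$. It is a birational self-map lying over the automorphism of $S$ induced by $g$, and over $U$ it is conjugate by the isomorphism $\psi|$ to the restriction of $g$ to $\hat\pi^{-1}(\hat U)$, hence an isomorphism $\pi^{-1}(U)\iso\pi^{-1}(g(U))=\pi^{-1}(U)$ (using that the induced automorphism of $S$ fixes $\Omega$, hence preserves $U$). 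By Lemma~\ref{Lem:Extension}, $\hat g$ is an automorphism of $X$, and it clearly lies in the connected group $\psi\Autz(\hat X)\psi^{-1}$, so $\hat g\in\Autz(X)$. Thus $\psi\Autz(\hat X)\psi^{-1}\subset\Autz(X)$.

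The main obstacle is the extension step: one must be careful that extending the vector bundle (equivalently: clearing denominators in the transition matrix and checking the determinant is a unit) genuinely works at a point of a smooth surface and that the result is independent of choices, so that the local extensions glue to a global $\p^1$-bundle on $S$. This is precisely the computation carried out in the proof of Lemma~\ref{Lem:Extension}, now run in reverse and localised at each point of $\Omega$; the smoothness of $S$ (so that $\O_p(S)$ is a UFD) is what makes it go through, and is also what forces $\Omega$ to be of codimension $2$ rather than $1$.
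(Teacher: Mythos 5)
Your proposal is correct and takes essentially the same route as the paper: reduce to the associated rank-two vector bundle, extend it across the finite set $\Omega$ using the fact that reflexive sheaves on a smooth surface are locally free (the paper does this in one stroke via the reflexive hull $(j_*(\eta_*\E_{\hat U}))^{\vee\vee}$, you do it locally around each point of $\Omega$ and glue — a cosmetic difference), and then get equivariance from Blanchard's lemma combined with Lemma~\ref{Lem:Extension}. Your explicit uniqueness argument via Lemma~\ref{Lem:Extension} with $g=\mathrm{id}$ is exactly what the paper leaves implicit, so no genuinely new ideas or gaps are involved.
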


\begin{proof}Writing $G=\Autz(\hat{X})$, Lemma~\ref{blanchard} implies that $\hat\pi$ and $\eta$ are $G$-equivariant for some unique biregular action of $G$ on $\hat{S}$ and $S$.

If $\eta$ is an isomorphism, everything is trivial. Otherwise, $\eta$ is the blow-up of finitely many points, so $\eta$ restricts to an isomorphism $\hat{U}\iso U$, where $U\subset S$ is an open subset, $\Omega=S\setminus U$ is a finite set, and $\hat{U}=\eta^{-1}(U)$. We denote by $j$ the inclusion $U \hookrightarrow S$.

Let $\E \to \hat{S}$ be a rank $2$ vector bundle such that $\P(\E) \simeq \hat{X}$, and let $\E_{\hat{U}} \to \hat{U}$ be its restriction over $\hat{U}$. 
Then $\E_{\hat{U}} \to \hat{U}$ identifies with $\eta_* \E_{\hat{U}} \to U$, and we can consider the reflexive hull $\E'=(j_* (\eta_* \E_{\hat{U}}))^{\vee \vee}$ of the coherent sheaf $j_* (\eta_* \E_{\hat{U}})$ on $S$. By~\cite[Corollary~1.4]{Har80}, the reflexive sheaf $\E'$ is locally free, and thus $\E' \to S$ is a rank $2$ vector bundle that extends (uniquely) $\eta_* \E_{\hat{U}}$. Writing $X=\P(\E')$, we obtain a $\p^1\!$-bundle $X \to S$ that extends (uniquely) the $\p^1\!$-bundle $\P (\eta_* \E_{\hat{U}}) \to U$.

Since $\eta: \hat{S} \to S$ is $G$-equivariant and the open subsets $\hat{U}$ and $U$ are $G$-stable, the $\p^1\!$-bundle $\hat{X}\rightarrow \hat{S}$ is $G$-equivariantly birational to ${X}_{U}(=\P (\eta_* \E_{\hat{U}})) \to U$. It remains to apply Lemma~\ref{Lem:Extension} to see that the $G$-action on ${X}_{U}$ extends to $X$.  
\end{proof}

\begin{remark}
If $\eta$ is the blow-up of a point $p\in S$ and $E=\eta^{-1}(p)\subset \hat{S}$ is the exceptional curve, the birational map $\psi$ can be described as follows (according to~\cite[\S~5.7.4, p.~700]{Mel02}), depending on the surface $Z=\hat\pi^{-1}(E)\simeq \F_a$: if $a=0$, then $\psi$ is a birational morphism whose restriction to $Z\simeq \p^1\times \p^1$ is the ``other projection''; if $a>0$, then $\psi$ is given by the anti-flip of the exceptional curve of $Z$ followed by the contraction of the strict transform of $Z$, isomorphic to the weighted projective plane $\P(1,1,a)$, onto a smooth point. 
\end{remark}

\subsection{Hirzebruch surfaces}\label{Subsec:AutFa}
In the following, we will always use the following coordinates for Hirzebruch surfaces, which are the analogue of the standard coordinates for $\p^2=(\A^3\setminus \{0\})/\G_m$. 

\begin{definition} \label{def Hirzebruch surfaces}
Let $a\in \Z$. The \emph{$a$-th Hirzebruch surface} $\F_a$ is defined to be the quotient of $(\A^2\setminus \{0\})^2$ by the action of $(\G_m)^2$ given by
\[\begin{array}{ccc}
(\G_m)^2 \times (\A^2\setminus \{0\})^2 & \to & (\A^2\setminus \{0\})^2\\
\big((\mu,\rho), (y_0,y_1,z_0,z_1)\big)&\mapsto& (\mu\rho^{-a} y_0,\mu y_1,\rho z_0,\rho z_1).\end{array}\]
The class of $(y_0,y_1,z_0,z_1)$ will be written $[y_0:y_1;z_0:z_1]$. The projection 
\[\tau_a\colon\F_a\to\p^1, \ \ [y_0:y_1;z_0:z_1]\mapsto [z_0:z_1]\]
identifies $\F_a$ with $\P(\OP(a) \oplus \OP)$ as a $\p^1\!$-bundle over $\p^1$. 

The disjoint sections $\s{-a},\s{a}\subset \F_a$ given by $y_0=0$ and $y_1=0$ have self-intersection $-a$ and $a$, respectively. The fibres $f\subset \F_a$ given by $z_0=0$ and $z_1=0$ are linearly equivalent and of self-intersection $0$. We moreover get $\Pic(\F_a)=\Z f\bigoplus \Z \s{-a}= \Z f\bigoplus \Z \s{a}$ since $\s{a}\sim\s{-a}+af$.
\end{definition}

\begin{remark}
The surface $\F_a$ is naturally isomorphic to $\F_{-a}$ via $[y_0:y_1;z_0:z_1]\mapsto [y_1:y_0;z_0:z_1]$, so we will most of the time choose $a \ge 0$.
\end{remark}

Let us recall the classical structure of the automorphism groups of Hirzebruch surfaces. The description of Definition~\ref{def Hirzebruch surfaces} allows one to present all automorphisms in a simple way. 

\begin{remark}\label{Rem:AutF0}
If $a=0$, then $\F_a\simeq \p^1\times \p^1$ and the natural action of $(\GL_2)^2$ on $(\A^2\setminus \{0\})^2$ yields a surjective group homomorphism $(\GL_2)^2\to \Autz(\F_0)\simeq (\PGL_2)^2$  (see \textit{e.g.}~\cite[Corollary~4.2.7]{BSU13} for the last isomorphism). All automorphisms are then of the form
\[[y_0:y_1;z_0:z_1]\mapsto [\alpha y_0+\beta y_1:\gamma y_0+\delta y_1;\alpha' z_0+\beta' z_1:\gamma' z_0+\delta' z_1].\]
The action of $\Autz(\F_0)$ on $\F_0$ is then homogeneous (one single orbit). Moreover, $\Aut(\F_0)=\Autz(\F_0)\rtimes \langle \iota\rangle$, with $\iota\colon[y_0:y_1;z_0:z_1]\mapsto [z_0:z_1;y_0:y_1]$. 
\end{remark}

\begin{remark}\label{Rem:AutFa}
If $a\ge 1$, then the curve $\s{-a}$ given by $y_0=0$ is the unique section of negative self-intersection of $\F_a\to\p^1$ and is thus invariant. Denoting by $\k[z_0,z_1]_{a}\subset \k[z_0,z_1]$ the vector space of homogeneous polynomials of degree $a$, one gets an action of $ \k[z_0,z_1]_{a}\rtimes \GL_2$ on $\F_a$ via
\[[y_0:y_1;z_0:z_1]\mapsto [y_0:y_1+y_0p(z_0,z_1);\alpha z_0+\beta z_1:\gamma z_0+\delta z_1],\]
which yields an exact sequence
\[1\rightarrow \mu_a\to \k[z_0,z_1]_{a}\rtimes \GL_2\rightarrow \Aut(\F_a) \rightarrow 1,\]
where $\mu_a\subset \GL_2$ is the cyclic group of homotheties $\alpha$ with $\alpha^a=1$. 
To prove the surjectivity of the morphism $\k[z_0,z_1]_{a}\rtimes \GL_2\rightarrow \Aut(\F_a)$, it suffices to consider the elements acting trivially on the basis of the $\p^1\!$-bundle $\F_a \to \P^1$ (by Blanchard's lemma, since $\GL_2$ itself acts transitively on the basis $\P^1$). One then restricts such an automorphism to the two charts $z_0\not=0$ and $z_1\not=0$ and obtains two $\A^1$-automorphisms of $\p^1\times \A^1$ that fix $y_0=0$, \textit{i.e.}~two triangular elements of $\GL_2(\k[z_0/z_1])$ and $\GL_2(\k[z_1/z_0])$, and that must coincide on the intersection. A simple calculation then shows that it comes from an element of $\k[z_0,z_1]_a\rtimes \GL_2$ as above. In particular, this implies that $\Aut(\F_a)$ is connected. Moreover, $\Aut(\F_a)=\Autz(\F_a)$ acts on $\F_a$ with two orbits, namely $\s{-a}$ and its complement.
\end{remark}

\begin{remark}\label{Rem:AutFaP1transitiveFibres}
It follows from Remarks~\ref{Rem:AutF0} and~\ref{Rem:AutFa} that for each $a\ge 0$, the morphism $\tau_a\colon \F_a\to \p^1$ yields a surjective group homomorphism
\[\Autz(\F_a)\twoheadrightarrow\Aut(\p^1)\simeq \PGL_2.\]
In particular,  $\Aut(\F_a)$ acts transitively on the set of fibres of $\tau_a\colon \F_a\to \p^1$.
\end{remark}

We also recall the following easy observation, that we will need further on.

\begin{lemma}\label{Lem:ExactSeqFa}
Let $a\ge 0$, and let $\mathcal{E}\to \p^1\!$ be a rank $2$ vector bundle that fits into an exact sequence
\[ 0 \to \O_{\p^1} \stackrel{\iota}{\to} \mathcal{E} \to \O_{\p^1}(-a) \to 0.\]
Then we have an isomorphism from $\p(\mathcal{E})\to \p^1\!$ to $\F_a\to\p^1\!$ which sends the section corresponding to $\iota(\O_{\p^1})$ onto~$\s{-a}$.
\end{lemma}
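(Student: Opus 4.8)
The statement asserts that a rank-two vector bundle $\mathcal{E}\to\p^1$ sitting in an extension $0\to\O_{\p^1}\xrightarrow{\iota}\mathcal{E}\to\O_{\p^1}(-a)\to 0$ is isomorphic to $\OP(a)\oplus\OP$ (identified with $\F_a\to\p^1$ via Definition~\ref{def Hirzebruch surfaces}), with the sub-line-bundle $\iota(\O_{\p^1})$ corresponding to the section $\s{-a}$. The natural approach is to use Grothendieck's classification of vector bundles on $\p^1$: $\mathcal{E}\simeq \OP(d_1)\oplus\OP(d_2)$ for some integers $d_1\ge d_2$. First I would compute $\det\mathcal{E}$ from the exact sequence: $\det\mathcal{E}\simeq \O_{\p^1}\otimes\O_{\p^1}(-a)=\O_{\p^1}(-a)$, hence $d_1+d_2=-a$. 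Next, the injection $\O_{\p^1}\hookrightarrow\mathcal{E}=\OP(d_1)\oplus\OP(d_2)$ shows that $\Hom(\O_{\p^1},\OP(d_i))\neq 0$ for at least one $i$, so $\max(d_1,d_2)=d_1\ge 0$. Combined with $d_1+d_2=-a\le 0$ and $a\ge 0$, I would argue $d_1\ge 0\ge d_2$; more precisely the surjection onto $\O_{\p^1}(-a)$ must be compatible with the splitting — the composite $\OP(d_2)\hookrightarrow\mathcal{E}\to\O_{\p^1}(-a)$ is either zero or injective. If it is zero then $\OP(d_2)\subset\ker=\iota(\O_{\p^1})$, forcing $d_2\le 0$ and in fact $\OP(d_2)=\O_{\p^1}$ (as a saturated sub of a line bundle), giving $d_2=0$, $d_1=-a$; but $d_1\ge 0$ then forces $a=0$ and we are done. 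If it is injective, then $d_2\le -a$, so $d_1=-a-d_2\ge 0$, and since $d_1\ge 0\ge -a\ge d_2$ we also need $d_1\le$ the degree of a quotient... Let me instead just pin down $(d_1,d_2)=(0,-a)$: from $d_1\ge 0$ and $d_1\le -a-d_2$ with the sub-bundle $\O_{\p^1}$ of degree $0$ mapping in, saturatedness of $\O_{\p^1}$ inside $\OP(d_1)\oplus\OP(d_2)$ forces $d_1=0$ (a nonzero map $\O_{\p^1}\to\OP(d_1)$ with $d_1>0$ would not be saturated unless it factors wrongly), hence $d_2=-a$, i.e. $\mathcal{E}\simeq \OP\oplus\OP(-a)\simeq\OP(a)\oplus\OP$.

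\textbf{Identifying the section.} Having fixed $\mathcal{E}\simeq \OP(a)\oplus\OP$, the $\p^1$-bundle $\P(\mathcal{E})\to\p^1$ is isomorphic to $\F_a\to\p^1$ by Definition~\ref{def Hirzebruch surfaces} (with $[y_0:y_1]$ the fibre coordinates dual to the summands $\OP(a)$ and $\OP$). The sub-line-bundle $\iota(\O_{\p^1})\subset\mathcal{E}$ determines a section $\sigma$ of $\P(\mathcal{E})\to\p^1$, and I must check $\sigma=\s{-a}$. The key point is that $\iota(\O_{\p^1})$ is the \emph{saturated} rank-one subsheaf of degree $0$, so the quotient $\mathcal{E}/\iota(\O_{\p^1})\simeq\O_{\p^1}(-a)$ is a line bundle (no torsion); the corresponding section $\sigma$ then has self-intersection computed from the standard formula $\sigma^2=\deg(\mathcal{E})-2\deg(\iota(\O_{\p^1}))=-a-0=-a$ (equivalently, $\sigma$ is the section with normal bundle $\O_{\p^1}(-a)$). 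Since $\s{-a}$ is the unique section of self-intersection $-a$ when $a\ge 1$ (Remark~\ref{Rem:AutFa}), and for $a=0$ all sections of that class are equivalent, we get $\sigma=\s{-a}$ (up to the automorphism identifying $\P(\mathcal{E})$ with $\F_a$). Alternatively, one can argue directly that the sub-bundle $\O_{\p^1}=\{y_0=0\}$-direction corresponds to the summand $\OP$, and under the coordinates of Definition~\ref{def Hirzebruch surfaces} the vanishing $y_0=0$ cuts out exactly $\s{-a}$.

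\textbf{Main obstacle.} The only delicate point is the bookkeeping with Grothendieck splitting: ensuring that the degrees must be exactly $(0,-a)$ and not, say, $(1,-a-1)$. The cleanest way around this is to use saturation: the image $\iota(\O_{\p^1})$ is saturated in $\mathcal{E}$ precisely because the quotient $\O_{\p^1}(-a)$ is locally free (this is automatic from the given exact sequence), and a saturated rank-one subsheaf of $\OP(d_1)\oplus\OP(d_2)$ of degree $0$ with locally free quotient of degree $-a$ forces the splitting type, since the quotient being a line bundle means $\iota(\O_{\p^1})$ is a sub-bundle and $\mathcal{E}$ is then an extension of $\O_{\p^1}(-a)$ by $\O_{\p^1}$; all such extensions are classified by $\mathrm{Ext}^1(\O_{\p^1}(-a),\O_{\p^1})=H^1(\p^1,\O_{\p^1}(a))=0$ for $a\ge 0$, so the sequence splits and $\mathcal{E}\simeq\O_{\p^1}\oplus\O_{\p^1}(-a)$ canonically with $\iota(\O_{\p^1})$ the first summand. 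This also immediately pins down the section. I expect this $\mathrm{Ext}^1$ vanishing argument to be the slickest route and would present it as the core of the proof, then read off the identification of $\s{-a}$ from Definition~\ref{def Hirzebruch surfaces}.
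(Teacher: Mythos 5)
Your proof is correct, but it takes a different route from the paper. The paper argues entirely with explicit trivialisations: it trivialises $\mathcal{E}$ over the two standard affine charts of $\p^1$, arranges coordinates so that $\iota(\O_{\p^1})$ is cut out by $x_0=0$, writes the transition function as $((x_0,x_1),z)\mapsto((z^a x_0,\,x_1+f(z)x_0),\tfrac1z)$ with $f\in\k[z,z^{-1}]$, and then kills $f$ by composing with fibrewise automorphisms on each chart, which replace $f$ by $f+h_1(z)+h_2(\tfrac1z)z^a$; this is possible exactly because $a\ge 0$, and is nothing but a hands-on Čech computation of the vanishing you invoke abstractly as $\mathrm{Ext}^1(\O_{\p^1}(-a),\O_{\p^1})=H^1(\p^1,\O_{\p^1}(a))=0$. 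The paper then reads off the isomorphism $\P(\mathcal{E})\simeq\F_a$ in the coordinates of Definition~\ref{def Hirzebruch surfaces}, under which $x_0=0$ maps onto $y_0=0=\s{-a}$, so the identification of the section is immediate rather than via the normal-bundle/self-intersection computation you use. Your Ext-vanishing argument is shorter and more conceptual, and it does pin down the section correctly (the sub-bundle $\iota(\O_{\p^1})$ has quotient of degree $-a$, hence gives the section of self-intersection $-a$, unique when $a\ge1$ and unique up to bundle automorphism when $a=0$); the paper's computation has the advantage of producing the explicit charts and transition functions that are reused verbatim in the later arguments of Section~\ref{Sec:P1bundlesHirz}. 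One small remark: the Grothendieck-splitting bookkeeping in your first paragraph (the claim that saturation alone forces $d_1=0$) is not airtight as phrased, but this does not matter since your final $\mathrm{Ext}^1$ argument supersedes it and is complete on its own.
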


\begin{proof}We trivialise $\mathcal{E}$ on the two open subsets of $\p^1$ given by $\{[1:z]\mid z\in \A^1\}$ and $\{[z:1]\mid z\in \A^1\}$, change coordinates so that $\iota(\O_{\p^1})$ corresponds to $x_0=0$, and get a transition 
\[\begin{array}{rcl}
\A^2\times \A^1&\dasharrow &\A^2\times \A^1\\
\big((x_0,x_1),z\big) &\dasharrow &\big((z^ax_0,x_1+f(z)),\frac{1}{z}\big)\end{array}\]
for some $f\in \k[z,z^{-1}]$. 
We can then compose at the source and target with some automorphisms of the form $((x_0,x_1),z)\mapsto ((x_0,y_1+h_i(z)),z)$ for  polynomials $h_1,h_2\in \k[z]$. This replaces $f$ by $f+h_1(z)+h_2(z^{-1})z^a$. Since $a\ge 0$, we can thus replace $f$ with $0$. The projectivisation of $\mathcal{E}$ is then isomorphic to $\F_a$, by sending $([x_0:x_1],z)\in \p^1\times \A^1$ onto $[x_0:x_1;1:z]$ and $[x_0:x_1;z:1]=[x_0z^a:x_1;1:z^{-1}]$, on both charts.
\end{proof}

\section{\texorpdfstring{$\boldsymbol{\p^1}$}{P\textasciicircum 1}-bundles over Hirzebruch surfaces} \label{Sec:P1bundlesHirz}

Most of the results in \S~\ref{Sec:P1bundlesHirz} are valid over an algebraically closed field $\k$ of arbitrary characteristic. More precisely, \S\S~\ref{SubSec:DecFa}-\ref{subsect: moduli space} are valid in arbitrary characteristic, the results in \S~\ref{section:action on the moduli space} are all valid in arbitrary characteristic, but the proof of Corollary~\ref{cor action on Mabc} that we give is shortened a bit by using the characteristic zero assumption (see Remark~\ref{rk:proof shortened in char zero}); \S\S~\ref{Sec:TTb} and~\ref{Sec:Umemurabundles}  are also valid in arbitrary characteristic different from $2$, except Lemma~\ref{Lemm:AutTTb}\ref{HatS:OnlyInvariantCurve}, which fails if and only if $\car(\k)$ divides $b+1$ (see Remark~\ref{Rem:CharPSchwarz}). In \S~\ref{subsec:invariant fibers} we must assume that the ground field $\k$ has characteristic zero as Lemma~\ref{lem:subgroups of PGL2} and Proposition~\ref{prop decompo and Umemura bundles} are false over any field of positive characteristic (see Remarks~\ref{Rem:PGL2positive} and~\ref{Rem:CharP}).  

\subsection{Decomposable \texorpdfstring{$\boldsymbol{\p^1}$}{P1}-bundles over Hirzebruch surfaces}\label{SubSec:DecFa}

As for Hirzebruch surfaces $\F_a$ (Definition~\ref{def Hirzebruch surfaces}), one can give global coordinates on decomposable $\p^1\!$-bundles over $\F_a$.

\begin{definition} \label{def Fabc}
Let $a,b,c\in \Z$. We define $\FF_a^{b,c}$ to be the quotient of $(\A^2\setminus \{0\})^3$ by the action of $(\G_m)^3$ given by
\[\begin{array}{ccc}
(\G_m)^3 \times (\A^2\setminus \{0\})^3 & \to & (\A^2\setminus \{0\})^3\\
\big((\lambda,\mu,\rho), (x_0,x_1,y_0,y_1,z_0,z_1)\big)&\mapsto& 
(\lambda\mu^{-b} x_0, \lambda\rho^{-c} x_1,\mu\rho^{-a} y_0,\mu y_1,\rho z_0,\rho z_1).
\end{array}\]
The class of $(x_0,x_1,y_0,y_1,z_0,z_1)$ will be written $[x_0:x_1;y_0:y_1;z_0:z_1]$. The projection 
\[\FF_a^{b,c}\to \F_{a}, \quad [x_0:x_1;y_0:y_1;z_0:z_1]\mapsto [y_0:y_1;z_0:z_1]\]
identifies $\FF^{b,c}_a$ with 
\[
\P\big(\OFa(b \s{a}) \oplus \OFa(c f)\big)=\P\big(\OFa \oplus \OFa(-b\s{a}+cf)\big)
\]
as a $\p^1\!$-bundle over $\F_a$, where $\s{a},f\subset \F_a$ are given by $y_1=0$ and $z_1=0$. 

Moreover, every fibre of the composed morphism $\FF^{b,c}_a \to \F_a \to \p^1$ given by the $z$-projection is isomorphic to $\F_b$, and the restrictions of $\FF^{b,c}_a$ on the curves $\s{-a}$ and $\s{a}$ given by $y_0=0$ and $y_1=0$ are isomorphic to $\F_c$ and $\F_{c-ab}$, respectively. 

As for Hirzebruch surfaces, one can reduce to the case $a\ge 0$ without changing the isomorphism class, by exchanging $y_0$ and $y_1$. We then observe that the exchange of $x_0$ and $x_1$ yields an isomorphism $\FF^{b,c}_a\iso \FF^{-b,-c}_a$. We will  assume $a,b\ge 0$ most of the time  from here on. If $b=0$, we can moreover assume $c\le 0$.
\end{definition}

\begin{remark}\label{Rem:FFabcNumabc}
Every decomposable $\p^1\!$-bundle over $\F_a$ is isomorphic to $\FF_a^{b,c}\to \F_a$ for some $b,c\in \Z,b\ge 0$. We can moreover assume  $b\ge 0$, and  $c\le 0$ if $b=0$, since $\FF_a^{b,c}\simeq \FF_a^{-b,-c}$.

The $\p^1\!$-bundle $\FF_a^{b,c}\to \F_a$ has numerical invariants $(a,b,c)$ (see Definition~\ref{Defi:NumInv}). As we will see later (Remark~\ref{Rem:NumInvAreInv}), these numerical invariants are indeed invariant under isomorphism. This will show that the isomorphism classes of decomposable $\p^1\!$-bundles over Hirzebruch surfaces are parametrised by these invariants.
\end{remark}

\begin{remark}\label{FabcToric}
All decomposable $\p^1\!$-bundles $\FF_a^{b,c}$ are toric varieties, with an action given by the map $[x_0:x_1;y_0:y_1;z_0:z_1]\mapsto [x_0:\alpha x_1;y_0:\beta y_1;z_0:\gamma z_1]$. 
\end{remark}

\begin{remark}\label{Rem:Fbbundle}We have two open embeddings
\[\begin{array}{ccc}
\F_b\times \A^1&\hookrightarrow &\FF_a^{b,c}\\
\big([x_0:x_1;y_0:y_1],z\big) &\mapsto & \big([x_0:x_1;y_0:y_1;1:z]\big)\\
\big([x_0:x_1;y_0:y_1],z\big) &\mapsto & \big([x_0:x_1;y_0:y_1;z:1]\big)\end{array}\]
over which the $\F_b$-bundle $\FF_a^{b,c}\to \p^1$ is trivial, with a transition function 
\[\begin{array}{c}
\big([x_0:x_1;y_0:y_1],z\big) \mapsto \left([x_0:x_1z^{c};y_0z^a:y_1],\frac{1}{z}\right).\end{array}\]
\end{remark}

\begin{lemma}\label{Lem:AutDecOnAutFa}
Let $a,b\ge 0$ and $c\in \Z$. The morphism $\pi\colon\FF_a^{b,c}\to \F_a$ yields a surjective group homomorphism
\[\rho\colon\Autz(\FF_a^{b,c})\twoheadrightarrow\Autz(\F_a).\]
\end{lemma}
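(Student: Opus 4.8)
The plan is to exhibit an explicit section of $\rho$, which automatically gives surjectivity. Recall from Remark~\ref{Rem:AutF0} and Remark~\ref{Rem:AutFa} that every element of $\Autz(\F_a)$ lifts to a linear action on the homogeneous coordinates $(y_0,y_1,z_0,z_1)$: for $a=0$ this is the obvious $(\GL_2)^2$-action, and for $a\ge 1$ it is the action of $\k[z_0,z_1]_a\rtimes\GL_2$ described in Remark~\ref{Rem:AutFa}. So first I would take such a lift $g\in\GL_2\times\GL_2$ (or $g\in\k[z_0,z_1]_a\rtimes\GL_2$) of a given $\bar g\in\Autz(\F_a)$, and simply let it act on $\FF_a^{b,c}$ by fixing the $x$-coordinates:
\[[x_0:x_1;y_0:y_1;z_0:z_1]\mapsto [x_0:x_1;\tilde g(y_0,y_1,z_0,z_1);\tilde g(z_0,z_1)].\]
One checks this is compatible with the $(\Gm)^3$-action defining $\FF_a^{b,c}$ in Definition~\ref{def Fabc}: since the $y,z$-coordinates only involve $\mu,\rho$ and transform exactly as in $\F_a$, and the $x$-coordinates are untouched, the map descends to a well-defined automorphism of $\FF_a^{b,c}$ lying over $\bar g$. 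This produces a set-theoretic lift of every element of $\Autz(\F_a)$; since the chosen lifts $g$ can be taken to depend algebraically on $\bar g$ (the sequence in Remark~\ref{Rem:AutFa} shows $\Autz(\F_a)$ is a quotient of a linear group, and one can lift along it rationally hence, by homogeneity, everywhere), the image of $\rho$ is a subgroup containing a dense subset, hence all of $\Autz(\F_a)$.

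Alternatively, and perhaps more cleanly, I would avoid picking an explicit section and instead argue via Blanchard's lemma together with a dimension/connectedness count, but this still needs the explicit lift to know the image is everything rather than a proper subgroup. A third route: apply Lemma~\ref{blanchard} to see $\rho$ is a well-defined homomorphism of algebraic groups (the morphism $\pi$ has connected fibres and $\pi_*\O=\O$, so $\Autz(\FF_a^{b,c})$ acts on $\F_a$), then note that the image is a connected algebraic subgroup of $\Autz(\F_a)$; to conclude it is everything, produce the explicit lifts above for a generating set of $\Autz(\F_a)$ — namely the torus together with the unipotent one-parameter subgroups coming from $\GL_2\times\GL_2$ (or $\k[z_0,z_1]_a\rtimes\GL_2$). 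Each of these obviously acts on $(x_0,x_1,y_0,y_1,z_0,z_1)$ fixing $x$, so each lifts, and a connected group generated by subgroups all in the image of $\rho$ is itself in the image.

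The only mild subtlety — and the place I would be most careful — is the bookkeeping of the $(\Gm)^3$-weights: one must verify that after fixing $x_0,x_1$ and replacing $(y_0,y_1,z_0,z_1)$ by their image under the lift, the resulting $6$-tuple is still equivariant for the grading $(\lambda\mu^{-b},\lambda\rho^{-c},\mu\rho^{-a},\mu,\rho,\rho)$, i.e. that the lift of an automorphism of $\F_a$ genuinely extends rather than merely being defined on a chart. This is where Remark~\ref{Rem:Fbbundle} or the transition functions are useful: since the lift acts on the $y,z$-coordinates precisely by an automorphism of $\F_a$ (hence respects the $\mu,\rho$-grading, up to the finite ambiguity $\mu_a$ already present in Remark~\ref{Rem:AutFa}), and the $x$-coordinates carry the $\lambda$-grading which is untouched, compatibility is automatic. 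There is no real obstacle; the content of the lemma is essentially that a decomposable bundle has "enough" automorphisms lying over the base, which is visible directly from the coordinates of Definition~\ref{def Fabc}.
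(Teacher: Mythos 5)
Your proposal is correct and is essentially the paper's own argument: existence of $\rho$ via Blanchard's lemma, and surjectivity by lifting the presentations of $\Autz(\F_a)$ from Remarks~\ref{Rem:AutF0} and~\ref{Rem:AutFa} to the prequotient $(\A^2\setminus\{0\})^3$, acting trivially on $x_0,x_1$ and compatibly with the $(\Gm)^3$-grading. The worry about choosing lifts algebraically is unnecessary: the lift defines a homomorphism from the connected group $(\GL_2)^2$ (resp.\ $\k[z_0,z_1]_a\rtimes\GL_2$) to $\Aut(\FF_a^{b,c})$, whose image automatically lands in $\Autz(\FF_a^{b,c})$ and maps onto $\Autz(\F_a)$.
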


\begin{proof}
The existence of $\rho$ is given by Lemma~\ref{blanchard}. The fact that it is surjective can be seen by observing that every automorphism $g\in \Autz(\F_a)$ comes from an automorphism of $(\A^2\setminus \{0\})^2$ (see Remarks~\ref{Rem:AutF0} and~\ref{Rem:AutFa}), so we can extend the action to $(\A^3\setminus \{0\})^2$ and then $\FF_a^{b,c}$ by doing nothing on $x_0$ and $x_1$. 
\end{proof}

\begin{remark} \label{rk aut decompo bundles over Fa}
For each $i,j \in \Z$, we denote by $\k[y_0,y_1,z_0,z_1]_{i,j}\subset \k[y_0,y_1,z_0,z_1]$ the space of homogeneous  polynomials of bidegree $(i,j)$, where the variables $y_0,y_1,z_0,z_1$ are of bidegree $(1,-a),(1,0),(0,1),(0,1)$. The group of automorphisms of $\p^1\!$-bundles of $\FF_{a}^{b,c}$ identifies with the (connected) group  
\[ \left \{ \begin{bmatrix}
 p_{1,0,0}& p_{2,-b,c}\\ p_{3,b,-c} & p_{4,0,0}  \end{bmatrix} \in \PGL_2(\k[y_0,y_1,z_0,z_1]) \ \middle| \ \begin{array}{l}
 p_{k,i,j}\in \k[y_0,y_1,z_0,z_1]_{i,j}\\
 \mbox{for }k=1,\ldots,4.\end{array} \right \}\]
whose action on $\FF_{a}^{b,c}$ is as follows:
$$ [x_0:x_1; y_0:y_1:z_0;z_1] \mapsto [x_0 p_1+x_1 p_2:x_0 p_3+x_1 p_4; y_0:y_1;z_0:z_1].$$
This can be seen  directly from the global description of $\FF_{a}^{b,c}$ in Definition~\ref{def Fabc} and by using trivialisations on open subsets isomorphic to $\A^2$.
\end{remark}

\subsection{Removal of jumping fibres}
Removing a fibre into a Hirzebruch surface, we get an open subset isomorphic to $\A^1\times \p^1$. It is then natural to study the $\p^1\!$-bundles over $\A^1\times\p^1$ in order to get a local description of the $\p^1\!$-bundles over Hirzebruch surfaces.

\begin{lemma}\label{Lemm:P1bundleoverA1P1}
Let $\pi\colon X\to \A^1\times \p^1$ be a $\p^1\!$-bundle, and let $\tau\colon \A^1\times \p^1\to \A^1$ be the first projection. Then, there exist an integer $b\ge 0$ and a dense open subset $U\subset \A^1$ $($both uniquely determined by $\pi)$  such that the following hold:
\begin{enumerate}
\item\label{P1bA1P1gen}
The generic fibre of the morphism ${\tau\pi} \colon X\to \A^1$ is isomorphic to the Hirzebruch surface $\F_b$.
\item \label{TrivialFmbundle}
There exists a commutative diagram
\[\xymatrix@R=4mm@C=2cm{
    (\tau\pi)^{-1}(U) \ar[rr]^{\simeq} \ar[dr]_{\pi}  && U\times \F_b\rlap{,} \ar[dl]^{\mathrm{pr}_1\times \tau_b} \\
   & U\times \p^1 
  }\]
  where $\mathrm{pr}_1\times \tau_b$ sends $(u,x)$ onto $(u,\tau_b(x))$ and $\tau_b\colon \F_b\to \p^1$ is the standard $\p^1\!$-bundle.
\item \label{SpecialfibreFmbundle}
For each $p\in \A^1\setminus U$, the fibre $(\tau\pi)^{-1}(p)$ is isomorphic to the Hirzebruch surface $\F_{b+2\epsilon}$ for some positive integer $\epsilon$.
\item\label{SpecialfibreFmbundleblowup}
For each $p\in \A^1\setminus U$, we can blow up the exceptional section of $(\tau\pi)^{-1}(p)$ and contract the strict transform of $(\tau\pi)^{-1}(p)$; this replaces $X$ by another $\p^1\!$-bundle $X'\to \p^1 \times \A^1$ as above, with a new open subset $U'$ which is equal either to $U$ or to $U\cup \{p\}$. After finitely many such steps, we get the case where $U'=\A^1$, corresponding to a trivial $\F_b$-bundle.
\end{enumerate}
\end{lemma}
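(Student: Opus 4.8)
The plan is to study the $\p^1$-bundle $\pi\colon X\to \A^1\times\p^1$ by trivialising it over the two standard charts of $\p^1$, so that $X$ is described by a single transition matrix $M\in\GL_2(\k(\A^1)[t])$, where $t$ is the coordinate on $\p^1$ and the transition glues $\A^1\times\A^1_t$ to $\A^1\times\A^1_{1/t}$ over $\A^1\times(\p^1\setminus\{0,\infty\})$. Because $\A^1$ is affine with coordinate ring a PID, I can clear denominators and perform row/column operations with entries in $\k[\A^1][t]$ (respectively $\k[\A^1][1/t]$) at source and target — exactly as in the proof of Lemma~\ref{Lem:ExactSeqFa}, but now over the base ring $\k[\A^1]$ instead of $\k$. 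First I would work over the generic point: over the field $\k(\A^1)$, the bundle restricted to the generic fibre is a $\p^1$-bundle over $\p^1_{\k(\A^1)}$, hence by the (relative) classification of vector bundles on $\p^1$ over a field it is $\P(\OP\oplus\OP(-b))$ for a unique $b\ge0$; this gives \ref{P1bA1P1gen}, and shrinking $\A^1$ to the locus $U$ where the normal form is achieved over $\k[\A^1]$ (a nonempty open subset, since finitely many denominators and leading coefficients are involved) gives \ref{TrivialFmbundle}. Uniqueness of $b$ and of $U$ follows from the fact that $\F_b\not\simeq\F_{b'}$ for $b\ne b'$ and that the generic fibre determines $b$, while $U$ is characterized as the maximal open set over which the fibre is $\F_b$.

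For \ref{SpecialfibreFmbundle}, let $p\in\A^1\setminus U$ and set $S_p=(\tau\pi)^{-1}(p)$. Since $S_p$ is a $\p^1$-bundle over $\p^1$ it is some $\F_m$; I must show $m>b$ and $m\equiv b\pmod 2$. The parity is forced because $m\equiv \deg(\det\E|_{S_p})\equiv -b \pmod 2$ is a deformation-invariant numerical quantity (the restriction of the rank-two bundle $\E$ to the fibres of $\tau\pi$ has fibre-constant degree, so $\det\E$ has the same degree on every fibre, giving $m\equiv b\bmod 2$). The inequality $m\ge b$ comes from semicontinuity: the function $p\mapsto h^0(S_p,\E|_{S_p}(\text{suitable twist}))$ jumps up on the special fibre, equivalently the minimal self-intersection of a section can only go down under specialization — concretely, a section of $X\to\p^1\times\A^1$ over $U$ with image a curve meeting each fibre of $\F_b$ in the minimal way degenerates to a section of $S_p$ of self-intersection $\le b$ of the same parity, whence $\ge -b$... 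I must be a little careful with signs, but the upshot is $m=b+2\epsilon$ with $\epsilon\ge1$ since $p\notin U$ (if $\epsilon=0$ the normal form would extend over $p$, contradicting maximality of $U$). The main obstacle is making this jumping-up argument precise and uniform in the base direction; the cleanest route is to exhibit, after row/column operations over $\k[\A^1]_{(p)}$, the transition matrix in the shape $\mathrm{diag}(1,t^b)$ times an upper-triangular unipotent matrix whose off-diagonal entry is a Laurent polynomial in $t$ with coefficients in $\k[\A^1]_{(p)}$, and then read off $S_p$ by setting the uniformizer at $p$ to zero — the pole order at $p$ of that off-diagonal entry after optimal reduction is exactly $\epsilon$.

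Finally \ref{SpecialfibreFmbundleblowup}: given such a jumping fibre $S_p\simeq\F_{b+2\epsilon}$, I would blow up the exceptional section $C\subset S_p$ (the curve of self-intersection $-(b+2\epsilon)$ inside $S_p$) in the threefold $X$, producing $\tilde X$ with exceptional divisor $E\simeq\p^1\times\p^1$, then contract the strict transform $\tilde S_p$ of $S_p$. This is a standard elementary transformation along a curve in a $\p^1$-bundle: one checks that $\tilde S_p\simeq\F_{(b+2\epsilon)-2}$ is again a Hirzebruch surface, that its normal bundle in $\tilde X$ is the one needed to contract it smoothly onto a $\p^1$ (keeping a $\p^1$-bundle over $\p^1\times\A^1$), and that the resulting $X'\to\p^1\times\A^1$ agrees with $X$ over $\A^1\setminus\{p\}$ — so its invariant is still $b$, its bad locus is contained in $\{p\}$, and the new fibre over $p$ is $\F_{b+2(\epsilon-1)}$ (possibly now $\F_b$, i.e.\ $p$ enters $U'$). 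In coordinates this is transparent: the elementary transformation multiplies the off-diagonal entry of the reduced transition matrix by the uniformizer at $p$, dropping $\epsilon$ by one. Iterating over all (finitely many) points of $\A^1\setminus U$ and decreasing the finitely many positive integers $\epsilon$ terminates with $U'=\A^1$, i.e.\ a trivial $\F_b$-bundle by part~\ref{TrivialFmbundle}. The one genuinely delicate point throughout is verifying that the blow-up/contraction really stays inside the category of Zariski-locally-trivial $\p^1$-bundles and that it can be performed $\Autz(X)$-equivariantly when $X$ comes from a globally defined bundle over a Hirzebruch surface; but over $\A^1\times\p^1$ the equivariance is not needed, and the local verification is the explicit matrix computation just described.
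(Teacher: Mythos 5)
Your overall strategy coincides with the paper's (trivialise over the two charts of $\p^1$, diagonalise the transition matrix over the generic point of $\A^1$ to get parts (1)--(2), and realise the blow-up/contraction of part (4) as conjugation of the transition matrix by $\mathrm{diag}(x-\lambda,1)$), but the two steps you rely on for the hard parts are not correct as stated. For part (3), the ``cleanest route'' normal form is false: if over the local ring $R=\k[\A^1]_{(p)}$ the transition matrix could be brought to $\mathrm{diag}(1,t^b)$ times a unipotent matrix whose off-diagonal entry $f$ has coefficients in $R$, then $f$ could be removed completely by further column operations over $R[t]$ and row operations over $R[t^{-1}]$ (every power of $t$ lies in $R[t]+t^bR[t^{-1}]$ since $b\ge 0$), so the bundle would be the constant $\F_b$-bundle near $p$ and no jump could occur at $p$; and if you instead allow poles at $p$ in $f$, you can no longer ``set the uniformizer to zero''. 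The same objection applies to your assertion that $\epsilon\ge 1$ for $p\notin U$ (``if $\epsilon=0$ the normal form would extend over $p$''): knowing that the fibre over $p$ is abstractly $\F_b$ does not obviously yield an extension of the relative trivialisation across $p$; this is exactly the content of the paper's reduction of the matrices $B,C$ with $B^{-1}AC=\mathrm{diag}(y^m,y^n)$ and its case analysis. (Your parity argument via constancy of $\deg\det\E$ on the fibres of $\tau$, and the semicontinuity argument for $m\ge b$, are sound in principle but are only sketched.)

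For part (4) the termination argument rests on a false mechanism: a single elementary transformation need not drop $\epsilon$ by one. Take $A=\begin{bmatrix} y & x^2\\ 0 & y^{-1}\end{bmatrix}$, with generic fibre $\F_0$ and fibre $\F_2$ at $x=0$; conjugating by $\mathrm{diag}(x,1)$ gives $\begin{bmatrix} y & x\\ 0 & y^{-1}\end{bmatrix}$, whose fibre at $x=0$ is still $\F_2$, and only the next transformation (off-diagonal entry $1$) produces $\F_0$. Conversely, for $A=\begin{bmatrix} y^2 & x\\ 0 & y^{-2}\end{bmatrix}$ one transformation takes the special fibre from $\F_4$ directly to $\F_0$. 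This is precisely why the statement only claims $U'=U$ or $U'=U\cup\{p\}$, and why the paper's induction runs on a different, genuinely strictly decreasing quantity (the degree of $\det B=\det C$ for the normalised matrices with $AC=BD$), not on the fibre type; your proof needs such an invariant, since ``decreasing the finitely many positive integers $\epsilon$'' does not happen at each step. Two smaller slips in the same part: the exceptional divisor of the blow-up of the exceptional section is $\P(N_{C/X}^{\vee})$ and need not be $\p^1\times\p^1$ (in the first example above it is $\F_2$, the normal-bundle extension being split there), and the strict transform of $S_p$ is isomorphic to $S_p\simeq\F_{b+2\epsilon}$ itself, not to $\F_{b+2\epsilon-2}$, because $C$ is a Cartier divisor in $S_p$; it is this $\F_{b+2\epsilon}$ that gets contracted along its ruling.
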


\begin{proof}
We choose two open subsets of $V_0,V_1\subset \A^1\times \p^1$ isomorphic to $\A^2$ via
\[
\begin{array}{lllll}
\begin{array}{rcccccc}
\iota_0\colon& \A^2&\iso &V_0\subset \A^1\times \p^1, \\
&(x,y) & \to & \big(x,[1:y]\big)\end{array}& 
\begin{array}{rccc}
\iota_1\colon& \A^2&\iso &V_1\subset \A^1\times \p^1.\\
&(x,y) & \to & \big(x,[y:1]\big)\end{array} \end{array}
\]
The restriction of $\pi$ to the open subsets $V_0$ and $V_1$ yield $\p^1\!$-bundles over $\A^2$, which are then trivial, by the Quillen--Suslin theorem (see~\cite[Theorem~XXI.3.7]{Lang}) combined with the fact that a $\p^1\!$-bundle over $\A^2$ is isomorphic to the projectivisation of a rank $2$ vector bundle over $\A^2$ since $\A^2$ is a smooth variety. This gives the existence of  isomorphisms $\varphi_i\colon\pi^{-1}(V_i)\to \A^2 \times \p^1$, for $i=0,1$, such that $\iota_i\mathrm{pr}_1 \varphi_i=\pi$, where $\mathrm{pr}_1\colon \A^2\times \p^1\to \A^2$ is the first projection.
 The isomorphisms $\varphi_i$ are uniquely determined, up to composing at the target and the source with elements of $\PGL_2(\k[\A^2])\subset \Aut(\A^2\times \p^1)$.
We then write the transition function $\varphi_1(\varphi_0)^{-1}$  as 
\[\big((x,y),[u:v]\big)\mapsto \big((x,y^{-1}),[\alpha_{11}(x,y)u+\alpha_{12}(x,y)v:\alpha_{21}(x,y)u +\alpha_{22}(x,y)v]\big),\]
where $A=\begin{bsmallmatrix}
\alpha_{11} & \alpha_{12} \\
\alpha_{21} & \alpha_{22} \end{bsmallmatrix}\in \GL_2(\k[x,y,y^{-1}])$. Note that the $\p^1\!$-bundle $\pi$ is determined by the equivalence class of $A$ modulo $A\sim \lambda MAM'$, where $\lambda \in \k[x,y,y^{-1}]^{*}=\k^*y^{\Z}$, $M\in \GL_2(\k[x,y^{-1}])$, $M'\in\GL_2(\k[x,y])$. In particular, we can multiply $A$ with an element of $\k^*$ and assume  $\det(A)\in y^{\Z}$.

 Working over the field $\k(x)$, we get a $\p^1\!$-bundle over $\p^1_{\k(x)}$, which is therefore isomorphic to a Hirzebruch surface $\F_b$, with $b\ge 0$ (this follows from the fact that a vector bundle on $\p^1$ is decomposable over any field). This yields two matrices $B\in \GL_2(\k(x)[y^{-1}])$ and $ C\in \GL_2(\k(x)[y])$ such that 
\[B^{-1} AC=D=\begin{bmatrix}
y^m & 0 \\
0 & y^n \end{bmatrix}\]
for some integers $m,n\in \Z$ with $m-n=b$, and yields part~\ref{P1bA1P1gen}.  Since $\det(B),\det(C)\in \k(x)^{*}$ and $\det(A),\det(D)\in y^{\Z}$, we get $\det(A)=\det(D)=y^{m+n}$, so $\det(B)=\det(C)\in \k(x)^*$.

Writing the equality $AC=BD$, we can multiply both $B$ and $C$ with the same element of $\k(x)^*$ and assume  $B\in \mathrm{Mat}_{2,2}(\k[x,y^{-1}])$, $C\in \mathrm{Mat}_{2,2}(\k[x,y])$, and  $(B(\lambda),C(\lambda))\not=(0,0)$ for each $\lambda\in \k$, where $B(\lambda),C(\lambda)\in \mathrm{Mat}_{2,2}(\k(y))$ are obtained by replacing $x$ by $\lambda$ in $B,C$.

We denote by $\mathcal{Z}\subset \A^1$ the zero set of $\det(B)=\det(C)\in \k[x]$. On the open set $U=\p^1\setminus \mathcal{Z}$, the matrices $B$ and $C$ correspond to automorphisms of $U\times \A^1$ on the two charts, respectively, so we have a trivial $\F_b$-bundle on $U$. In particular, if $\mathcal{Z}=\emptyset$, then $\tau\pi\colon X\to \A^1$ is a trivial $\F_b$-bundle, and the proof is over. We can thus prove the result by induction on the degree of the polynomial $\det(B)$.

Suppose that $\lambda\in \mathcal{Z}$ is such that the fibre $(\tau\pi)^{-1}(\lambda)$ is a Hirzebruch surface $\F_{\tilde{b}}$. Hence, $A(\lambda)$ corresponds to the transition function of $\F_{\tilde{b}}$, which means that  $\tilde{B}^{-1} A(\lambda) \tilde{C}= \begin{bsmallmatrix} 
y^{\tilde{m}} & 0 \\
0 & y^{\tilde{n}} \end{bsmallmatrix}$ with $\tilde{B}\in\GL_2(\k[y^{-1}])$, $\tilde{C}\in\GL_2(\k[y])$, $\tilde{m},\tilde{n}\in \Z$, and $\tilde{m}-\tilde{n}=\tilde{b}\ge 0$. Computing the determinant yields $m+n=\tilde{m}+\tilde{n}$. Writing $\epsilon=\tilde{m}-m=n-\tilde{n}$, we then get $\tilde{b}-b=(\tilde{m}-\tilde{n})-(m-n)=2\epsilon$. Replacing $A,B,C$ with $\tilde{B}^{-1} A \tilde{C}$, $(\tilde{B})^{-1}B$, $(\tilde{C})^{-1}C$, we keep the equation $AC=BD$, do not change the degree of $\det(B)=\det(C)$ or the set $\mathcal{Z}$, and can then assume  $A(\lambda)=\begin{bsmallmatrix}
y^{\tilde{m}} & 0 \\
0 & y^{\tilde{n}} \end{bsmallmatrix}$. Writing $B(\lambda)=\begin{bsmallmatrix}
\beta_{11} & \beta_{12} \\
\beta_{21} & \beta_{22} \end{bsmallmatrix}$ yields \[C(\lambda)=A(\lambda)^{-1}B(\lambda)D=\begin{bmatrix}
\beta_{11}y^{m-\tilde{m}} & \beta_{12}y^{n-\tilde{m}} \\
\beta_{21}y^{m-\tilde{n}} & \beta_{22}y^{n-\tilde{n}} \end{bmatrix}=\begin{bmatrix}
\beta_{11}y^{-\epsilon} & \beta_{12}y^{-b-\epsilon} \\
\beta_{21}y^{b+\epsilon} & \beta_{22}y^{\epsilon} \end{bmatrix}.\] 

$(a)$ If the first column of $B(\lambda)$ is zero, then so is the first column of $C(\lambda)$. Writing $\Delta=\begin{bsmallmatrix} x-\lambda & 0 \\ 0 & 1\end{bsmallmatrix}$, we get $B=B'\Delta $ and $C=C'\Delta $ for some $B',C'\in \mathrm{Mat}_{2,2}(\k[x,y,y^{-1}])$. Replacing $B$ and $C$ by $B'$ and $C'$ does not change the equation $AC=BD$, since $D$ commutes with $\Delta$, and decreases the degree of $\det(B).$ A similar argument works if the second column of $B$ is zero.

$(b)$ If $\epsilon<0$, then $-b-\epsilon=\epsilon-\tilde{b}<0$, so the second column of $B(\lambda)$ and $C(\lambda)$ is zero since $B(\lambda)\in \mathrm{Mat}_{2,2}(\k[y^{-1}])$ and $C(\lambda)\in\mathrm{Mat}_{2,2}(\k[y])$; we then apply $(a)$.

$(c)$ If $\epsilon=b=0$, then $B(\lambda)=C(\lambda)\in \mathrm{Mat}_{2,2}$. There thus exists an $R\in \GL_2$ such that the first column of $B(\lambda)R$ is zero. We can replace $B,C$ with $BR,CR$, since $R$ commutes with $D=y^m\cdot I=y^n\cdot I$, and reduce to case $(a)$. 

$(d)$ If $\epsilon=0$ and $b>0$, then $\beta_{12}=0$ and $\beta_{22}\in \k$. If $\beta_{22}=0$, we do as above. If $\beta_{22}\not=0$, we get $\beta_{11}=0$ since $\det(B(\lambda))=0$, hence $B(\lambda)=\begin{bsmallmatrix}
0 & 0 \\
\beta_{21} & \beta_{22} \end{bsmallmatrix}$, so the first column of $B(\lambda)\cdot R$ is zero, with $R=\begin{bsmallmatrix}\beta_{22}& 0  \\  -\beta_{21}& 1 \end{bsmallmatrix}\in \GL_2(\k[y^{-1}])$. Writing $R'=D^{-1}RD=\begin{bsmallmatrix} \beta_{22}& 0  \\  -\beta_{21}y^b& 1 \end{bsmallmatrix}\in \GL_2(\k[y])$, we can replace $B$ and $C$ with $BR$ and $CR'$ and get case $(a)$.

$(e)$ The last case is when $\epsilon>0$, which implies that $\tilde{b}=b+2\epsilon\ge b+2\ge 2$ and that $\beta_{11}=\beta_{12}=0$. 

After applying the steps above, we can assume that all elements of $\mathcal{Z}$ give rise to case $(e)$. Writing $U=\A^1 \setminus \mathcal{Z}$, this yields parts~\ref{TrivialFmbundle} and~\ref{SpecialfibreFmbundle}.

It remains to show part~\ref{SpecialfibreFmbundleblowup}, by studying more carefully case $(e)$. Note that the fibre $x=\lambda$ corresponds to the Hirzebruch surface $\F_{\tilde{b}}$, with $\tilde{b}\ge 2$, and a special section corresponds to $u=0$ in the charts $\A^2\times \p^1$. The blow-up of the exceptional section, followed by the contraction of the strict transform of the surface $\F_{\tilde{b}}$, corresponds locally to 
\[\begin{array}{ccc}
\A^2\times \p^1 & \dasharrow & \A^2 \times \p^1\\
((x,y),[u:v])&\mapsto& ((x,y),[u:(x-\lambda)v]).\end{array}\]
This replaces the transition matrix $A$ with $A'=\Delta^{-1} A \Delta$, where $\Delta=\begin{bsmallmatrix} x-\lambda & 0 \\ 0 & 1\end{bsmallmatrix}:$
\[A=\begin{bmatrix}
\alpha_{11} & \alpha_{12} \\
\alpha_{21} & \alpha_{22} \end{bmatrix},\quad A'=\begin{bmatrix}
\alpha_{11} & \frac{\alpha_{12}}{x-\lambda} \\
(x-\lambda)\alpha_{21} & \alpha_{22} \end{bmatrix}.\]
Note that the new transition $A'$ still belongs to $\GL_2(\k[x,y,y^{-1}])$ since $A(\lambda)$ is diagonal, which implies that $\alpha_{12}$ and $\alpha_{21}$ are multiples of $x-\lambda$.

Moreover, the first line of $B(\lambda)$ and $C(\lambda)$ is zero, so we can write $B=\Delta B'$ and $C=\Delta C'$ for some $B',C'\in \mathrm{Mat}_{2,2}(\k[x,y,y^{-1}])$. The blow-up replaces $A$ by $A'$, and we can replace $B,C$ with $B',C'$ since $A'C'=(\Delta^{-1} A \Delta)(\Delta^{-1} C)=\Delta^{-1} BD=B'D$. This process decreases the degree of $\det(B)=\det(C)$; we get a trivial $\F_b$-bundle after finitely many steps.
\end{proof}

As a consequence of Lemma~\ref{Lemm:P1bundleoverA1P1}, we get the following result.

\begin{proposition}[Removal of jumping fibres]\label{Prop:NoMoreJumpingfibre}
Let $a\ge 0$, and let $\pi\colon X\to \F_a$ be a $\p^1\!$-bundle. There exist an integer $b\ge 0$ and a dense open subset of $U\subset\p^1$ such that $(\tau_a\pi)^{-1}(p)$ is a Hirzebruch surface $\F_b$ for each $p\in U$. Moreover, we have:
\begin{enumerate}
\item\label{UP1}
If $U=\p^1$, then $\tau_a\pi\colon X\to \p^1$ is an $\F_b$-bundle which is trivial on every affine open subset of $\,\p^1$. In this case, we say that \emph{$\pi$ has no jumping fibre}.
\item
If one fibre $(\tau_a\pi)^{-1}(\{p\})$ is isomorphic to $\F_c$ for some $c\not=b$, then $c-b$ is a positive even integer $($we say that $\tau_a^{-1}(\{p\})$ is a \emph{jumping fibre}$)$, and the blow-up of the $($unique$)$ exceptional section of $\,\F_c$ followed by the contraction of the strict transform of $\,\F_c$ gives an $\Autz(X)$-equivariant birational map $X\dasharrow X'$ to another $\p^1\!$-bundle over $\F_a$. After finitely many such steps, one gets case~\ref{UP1}.
\end{enumerate}
\end{proposition}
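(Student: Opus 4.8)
The strategy is to reduce the global statement over $\F_a$ to the local statement over $\A^1 \times \p^1$ that was already established in Lemma~\ref{Lemm:P1bundleoverA1P1}, using that $\F_a$ is covered by two copies of $\A^1 \times \p^1$ (the complements of the two fibres $f_0 = \{z_0 = 0\}$ and $f_\infty = \{z_1 = 0\}$ of $\tau_a$), and that the descent lemma machinery (Lemma~\ref{Lem:Extension}) guarantees that a $\p^1$-bundle is determined by its restriction to such an open cover. First I would fix the fibration $\tau_a\pi \colon X \to \p^1$ obtained by composing $\pi$ with $\tau_a \colon \F_a \to \p^1$. Restricting $\pi$ over $\F_a \setminus f_\infty \cong \A^1 \times \p^1$, Lemma~\ref{Lemm:P1bundleoverA1P1}\ref{P1bA1P1gen} produces a well-defined integer $b \ge 0$ such that the generic fibre of $\tau_a\pi$ is $\F_b$, and a dense open $U \subset \p^1$ such that $(\tau_a\pi)^{-1}(p) \simeq \F_b$ for all $p \in U$; the same integer $b$ is obtained using the other chart $\F_a \setminus f_0$, since $b$ is determined by the generic fibre. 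Shrinking, I may take $U$ to be the intersection of the good loci of the two charts, hence $\p^1 \setminus U$ is finite. This gives the first sentence of the Proposition.

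\textbf{Main steps.} For part \ref{UP1}: if $U = \p^1$ then in particular the good locus of each chart is all of $\A^1$, so by Lemma~\ref{Lemm:P1bundleoverA1P1}\ref{TrivialFmbundle} the bundle $\tau_a\pi$ is a trivial $\F_b$-bundle over each of the two affine lines covering $\p^1$; more generally over any affine open subset of $\p^1$ the same lemma (applied after an automorphism of $\p^1$ moving that open set into the standard $\A^1$) gives triviality, so $\tau_a\pi$ is an $\F_b$-bundle trivial on every affine open subset of $\p^1$, which is exactly statement \ref{UP1}. For part (2): pick $p$ with $(\tau_a\pi)^{-1}(p) \simeq \F_c$, $c \ne b$. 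Choosing the chart not containing $f_\infty$ (or the other, after an automorphism of $\p^1$, so that $p$ lies in the chosen $\A^1$), Lemma~\ref{Lemm:P1bundleoverA1P1}\ref{SpecialfibreFmbundle} forces $c = b + 2\epsilon$ with $\epsilon$ a positive integer, i.e. $c - b$ is a positive even integer. Then the birational modification of Lemma~\ref{Lemm:P1bundleoverA1P1}\ref{SpecialfibreFmbundleblowup} — blow up the exceptional section of $\F_c$ inside $X$, then contract the strict transform of the surface $(\tau_a\pi)^{-1}(p) \simeq \F_c$ — is performed on $X$ itself; it is an isomorphism away from the fibre of $p$, hence by Lemma~\ref{Lem:Extension} it glues with the identity on the rest of $\F_a$ to produce a $\p^1$-bundle $\pi' \colon X' \to \F_a$ and a square birational map $X \dasharrow X'$ over $\mathrm{id}_{\F_a}$ which restricts to an isomorphism over $\F_a$ minus the fibre of $p$.

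\textbf{Equivariance and termination.} Two points need care. First, $\Autz(X)$-equivariance of $X \dasharrow X'$: since the fibre $(\tau_a\pi)^{-1}(p)$ is the unique fibre of $\tau_a\pi$ isomorphic to $\F_c$ among all $p$ in a suitable finite set, and $\Autz(X)$ acts on $\p^1$ through $\Aut(\p^1)$ compatibly with $\tau_a\pi$ (Lemma~\ref{blanchard}), the $\Autz(X)$-orbit of $p$ is finite; but a connected group has no nontrivial finite orbit on $\p^1$, so $p$ is fixed, the surface $(\tau_a\pi)^{-1}(p)$ is $\Autz(X)$-invariant, and its exceptional section (being unique on $\F_c$ with $c \ge 2$) is invariant too. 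Hence the blow-up and contraction are canonical and $\Autz(X)$-equivariant, so $\Autz(X)$ acts on $X'$ and the square birational map conjugates $\Autz(X)$ into $\Autz(X')$ — this is the assertion of Lemma~\ref{Lem:Extension} applied $\Autz(X)$-equivariantly, exactly as in the proof of Lemma~\ref{Lem:GoingdownMinimalsurfaces}. Second, termination: each such step strictly decreases the degree of the determinant polynomial $\det(B) = \det(C)$ appearing in the local normal form of Lemma~\ref{Lemm:P1bundleoverA1P1} in both charts simultaneously (the modification in one chart does not worsen the other, since off the fibre of $p$ nothing changes), so after finitely many steps no jumping fibre remains and we are in case \ref{UP1}. \textbf{The main obstacle} I anticipate is bookkeeping the compatibility of the two charts: one must check that performing the elementary transformation to kill a jumping fibre seen in one chart does not create or modify jumping fibres in the other chart, and that the integer $b$ and the open set $U$ are genuinely intrinsic (independent of the chart); both follow from the uniqueness clauses in Lemma~\ref{Lemm:P1bundleoverA1P1} together with the fact that $b$ and the fibrewise isomorphism type are determined by $X$, but this requires a careful, if routine, gluing argument via Lemma~\ref{Lem:Extension}.
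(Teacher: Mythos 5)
Your proposal is correct and takes essentially the same route as the paper, whose entire proof consists of applying Lemma~\ref{Lemm:P1bundleoverA1P1} on the affine charts $\p^1\setminus\{p\}$ of the base; the equivariance argument (connectedness of $\Autz(X)$ forcing the finitely many jumping points, hence the jumping fibres and their unique exceptional sections, to be invariant) and the chart-compatibility/termination bookkeeping that you spell out are left implicit there. One small correction: Lemma~\ref{Lem:Extension} applies to maps that are isomorphisms off a \emph{finite} subset of the base, so it is not the right citation for gluing across the curve $\tau_a^{-1}(p)$ — but no such lemma is needed, since your elementary transformation is defined globally on $X$ and is an isomorphism off that fibre, so the result is a $\p^1$-bundle over $\F_a$ by ordinary gluing over the open cover.
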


\begin{proof}For each $p\in \p^1$, we have a commutative diagram
\[\xymatrix@R=3mm@C=2cm{
    \A^1\times\p^1 \ar[r]^{\simeq} \ar[d]_{\mathrm{pr}_1}  & \F_a\setminus \tau_a^{-1}(\{p\}) \ar[d]_{\tau_a} \\
    \A^1 \ar[r]_{\simeq} & \p^1\setminus \{p\}\rlap{.}
  }\]
We can thus apply Lemma~\ref{Lemm:P1bundleoverA1P1} on each affine subset $\p^1\setminus \{p\}$ and get the result.
\end{proof}

Another consequence of Lemma~\ref{Lemm:P1bundleoverA1P1} is the following description.

\begin{corollary}\label{Coro:NumericalInv}
Let $\pi\colon X\to S=\A^1\times \p^1$ be a $\p^1\!$-bundle, let $\tau\colon S\to \A^1$ be the first projection, and let $b\ge 0$. The following conditions are equivalent: 
\begin{enumerate}
\item\label{PA1e1}
The fibre $(\tau\pi)^{-1}(p)$ is a Hirzebruch surface $\F_b$ for each $p\in \A^1$.
\item\label{PA1e2}
There exists a commutative diagram
\[\xymatrix@R=4mm@C=1.5cm{
    X \ar[rr]^{\simeq} \ar[dr]_{\pi}  && \A^1\times \F_b\rlap{,} \ar[dl]^{\mathrm{pr}_1\times \tau_b} \\
   & S=\A^1\times \p^1 
  }\]
  where $\mathrm{pr}_1\times \tau_b$ sends $(u,x)$ on $(u,\tau_b(x))$.
  \item\label{PA1e3}
 The $\p^1\!$-bundle $X\to S$ is the projectivisation of a rank $2$ vector bundle $\mathcal{E}$  which fits in a short exact sequence
\[ 0 \to \O_{S} \to \mathcal{E} \to \O_{S}(-bs) \to 0,\]
where $s$ is a fibre of $\,\mathrm{pr}_2\colon S\to \p^1$ $($which satisfies $\Pic(S)=\Z s)$.
\end{enumerate}
\end{corollary}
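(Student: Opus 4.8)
The plan is to prove the cyclic chain of implications $(1)\Rightarrow(2)\Rightarrow(3)\Rightarrow(1)$, using the work already done in Lemma~\ref{Lemm:P1bundleoverA1P1} for the hardest part and Lemma~\ref{Lem:ExactSeqFa} (applied fibrewise over $\k(x)$ and globally) for the rest.

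\textbf{$(1)\Rightarrow(2)$.} Suppose the fibre $(\tau\pi)^{-1}(p)\simeq \F_b$ for \emph{every} $p\in\A^1$. Apply Lemma~\ref{Lemm:P1bundleoverA1P1}: it produces an integer $b'\ge 0$, a dense open $U\subset\A^1$, and an isomorphism $(\tau\pi)^{-1}(U)\simeq U\times\F_{b'}$ over $U\times\p^1$, while every $p\in\A^1\setminus U$ gives a fibre $\F_{b'+2\epsilon}$ with $\epsilon\ge 1$. Since by hypothesis \emph{all} fibres are $\F_b$, the generic fibre forces $b'=b$, and the absence of any $\F_{b+2\epsilon}$ with $\epsilon\ge 1$ forces $U=\A^1$. (Here one uses that the isomorphism type of the special fibres is an honest isomorphism invariant of the threefold $X$, not just of a chosen trivialisation; this is clear since $(\tau\pi)^{-1}(p)$ is an intrinsically defined subvariety of $X$.) Hence part \ref{SpecialfibreFmbundleblowup} of Lemma~\ref{Lemm:P1bundleoverA1P1}, or rather directly part \ref{TrivialFmbundle} with $U=\A^1$, gives exactly the commutative diagram of $(2)$.

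\textbf{$(2)\Rightarrow(3)$.} Assume $X\simeq \A^1\times\F_b$ over $S=\A^1\times\p^1$, where the right-hand map is $\mathrm{pr}_1\times\tau_b$. Recall $\F_b\simeq\P(\OP(b)\oplus\OP)=\P(\OP\oplus\OP(-b))$ over $\p^1$, so $\A^1\times\F_b\simeq\P\big(\mathrm{pr}_2^*(\OP\oplus\OP(-b))\big)$ over $\A^1\times\p^1$, where $\mathrm{pr}_2\colon S\to\p^1$. Set $\mathcal{E}=\mathrm{pr}_2^*\OP\oplus\mathrm{pr}_2^*\OP(-b)=\O_S\oplus\O_S(-bs)$, where $s$ is a fibre of $\mathrm{pr}_2$ and $\Pic(S)=\Z s$. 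This $\mathcal{E}$ fits into the split exact sequence $0\to\O_S\to\mathcal{E}\to\O_S(-bs)\to 0$, and $\P(\mathcal{E})\simeq X$ over $S$, giving $(3)$.

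\textbf{$(3)\Rightarrow(1)$.} Suppose $X=\P(\mathcal{E})$ with $\mathcal{E}$ fitting in $0\to\O_S\to\mathcal{E}\to\O_S(-bs)\to 0$. Fix $p\in\A^1$ and let $i_p\colon\{p\}\times\p^1\hookrightarrow S$ be the inclusion; note $i_p^*\O_S(-bs)\simeq\OP(-b)$ since $s\cap(\{p\}\times\p^1)$ is a single point. Pulling back the exact sequence by $i_p$ is still left-exact, so $i_p^*\mathcal{E}$ receives $\OP$ as a subsheaf with quotient a subsheaf of $\OP(-b)$; counting degrees, $i_p^*\mathcal{E}$ is a rank-two bundle of degree $-b$ on $\p^1$ with a nowhere-vanishing section $\O_{\p^1}\hookrightarrow i_p^*\mathcal{E}$ (the restricted section does not vanish because it generates a sub-\emph{bundle}, the quotient being torsion-free on the curve). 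Hence the sequence $0\to\OP\to i_p^*\mathcal{E}\to\OP(-b)\to 0$ is exact on $\{p\}\times\p^1$, and Lemma~\ref{Lem:ExactSeqFa} gives $(\tau\pi)^{-1}(p)=\P(i_p^*\mathcal{E})\simeq\F_b$. As $p\in\A^1$ was arbitrary, $(1)$ follows.

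The only genuinely substantive input is $(1)\Rightarrow(2)$, and it is already packaged in Lemma~\ref{Lemm:P1bundleoverA1P1}; the main obstacle is purely expository, namely making sure the reader sees that ``all fibres $\simeq\F_b$'' is enough to kill the open set $U$ coming out of that lemma (the point being that one must argue the isomorphism type of a special fibre is intrinsic, and that $b'+2\epsilon>b$ whenever $\epsilon\ge1$). The implications $(2)\Rightarrow(3)$ and $(3)\Rightarrow(1)$ are routine bookkeeping with $\P(-)$ of split bundles and Lemma~\ref{Lem:ExactSeqFa}, the only mild care being the torsion-freeness argument ensuring restriction of the exact sequence stays exact on each fibre curve.
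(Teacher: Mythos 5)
Your proof is correct and follows essentially the same route as the paper: $(1)\Rightarrow(2)$ via the jumping-fibre analysis (the paper cites Proposition~\ref{Prop:NoMoreJumpingfibre}\ref{UP1}, whose content is exactly Lemma~\ref{Lemm:P1bundleoverA1P1} that you invoke directly), $(2)\Rightarrow(3)$ by identifying $\A^1\times\F_b$ with the projectivisation of the split bundle $\O_S\oplus\O_S(-bs)$, and $(3)\Rightarrow(1)$ by restricting the exact sequence to the fibres of $\tau$ and applying Lemma~\ref{Lem:ExactSeqFa}. Your extra care about exactness of the restricted sequence (the section is nowhere vanishing since the quotient $\O_S(-bs)$ is locally free) is a point the paper silently glosses over, but it is the same argument in substance.
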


\begin{proof} The implication \ref{PA1e1} $\Rightarrow$ \ref{PA1e2} is given by 
Proposition~\ref{Prop:NoMoreJumpingfibre}\ref{UP1}. To get $\ref{PA1e2}\Rightarrow\ref{PA1e3}$, we observe that $\ref{PA1e2}$ yields an isomorphism between the $\p^1\!$-bundles $X\to \A^1\times \p^1$ and $\A^1 \times \p\big(\O_{\p^1}\bigoplus \O_{\p^1}(b)\big)\to \A^1\times \p^1$. To get \mbox{\ref{PA1e3} $\Rightarrow$ \ref{PA1e1},} we restrict the exact sequence to each fibre of $\tau\colon S\to \A^1$ and apply Lemma~\ref{Lem:ExactSeqFa}.
\end{proof}

\subsection{Moduli spaces of \texorpdfstring{$\p^1$}{P1}-bundles over \texorpdfstring{$\F_a$}{Fa} with no jumping fibre} \label{subsect: moduli space}
The following proposition associates to every $\p^1\!$-bundle over a Hirzebruch surface with no jumping fibre three unique invariants $(a,b,c)$, called \emph{numerical invariants} in Definition~\ref{Defi:NumInv}. The integer $a$ is given by the Hirzebruch surface over which the $\p^1\!$-bundle is taken. The integer $b$ is given by the generic fibre of the projection to $\p^1$, which is a Hirzebruch surface $\F_b$. The last integer $c$ can be seen using exact sequences or using transition functions, as the following result explains.

\begin{proposition}\label{Prop:EquivABC}
Let $a\ge 0$, and let $\pi\colon X\to \F_a$ be a $\p^1\!$-bundle.
\begin{enumerate}
\item\label{Equivabc}
For all integers $b\ge 0$ and $c\in \Z$, the following are equivalent: 
\begin{enumerate}
\item\label{Equivabc1} The variety $X$ is the gluing of two copies of $\,\F_b\times \A^1$  along $\F_b \times \A^1 \setminus \{0\}$ by the automorphism $\nu_{c,P}\in \Aut(\F_b \times \A^1 \setminus \{0\})$ given by\\
  $\begin{array}{c}
  \nu_{c,P}\colon([x_0:x_1;y_0:y_1],z) \mapsto \left([x_0:x_1z^{c}+x_0P(y_0,y_1,z);y_0z^a:y_1],\frac{1}{z}\right)\end{array}$

for some $P\in\k[y_0,y_1,z,z^{-1}]$, homogeneous of degree $b$ in $y_0,y_1$, such that $\pi\colon X\to \F_a$ sends \linebreak $([x_0:x_1;y_0:y_1],z)\in \F_b\times \A^1$ onto, respectively,
$[y_0:y_1;1:z]\in \F_a$ and $[y_0:y_1;z:1]\in \F_a$ on the two charts. 
 \item\label{Equivabc2}
The $\p^1\!$-bundle $\pi\colon X\to \F_a$ is the projectivisation of a rank $2$ vector bundle 
$\mathcal{E}$ which fits in a short exact sequence
\[ 0 \to \O_{\F_a} \to \mathcal{E} \to \O_{\F_a}(-b\s{a}+cf) \to 0, \]
where $f,\s{a}\subset \F_a$ are given by $y_1=0$ and $z_1=0$.
\end{enumerate}
\item\label{Existabc}
If there exists a $b\ge 0$ such that the preimage of each fibre of the $\p^1\!$-bundle $\tau_a\colon \F_a\to\p^1$ is isomorphic to $\F_b$ $($no jumping fibre$)$, then there is an integer $c\in \Z$ such that the above properties are satisfied. If $b>0$, the integer $c$ is unique. If $b=0$, then $\lvert c\rvert$ is unique and $\pi\colon X\to \F_a$ is isomorphic to the decomposable bundles $\FF_a^{0,c}\to \F_a$ and $\FF_a^{0,-c}\to \F_a$.
\end{enumerate}
\end{proposition}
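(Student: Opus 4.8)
The plan is to unwind the two descriptions in \ref{Equivabc} into explicit gluing data over the two standard ``vertical'' charts of $\F_a$, prove their equivalence by matching transition functions, and then deduce \ref{Existabc} from this together with a local triviality statement (Corollary~\ref{Coro:NumericalInv}) and a normal form computation for the gluing automorphism. Throughout I would cover $\F_a$ by $U_0=\{z_1\ne 0\}$ and $U_1=\{z_0\ne 0\}$; each is isomorphic to $\A^1\times\p^1$ with coordinates $(z,[y_0:y_1])$, and $U_0\cap U_1$ is glued by $z\mapsto 1/z$, $[y_0:y_1]\mapsto[z^ay_0:y_1]$ --- this being the transition of $\F_a$ itself (cf.\ Definition~\ref{def Hirzebruch surfaces}).

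For \ref{Equivabc}, assuming \ref{Equivabc2} I would restrict the exact sequence to each $U_i$: since $f|_{U_i}\sim 0$ while $\s{a}|_{U_i}$ is a fibre of the projection $U_i\to\p^1$, Corollary~\ref{Coro:NumericalInv} yields an isomorphism $\P(\E)|_{U_i}\cong\F_b\times\A^1$ over $U_i$, which (by Lemma~\ref{Lem:ExactSeqFa} applied fibrewise) can be chosen so that the section cut out by $\O_{\F_a}\subset\E$ becomes $\{x_0=0\}$, i.e.\ the negative section of every fibre $\F_b$. The resulting transition over $U_0\cap U_1$ is then an automorphism of $\F_b\times\A^1_*$ lying over the gluing above; as it fixes $\{x_0=0\}$ and since $H^0(\p^1,\O_{\p^1}(-b))=0$, it has the shape $[x_0:x_1;y_0:y_1]\mapsto[x_0:\lambda x_1+x_0P;z^ay_0:y_1]$ with $\lambda$ an invertible function on $U_0\cap U_1$, hence $\lambda\in\k^*z^{\Z}$, and $P$ homogeneous of degree $b$ in $y_0,y_1$ with Laurent coefficients in $z$ (the degree forced by the torus weights of Definition~\ref{def Fabc}); computing the transition of the quotient line bundle $\O_{\F_a}(-b\s{a}+cf)$ between the two $z$-charts --- where the summand $cf$ accounts for exactly the factor $z^c$ --- pins down $\lambda=z^c$ up to a constant absorbed by rescaling $x_1$. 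This is $\nu_{c,P}$. The converse $\ref{Equivabc1}\Rightarrow\ref{Equivabc2}$ is the same computation run backwards: from $\nu_{c,P}$ one defines a rank two bundle $\E$ over $\F_a$ by gluing trivialisations over the vertical charts with transition $\left(\begin{smallmatrix}1&0\\ P&z^c\end{smallmatrix}\right)$ on the fibre coordinates (together with the fixed base gluing), checks it is locally free on $\F_a$, that the first basis vector spans a sub-bundle $\O_{\F_a}$, and that the quotient is $\O_{\F_a}(-b\s{a}+cf)$.

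For \ref{Existabc}, suppose the preimage of each fibre of $\tau_a$ is $\F_b$. Then Corollary~\ref{Coro:NumericalInv}, applied to $\pi$ over each of $U_0$ and $U_1$, exhibits $X$ as glued from two copies of $\F_b\times\A^1$ by an automorphism $\theta$ of $\F_b\times\A^1_*$ over the gluing of $U_0\cap U_1$, and it remains to normalise $\theta$. If $b\ge 1$, the union $N\subset X$ of the negative sections of the fibres $(\tau_a\pi)^{-1}(z)\cong\F_b$ is a well-defined section of $\pi$ (each such fibre has a unique curve of negative self-intersection, and $N$ is $\{x_0=0\}$ in either chart), so $\theta$ fixes $\{x_0=0\}$ and --- by the shape argument above --- has the form $[x_0:x_1]\mapsto[x_0:\delta x_1+x_0P]$; writing $\delta=\mu z^c$ and composing with the constant automorphism $x_1\mapsto\mu^{-1}x_1$ of one chart brings $\theta$ to $\nu_{c,P}$, so \ref{Equivabc1}, hence \ref{Equivabc2}, holds. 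The exponent $c$ is independent of all choices: two such normalisations differ by automorphisms of $\F_b\times\A^1$ over $U_0$, resp.\ $U_1$, fixing $\{x_0=0\}$, whose diagonal entry is a unit of $\Gamma(U_i,\O)=\k[z]$, i.e.\ a constant, so they change $z^c$ only by a constant; by \ref{Equivabc} this $c$ coincides with the integer in the exact sequence, which gives its uniqueness. If instead $b=0$, then $\F_0\simeq\p^1\times\p^1$ and $\theta$ acts on the fibre coordinate $[x_0:x_1]$ by a matrix $M\in\GL_2(\k[z,z^{-1}])$ up to scalars; writing $M=B^{-1}\mathrm{diag}(z^m,z^n)C$ with $B\in\GL_2(\k[z^{-1}])$ and $C\in\GL_2(\k[z])$ (every vector bundle on $\p^1$ is decomposable; cf.\ the proof of Lemma~\ref{Lemm:P1bundleoverA1P1}) normalises $\theta$ to $\nu_{c,0}$ with $c=n-m$, whence $X\simeq\FF_a^{0,c}$ by Remark~\ref{Rem:Fbbundle}. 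The remaining freedom of exchanging $x_0$ and $x_1$ replaces $c$ by $-c$ (and $\FF_a^{0,c}\simeq\FF_a^{0,-c}$, cf.\ Definition~\ref{def Fabc}), so exactly $\lvert c\rvert$ is well-defined.

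The one place needing genuine care, and the main obstacle, is the bookkeeping identifying the power of $z$ occurring in $\nu_{c,P}$ with the summand $cf$ of the quotient bundle $\O_{\F_a}(-b\s{a}+cf)$: this amounts to writing out the transition functions of $\O_{\F_a}(\s{a})$ and $\O_{\F_a}(f)$ between the two vertical charts, using $\Pic(\F_a)=\Z\s{a}\oplus\Z f$ and $\s{a}\sim\s{-a}+af$. Everything else is routine unwinding of the chart descriptions of Definitions~\ref{def Hirzebruch surfaces} and~\ref{def Fabc} together with the decomposability of vector bundles on $\p^1$.
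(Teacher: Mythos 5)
Your plan is essentially the paper's own proof: describe $X$ over the two vertical charts $\{z_1\neq 0\},\{z_0\neq 0\}\simeq \A^1\times\p^1$ of $\F_a$, normalise the resulting transition of $\F_b\times\A^1$ (for $b=0$ via a Birkhoff-type factorisation of a matrix of $\GL_2(\k[z,z^{-1}])$ by matrices in $\GL_2(\k[z^{-1}])$ and $\GL_2(\k[z])$, i.e. decomposability of rank-two bundles on $\p^1$), and read $c$ off the quotient line bundle; part \ref{Existabc} is then obtained from the no-jumping-fibre trivialisation exactly as in the paper. Two concrete points, however, need repair. First, in your direct argument for $\ref{Equivabc2}\Rightarrow\ref{Equivabc1}$ the justification fails when $b=0$: there $H^0(\p^1,\O_{\p^1}(-b))=\k\neq 0$, and the ``fibrewise'' appeal to Lemma~\ref{Lem:ExactSeqFa} does not straighten the section determined by $\O_{\F_a}\subset\E$ to $\{x_0=0\}$ over a whole chart $U_i\simeq\A^1\times\p^1$; one must argue separately that this section, given by a pair of polynomials in $z$ without common zero, can be moved to a constant section by an element of $\PGL_2(\k[z])$ (a unimodular-row argument), or else avoid the issue as the paper does by proving $\ref{Equivabc1}\Rightarrow\ref{Equivabc2}$ first and deducing the converse from part \ref{Existabc} (restricting the exact sequence to a fibre shows there is no jumping fibre, part \ref{Existabc} gives the normal form, and the explicit computation fixes the constant $c$).

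Second, the step you defer as ``the one place needing genuine care''---identifying the factor $z^{c}$ in $\nu_{c,P}$ with the summand $cf$ of the quotient---is exactly where the paper's proof of $\ref{Equivabc1}\Rightarrow\ref{Equivabc2}$ does its work: in the two affine charts $(y,z)\mapsto[1:y;1:z]$ and $(y,z)\mapsto[y:1;z:1]$ the associated rank-two bundle has transition $(x_0,x_1,y,z)\mapsto\left(y^bz^{-c}x_0,\ x_1+x_0P(1,y,z)z^{-c},\ z^a/y,\ 1/z\right)$, from which one reads off that the sub is $\O_{\F_a}$ and the quotient is $\O_{\F_a}(-b\s{a}+cf)$. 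You describe but do not carry out this computation, and your phrase ``gluing trivialisations over the vertical charts'' is not quite accurate for $b>0$, since $\E$ is not trivial over $U_i\simeq\A^1\times\p^1$ (it is of the form $\O\oplus\O(-bs)$ there); this is why the paper passes to affine $\A^2$ charts, where everything is trivial and the matrix computation is immediate. With these two repairs your argument goes through and coincides in substance with the published proof.
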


\begin{proof}
We first prove \ref{Equivabc1} $\Rightarrow$ \ref{Equivabc2}.
The section $x_0=0$ being invariant by the transition function, one can see $X$ as the projectivisation of a rank $2$ vector bundle 
$\mathcal{E}$ which fits in a short exact sequence
\[ 0 \to \O_{\F_a} \to \mathcal{E} \to \O_{\F_a}(\beta \s{a}+\gamma f) \to 0\]
for some integers $\beta,\gamma\in \Z$. To compute these numbers, we take the two open subsets $U_0,U_1\subset\F_a$ isomorphic to $\A^2$ via
\[
\begin{array}{lllll}
\begin{array}{rcccccc}
\theta_0\colon&  \A^2&\iso &U_1\subset \F_a, \\
&(y,z) & \to & [1:y;1:z]\end{array}& 
\begin{array}{rccc}
\theta_1\colon&  \A^2&\iso &U_2\subset \F_a\\
&(y,z) & \to & [y:1;z:1]\end{array} \end{array}
\]
and observe that the vector bundle has a transition function of the form
\[(x_0,x_1,y,z)\mapsto \left(y^bz^{-c}x_0,x_1+x_0P(1,y,z)z^{-c},\frac{z^a}{y},\frac{1}{z}\right),\]
which yields $\beta=-b$ and $\gamma=c$.

We then prove~\ref{Existabc}. It follows from Proposition~\ref{Prop:NoMoreJumpingfibre} that the pull-back on $X$ of the two open subsets $V_0,V_1\subset \F_a$ given by
\[
\begin{array}{lllll}
\begin{array}{rcccccc}
\theta_0\colon&  \p^1\times \A^1&\iso &V_0\subset \F_a,\\
&([y_0:y_1],z) & \to & [y_0:y_1;1:z]\end{array}& 
\begin{array}{rccc}
\theta_1\colon&  \p^1\times \A^1&\iso &V_1\subset \F_a\\
&([y_0:y_1],z) & \to & [y_0:y_1;z:1]\end{array} \end{array}
\]
 are isomorphic to $ \A^1\times \F_b$. The transition function on $\F_a$ being given by $([y_0:y_1],z)\dasharrow ([z^ay_0:y_1],z^{-1})$, we get a transition function $\psi \in \Aut(\F_b \times \A^1\setminus \{0\})$ of the form 
\[
\begin{array}{ccccccc}
\F_b\times \A^1&\dasharrow &\F_b\times \A^1\\
\big([x_0:x_1;y_0:y_1],z\big) & \mapsto & \big([f_0(x_0,x_1,y_0,y_1,z):f_1(x_0,x_1,y_0,y_1,z);z^ay_0:y_1],\frac{1}{z}\big)\end{array}
\]
for some $f_0,f_1\in \k[x_0,x_1,y_0,y_1][z,z^{-1}]$. The isomorphism class of $\pi\colon X\to \F_a$ (as in Definition~\ref{Defi:SquareEtc}) is then determined by $\psi$, up to composition at the source and target with automorphisms of the $\p^1\!$-bundle $\F_b\times \A^1\to \p^1\times \A^1$.

If $b=0$, then $\psi$ is of the form
$([x_0:x_1;y_0:y_1],z)  \mapsto ([\alpha(z)x_0+\beta(z)x_1;\gamma(z)x_0+\delta(z)x_1;z^ay_0:y_1],z^{-1})$, 
where $A=\begin{bsmallmatrix} \alpha(z)& \beta(z) \\ \gamma(z) & \delta(z)\end{bsmallmatrix} \in \GL_2(\k[z,z^{-1}])$. The isomorphism class of $\pi\colon X\to \F_a$ is then given by the matrix $A$, up to replacing $A$ by $\lambda BAC$, where $B\in \GL_2(\k[z^{-1}])$, $C\in \GL_2(\k[z])$, and $\lambda\in \k^*$. The class of $A$ modulo this replacement corresponds to a vector bundle over $\p^1$, which is therefore equivalent to a decomposable one, with a diagonal matrix $A$. We then get an integer $c\in \Z$, unique up to sign, such that $\psi$ is of the form
\[\begin{array}{c}\psi\colon \big([x_0:x_1;y_0:y_1],z\big) \mapsto \left([x_0:x_1z^{c};y_0z^a:y_1],\frac{1}{z}\right).\end{array}\]
This shows that $X\to \F_a$ is isomorphic to $\FF_a^{b,c}=\FF_a^{0,c}\to \F_a$
 (see Remark~\ref{Rem:Fbbundle}) and also to $\FF_a^{0,-c}\to \F_a$.

If $b>0$, then $\psi$ is of the form 
$([x_0:x_1;y_0:y_1],z)  \mapsto  ([x_0:\mu(z)x_1+x_0P(y_0,y_1,z);z^ay_0:y_1],z^{-1})$, 
where $\mu(z)\in \k[z,z^{-1}]^*=\k^*\cdot z^{\Z}$ and $P(y_0,y_1,z)$ is a homogeneous polynomial of degree $b$ in $y_0$, $y_1$, with coefficients in $\k[z,z^{-1}]$. Applying a diagonal automorphism at the target, we can assume  $\mu(z)=z^c$ for some $c\in \Z$. We then observe that $c$ is unique since the transition function is determined up to automorphisms of $\F_b\times \A^1\to \p^1\times \A^1$ at the target and the source, which do not change $c$.

It remains to prove \ref{Equivabc2} $\Rightarrow$ \ref{Equivabc1}.
 The inclusion $\O_{\F_a} \hookrightarrow \mathcal{E}$ corresponds to a section of $X\to \F_a$. We restrict the exact sequence to a fibre $f$ and get $ 0 \to \O_{f} \to \mathcal{E}_f \to \O_{f}(-b) \to 0.$ The corresponding section then needs to be the exceptional section of $\F_b$, unique if and only if $b>0$ (see Lemma~\ref{Lem:ExactSeqFa}).
The preimage of each fibre of the $\p^1\!$-bundle $\tau_a\colon \F_a\to\p^1$ is then isomorphic to $\F_b$ $($no jumping fibre$)$. We then apply~\ref{Existabc} and get $\ref{Equivabc1}$ for some unique integer $c'$. The calculation made in the proof of \ref{Equivabc1} $\Rightarrow$ \ref{Equivabc2} implies that $c=c'$ since the inclusion  $\O_{\F_a} \hookrightarrow \mathcal{E}$ corresponds to the section $x_0=0$.
\end{proof}

\begin{remark}\label{Rem:NumInvAreInv}
Proposition~$\ref{Prop:EquivABC}$ shows that two $\p^1\!$-bundles with different numerical invariants (see Definition~\ref{Defi:NumInv}) are not isomorphic. The numerical invariants defined in Definition~\ref{Defi:NumInv}, which correspond to the integers $(a,b,c)$ of Proposition~$\ref{Prop:EquivABC}$ (where $c\le0$ when $b=0$), are then really invariant under isomorphisms.

In particular, $\FF_a^{b,c}\to \F_a$ is the unique isomorphism class of decomposable $\p^1\!$-bundles with invariants $(a,b,c)$. (This follows from Remark~\ref{Rem:FFabcNumabc}).
\end{remark}

As a direct consequence of Proposition~\ref{Prop:EquivABC}, we obtain the following corollary (which is well known over the field of complex numbers; see \textit{e.g.}~\cite[\S~2.2]{ABV}).

\begin{corollary}\label{Coro:ABV}
Let $a,b\ge 0$, and let $\mathcal{E}$ be a rank $2$ vector bundle over $\F_a$. Then the following are equivalent: 
\begin{enumerate}
\item\label{ExactSeqVB}
There exists an exact sequence
\[ 0 \to \O_{\F_a}(d\s{-a}+rf) \to \mathcal{E} \to \O_{\F_a}(d'\s{-a}+r'f) \to 0\]
for some integers $d,d',r,r'$ such that $b=d-d'$.
\item\label{ExactSeqVB2}
The preimage by $\p(\mathcal{E})\to\F_a$ of each fibre of the $\p^1\!$-bundle $\tau_a\colon \F_a\to\p^1$ is isomorphic to $\F_b$.
\end{enumerate}
Moreover, the extension in~\ref{ExactSeqVB} is unique if $b>0$. 
\end{corollary}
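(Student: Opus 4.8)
The plan is to derive Corollary~\ref{Coro:ABV} directly from Proposition~\ref{Prop:EquivABC} by relating a general exact sequence of vector bundles to the normalised one appearing in Definition~\ref{Defi:NumInv}. The key observation is that the $\p^1$-bundle $\p(\mathcal{E})\to \F_a$ is unchanged under twisting $\mathcal{E}$ by a line bundle: $\p(\mathcal{E})\simeq \p(\mathcal{E}\otimes \L)$ for any line bundle $\L$ on $\F_a$. So, starting from an exact sequence as in~\ref{ExactSeqVB}, I would tensor the whole sequence by $\O_{\F_a}(-d\s{-a}-rf)$ to obtain
\[ 0 \to \O_{\F_a} \to \mathcal{E}\otimes \O_{\F_a}(-d\s{-a}-rf) \to \O_{\F_a}((d'-d)\s{-a}+(r'-r)f) \to 0.\]
Now $d'-d=-b$, and using $\s{-a}=\s{a}-af$ one rewrites $(d'-d)\s{-a}+(r'-r)f=-b\s{a}+(r'-r+ab)f$, which is exactly of the form $-b\s{a}+cf$ with $c=r'-r+ab$. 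Hence $\p(\mathcal{E})\to \F_a$ has numerical invariants $(a,b,c)$ in the sense of Definition~\ref{Defi:NumInv} (after possibly flipping the sign of $c$ when $b=0$, which by Proposition~\ref{Prop:EquivABC}\ref{Existabc} does not change the isomorphism class), and Proposition~\ref{Prop:EquivABC}\ref{Equivabc} then gives that the preimage of every fibre of $\tau_a$ is $\F_b$. This proves \ref{ExactSeqVB}$\Rightarrow$\ref{ExactSeqVB2}.

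For the converse, suppose the preimage of each fibre of $\tau_a$ by $\p(\mathcal{E})\to \F_a$ is isomorphic to $\F_b$. By Proposition~\ref{Prop:EquivABC}\ref{Existabc} there is an integer $c$ so that $\p(\mathcal{E})\to \F_a$ has numerical invariants $(a,b,c)$, so by Proposition~\ref{Prop:EquivABC}\ref{Equivabc2} there is a rank two vector bundle $\mathcal{E}'$ with $\p(\mathcal{E}')\simeq \p(\mathcal{E})$ over $\F_a$ fitting in
\[ 0 \to \O_{\F_a} \to \mathcal{E}' \to \O_{\F_a}(-b\s{a}+cf) \to 0.\]
Since $\p(\mathcal{E}')\simeq \p(\mathcal{E})$, we have $\mathcal{E}\simeq \mathcal{E}'\otimes \L$ for some line bundle $\L=\O_{\F_a}(d\s{-a}+rf)$ on $\F_a$ (two rank two bundles with isomorphic projectivisations over $\F_a$ differ by a twist, as $\Pic(\F_a)$ surjects onto such twists). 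Tensoring the displayed sequence by $\L$ and again converting $\s{a}$ to $\s{-a}$ via $\s{a}=\s{-a}+af$ yields an exact sequence of the shape in~\ref{ExactSeqVB} with the sub-bundle $\O_{\F_a}(d\s{-a}+rf)$ and quotient $\O_{\F_a}((d-b)\s{-a}+(r+c-ab+?\,)f)$; collecting terms, the difference of the $\s{-a}$-coefficients is $d-(d-b)=b$, as required.

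Finally, for the uniqueness statement when $b>0$: an exact sequence as in~\ref{ExactSeqVB} is determined by the sub-line-bundle $\O_{\F_a}(d\s{-a}+rf)\hookrightarrow \mathcal{E}$, equivalently by a section of $\p(\mathcal{E})\to \F_a$ together with a choice of $\L$, but after fixing $\mathcal{E}$ the sub-line-bundle that can appear is constrained: restricting to a fibre $f$ of $\tau_a$ gives $0\to \O_f(\text{deg }r\text{ part})\to \mathcal{E}_f\to \O_f(\ldots)\to 0$, and since $\mathcal{E}_f\simeq \O_f(j)\oplus\O_f(j-b)$ with $b>0$, the sub-line-bundle of top degree is unique; this forces the section of $\p(\mathcal{E})$ to be the one through the exceptional sections of the fibres $\F_b$, which is unique because $b>0$ (Lemma~\ref{Lem:ExactSeqFa}), and hence the whole sequence is unique. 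The main obstacle I anticipate is purely bookkeeping: keeping the change of basis $\s{a}\leftrightarrow \s{-a}=\s{a}-af$ straight through all the twists so that the coefficient difference comes out to exactly $b$ (and not $-b$), and correctly handling the sign ambiguity of $c$ in the degenerate case $b=0$ by appealing to Proposition~\ref{Prop:EquivABC}\ref{Existabc}.
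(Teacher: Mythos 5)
Your proof is correct and takes essentially the same route as the paper: both directions reduce to Proposition~\ref{Prop:EquivABC} together with the restriction-to-a-fibre argument and Lemma~\ref{Lem:ExactSeqFa}, and the uniqueness for $b>0$ comes, as in the paper, from the uniqueness of the negative section in each fibre $\F_b$, which pins down the sub-line bundle and hence the extension. The only difference is one of explicitness: you spell out the twist $\mathcal{E}\simeq\mathcal{E}'\otimes\L$ relating the given bundle to the normalised one of Definition~\ref{Defi:NumInv}, a step the paper leaves implicit when it cites Proposition~\ref{Prop:EquivABC}\ref{Existabc} for \ref{ExactSeqVB2}$\Rightarrow$\ref{ExactSeqVB}.
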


\begin{proof}
\ref{ExactSeqVB} $\Rightarrow$ \ref{ExactSeqVB2} The inclusion $\O_{\F_a}(d\s{-a}+rf) \hookrightarrow \mathcal{E}$ corresponds to a section of $\p(\mathcal{E})\to\F_a$. We restrict the exact sequence to a fibre $f$ and get $ 0 \to \O_{f}(d) \to \mathcal{E}_f \to \O_{f}(d') \to 0$, which yields the same $\p^1\!$-bundle as $ 0 \to \O_{f} \to \mathcal{E}_f \to \O_{f}(-b) \to 0$. The corresponding section then needs to be the exceptional section of $\F_b$ (see Lemma~\ref{Lem:ExactSeqFa}), which is unique if $b>0$.

\ref{ExactSeqVB2} $\Rightarrow$ \ref{ExactSeqVB} This follows from Proposition~\ref{Prop:EquivABC}\ref{Existabc}.
\end{proof}

\begin{notation}\label{Not:ZabcP}
  Let $a,b,c\in \Z$, with $a,b\ge 0$, and $c\le 0$ if $b=0$. For each
  \begin{center}$P\in\k[y_0,y_1,z,z^{-1}]_b=\{f\in \k[y_0,y_1,z,z^{-1}]\mbox{ homogeneous of degree $b$ in }y_0,y_1\}$,\end{center} 
we denote by $Z_{a}^{b,c,P}\to \F_a$ the $\p^1\!$-bundle given by
the gluing of two copies of $\F_b\times \A^1$  along $\F_b \times \A^1 \setminus \{0\}$ by the automorphism $\nu_{c,P}\in \Aut(\F_b \times \A^1 \setminus \{0\})$ given by
$$\nu_{c,P}\colon\big([x_0:x_1;y_0:y_1],z\big) \mapsto \left([x_0:x_1z^{c}+x_0P(y_0,y_1,z);y_0z^a:y_1],\frac{1}{z}\right),
  $$
such that $\pi\colon X\to \F_a$ sends $([x_0:x_1;y_0:y_1],z)\in \F_b\times \A^1$ onto, respectively,  
$[y_0:y_1;1:z]\in \F_a$ and $[y_0:y_1;z:1]\in \F_a$ on the two charts.
\end{notation}

\begin{remark}\label{Rem:ZabcHasNumABC}
Proposition~\ref{Prop:EquivABC} shows that every $\p^1\!$-bundle over $\F_a$ with no jumping fibre is isomorphic to $Z_{a}^{b,c,P}\to \F_a$ for some $b,c\in \Z$ with $b\ge 0$, and $c\le 0$ if $b=0$, and some $P\in\k[y_0,y_1,z,z^{-1}]_b$, and that it has numerical invariants $(a,b,c)$ (see Definition~\ref{Defi:NumInv}). Moreover, $Z_{a}^{b,c,0}\to \F_a$ is isomorphic to $\FF_a^{b,c}$ (see Remark~\ref{Rem:Fbbundle}).
\end{remark}

\begin{lemma}\label{Lemm:EquivalenceClassP}
Let $\pi\colon Z_{a}^{b,c,P}\to \F_a$ and $\pi'\colon Z_{a}^{b',c',P'}\to \F_a$ be two $\p^1\!$-bundles as in Notation~$\ref{Not:ZabcP}$, with $b\ge 1$. Then the following are equivalent: 
\begin{enumerate}
\item\label{Isophichipi}
The $\p^1\!$-bundles $\pi\colon Z_{a}^{b,c,P}\to \F_a$ and $\pi'\colon Z_{a}^{b',c',P'}\to \F_a$ are isomorphic.
\item\label{ccpPprime}
  We have $b'=b$ and $c'=c$, and there exist $\lambda\in \k^*$ and $Q_1,Q_2\in \k[y_0,y_1,z]$ homogeneous of degree $b$ in $y_0,y_1$ such that
  \begin{center}$P'=\lambda P+Q_1(y_0,y_1,z)z^{c}+Q_2(y_0z^a,y_1,z^{-1})$.
  \end{center}
\end{enumerate}
\end{lemma}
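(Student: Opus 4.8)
The plan is to unwind both $\p^1$-bundles into their two standard charts $\F_b\times\A^1$ and compare the transition data directly, using that an isomorphism of $\p^1$-bundles over $\F_a$ is, locally over the two charts $V_0,V_1\subset\F_a$ of Proposition~\ref{Prop:EquivABC}, given by automorphisms of $\F_b\times\A^1$ commuting with the projection to $\p^1\times\A^1$, and that the whole isomorphism is determined by this data (Lemma~\ref{Lem:Extension}). The implication $\ref{ccpPprime}\Rightarrow\ref{Isophichipi}$ is the easy direction: given $\lambda$, $Q_1$, $Q_2$ as stated, one writes down the candidate isomorphism explicitly — on one chart an automorphism of $\F_b\times\A^1$ of the form $([x_0:x_1;y_0:y_1],z)\mapsto([x_0:\lambda x_1+x_0Q_2(y_0,y_1,z)\cdot(\text{something})],z)$ and on the other chart the analogous automorphism built from $Q_1$ — and checks that conjugating $\nu_{c,P}$ by this pair yields $\nu_{c,P'}$. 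The three pieces $\lambda P$, $Q_1 z^c$, $Q_2(y_0z^a,y_1,\frac1z)$ correspond precisely to: a diagonal automorphism on $x_1$; an automorphism on the $z=0$ chart of the form $x_1\mapsto x_1+x_0 Q_1$; and an automorphism on the $z=\infty$ chart of the form $x_1\mapsto x_1+x_0 Q_2$, transported through the transition $z\leftrightarrow\frac1z$, $y_0\leftrightarrow y_0z^a$. So this direction is a short verification once the dictionary is set up.

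For $\ref{Isophichipi}\Rightarrow\ref{ccpPprime}$: first, $b'=b$ and $c'=c$ because $b,c$ are the numerical invariants $(a,b,c)$ of the bundle (Remark~\ref{Rem:ZabcHasNumABC}, via Proposition~\ref{Prop:EquivABC}\ref{Existabc} and Remark~\ref{Rem:NumInvAreInv}), hence preserved by isomorphism. So assume $b'=b\ge1$, $c'=c$, and let $\Phi\colon Z_a^{b,c,P}\iso Z_a^{b,c,P'}$ be an isomorphism of $\p^1$-bundles. Restricting over the two charts $V_0,V_1$ of $\F_a$, $\Phi$ is given by a pair $(\Phi_0,\Phi_1)$ of automorphisms of $\F_b\times\A^1$ over $\p^1\times\A^1$, i.e. each $\Phi_i$ has the form $([x_0:x_1;y_0:y_1],z)\mapsto([x_0:\mu_i(z) x_1+x_0 R_i(y_0,y_1,z)],z)$ — here one uses that $b\ge1$ so the automorphism group of $\p^1$-bundle of $\F_b\times\A^1$ over $\p^1\times\A^1$ has this triangular shape (the off-diagonal entry would have degree $-b<0$, hence vanishes), with $\mu_i\in\k[z,\frac1z]^*$ on the overlap but $\mu_0\in\k[z]^*=\k^*$, $R_0\in\k[y_0,y_1,z]_b$ on $V_0$ and $\mu_1\in\k^*$, $R_1\in\k[y_0,y_1,\frac1z]_b$ on $V_1$. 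The compatibility $\Phi_1\circ\nu_{c,P}=\nu_{c,P'}\circ\Phi_0$ on the overlap $\F_b\times(\A^1\setminus\{0\})$ then becomes a single identity among Laurent polynomials in $z$. Reading off the $x_0$-coefficient of the $x_1$-slot gives $\mu_1 z^c + $ (contribution of $R_1$ transported) $ = \mu_0 z^c P'/P$-type relation; more precisely, comparing the two sides yields $\mu_0 z^c = \mu_1 z^c$ forcing $\mu_0=\mu_1=:\lambda^{-1}\in\k^*$ (after also using the diagonal entries), and then the additive parts combine to $P' = \lambda P + (\text{term from }R_0)z^c + (\text{term from }R_1)$, where the $R_0$-contribution is $z^c$ times a polynomial $Q_1\in\k[y_0,y_1,z]_b$ and the $R_1$-contribution, after the substitution $z\mapsto\frac1z$, $y_0\mapsto y_0 z^a$ dictated by the transition on $\F_a$, is exactly $Q_2(y_0z^a,y_1,\frac1z)$ for some $Q_2\in\k[y_0,y_1,z]_b$. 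Collecting, this is precisely the formula in $\ref{ccpPprime}$.

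The main obstacle is bookkeeping rather than conceptual: correctly tracking how the $\F_b\times\A^1$-automorphisms on the two charts interact with the transition automorphism $\nu_{c,P}$, and in particular checking that the polynomial $R_1$ on the $V_1$-chart, which a priori lies in $\k[y_0,y_1,\frac1z]_b$, contributes a term of the allowed shape $Q_2(y_0z^a,y_1,\frac1z)$ with $Q_2$ a genuine polynomial — this uses $a\ge0$ so that the substitution $y_0\mapsto y_0z^a$ only introduces non-negative powers of $z$, exactly as in the normalisation step in Lemma~\ref{Lem:ExactSeqFa}. One must also be careful that $\mu_0,\mu_1$ are forced to be nonzero constants (not arbitrary units): $\mu_0$ is a unit in $\k[z]$ hence in $\k^*$, $\mu_1$ a unit in $\k[\frac1z]$ hence in $\k^*$, and the overlap identity forces them equal; this is where the hypothesis $b\ge1$ is essential, since for $b=0$ the full matrix group $\GL_2$ intervenes and the classification is the different one given in Proposition~\ref{Prop:EquivABC}\ref{Existabc}. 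Once these normalisations are in place, equating coefficients of the single Laurent-polynomial identity finishes the proof.
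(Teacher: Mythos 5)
Your proposal is correct and follows essentially the same route as the paper: an isomorphism is identified chart-by-chart with a pair of triangular automorphisms of the $\p^1$-bundle $\F_b\times\A^1\to\p^1\times\A^1$ (the triangular shape being exactly where $b\ge 1$ is used), and conjugating $\nu_{c,P}$ by such a pair produces precisely the rule $P\mapsto \lambda P+Q_1(y_0,y_1,z)z^{c}+Q_2(y_0z^a,y_1,\frac{1}{z})$, as in the paper's displayed computation. The only cosmetic differences are that you get $b'=b$, $c'=c$ by citing the invariance of the numerical invariants from Proposition~\ref{Prop:EquivABC} rather than reading them off the same computation, and your $\lambda^{-1}$ versus $\lambda$ bookkeeping is immaterial since $\lambda$ ranges over all of $\k^*$.
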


\begin{proof}If $b\not=b'$, then the two $\p^1\!$-bundles are not isomorphic since the preimages of the fibres of the $\p^1\!$-bundle $\tau_a\colon \F_a\to\p^1$ are not isomorphic. We can thus assume  $b'=b$.

The two $\p^1\!$-bundles are obtained by gluing two copies of $\F_b\times \A^1$ over $\F_b\times \A^1\setminus \{0\}$ by $\nu_{c,P},\nu_{c',P'}\in \Aut(\F_b \times \A^1 \setminus \{0\})$ (see Notation~$\ref{Not:ZabcP}$). The $\p^1\!$-bundles are thus isomorphic if and only if $\nu_{c',P'}=\alpha\nu_{c,P}\beta$ for some automorphisms $\alpha$, $\beta$ of the $\p^1\!$-bundle $\F_b\times \A^1\to \p^1\times \A^1$. Since $b\ge 1$, such elements are of the form
\[
\begin{array}{rcccccc}
\theta_{\lambda,Q}\colon& \F_b\times \A^1&\iso &\F_b\times \A^1\\
&\big([x_0:x_1;y_0:y_1],z\big) & \mapsto & \big([x_0:\lambda x_1+x_0 Q(y_0,y_1,z);y_0:y_1],z\big),\end{array}
\]
where $\lambda\in \k^*$ and $Q\in \k[y_0,y_1,z]_b$. The composition $\theta_{\lambda_2,Q_2} \nu_{c,P}\theta_{\lambda_1,Q_1}$ yields
\[
\begin{array}{cccccc}
 \F_b\times \A^1\setminus \{0\}&\iso &\F_b\times \A^1\setminus \{0\}\\
\big([x_0:x_1;y_0:y_1],z\big) & \mapsto & \big([x_0:z^{c}\lambda_1\lambda_2x_1+x_0\tilde{P}(y_0,y_1,z);z^ay_0:y_1],\frac{1}{z}\big),\text{ with}\end{array}\]
\[\begin{array}{rcl}
\tilde{P}(y_0,y_1,z)&=&\lambda_2P(y_0,y_1,z)+\lambda_2Q_1(y_0,y_1,z)z^{c}+Q_2\big(y_0z^a,y_1,\frac{1}{z}\big).\end{array}
\]
To get a transition function of the form $\nu_{c',P'}$, we then need $c'=c$ and $\lambda_1\lambda_2=1$.
\end{proof}

From now on we write $\k[z]_{\le r}=\{f\in \k[z]\mid \deg(f)\le r\}=\k\oplus \k z\oplus \cdots \oplus\k z^r$.

\begin{corollary}\label{Coro:IsoClass}
 Let $a,b,c\in \Z$, with $a,b\ge 0$ and with $c\le 0$ if $b=0$.
\begin{enumerate}
\item\label{UniqueIsoDecABC}
There is a unique isomorphism class of decomposable $\p^1\!$-bundles $X\to \F_a$ with numerical invariants $(a,b,c)$, represented by $\FF_a^{b,c}\to \F_a$. 
\item
If $b=0$ or $c\le 1$, every $\p^1\!$-bundle $X\to \F_a$ with numerical invariants $(a,b,c)$ is decomposable, and thus isomorphic to $\FF_a^{b,c}\to \F_a$.
\item\label{Pgiven}
If $b\ge 1$ and $c\ge 2$, every $\p^1\!$-bundle with numerical invariants $(a,b,c)$ is isomorphic to  $Z_{a}^{b,c,P}\to \F_a$, where
\[\begin{array}{c}P(y_0,y_1,z)=\sum_{i=0}^b  {y_0}^i{y_1}^{b-i}P_i(z)z^{ai+1}\end{array}\]and $P_i(z)\in \k[z]_{\le c-2-ai}$ $($hence $P_i=0$ if $c<ai+2)$, for $i=0,\ldots,b$.

The isomorphism class of the $\p^1\!$-bundle is determined by the class of $P$, up to scalar multiplication by an element of $\,\k^*$.  The $\p^1\!$-bundle is decomposable if and only if $P=0$.
\end{enumerate}
\end{corollary}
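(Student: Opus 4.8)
The three statements are almost immediate consequences of the structural results already established, so the strategy is to unwind the definitions and invoke Proposition~\ref{Prop:EquivABC}, Lemma~\ref{Lemm:EquivalenceClassP}, and Remarks~\ref{Rem:NumInvAreInv} and~\ref{Rem:ZabcHasNumABC}. For \ref{UniqueIsoDecABC}: by Remark~\ref{Rem:FFabcNumabc}, $\FF_a^{b,c}\to\F_a$ has numerical invariants $(a,b,c)$, and by Remark~\ref{Rem:NumInvAreInv} any two $\p^1$-bundles with distinct numerical invariants are non-isomorphic; so it only remains to see that two \emph{decomposable} bundles with the \emph{same} invariants $(a,b,c)$ are isomorphic. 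When $b=0$ this is the last sentence of Proposition~\ref{Prop:EquivABC}\ref{Existabc} (with the sign normalisation $c\le 0$). When $b\ge 1$, writing $P=0$ in Lemma~\ref{Lemm:EquivalenceClassP}, the only decomposable representative in the family $Z_a^{b,c,P}$ is $Z_a^{b,c,0}\simeq\FF_a^{b,c}$ (Remark~\ref{Rem:ZabcHasNumABC}), and the proof of the cocycle relation in Lemma~\ref{Lemm:EquivalenceClassP} shows $P'=0$ is preserved under the equivalence only up to the span of $z^c\k[y_0,y_1,z]_b$ and $\k[y_0z^a,y_1,z^{-1}]_b$, so any decomposable $\p^1$-bundle with invariants $(a,b,c)$ whose transition $P'$ lies in that span is isomorphic to $\FF_a^{b,c}$; since by Proposition~\ref{Prop:EquivABC}\ref{Existabc} \emph{every} bundle with these invariants is some $Z_a^{b,c,P'}$, and ``$P'$ in the span'' is exactly the condition that it be decomposable, this closes the case.

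For \ref{Pgiven} I would start from an arbitrary $\p^1$-bundle $X\to\F_a$ with invariants $(a,b,c)$, $b\ge1$, $c\ge2$. By Proposition~\ref{Prop:EquivABC}\ref{Existabc} and Remark~\ref{Rem:ZabcHasNumABC} it is isomorphic to $Z_a^{b,c,P'}\to\F_a$ for some $P'\in\k[y_0,y_1,z,\frac1z]_b$. The task is then purely computational: use the equivalence relation of Lemma~\ref{Lemm:EquivalenceClassP}\ref{ccpPprime}, namely $P'\sim \lambda P'+Q_1(y_0,y_1,z)z^c+Q_2(y_0z^a,y_1,\tfrac1z)$ with $Q_1,Q_2\in\k[y_0,y_1,z]_b$ and $\lambda\in\k^*$, to put $P'$ into the claimed normal form. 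Writing $P'=\sum_{i=0}^b y_0^i y_1^{b-i} R_i(z)$ with $R_i\in\k[z,z^{-1}]$, the subtraction of $Q_1 z^c$ lets us kill, in each monomial $y_0^i y_1^{b-i}$, all powers $z^k$ with $k\ge c$; the subtraction of $Q_2(y_0z^a,y_1,\tfrac1z)$, whose $y_0^i y_1^{b-i}$-coefficient is $z^{ai}\cdot(\text{polynomial in }z^{-1})$, lets us kill all powers $z^k$ with $k\le ai$ in that coefficient. What survives in slot $i$ is therefore spanned by $z^{ai+1},\dots,z^{c-1}$, i.e. $z^{ai+1}\cdot P_i(z)$ with $\deg P_i\le c-2-ai$ (and $P_i=0$ when $c-2-ai<0$, i.e. $c<ai+2$); this is exactly the asserted shape $P(y_0,y_1,z)=\sum_i y_0^i y_1^{b-i}P_i(z)z^{ai+1}$. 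The residual $\k^*$-scaling by $\lambda$ gives that the isomorphism class is determined by $P$ only up to a scalar; and $P=0$ corresponds to the decomposable bundle $\FF_a^{b,c}$ by Remark~\ref{Rem:ZabcHasNumABC}, giving the last sentence.

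For the middle statement: if $b=0$ this is Proposition~\ref{Prop:EquivABC}\ref{Existabc} directly (every bundle with invariants $(a,0,c)$ is $\FF_a^{0,c}$). If $b\ge1$ and $c\le 1$, run the normal-form reduction of part~\ref{Pgiven}: each slot $i$ then admits only monomials $z^k$ with $ai+1\le k\le c-1\le 0$, which forces $ai+1\le 0$, impossible for $i\ge 0$; hence every $P_i=0$, so $P=0$ and the bundle is $\FF_a^{b,c}$, i.e. decomposable. I do not expect any genuine obstacle here — the only care needed is bookkeeping the degree ranges in the reduction and correctly tracking the two subtractions (the $z^c$-term trims high powers, the $z^{ai}$-scaled term in $Q_2$ trims low powers), together with checking that these two operations act on disjoint ranges of exponents precisely when $c\ge 2$, which is what makes the normal form well-defined modulo scalars.
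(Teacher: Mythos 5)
Your proposal is correct and follows essentially the same route as the paper: reduce via Proposition~\ref{Prop:EquivABC} to a bundle $Z_a^{b,c,P}$, then use the equivalence relation of Lemma~\ref{Lemm:EquivalenceClassP} (killing exponents $\ge c$ with $Q_1z^c$ and exponents $\le ai$ with $Q_2(y_0z^a,y_1,\tfrac1z)$) to reach the normal form $\sum_i y_0^iy_1^{b-i}P_i(z)z^{ai+1}$ with $\deg P_i\le c-2-ai$, unique up to scalar, from which parts (2) and (3) follow exactly as in the paper. Your treatment of part (1) is slightly more roundabout than the paper's (which just combines Remarks~\ref{Rem:FFabcNumabc} and~\ref{Rem:NumInvAreInv}: every decomposable bundle is some $\FF_a^{b',c'}$ and the invariants pin down $(b',c')$), but the characterisation you assert does follow from those same facts, so there is no real gap.
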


\begin{proof}
Assertion~\ref{UniqueIsoDecABC} has already been proven (Remark~\ref{Rem:NumInvAreInv}).
If $b=0$, then Proposition~$\ref{Prop:EquivABC}$ shows that every $\p^1\!$-bundle over $\F_a$ with numerical invariants $(a,b,c)$ is isomorphic to $\FF_{a}^{b,c}=\FF_{a}^{0,c}$. We can thus assume  $b\ge 1$.

Proposition~$\ref{Prop:EquivABC}$ shows that every $\p^1\!$-bundle $X\to \F_a$ with invariants $(a,b,c)$ is isomorphic to $Z_{a}^{b,c,P}\to \F_a$ for some $P\in\k[y_0,y_1,z,z^{-1}]_b.$ Lemma~\ref{Lemm:EquivalenceClassP} shows that the isomorphism class is inside the set of $\p^1\!$-bundles with invariants $(a,b,c)$ and corresponds to an equivalence class on $\k[y_0,y_1,z,z^{-1}]_b$, where $P$ and $P'$ are equivalent if and only if
$P'=\lambda P+Q_1(y_0,y_1,z)z^{c}+Q_2(y_0z^a,y_1,z^{-1})$, for $\lambda\in \k^*$ and $Q_1,Q_2\in \k[y_0,y_1,z]_b$.
In particular, each equivalence class is given by an element 
\begin{center}$P(y_0,y_1,z)=\sum_{i=0}^b  {y_0}^i{y_1}^{b-i}P_i(z)z^{ai+1}$,\end{center}
where $P_i(z)\in \k[z]$ is of degree at most $c-ai-2$ for $i=0,\ldots,b$ and the element $P$ is unique up to multiplication by $\lambda\in \k^*$. If $c\le 1$, then $P$ is zero, so every $\p^1\!$-bundle $X\to \F_a$ with numerical invariants $(a,b,c)$ is decomposable. This achieves the proof.
\end{proof}

\begin{corollary} \label{Cor:modspace}
Let $a,b,c\in \Z$, with $a\ge 0$, $b\ge1$, and $c\ge 2$. The isomorphism classes of non-decomposable $\p^1\!$-bundles $X \to \F_a$ with numerical invariants $(a,b,c)$ are parametrised by the projective space 
\[\begin{array}{c}
\M_{a}^{b,c}=\P \left( \bigoplus\limits_{i=0}^{b} y_0^i y_1^{b-i}\cdot\k[z]_{\leq c-2-ai} \right).\end{array}\]
\end{corollary}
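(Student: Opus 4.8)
The plan is to deduce Corollary~\ref{Cor:modspace} directly from Corollary~\ref{Coro:IsoClass}\ref{Pgiven}, which is essentially a restatement, so the only work is to package the bookkeeping properly. First I would recall from Corollary~\ref{Coro:IsoClass}\ref{Pgiven} that every $\p^1$-bundle $X\to\F_a$ with numerical invariants $(a,b,c)$ is isomorphic to $Z_a^{b,c,P}\to\F_a$, where $P$ ranges over the finite-dimensional $\k$-vector space
\[W=\bigoplus_{i=0}^b y_0^i y_1^{b-i}\cdot \k[z]_{\le c-2-ai}z^{ai+1}\subset \k\!\left[y_0,y_1,z,\tfrac1z\right]_b,\]
and that two such bundles $Z_a^{b,c,P}$, $Z_a^{b,c,P'}$ with $P,P'\in W$ are isomorphic if and only if $P'=\lambda P$ for some $\lambda\in\k^*$; moreover $Z_a^{b,c,P}$ is decomposable precisely when $P=0$. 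Multiplying every monomial by the fixed unit $z$ gives a $\k$-linear isomorphism $W\iso \bigoplus_{i=0}^b y_0^i y_1^{b-i}\cdot\k[z]_{\le c-2-ai}$, which does not affect the scaling action, so I may identify $W$ with the latter space without loss of generality.

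From these facts the statement follows formally. The assignment $P\mapsto Z_a^{b,c,P}$ is a surjection from $W\setminus\{0\}$ onto the set of isomorphism classes of non-decomposable $\p^1$-bundles with invariants $(a,b,c)$ (surjectivity by Corollary~\ref{Coro:IsoClass}\ref{Pgiven}, and $P\ne 0$ exactly captures non-decomposability), and its fibres are exactly the orbits of the scaling $\k^*$-action; hence it descends to a bijection between $\P(W)=\left(W\setminus\{0\}\right)/\k^*$ and that set of isomorphism classes. This is exactly the claimed parametrisation, with $\P(W)=\M_a^{b,c}$.

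To justify calling this a genuine moduli space (i.e.\ that the parametrisation is ``algebraic'' in the sense needed later in \S\ref{section:action on the moduli space}), I would briefly note that there is a universal family: over $\P(W)\times\F_a$ one glues two copies of $\P(W)\times\F_b\times\A^1$ along $\P(W)\times\F_b\times(\A^1\setminus\{0\})$ by the transition $\nu_{c,P}$ of Notation~\ref{Not:ZabcP} with $P$ now read as the tautological section of $\O_{\P(W)}(1)\otimes(\text{the constant sheaf }W)$, so that the fibre over $[P]\in\P(W)$ is $Z_a^{b,c,P}$; well-definedness of the gluing is unaffected by rescaling $P$ since $\nu_{c,\lambda P}$ and $\nu_{c,P}$ differ by the $\p^1$-bundle automorphism $\theta_{\lambda,0}$ of Lemma~\ref{Lemm:EquivalenceClassP}. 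I do not expect any real obstacle here: the content is entirely in the already-proven Corollary~\ref{Coro:IsoClass}, and this corollary is just the projectivisation of that statement together with the observation that ``decomposable $\Leftrightarrow P=0$'' is precisely the condition removed when passing from $W$ to $\P(W)$. The only mild subtlety worth spelling out is the harmless change of variable absorbing the overall factor $z$, which I flag above so the displayed space matches the one in Corollary~\ref{Coro:IsoClass} up to the innocuous shift $ai+1\mapsto$ the exponent range.
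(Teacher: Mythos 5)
Your proposal is correct and follows essentially the same route as the paper, whose entire proof is the one-line citation of Corollary~\ref{Coro:IsoClass}\ref{Pgiven}; your unpacking of the bijection $[P]\mapsto Z_a^{b,c,P}$ (with $P\neq 0$ exactly the non-decomposable locus, and fibres the $\k^*$-orbits) is the intended content, and the universal-family remark is harmless extra. The only nitpick is that the identification of $W$ with $\bigoplus_i y_0^i y_1^{b-i}\cdot\k[z]_{\le c-2-ai}$ divides the $i$-th summand by $z^{ai+1}$ (not by a single fixed $z$), but this is still a graded $\k$-linear isomorphism compatible with scaling, so nothing is affected.
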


\begin{proof}This follows from Corollary~\ref{Coro:IsoClass}\ref{Pgiven}. 
\end{proof}

\begin{remark} \label{rk: dim mod spaces}
We have $\M_{a}^{b,c}\simeq \p^{\frac{1}{2}(d+1)(2(c-1)-ad)-1}$, where $d$ is the biggest integer such that $d\le b$ and $ad\le c-2$ (and is thus equal to $b$ if $ab\le c-2$). Indeed, the dimension of the vector space $\k[z]_{\leq c-2-ai}$ is equal to $c-1-ai$ if $ai\le c-2$ and to $0$ if $ai>c-2$. Hence, the dimension of $\bigoplus_{i=0}^{b} y_0^i y_1^{b-i}\cdot\k[z]_{\leq c-2-ai}$ is equal to $\sum_{i=0}^d (c-1-ai)=\frac{1}{2}(d+1)(2(c-1)-ad)$.
\end{remark}

\subsection{Action of \texorpdfstring{$\boldsymbol{\Autz(\F_a)}$}{Aut(Fa)} on the moduli spaces \texorpdfstring{$\boldsymbol{\M_{a}^{b,c}}$}{Mabc}} \label{section:action on the moduli space}
If $\pi\colon X\to \F_a$ is a $\p^1\!$-bundle with numerical invariants $(a,b,c)$, then so is $\varphi  \circ \pi\colon X\to \F_a$ for each $\varphi \in \Autz(\F_a)$. Indeed, the action of $\Autz(\F_a)$ does not change the exact sequence of Definition~\ref{Defi:NumInv}. This then gives a natural left-action of $\Autz(\F_a)$  on $\M_{a}^{b,c}$, that we describe in this section; it will be very useful later, for the following reason: a point $[p\colon X \to \F_a]\in\M_{a}^{b,c}$ is fixed by $g\in \Autz(\F_a)$ if and only if $g$ lifts to an automorphism of the variety $X$.

 Since we have a group homomorphism $\GL_2\to \Autz(\F_a)$ (see $\S\ref{Subsec:AutFa}$), we get an action of $\GL_2$ on $\M_{a}^{b,c}=\P \big( \bigoplus_{i=0}^{b} y_0^i y_1^{b-i}\cdot\k[z]_{\leq c-2-ai} \big)$ (see Corollary~\ref{Cor:modspace}). We will show that this $\GL_2$-action coincides with the following one.
 
\begin{definition} \label{GL2 action}
Let $r \ge 0$, and let us equip $V=\k^2$ with the standard left-action of $\GL_2$. There is a unique left-action of $\GL_2$ on $\k[z]_{\leq r}$ making the following map $\GL_2$-equivariant: 
\[\begin{array}{ccc}
V & \mapsto & \k[z]_{\leq r}\\
(u,v)& \mapsto & \sum_{i=0}^r u^{i}v^{r-i} \cdot z^i.\end{array}\]
\end{definition}

\begin{remark} \label{Remark rep is Sb(V)}
Equipped with the $\GL_2$-action of Definition~\ref{GL2 action}, the vector space $\k[z]_{\leq r}$ identifies with the $r$-th symmetric power of the standard representation of $\GL_2$. In particular, $\k[z]_{\leq r}$ is an irreducible $\GL_2$-representation (as we assumed $\k$ to be of characteristic zero).
\end{remark}

We first need the following observations on the action of $\GL_2$ on $\k[z]_{\leq r}$ of Definition~\ref{GL2 action}.

\begin{lemma}\label{Lemm:TechnicalAction}
Let $r\ge 0$, and let $P\in \k[z]_{\leq r}$.
\begin{enumerate}
\item\label{DescInv}
If $\sigma=\begin{bsmallmatrix}
0 & 1 \\ 1 & 0
\end{bsmallmatrix}\in \GL_2$, then $\sigma(P)=P(z^{-1})\cdot z^r$.
\item\label{DescTrian}
If $\sigma=\begin{bsmallmatrix}
\alpha  & \beta \\ 0 & \delta
\end{bsmallmatrix}\in \GL_2$, then $\hat{P}:=\sigma(P)\in \k[z]_{\leq r}$ is the unique polynomial that satisfies 
\begin{equation}\label{equationInvertPPprim}
P(z)=\frac{\alpha}{\delta^r(\beta z +\alpha)}\cdot \hat{P}\left(\frac{\delta z}{\beta z +\alpha}\right),\end{equation}
where we identify $\k[z]_{\le r}$ with $\k[z]/(z^{r+1})$ and compute the above equality in this ring.
\end{enumerate}
\end{lemma}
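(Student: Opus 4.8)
The plan is to reduce everything to a single well-chosen spanning family of $\k[z]_{\le r}$ on which the $\GL_2$-action of Definition~\ref{GL2 action} is transparent, and to do all computations inside the ring $\k[z]/(z^{r+1})$ as the statement suggests. Set $W=\k[z]_{\le r}$, identified with $\k[z]/(z^{r+1})$, write $\phi\colon\k^2\to W$, $\phi(x,y)=\sum_{i=0}^r x^iy^{r-i}z^i$ for the map of Definition~\ref{GL2 action}, and for $u\in\k$ put $P_u=\phi(u,1)=\sum_{i=0}^r u^iz^i$. A Vandermonde argument (using that $\k$ is infinite) shows that the $P_u$ span $W$, so any two $\k$-linear endomorphisms of $W$ agreeing on all $P_u$ coincide. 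By the defining equivariance of Definition~\ref{GL2 action}, $\sigma(P_u)=\phi(\sigma\cdot(u,1))$ for every $\sigma\in\GL_2$. The key computational input is the truncated geometric series identity: modulo $z^{r+1}$ one has $P_u=\sum_i(uz)^i=(1-uz)^{-1}$, and more generally $\phi(x,y)=y^{r+1}/(y-xz)$ whenever $y\neq 0$.

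For assertion~\ref{DescInv}, both $P\mapsto\sigma(P)$ and $P\mapsto P(1/z)z^r$ are $\k$-linear endomorphisms of $W$ (the second being the coefficient reversal $\sum p_iz^i\mapsto\sum p_iz^{r-i}$), so it suffices to check they agree on each $P_u$. On one side $\sigma(P_u)=\phi(\sigma\cdot(u,1))=\phi(1,u)=\sum_i u^{r-i}z^i$; on the other $P_u(1/z)z^r=\sum_i u^iz^{r-i}=\sum_i u^{r-i}z^i$. They coincide, hence $\sigma(P)=P(1/z)z^r$ for all $P\in W$.

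For assertion~\ref{DescTrian}, since $\sigma$ is invertible we have $\alpha\delta\neq 0$, so $\beta z+\alpha$ is a unit in $\k[z]/(z^{r+1})$ and $\tfrac{\delta z}{\beta z+\alpha}$ lies in $z\,\k[z]/(z^{r+1})$; therefore substitution $z\mapsto\tfrac{\delta z}{\beta z+\alpha}$ is a ring endomorphism of $\k[z]/(z^{r+1})$, and the formula $\Lambda(Q)=\tfrac{\alpha}{\delta^r(\beta z+\alpha)}\,Q\!\left(\tfrac{\delta z}{\beta z+\alpha}\right)$ defines a $\k$-linear endomorphism of $W$. Condition $(\star)$ is exactly $\Lambda(\sigma(P))=P$, so it suffices to verify $\Lambda\circ\sigma=\mathrm{id}_W$ on the family $\{P_u\}$. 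Using the geometric series identity with $y=\delta\neq 0$, $\sigma(P_u)=\phi(\alpha u+\beta,\delta)=\tfrac{\delta^{r+1}}{\delta-(\alpha u+\beta)z}$; applying the substitution $z\mapsto\tfrac{\delta z}{\beta z+\alpha}$ and clearing $\beta z+\alpha$, the denominator becomes $\delta(\beta z+\alpha)-(\alpha u+\beta)\delta z=\alpha\delta(1-uz)$, and then multiplication by $\tfrac{\alpha}{\delta^r(\beta z+\alpha)}$ collapses everything to $\tfrac{1}{1-uz}=P_u$. This shows $\sigma(P)$ satisfies $(\star)$; uniqueness of the solution of $(\star)$ follows since $\Lambda$ is injective, having the left inverse $\sigma$ on the finite-dimensional space $W$ (equivalently, the substitution $z\mapsto\tfrac{\delta z}{\beta z+\alpha}$ is a ring automorphism, with inverse $z\mapsto\tfrac{\alpha z}{\delta-\beta z}$).

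There is no serious obstacle here: the only points requiring care are that every identity must be read in $\k[z]/(z^{r+1})$, not in $\k[z]$ or $\k(z)$, and that the identity $\phi(x,y)=y^{r+1}/(y-xz)$ degenerates when $y=0$, which is precisely why the verification is carried out on $P_u=\phi(u,1)$ (where the second coordinate is $1$) rather than on an arbitrary image point. The remainder is the elementary denominator cancellation displayed above.
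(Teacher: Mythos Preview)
Your proof is correct, and for part~\ref{DescTrian} it is genuinely simpler than the paper's argument. Both proofs reduce to checking the claim on the spanning family $\phi(u,v)=\sum_i u^iv^{r-i}z^i$, but the paper then clears denominators by multiplying through by $(\beta z+\alpha)^{r+1}$, expands both sides as polynomials, compares coefficients of $z^p$ and then of $u^kv^{r-k}$, and is left with a binomial identity $\binom{r+1}{p-k}=\sum_{i=k}^p\binom{i}{i-k}\binom{r-i}{p-i}$ that requires a separate combinatorial lemma (proved by a bijection). Your key observation that $\phi(x,y)=y^{r+1}/(y-xz)$ in $\k[z]/(z^{r+1})$ turns the verification into a one-line rational-function cancellation and bypasses the combinatorics entirely; the price is only the need to justify that substitution $z\mapsto \delta z/(\beta z+\alpha)$ is a ring endomorphism of $\k[z]/(z^{r+1})$, which you do correctly since the image lies in the maximal ideal $(z)$. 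For part~\ref{DescInv} the two proofs are essentially the same.
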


\begin{proof}
\ref{DescInv} The action of $\sigma$ on $V$ being $(u,v)\mapsto (v,u)$, the action on $P\in \k[z]_{\leq r}$ sends $\sum_{i=0}^r a_i z^i$ onto $\sum_{i=0}^r a_{r-i} z^i$, which is exactly $P\mapsto P(z^{-1})z^r$.

\ref{DescTrian}
We first  observe that Equation~\eqref{equationInvertPPprim} uniquely determines $\hat{P}$ in terms of $P$ since $\alpha$, $\delta$, and $\beta z+\alpha$ are invertible in $\k[z,z^{-1}]/(z^{r+1})$ (because $\alpha\delta\not=0$). We then check that if Equation~\eqref{equationInvertPPprim} is true for $P,Q\in \k[z]_{\le r}$, then it is true for all $\mu P +\nu Q$ with $\mu,\nu\in \k$. We thus only need to check it for $P=\sum_{i=0}^r u^{i}v^{r-i}\cdot z^i$, where $(u,v)\in \k^2$. By the definition of the action, we get $\hat{P}=\sigma(P)=\sum_{i=0}^r (\alpha u+\beta v)^{i}(\delta v)^{r-i}\cdot z^i$, which yields
\[\begin{array}{rcl}
(\beta z +\alpha)^{r+1}\cdot P(z)&=&\sum_{i=0}^r u^{i}v^{r-i} z^i(\beta z+\alpha)^{r+1}, 
\\[1ex]
\frac{\alpha(\beta z +\alpha)^r}{\delta^r}\cdot\hat{P}\left(\frac{\delta z}{\beta z +\alpha}\right)&=&\frac{\alpha(\beta z +\alpha)^r}{\delta^r}\cdot \sum_{i=0}^r (\alpha u+\beta v)^{i}(\delta v)^{r-i}\cdot \left(\frac{\delta z}{\beta z +\alpha}\right)^i\\[1ex]
&=&  \sum_{i=0}^r \alpha (\alpha u +\beta v)^{i}(\beta z +\alpha)^{r-i}v^{r-i}z^i. 
\end{array}\]

Comparing the coefficients of $z^p$ for $p=0,\ldots,r-1$, we need to prove that
\[\begin{array}{rcl}
\sum_{i=0}^p u^{i}v^{r-i} \mybinom{r+1}{p-i}\beta^{p-i}\alpha^{r+1-(p-i)}&=&\sum_{i=0}^p 
(\alpha u +\beta v)^iv^{r-i}\mybinom{r-i}{p-i}\beta^{p-i}\alpha^{r+1-p}
\end{array}
\]
for each $p=0,\ldots,r-1$.
Comparing the coefficients of $u^{k}v^{r-k}$, we get
\[\begin{array}{rcl}\mybinom{r+1}{p-k}\beta^{p-k}\alpha^{r+1-(p-k)}&=&\sum_{i=k}^p \binom{i}{k}\beta^{i-k}\alpha^k\mybinom{r-i}{p-i}\beta^{p-i}\alpha^{r+1-p}, \end{array}\]
so the result follows from the next combinatoric lemma.
\end{proof}

\begin{lemma}\label{Lemm:Combinatoric}For all integers  $0\le k\le p\le r$, we have
\[\binom{r+1}{p-k}=\sum_{i=k}^p \binom{i}{i-k}\binom{r-i}{p-i}.\]
 \end{lemma}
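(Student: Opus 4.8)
The plan is to recognise Lemma~\ref{Lemm:Combinatoric} as a disguised form of the classical Vandermonde-type ("committee") identity and then to dispatch it by a short generating-function computation. First I would normalise the summand: applying the symmetry $\binom{n}{j}=\binom{n}{n-j}$ twice gives $\binom{i}{i-k}=\binom{i}{k}$ and $\binom{r-i}{p-i}=\binom{r-i}{r-p}$, so the right-hand side equals $\sum_{i=k}^{p}\binom{i}{k}\binom{r-i}{r-p}$. Now set $j=i-k$ and $l=p-i$; as $i$ runs over $\{k,\dots,p\}$, the pair $(j,l)$ runs over exactly the pairs of non-negative integers with $j+l=p-k$, and the summand becomes $\binom{j+k}{j}\binom{l+(r-p)}{l}$ (using here $r-p\ge0$). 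Thus the sum is the coefficient of $y^{p-k}$ in a Cauchy product of two power series.

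For the second step I would use the standard expansion $\binom{m+s}{m}=[y^{m}]\,(1-y)^{-(s+1)}$ (valid for $m,s\ge0$) to write
\[\Bigl(\textstyle\sum_{j\ge0}\binom{j+k}{j}y^{j}\Bigr)\Bigl(\textstyle\sum_{l\ge0}\binom{l+r-p}{l}y^{l}\Bigr)=(1-y)^{-(k+1)}(1-y)^{-(r-p+1)}=(1-y)^{-(k+r-p+2)}.\]
Extracting the coefficient of $y^{p-k}$ then yields $\binom{(p-k)+(k+r-p+2)-1}{p-k}=\binom{r+1}{p-k}$, which is precisely the asserted identity. The hypothesis $0\le k\le p\le r$ enters only to guarantee $p-k\ge0$ and $r-p\ge0$, so that every binomial coefficient and series identity above is the genuinely non-negative (and hence unambiguous) one.

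I do not expect a real obstacle here: the content is pure bookkeeping. The only point that needs a little care is the index translation — checking that $\{i:k\le i\le p\}$ is in bijection with $\{(j,l)\in\Z_{\ge0}^{2}:j+l=p-k\}$ — together with the edge cases $k=p$ and $p=r$, all of which the argument above handles uniformly. If one prefers to avoid generating functions, the second step can instead be carried out by the usual double-counting proof of $\sum_{j}\binom{j}{k}\binom{r-j}{r-p}=\binom{r+1}{k+(r-p)+1}=\binom{r+1}{p-k}$: count the $(k+r-p+1)$-element subsets of $\{1,\dots,r+1\}$ according to the value $j+1$ of their $(k+1)$-st smallest element, which leaves $\binom{j}{k}$ choices for the elements below it and $\binom{r-j}{r-p}$ for those above it.
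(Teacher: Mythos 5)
Your proof is correct, but it takes a different route from the paper. After the symmetry normalisation $\binom{i}{i-k}=\binom{i}{k}$, $\binom{r-i}{p-i}=\binom{r-i}{r-p}$ and the shift $j=i-k$, $l=p-i$, your main argument is the generating-function convolution $(1-y)^{-(k+1)}(1-y)^{-(r-p+1)}=(1-y)^{-(k+r-p+2)}$, extracting the coefficient of $y^{p-k}$; all the steps check, including the hypotheses $p-k\ge 0$, $r-p\ge 0$ needed for the nonnegative-index expansions, and the edge cases $k=p$, $p=r$ come out right. The paper instead gives a bijective double count: it interprets both sides as counting $(p-k)$-element subsets $C\subset\{0,\dots,r\}$, decomposed as $A\cup B$ with a distinguished index $i\notin C$ such that $\lvert C\cap\{0,\dots,i-1\}\rvert=i-k$, and the only nontrivial point there is the uniqueness of $i$, which is settled by a monotonicity argument on the function $i\mapsto (i-k)-\lvert C\cap\{0,\dots,i-1\}\rvert$. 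Interestingly, the alternative you sketch at the end — counting $(k+r-p+1)$-subsets of an $(r+1)$-set by the value of their $(k+1)$-st smallest element — is essentially the paper's bijection read off on the complementary subsets (your distinguished element is exactly the paper's index $i$, which is the $(k+1)$-st smallest element of the complement of $C$), and in that complementary formulation the uniqueness is automatic, so it slightly streamlines the paper's argument. The generating-function route buys a purely mechanical verification with no bijection to justify; the combinatorial route buys a self-contained argument requiring no power-series formalism.
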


\begin{proof}
The sides of the equations count the number of elements of the sets
 \[\begin{array}{rcl}
 \mathcal{S}&=&\vphantom{\Big)}\{C\subset \{0,\ldots ,r \}\mid C\mbox{ contains }p-k\mbox{ elements}\},\\
 \mathcal{S}'&=&\left\{(i,A,B)\left| \begin{array}{ll}
 i\in \{k,\ldots,p\}, A\subset \{0,\ldots,i-1\}, B\subset \{i+1,\ldots,r\},\\
 A\mbox{ contains }i-k \mbox{ elements}, B\mbox{ contains }p-i \mbox{ elements}\end{array} \right\},\right.\end{array}\]
 so it remains to prove that the map $\mathcal{S}'\to \mathcal{S}$, $(i,A,B)  \mapsto  A\cup B$ is bijective. This corresponds to showing that for each $C\in \mathcal{S}$, there exists a unique $i\in \{k,\ldots,p\}\setminus C$ such that $C\cap \{0,\ldots,i-1\}$ contains $i-k$ elements. This is because the map
 \[
 \tau\colon \{k,\ldots,p\}\to \N,\quad i\mapsto (i-k)-\lvert C\cap \{0,\ldots,i-1\}\rvert
 \]
 is non-decreasing, is increasing outside $C$, and satisfies $\tau(k)\le 0\le\tau(p)$.
\end{proof}

\begin{lemma}\label{Lemm:ActionAutFaParSpace}
Let $a \ge 0, b\ge 1, c\ge 2$. Let $\pi\colon Z_{a}^{b,c,P}\to \F_a$ be the $\p^1\!$-bundle induced by $P(y_0,y_1,z)=\sum_{i=0}^b y_0^iy_1^{b-i}P_i(z)z^{ai+1}$ with $ P_i\in \k[z]_{\le c-2-ai}$ $($Notation~$\ref{Not:ZabcP})$.

For each $\varphi\in \Autz(\F_a)$, the $\p^1\!$-bundle $\varphi \pi$ is isomorphic to $Z_{a}^{b,c,\hat{P}}\to \F_a$, where $\hat{P}$ is defined by 
$\hat{P}(y_0,y_1,z)=\sum_{i=0}^b y_0^iy_1^{b-i}\hat{P}_i(z)z^{ai+1}$, with $\hat{P}_i \in \k[z,z^{-1}]$ given as follows: 
\begin{enumerate}
\item\label{ActionGL2}
If $\varphi([y_0:y_1;z_0:z_1])=[y_0:y_1;\alpha z_0+\beta z_1:\gamma z_0+\delta z_1]$ for some $\sigma=\begin{bsmallmatrix}\alpha&\beta\\ \gamma&\delta\end{bsmallmatrix}\in \GL_2$, then $\hat{P}_i= \sigma(P_i)$ for each $i$, where the action of $\,\GL_2$ on the vector space $\k[z]_{\le c-ai}$ is that of Definition~$\ref{GL2 action}$.
\item
\label{ActionGL20}
If $a=0$ and $\varphi([y_0:y_1;z_0:z_1])=[\alpha y_0+\beta y_1;\gamma y_0+\delta y_1:z_0:z_1]$ for some $\alpha,\beta,\gamma,\delta\in \k$ with $\alpha\delta-\beta\gamma\not=0$, then $\hat{P}(y_0,y_1,z)$ satisfies
$P(y_0,y_1,z)=\hat{P}(\alpha y_0+\beta y_1,\gamma y_0+\delta y_1,z)$.
\item
\label{ActionFap}
If $a\ge 1$ and $\varphi([y_0:y_1;z_0:z_1])=[ y_0:y_1+y_0R(z_0,z_1):z_0:z_1]$ for some $R\in \k[z_0,z_1]_a$, then $\hat{P}$ is such that $P(y_0,y_1,z)=\hat{P}(y_0,y_1+y_0R(z,1),z)$.
\end{enumerate}
\end{lemma}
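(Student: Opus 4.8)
The strategy is to compute directly how the gluing automorphism $\nu_{c,P}$ that defines $Z_a^{b,c,P}$ changes after precomposing the bundle projection $\pi$ with $\varphi \in \Aut(\F_a)$, and then to match the result against Notation~\ref{Not:ZabcP} using Lemma~\ref{Lemm:EquivalenceClassP} (equivalence class of $P$ modulo the subgroup generated by scalars, $Q_1(y_0,y_1,z)z^c$, and $Q_2(y_0z^a,y_1,\tfrac1z)$). Recall that $Z_a^{b,c,P}$ is obtained by gluing two copies $W_0, W_1$ of $\F_b \times \A^1$, where on $W_j$ the point $([x_0:x_1;y_0:y_1],z)$ maps under $\pi$ to $[y_0:y_1;1:z]$ (resp.\ $[y_0:y_1;z:1]$). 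Composing with $\varphi$ changes these target points; the key point is that in each of the three cases $\varphi$ is of a special form that either permutes/reparametrizes the base $\p^1$ (the $z$-coordinate), or acts only on the $y$-coordinates, so that one can re-coordinatize the two charts $W_0, W_1$ back into the standard form and read off the new transition function. I would treat the three cases separately.

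\emph{Case \ref{ActionGL2} (action of $\GL_2$ on the $z$-coordinate).} Here $\varphi$ acts on $\p^1$ via $\sigma = \begin{bmatrix}\alpha&\beta\\\gamma&\delta\end{bmatrix}$, i.e.\ $[z_0:z_1]\mapsto[\alpha z_0+\beta z_1:\gamma z_0+\delta z_1]$. I would first reduce to the generators $\sigma$ of $\GL_2$: the diagonal (and unipotent lower-triangular) ones, together with the involution $\sigma_0 = \begin{bmatrix}0&1\\1&0\end{bmatrix}$ — since $\Aut(\p^1)$ is generated by an upper-triangular Borel and $\sigma_0$, it suffices to verify equivariance on these, and for the upper-triangular part the action was already normalized in the proof of Proposition~\ref{Prop:EquivABC}. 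For $\sigma_0$, which swaps $z \leftrightarrow 1/z$, the new transition function is $\nu_{c,P}^{-1}$ read in swapped charts; unwinding this gives precisely $\hat P_i(z) = P_i(1/z)z^{c-ai}$ up to the allowed equivalence, which by Lemma~\ref{Lemm:TechnicalAction}\ref{DescInv} is $\sigma_0(P_i)$ in $\k[z]_{\le c-ai}$ (note the degree bound: $P_i$ has degree $\le c-2-ai$, but after multiplying by $z^{ai+1}$ inside $P$ and tracking the $z^c$-twist the relevant symmetric power is $\k[z]_{\le c-ai}$, matching Remark~\ref{Remark rep is Sb(V)}). For a general upper-triangular $\sigma=\begin{bmatrix}\alpha&\beta\\0&\delta\end{bmatrix}$, substituting $z \mapsto \delta z/(\beta z+\alpha)$ in the base and clearing denominators into the allowed form $Q_1 z^c + Q_2(\cdot,\tfrac1z)$ yields exactly the characterization~\eqref{equationInvertPPprim} of $\sigma(P_i)$ from Lemma~\ref{Lemm:TechnicalAction}\ref{DescTrian}. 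Composing, one gets the claim for all $\sigma \in \GL_2$.

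\emph{Cases \ref{ActionGL20} and \ref{ActionFap} (action on the $y$-coordinates).} These are more elementary because such $\varphi$ fix the $\p^1$-base pointwise (in the $z$-coordinate), so the gluing locus $\F_b \times \A^1\setminus\{0\}$ and the twist $z^c$ are unchanged; only the section $x_0 = 0$ and the polynomial $P$ are transported. In case \ref{ActionGL20} ($a=0$, so $\F_0 = \p^1\times\p^1$ and $\varphi$ is a linear change on $[y_0:y_1]$), precomposition simply substitutes the new $y$-coordinates into $P$, giving $P(y_0,y_1,z) = \hat P(\alpha y_0+\beta y_1, \gamma y_0+\delta y_1, z)$; one checks this substitution preserves the shape $\sum y_0^i y_1^{b-i}\hat P_i(z)z^{i+1}$ modulo the equivalence of Lemma~\ref{Lemm:EquivalenceClassP} (here $a=0$ so $z^{ai+1}=z$ throughout, and homogeneity of degree $b$ in $y_0,y_1$ is preserved). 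In case \ref{ActionFap} ($a\ge1$, $\varphi\colon y_1 \mapsto y_1 + R(z_0,z_1)y_0$), the substitution $y_1 \mapsto y_1 + y_0 R(z,1)$ has polynomial inverse, preserves homogeneity, and again only shuffles $P$ into $\hat P$ by the stated relation; I would verify that the result, after collecting into the normal form of Corollary~\ref{Coro:IsoClass}\ref{Pgiven}, stays in $Z_a^{b,c,\hat P}$-shape.

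\emph{Main obstacle.} The substantive work is all in Case \ref{ActionGL2}, and within it the general upper-triangular $\sigma$: one must carefully track how the transition function $\nu_{c,P}$ transforms when the base coordinate $z$ is replaced by a Möbius transformation, clear the resulting rational functions into the precise allowed form $\lambda P + Q_1 z^c + Q_2(y_0z^a,y_1,\tfrac1z)$ dictated by Lemma~\ref{Lemm:EquivalenceClassP}, and recognize the surviving polynomial as $\sigma(P_i)$ — this is exactly why Lemma~\ref{Lemm:TechnicalAction} (and the combinatorial Lemma~\ref{Lemm:Combinatoric} behind it) was proved first. The bookkeeping of the twist $z^{ai+1}$ versus the symmetric-power degree $c-ai$ is the one place where an off-by-one error is easy; once that is pinned down, the three cases assemble the full $\Aut(\F_a)$-action since these $\varphi$'s generate $\Aut(\F_a)$ (Remarks~\ref{Rem:AutF0}, \ref{Rem:AutFa}).
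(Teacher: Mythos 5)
Your proposal follows essentially the same route as the paper's proof: compute the new transition function on generators of $\Aut(\F_a)$ (upper-triangular and the swap in $\GL_2$, plus the $y$-substitutions in cases \ref{ActionGL20}--\ref{ActionFap}), reduce modulo the equivalence of Lemma~\ref{Lemm:EquivalenceClassP}, and identify the result with $\sigma(P_i)$ via Lemma~\ref{Lemm:TechnicalAction}. The only slip is the exponent in the swap case, which should be $\hat{P}_i=P_i(\tfrac1z)z^{c-2-ai}$ with the relevant representation being $\k[z]_{\le c-2-ai}$ (the $\k[z]_{\le c-ai}$ in the statement is a typo of the paper itself), a bookkeeping point you correctly flag as the place where care is needed.
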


\begin{proof}
Recall that the transition function is given by $\nu_{c,P}\in\Aut(\F_b\times \A^1\setminus \{0\})$, 
\[\begin{array}{c}
\nu_{c,P}\colon \big([x_0:x_1;y_0:y_1],z\big) \mapsto \left([x_0:x_1z^{c}+x_0 P(y_0,y_1,z);y_0z^a:y_1],\frac{1}{z}\right)\end{array}\]
and that the morphism $\pi\colon Z_{a}^{b,c,P}\to \F_a$ is given on the two charts by $\tau_0,\tau_1\colon \F_b\times \A^1  \to \F_a$, which send $([x_0:x_1;y_0:y_1],z)$ onto, respectively,  $[y_0:y_1;1:z]$ and $[y_0:y_1;z:1]$.

We construct a transition function $\nu_{c,P'}$ of the $\p^1\!$-bundle $\varphi \pi\colon X\to \F_a$ for some $\varphi\in \Aut(\F_a)$ and show that it is equivalent to that of $\hat{P}$ (modulo the equivalence described in Lemma~\ref{Lemm:EquivalenceClassP}). To do this, we find corresponding birational maps $\theta_0$ and $\theta_1$ compatible with the following commutative diagram:
\begin{displaymath}
 \xymatrix@R=4mm@C=10mm{
 &&& \F_b \times \A^1\ar@{-->}[llld]_(.4){\theta_0} \ar@{-->}[rr]^{\nu_{c,P}} \ar[dddr]_{\tau_0}&&   \F_b\times \A^1\rlap{.}\ar[dddl]^{\tau_1}\ar@{-->}[llld]^(.4){\theta_1}\\ 
  \F_b \times \A^1 \ar@{-->}[rr]_{\nu_{c,P'}}\ar[dddr]_{\tau_0} &&   \F_b \times \A^1\ar[dddl]^{\tau_1}  \\
\\
  &&&&\F_a\ar[llld]_{\simeq}^{\varphi}\\
  &\F_a
}
\end{displaymath}

\ref{ActionGL2}
When $\varphi([y_0:y_1;z_0:z_1])=[y_0:y_1;\alpha z_0+\beta z_1:\gamma z_0+\delta z_1]$ for some $\sigma=\begin{bsmallmatrix}
\alpha & \beta\\
\gamma& \delta\end{bsmallmatrix}\in \GL_2$, the action of $\varphi$ on $\F_a$ corresponds to 
\[\begin{array}{llll}
\ [y_0:y_1;1:z]&\mapsto &[y_0:y_1;\alpha+\beta z :\gamma+\delta z]=\big[(\alpha+\beta z)^{a}y_0:y_1;1:\frac{\gamma +\delta z}{\alpha +\beta z}\big],\\
\ [y_0:y_1;z:1]&\mapsto &[y_0:y_1; \alpha z +\beta: \gamma z+\delta ]=\big[( \gamma z +\delta)^{a}y_0:y_1;\frac{\alpha z+\beta}{\gamma z +\delta}:1\big]\vphantom{\Big)}\end{array}\] on the two charts. To check that the action we gave on the moduli space is the right one, we only need to check it for generators of $\GL_2$.

$(i)$ We first do the case where $\gamma=0$ (upper-triangular matrices). The second chart of $\F_a$ is preserved, and $\varphi$ corresponds to 
\[\begin{array}{llllll}
\ [y_0:y_1;1:z]&\mapsto &[y_0:y_1;\alpha+\beta z :\delta z]&=&\big[(\alpha+\beta z)^{a}y_0:y_1;1:\frac{\delta z}{\alpha +\beta z}\big],\\
\ [y_0:y_1;z:1]&\mapsto &[y_0:y_1; \alpha z +\beta: \delta ]&=&\big[\delta^{a}y_0:y_1;\frac{\alpha z+\beta}{\delta}:1\big]\vphantom{\Big)}\end{array}\] on the two charts. It then suffices to choose
\[\begin{array}{rccc}
\theta_1\colon & \F_b\times \A^1 & \iso & \F_b\times \A^1\\
&([x_0:x_1;y_0:y_1],z)&\mapsto & \big([x_0:x_1:\delta^{a} y_0:y_1],\frac{\alpha z+\beta}{\delta}\big)\end{array}\]
and to choose a transition function $P'(y_0,y_1,z)\in \k[y_0,y_1,z]$ such that $\theta_0= (\nu_{c,P'})^{-1}\theta_1 \nu_{c,P}$ is a local isomorphism at each point where $z=0$. We compute that $\theta_0([x_0:x_1;y_0:y_1],z)$ is equal to
\[\left(\left[x_0:\left(\frac{\beta z+\alpha}{\delta}\right)^{c}\left(x_1+\frac{x_0}{z^{c}}R(y_0,y_1,z)\right):(\beta z+\alpha)^{a} y_0:y_1\right],\frac{\delta z}{\beta z +\alpha}\right),\]
where $R(y_0,y_1,z)=P(y_0,y_1,z)-P'\big(y_0(\beta z+\alpha)^a,y_1,\frac{\delta z}{\beta z +\alpha}\big)$. We then only need to choose the transition function $P'(y_0,y_1,z)\in \k[y_0,y_1,z]$ so that $\theta_0$ is a local isomorphism at each point where $z=0$, which corresponds to saying that the valuation of $z$ at $R$ is at least $c$.
We will observe that such a $P'$ exists in the equivalence class of $\hat{P}$. Writing $P'(y_0,y_1,z)=\sum_{i_0}^{b} y_0^i y_1^{b-i} P'_i(z) z^{ai+1}$, we get
\[\begin{array}{rcl}
P'(y_0\big(\beta z+\alpha)^a,y_1,\frac{\delta z}{\beta z +\alpha}\big)&=&\sum\limits_{i=0}^b {y_0}^i{y_1}^{b-i}\frac{\delta^{ai+1} P'_i\left(\frac{\delta z}{\beta z +\alpha}\right)}{\beta z +\alpha}z^{ai+1}, \end{array}\]
and then we need to choose the $P'_i$ such that $\frac{\delta^{ai+1} P'_i({\delta z}/({\beta z +\alpha}))}{\beta z +\alpha}-P_i(z)$ has valuation at least $c$ at $z$. Since $\beta z+\alpha$ is invertible in $\k[z]/(z^{c-ai-1})$, we find a unique solution in $\k[z]/(z^{c-ai-1})$, equal to $\frac{\alpha}{\delta^{c-1}}\sigma(P_i)$ by Lemma~\ref{Lemm:TechnicalAction}\ref{DescTrian}.

$(ii)$ It remains to consider the case where $\sigma=\begin{bsmallmatrix}
0 & 1 \\ 1 & 0
\end{bsmallmatrix}$. The two charts are exchanged here, so it suffices to choose $\nu_{c,P'}=(\nu_{c,P})^{-1}$, which yields $P'(y_0,y_1,z)=-P(y_0z^a,y_1,z^{-1})z^{c}$, which is equivalent to 
\[\begin{array}{rcl}
P\big(y_0z^a,y_1,\frac{1}{z}\big)z^{c}&=&\sum_{i=0}^b (y_0z^a)^iy_1^{b-i}P_i\big(\frac{1}{z}\big)\big(\frac{1}{z}\big)^{ai+1}z^{c}\\[1ex]
&=&\sum_{i=0}^b y_0^iy_1^{b-i}\bigl(P_i\big(\frac{1}{z}\big)\,z^{c-ai-2}\bigr)\,z^{ai+1}\\[1ex]
&\stackrel{\text{Lemma}~\ref{Lemm:TechnicalAction}\ref{DescInv}}{=}&\sum_{i=0}^b y_0^iy_1^{b-i}\sigma(P_i)\,z^{ai+1}.
\end{array}\]

\ref{ActionGL20}
Now suppose $a=0$ and that $\varphi([y_0:y_1;z_0:z_1])=[\alpha y_0+\beta y_1;\gamma y_0+\delta y_1:z_0:z_1]$. Choosing $\theta_1=\theta_2\colon ([x_0:x_1;y_0:y_1],z)\mapsto ([x_0:x_1: \alpha y_0+\beta y_1:\gamma y_0+\delta y_1],z)$, we get $\nu_{c,\hat{P}} \theta_1=  \theta_2 \nu_{c,P}$ with $P(y_0,y_1,z)=\hat{P}(\alpha y_0+\beta y_1,\gamma y_0+\delta y_1,z)$.

\ref{ActionFap}
Now suppose  $a>0$ and  $\varphi([y_0:y_1;z_0:z_1])=[ y_0:y_1+y_0R(z_0,z_1):z_0:z_1]$ for some $R\in \k[z_0,z_1]_a$. We then choose $\theta_1\colon ([x_0:x_1;y_0:y_1],z)\mapsto ([x_0:x_1: y_0:y_1+y_0R(z,1)],z)$ and $\theta_2\colon ([x_0:x_1;y_0:y_1],z)\mapsto$ $([x_0:x_1: y_0:y_1+y_0 R(1,z)],z)$ and get $\nu_{c,\hat{P}} \theta_1=  \theta_2 \nu_{c,P}$ with $P(y_0,y_1,z)=\hat{P}(y_0,y_1+y_0R(z,1),z)$.
\end{proof}

\begin{corollary} \label{cor action on Mabc}
Let $a,b,c\in \Z$, with $a\ge 0$, $b\ge1$, and $c\ge 2$. We recall that $\GL_2$ acts on $\k[z]_{\leq r}$ $($see Definition~$\ref{GL2 action})$ and on $\k[y_0,y_1]_b$ when $a=0$ $($see Lemma~$\ref{Lemm:ActionAutFaParSpace}\ref{ActionGL20})$.
We have the following equivariant isomorphisms: 
\begin{enumerate}
\item\label{GL2SV}  If $a\ge 1$, then  $\M_{a}^{b,c} \simeq \P \left( \bigoplus_{i=0}^{b}  y_0^{i}y_1^{b-i} \cdot k[z]_{\leq c-2-ai} \right)$ as $\GL_2$-varieties. 
\item\label{GL2SVVp} If $a=0$, then as $\GL_2 \times \GL_2$-varieties, 
$$\M_{0}^{b,c} \simeq \P\big( k[y_0,y_1]_b \otimes k[z]_{\leq c-2}\big) = \P\big( \Hom((k[y_0,y_1]_b)^*, k[z]_{\leq c-2})\big).$$ 
Moreover, if $b=c-2$, then $\Hom^{\GL_2}((k[y_0,y_1]_b)^*, k[z]_{\leq b})=\{\lambda I; \lambda \in k\}$, and the identity element corresponds to $Z_{a}^{b,b+2,P}$, with 
\begin{center}$P=\sum_{i=0}^b y_0^iy_1^{b-i} P_i(z)z\;$
and $\;P_i(z)=z^i$ for $i=1,\ldots,b$.
\end{center}
\end{enumerate}
\end{corollary}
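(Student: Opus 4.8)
The plan is to read off the two isomorphisms directly from Corollary~\ref{Cor:modspace} and Lemma~\ref{Lemm:ActionAutFaParSpace}, and to treat the supplementary assertion ($b=c-2$) by Schur's lemma.

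First, Corollary~\ref{Cor:modspace} (via Corollary~\ref{Coro:IsoClass}\ref{Pgiven}) identifies $\M_{a}^{b,c}$, \emph{as a variety}, with $\P(W)$ where $W=\bigoplus_{i=0}^{b}y_0^iy_1^{b-i}\cdot\k[z]_{\le c-2-ai}$, the point $[P_0:\dots:P_b]$ corresponding to the $\p^1$-bundle $Z_{a}^{b,c,P}$ with $P=\sum_i y_0^iy_1^{b-i}P_i(z)z^{ai+1}$. I would then identify the natural $\Autz(\F_a)$-action. For $a\ge 1$ the acting group is the subgroup $\GL_2\subset\Autz(\F_a)$ of Remark~\ref{Rem:AutFa} acting on the base $\p^1$ of $\tau_a$ through $\sigma$; by Lemma~\ref{Lemm:ActionAutFaParSpace}\ref{ActionGL2} it sends $[P_0:\dots:P_b]$ to $[\sigma(P_0):\dots:\sigma(P_b)]$, where $\sigma$ acts on each $\k[z]_{\le c-2-ai}$ by the representation of Definition~\ref{GL2 action} (the normalising scalars appearing in the proof of that lemma are the same for every summand, hence harmless after projectivising). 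This is precisely the $\GL_2$-action on $\P(W)$ induced by the linear action on $W$, which is \ref{GL2SV}. When $a=0$ every summand of $W$ equals $\k[z]_{\le c-2}$, so $W=\k[y_0,y_1]_b\otimes\k[z]_{\le c-2}$; the acting group is now $\GL_2\times\GL_2$, the second factor acting on the $z$-part as above and the first acting, by Lemma~\ref{Lemm:ActionAutFaParSpace}\ref{ActionGL20}, by substitution on $\k[y_0,y_1]_b$. Combined with the canonical identification $\P(A\otimes B)=\P(\Hom(A^*,B))$ this yields \ref{GL2SVVp}.

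For the last part, suppose $b=c-2$, so $\k[z]_{\le c-2}=\k[z]_{\le b}$. Since $\car\k=0$, both $(\k[y_0,y_1]_b)^*$ and $\k[z]_{\le b}$ are irreducible $\GL_2$-modules of dimension $b+1$ (Remark~\ref{Remark rep is Sb(V)}). I would check that they are isomorphic by comparing eigenvalues of a diagonal torus $\mathrm{diag}(t,s)$: the functional $e_i$ dual to the monomial $y_0^iy_1^{b-i}$ has eigenvalue $t^is^{b-i}$, and by Definition~\ref{GL2 action} so does $z^i$; hence $e_i\mapsto z^i$ is a $\GL_2$-equivariant isomorphism, and by Schur's lemma $\Hom^{\GL_2}((\k[y_0,y_1]_b)^*,\k[z]_{\le b})$ is one-dimensional, spanned by this map $I$. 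Finally, tracing $I$ back through the identifications $\Hom((\k[y_0,y_1]_b)^*,\k[z]_{\le b})=\k[y_0,y_1]_b\otimes\k[z]_{\le b}=\bigoplus_{i=0}^{b}y_0^iy_1^{b-i}\cdot\k[z]_{\le b}$ shows that $I$ corresponds to the tuple $(P_0,\dots,P_b)$ with $P_i(z)=z^i$, that is, to the $\p^1$-bundle $Z_{0}^{b,b+2,P}$ displayed in the statement (which one recognises as $\TT_b=\kappa^*\SS_b$).

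Apart from the translation through Corollary~\ref{Cor:modspace} and Lemma~\ref{Lemm:ActionAutFaParSpace}, which is formal, the two points that need care are: verifying that the scalar factors produced in the proof of Lemma~\ref{Lemm:ActionAutFaParSpace} do not depend on $i$ or on the point (so that they are invisible on $\P(W)$), and the weight computation identifying $(\k[y_0,y_1]_b)^*$ with $\k[z]_{\le b}$. The latter is the heart of the matter: it is what forces $\Hom^{\GL_2}$ to be one-dimensional exactly when $b=c-2$ (for $b\neq c-2$ the two irreducibles have different dimensions and this $\Hom$-space vanishes), and hence pins down the distinguished $\Autz(\F_0)$-fixed point of $\M_{0}^{b,b+2}$.
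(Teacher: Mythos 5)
Your reduction of \ref{GL2SV} and of the first half of \ref{GL2SVVp} to Corollary~\ref{Cor:modspace} and Lemma~\ref{Lemm:ActionAutFaParSpace} is the same as the paper's, and your observation about the scalars in the proof of that lemma being uniform in $i$ is fine (the lemma already absorbs them into the projective class). The gap is in the step you yourself call the heart of the matter: from the fact that $e_i$ and $z^i$ have the same weight $t^is^{b-i}$ under $\mathrm{diag}(t,s)$ you conclude that ``$e_i\mapsto z^i$ is a $\GL_2$-equivariant isomorphism''. Matching torus weights only gives $T$-equivariance; since all weight spaces are one-dimensional it shows that \emph{any} intertwiner must be of the form $e_i\mapsto \lambda_i z^i$, but it cannot determine the $\lambda_i$. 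For instance the map $e_i\mapsto\binom{b}{i}z^i$ has exactly the same behaviour under the torus (and also under the swap matrix, since $\binom{b}{i}=\binom{b}{b-i}$), and with other common normalisations of the symmetric-power action it, not $e_i\mapsto z^i$, would be the equivariant one. This is not a cosmetic issue here: by Corollary~\ref{Coro:IsoClass}\ref{Pgiven}, the tuples $(z^i)_i$ and $(\binom{b}{i}z^i)_i$ are \emph{different} points of $\M_0^{b,b+2}$ for $b\ge 2$, and the whole point of the last assertion is to pin down which point is the $H_\Delta$-fixed one (it is later identified with $\TT_b$, hence with the pullback of the Schwarzenberger bundle). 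So Schur's lemma gives you the one-dimensionality of $\Hom^{\GL_2}$, but not the displayed $P$.

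To close the gap you must test equivariance against elements outside the normaliser of the torus, e.g. the unipotent upper-triangular matrices. The paper does this by introducing the coefficient pairing $\phi\bigl(\sum_j c_jy_0^jy_1^{b-j},\sum_i d_iz^i\bigr)=\sum_l c_ld_l$ and checking its $\GL_2$-invariance on the generators $\begin{bmatrix}0&1\\1&0\end{bmatrix}$ and $\begin{bmatrix}\alpha&\beta\\0&\delta\end{bmatrix}$; this exhibits $k[z]_{\le b}$ as the dual of $k[y_0,y_1]_b$ with $\{1,z,\dots,z^b\}$ dual to $\{y_1^b,\dots,y_0^b\}$, which is exactly the normalisation your argument leaves undetermined. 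A quick way to carry out this verification: by the very definitions one has $\phi\bigl(P,\sum_i u^iv^{b-i}z^i\bigr)=P(u,v)$, and since the action on $k[z]_{\le b}$ is defined by equivariance of $(u,v)\mapsto\sum_i u^iv^{b-i}z^i$ while the action on $k[y_0,y_1]_b$ is by substitution, invariance of $\phi$ follows at once on the spanning set $\bigl\{\sum_i u^iv^{b-i}z^i\bigr\}_{(u,v)\in k^2}$ and hence everywhere by bilinearity. With that (or the paper's generator computation) in place, the rest of your argument goes through.
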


\begin{proof}
Assertion~\ref{GL2SV} and the first part of~\ref{GL2SVVp} follow from Corollary~\ref{Cor:modspace} and Lemma~\ref{Lemm:ActionAutFaParSpace}.

We now assume  $b=c-2$ and define a non-degenerate bilinear form $\phi$ as follows: 
$$\begin{array}{cccccc}
\phi : & k[y_0,y_1]_b &\times &k[z]_{\leq b} &\to &k \\
       &\big( \sum_{j=0}^{b} c_j y_0^j y_1^{b-j} &, &\sum_{i=0}^{b} d_i z^i \big) &\mapsto &\sum_{l=0}^{b} c_l d_l.\\
      \end{array}$$
The map $\phi$ is $\GL_2$-invariant. Indeed, by bilinearity, it suffices to check that for all $g \in \GL_2$, for all $j=0,\ldots,b$, and for all $(u,v) \in k^2$, we have
$$\phi \left(g \cdot y_0^j y_1^{b-j}, g \cdot \sum_{i=0}^{b} u^i v^{b-i} z^i \right)=\phi \left( y_0^j y_1^{b-j}, \sum_{i=0}^{b} u^i v^{b-i} z^i \right)=u^jv^{b-j}.$$
This can be checked directly for $g=\begin{bsmallmatrix}
0 & 1 \\ 1 & 0
\end{bsmallmatrix}$ and $g=\begin{bsmallmatrix}
\alpha & \beta \\ 0 & \delta
\end{bsmallmatrix}$. Since these elements generate $\GL_2$, the $\GL_2$-invariance of $\phi$ follows.

Therefore, the $\GL_2$-representations $\k[y_0,y_1]_b$ and $\k[z]_{\leq b}$ are dual to each other. Since they are both irreducible (a consequence of Remark~\ref{Remark rep is Sb(V)}), it follows from Schur's lemma that $\Hom^{\GL_2}((k[y_0,y_1]_b)^*, k[z]_{\leq b})=\{\lambda I; \lambda \in k\}$. Identifying $\k[z]_{\leq b}$ with $(\k[y_0,y_1]_b)^*$ via the map $\phi$ above, we see that $\{1,z,\ldots,z^b\}$ is the dual basis of $\{y_1^b, y_0 y_1^{b-1},\ldots,y_0^b\}$. Hence the identity element corresponds to $\sum_{i=0}^b y_0^iy_1^{b-i} z^i$ as an element of $\k[y_0,y_1]_b \otimes k[z]_{\leq b}$, and so it corresponds to the $\p^1\!$-bundle $Z_a^{b,b+2,P}$, with $P$ as in the statement of the corollary.
\end{proof}

\begin{remark} \label{rk:proof shortened in char zero}
In the proof of Corollary~\ref{cor action on Mabc}, we use the fact that the ground field is of characteristic zero to say that the $\GL_2$-representations $\k[y_0,y_1]_b$ and $\k[z]_{\leq b}$ are irreducible (this is used to prove the last part of the statement). The statement of the corollary is true over any algebraically closed field of arbitrary characteristic, but a general proof is slightly more complicated since we can no longer invoke Schur's lemma to prove that $\Hom^{\GL_2}((k[y_0,y_1]_b)^*, k[z]_{\leq b})=\{\lambda I; \lambda \in k\}$, and we have to do the calculation by hand (using Lemma~\ref{Lemm:ActionAutFaParSpace}).
\end{remark}

\subsection{The \texorpdfstring{$\boldsymbol{\p^1}$}{P1}-bundles \texorpdfstring{$\boldsymbol{\TT_b\to \p^1\times \p^1}$}{HatSb -> F0}}\label{Sec:TTb}

We now study two families of non-decomposable $\p^1\!$-bundles $X\to \F_a$, in \S~\ref{Sec:TTb} and \S~\ref{Sec:Umemurabundles}, which will play an important role further on (see Proposition~\ref{prop decompo and Umemura bundles}).

\begin{definition}\label{def hat Schwrzenberger}
For each integer $b\ge 1$, we define $\TT_b\to \F_0=\p^1\times\p^1$ to be the $\p^1\!$-bundle $Z_{0}^{b,b+2,P}\to \F_0$, where $P=\sum_{i=0}^b y_0^iy_1^{b-i} P_i(z)z$
and $P_i(z)=z^{i}$.
\end{definition}

\begin{remark}
  The $\p^1\!$-bundle  $\TT_b\to \F_0$ naturally arises in Corollary~\ref{cor action on Mabc}\ref{GL2SVVp}, which explains why the image of $\Autz(\TT_b)\to \Autz(\F_0)$ contains the diagonal group $H_\Delta=\{(g,g)\mid g\in \PGL_2\}\subset \PGL_2\times \PGL_2=\Autz(\F_0)$. Lemma~\ref{Lemm:AutTTb} below makes this explicit  and shows that $\Autz(\TT_b)\simeq \PGL_2$.
\end{remark}

\begin{remark}
The $\p^1\!$-bundle  $\TT_b\to \F_0$ has numerical invariants $(0,b,b+2)$ since it is equal to $Z_{0}^{b,b+2,P}$ for some polynomial $P$  (see Remark~\ref{Rem:ZabcHasNumABC}).
\end{remark}

\begin{remark}
We will prove in Lemma~\ref{Lemm:LiftSchwarzIsSchwarzHat} that the $\p^1\!$-bundle $\TT_b$ of Definition~\ref{def hat Schwrzenberger} coincides with the lift of the Schwarzenberger bundle $\SS_b\to \p^2$ of Definition~\ref{def:Schwarz}. 
\end{remark}

\begin{lemma}\label{Lemm:AutTTb}
Let $b\ge 1$ be an integer, and let us denote by $\pi,\pi'$ the $\p^1\!$-bundles $\pi\colon \TT_b\to \p^1\times \p^1$ and $\pi'\colon \FF_0^{b+1,b+1}\to \p^1\times \p^1$. Then, the following hold: 
\begin{enumerate}
\item\label{S1S2TTb}
For each $i=1,2$, denoting by $\mathrm{pr}_i\colon \p^1\times \p^1\to \p^1$ the $i$-th projection, the morphism $\mathrm{pr}_i\pi\colon \TT_b\to \p^1$ is a $\F_b$-bundle.
Denoting by $S_i\subset \TT_b$ the union of the $(-b)$-curves of the $\F_b$, the intersection $C=S_1\cap S_2$ is a curve isomorphic to the diagonal $\Delta\subset \p^1\times \p^1$ via~$\pi$. It corresponds to the intersection of $\pi^{-1}(\Delta)$ with the surface $x_0=0$ in both charts.
\item\label{CommDiag}
We have $\Autz(\TT_b)\simeq \PGL_2$ and a commutative diagram
\[\xymatrix@R=3mm@C=2cm{
& X\ar[rd]^{\eta}\ar[ld]_{\epsilon}\\
     \TT_{b}\ar@{-->}[rr]^{\psi} \ar[rd]  && \FF_0^{b+1,b+1}, \ar[ld] \\
    & \p^1\times \p^1
  }\]
  where all maps are $\PGL_2$-equivariant, the action of $\PGL_2$ on $\p^1\times \p^1$ is the diagonal one, the action of $\PGL_2$ on $\FF_0^{b+1,b+1}$ is given by 
  \begin{equation}\label{equationActionPGL2F0mm}
    [x_0:x_1;y_0:y_1;z_0:z_1]\mapsto [x_0:x_1;\alpha y_0+\beta y_1:\gamma y_0+\delta y_1;\alpha z_0+\beta z_1:\gamma z_0+\delta z_1],\end{equation}
the morphism $\epsilon$ is the blow-up of the curve $C$, and the morphism $\eta$ is the blow-up of the curve $C'\subset \FF_0^{b+1,b+1}$ given by $\p^1\hookrightarrow \FF_0^{b+1,b+1}$, $[u:v]\mapsto [1:1:u:v:u:v]$.
  
  Moreover, every automorphism of the $\p^1\!$-bundle $\TT_b\to \p^1\times \p^1$ is trivial.
\item\label{HatS:OnlyInvariantCurve}
If $\car(\k)$ does not divide $b+1$, the curve $C$ is the unique curve invariant by $\Autz(\TT_b)$.
  \end{enumerate}
\end{lemma}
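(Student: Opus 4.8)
The strategy is to establish the three parts in order, since each relies on the previous. For part~\ref{S1S2TTb}, I would start from the explicit transition function $\nu_{c,P}$ with $c=b+2$ and $P=\sum y_0^iy_1^{b-i}z^{i+1}$ from Definition~\ref{def hat Schwrzenberger}. Composing $\pi$ with each projection $\mathrm{pr}_i$ gives a morphism to $\p^1$; restricting the transition data to a fibre of this composed map recovers an $\F_b$ (this is essentially the statement that $\TT_b$ has numerical invariants $(0,b,b+2)$, already recorded). For each $i$, the union $S_i$ of the exceptional $(-b)$-sections is cut out in the charts by the locus $x_0=0$ together with the appropriate condition on the $y$- or $z$-coordinates; in fact because $P$ has no term with $x_1$, the section $x_0=0$ is globally well-defined, and in the chart the $(-b)$-section of the $\F_b$ sitting over a point of the first $\p^1$ is $\{x_0=0\}$, similarly for the second. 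One then computes that $S_1\cap S_2$ lies in $\{x_0=0\}$ and that its image under $\pi$ is exactly the diagonal: this is a direct calculation with $P(y_0,y_1,z)$ evaluated where the two families of negative sections meet, i.e. where $z$ matches the point of the second $\p^1$ and $[y_0:y_1]$ the point of the first; the condition $P$-vanishing forces $[y_0:y_1]=[z:1]$-type relation cutting out $\Delta$. I would present this chart computation compactly.

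For part~\ref{CommDiag}, the plan is first to produce the diagram and deduce the automorphism group from it. By Corollary~\ref{cor action on Mabc}\ref{GL2SVVp}, $\TT_b$ is the point of $\M_0^{b,b+2}$ fixed by the diagonal $\PGL_2\subset\PGL_2\times\PGL_2$, hence $\Autz(\TT_b)$ surjects onto that diagonal $H_\Delta\simeq\PGL_2$ (using Lemma~\ref{blanchard} for the homomorphism $\Autz(\TT_b)\to\Autz(\F_0)$ and Lemma~\ref{Lemm:ActionAutFaParSpace} for the action on the moduli space, whose stabiliser of the class of $P$ is exactly $H_\Delta$). To see the surjection is an isomorphism, I would argue that the kernel consists of automorphisms of the $\p^1$-bundle $\TT_b\to\F_0$ fixing the class of $P$; by Remark~\ref{rk aut decompo bundles over Fa}-type reasoning (or directly by the shape of $\nu_{c,P}$ with $b\ge 1$, where fibrewise automorphisms are upper-triangular $\begin{bmatrix}1&q\\0&\lambda\end{bmatrix}$), such an automorphism is determined by $(\lambda,Q)$ with $Q\in\k[y_0,y_1,z_0,z_1]$ of the appropriate bidegree, and the constraint of commuting with the transition forces $\lambda=1$, $Q=0$; this simultaneously proves the last sentence of~\ref{CommDiag} that the only $\p^1$-bundle automorphism is trivial. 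For the diagram: blowing up the curve $C$ (a section of $\pi$ over $\Delta$) is an $\Autz(\TT_b)$-equivariant operation, and by the descent-type analysis (Lemma~\ref{Lem:GoingdownMinimalsurfaces} and the remark following it, applied to the blow-up $\F_0\leftarrow X$ of $\Delta$'s image — rather, here one blows up the curve $C$ in the threefold and then contracts the strict transform of $\pi^{-1}(\Delta)$), one gets a birational map to another $\p^1$-bundle over $\p^1\times\p^1$; computing numerical invariants (the strict transform change shifts $(0,b,b+2)$ in a controlled way) identifies the target as $\FF_0^{b+1,b+1}$, and tracking coordinates through the elementary transformation gives that $C$ goes to the curve $C'$ with the stated parametrisation, and that $\psi=\eta\circ\epsilon^{-1}$; the $\PGL_2$-equivariance of everything and the explicit action~\eqref{equationActionPGL2F0mm} on $\FF_0^{b+1,b+1}$ follow because $\epsilon,\eta$ are canonical and $\PGL_2$-stable-center blow-ups. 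This coordinate bookkeeping is where I expect the main effort.

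For part~\ref{HatS:OnlyInvariantCurve}, I would argue that any $\Autz(\TT_b)$-invariant curve $D\subset\TT_b$ maps under $\pi$ to a $H_\Delta$-invariant closed subset of $\p^1\times\p^1$, which is either all of $\p^1\times\p^1$ (impossible for a curve, as $\pi|_D$ would then be dominant with one-dimensional source, forcing $D$ to be a multisection — handled below) or contained in the unique $H_\Delta$-invariant curve, the diagonal $\Delta$. In the first case $\pi(D)=\p^1\times\p^1$ forces $\dim D\ge 2$, contradiction; so $\pi(D)\subseteq\Delta$ and $\pi(D)=\Delta$. Then $D\subset\pi^{-1}(\Delta)$, which is a Hirzebruch surface (the restriction of $\TT_b$ to $\Delta$); $\Autz(\TT_b)$ acts on it through $\PGL_2$ acting on $\Delta\simeq\p^1$ and on the fibres, and the only $\PGL_2$-invariant section/curve in that ruled surface is the negative section, which is precisely $C$ by part~\ref{S1S2TTb}. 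To make the "only invariant curve on the ruled surface $\pi^{-1}(\Delta)$" step rigorous one identifies $\pi^{-1}(\Delta)\to\Delta$ with some $\F_k$ equipped with a $\PGL_2$-action lifting the standard one on $\p^1$; if $k>0$ the unique negative section is the only invariant curve (an invariant section would give a $\PGL_2$-equivariant splitting, impossible unless it is the canonical one), and a short computation shows $k>0$ here. The main obstacle, as noted, is the explicit elementary-transformation computation in part~\ref{CommDiag} identifying the target bundle and the curve $C'$; everything else is either bookkeeping with the transition function or a standard representation-theoretic/invariant-curve argument.
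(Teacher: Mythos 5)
Your overall architecture (moduli-space argument for $\Autz(\TT_b)\simeq\PGL_2$ plus triviality of $\p^1$-bundle automorphisms read off the transition function, then an elementary transformation along $C$) is a reasonable alternative to the paper's construction-first approach, but three steps in your sketch do not hold up. First, in part~\ref{S1S2TTb} the claim that $\mathrm{pr}_1\pi$ being an $\F_b$-bundle is "essentially the numerical invariants, already recorded" is false: the invariants $(0,b,b+2)$ only control the fibres of $\mathrm{pr}_2\pi$, the projection used in the gluing; for a general bundle with these invariants the fibres of the other projection need not even be constant, so this requires a genuine argument special to $\TT_b$. Worse, your identification of $S_1$: over $[\mu:\nu]$ the fibre of $\mathrm{pr}_1\pi$ is glued by $[x_0:x_1]\mapsto[x_0:x_1z^{b+2}+x_0P(\mu,\nu,z)]$, so along $x_0=0$ the normal direction transforms by $z^{-(b+2)}$ and $x_0=0$ is a section of self-intersection $+(b+2)$ in that fibre, not its $(-b)$-section; if $S_1$ were also cut out by $x_0=0$ you would get $S_1=S_2$, contradicting that $S_1\cap S_2$ is a curve. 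The paper obtains \ref{S1S2TTb} only after constructing the map to $\FF_0^{b+1,b+1}$, by transporting the involution of $\FF_0^{b+1,b+1}$ exchanging the two factors; your plan needs either that symmetry or an explicit computation valid for every $[\mu:\nu]$, and has neither. Relatedly, in part~\ref{CommDiag} the target of your elementary transformation cannot be identified as $\FF_0^{b+1,b+1}$ "by computing numerical invariants": $\M_0^{b+1,b+1}\neq\emptyset$, so there are non-decomposable bundles with invariants $(0,b+1,b+1)$, and you must actually compute the new transition function (this is precisely the paper's main computation, done in the direction $\FF_0^{b+1,b+1}\dasharrow\TT_b$), or supplement with an argument such as: the target is $H_\Delta$-invariant and by Schur there is no $H_\Delta$-fixed point in $\M_0^{b+1,b+1}$, hence it is decomposable.

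The decisive gap is in part~\ref{HatS:OnlyInvariantCurve}. The surface $\pi^{-1}(\Delta)$ is not an $\F_k$ with $k>0$: restricting the transition function of $\TT_b$ to $\Delta$ gives $[x_0:x_1]\mapsto[x_0:x_1z^{2}+(b+1)z\,x_0]$, and since $b+1\neq0$ in characteristic zero this surface is isomorphic to $\p^1\times\p^1=\F_0$ (this is exactly the computation the paper performs, including an explicit isomorphism sending $C$ to the diagonal); uniqueness of the invariant curve then follows because the induced $\PGL_2$-action must be the diagonal action, whose only invariant curve is the diagonal. So your step "a short computation shows $k>0$, hence the negative section is the only invariant curve" is simply wrong, and the argument built on it collapses. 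A telling symptom: your sketch never uses the coefficient $b+1$ coming from $P|_\Delta$, so if it were correct it would prove \ref{HatS:OnlyInvariantCurve} in every characteristic, contradicting Remark~\ref{Rem:CharPSchwarz}, which records that when $\car(\k)\mid b+1$ there are two invariant curves.
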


\begin{proof}We write $m=b+1$. The fact that \eqref{equationActionPGL2F0mm} yields an action of $\PGL_2$ on $\FF_0^{m,m}$ follows from the fact that 
$[x_0:x_1;\lambda y_0:\lambda y_1;\lambda z_0:\lambda z_1]=[\lambda^{m}x_0:\lambda^{m}x_1;y_0:y_1;z_0:z_1]=[x_0:x_1;y_0:y_1;z_0:z_1]$ for each $\lambda\in \k^*$. The same argument shows that the map $\p^1\to \FF_0^{m,m}$, $[u:v]\mapsto [1:1:u:v:u:v]$ is a well-defined closed embedding. The image $C'$ is sent to the diagonal $\Delta\subset \p^1\times \p^1$, and the action on $\p^1\times \p^1$ (via the $\p^1\!$-bundle $\FF_0^{m,m}\to \p^1\times \p^1$) is the diagonal action.

We now construct a birational map $\varphi\colon \FF_0^{m,m}\dasharrow \TT_{m-1}=\TT_b$.

The two open subsets $U_0,U_1\subset\FF_0^{m,m}$ where $z_0\not=0$ and $z_1\not=0$, respectively, are isomorphic to $\F_m\times \A^1$ via
\[\begin{array}{rccc}
&\F_m\times \A^1&\hookrightarrow &\FF_0^{m,m}\\
\iota_0\colon &\big([x_0:x_1;y_0:y_1],z\big) & \mapsto & [x_1:x_0;y_0:y_1;1:z],\\
\iota_1\colon &\big([x_0:x_1;y_0:y_1],z\big) & \mapsto & [x_1:x_0;y_0:y_1;z:1].
\end{array}\]
The transition function $\theta'=(\iota_1)^{-1} \iota_0\in \Bir(\F_m\times \A^1)$ is then given by
\[\big([x_0:x_1;y_0:y_1],z\big)\mapsto \big([x_0:x_1z^m;y_0:y_1],\frac{1}{z}\big),\]
and the curve $C'\subset \FF_0^{m,m}$ yields a section of $\F_m\times \A^1\to \A^1$, given by $\A^1\hookrightarrow \F_m\times \A^1$, $z\mapsto ([1:1;1:z],z)$ and $z\mapsto ([1:1;z:1],z)$, respectively, on the two charts. We can then blow up $C$ and contract the strict transform of ${\pi'}^{-1}(\pi'(C'))$; we do it on the two charts via
\[\begin{array}{rccc}
&\F_m\times \A^1&\dasharrow &\F_m\times \A^1\\
\varphi_0\colon &\big([x_0:x_1;y_0:y_1],z\big) & \mapsto & \big([x_0(y_0z-y_1):x_1-x_0y_0^m;y_0:y_1],z\big),\\
\varphi_1\colon &\big([x_0:x_1;y_0:y_1],z\big) & \mapsto & \big([x_0(y_0-y_1z):x_1-x_0y_1^m;y_0:y_1],z\big).
\end{array}\]
Computing the transition function $\theta=\varphi_1 \theta'(\varphi_0)^{-1}\in \Bir(\F_{b}\times \A^1)$, we obtain
\[\big([x_0:x_1;y_0:y_1],z\big)\mapsto \left(\left[x_0:x_1z^{m+1}+x_0z\frac{y_0^mz^m -y_1^m}{y_0z-y_1};y_0:y_1\right],\frac{1}{z}\right),\]
which is the transition function of the $\p^1\!$-bundle $\TT_{b}\to \p^1\times \p^1$ (see Definition~\ref{def hat Schwrzenberger}).

Since $C'$ and $\pi'^{-1}(\pi'(C'))=\pi'^{-1}(\Delta)$ (where $\Delta\subset \p^1\times \p^1$ is the diagonal) are invariant by $\PGL_2$, the birational map $\varphi$ is $\PGL_2$-equivariant, for some biregular action of $\PGL_2$ on $\TT_{b}$, acting diagonally on $\p^1\times \p^1$, and preserving the curve $C\subset \TT_{b}$ being the image of the contracted surface $\pi'^{-1}(\pi'(C'))$. Note that this curve is given by the intersection of $x_0=0$ with $\pi^{-1}(\Delta)$ in both charts, where $\Delta\subset \p^1\times \p^1$ is the diagonal (this follows by replacing $y_0$ and $y_1$ by $1$ and $z$ in $\varphi_0$ and by $z$ and $1$ in $\varphi_1$). This yields the commutative diagram of~\ref{CommDiag}, with $\psi=\varphi^{-1}$. To achieve the proof of~\ref{CommDiag}, we only need to show that the homomorphism $\PGL_2\to \Autz(\TT_{b})$ constructed by this map is surjective.

The second projection $\mathrm{pr}_2\colon \p^1\times \p^1$ satisfies that $\mathrm{pr}_2\pi\colon \TT_b\to \p^1$ is a $\F_b$-bundle, trivial on $\p^1\setminus [0:1]$ and $\p^1\setminus [1:0]$ with transition function $\theta$. The union of the $(-b)$-curves of the $\F_b$ is a surface $S_2\subset \TT_b$, which corresponds to $x_0=0$ on both charts and is then sent by $\psi$ onto the surface $S_2'\subset \FF_0^{m,m}$ given by $x_1=0$.

The involution $\sigma'\in \Aut(\FF_0^{m,m})$ given by $\sigma'\colon [x_0:x_1;y_0:y_1;z_0:z_1]\mapsto [x_1:x_0;z_0:z_1;y_0:y_1]$ commutes with $\PGL_2$ and preserves $C'$ and $\pi'^{-1}(\pi'(C))$; hence $\sigma=\psi^{-1}\sigma' \psi=\varphi\sigma'\varphi^{-1}\in \Aut(\TT_b)$. Since $\sigma$ and $\sigma'$ act on $\p^1\times \p^1$ by the exchange of the two factors, $\mathrm{pr}_1\pi\colon \TT_b\to \p^1$ is also an $\F_b$-bundle. The union of the $(-b)$-curves of the $\F_b$ is a surface $S_1=\sigma(S_2)\subset \TT_b$, which is then sent by $\psi$ onto the surface $S_1'\subset \FF_0^{m,m}$ given by $x_0=0$. The two surface $S_1',S_2'\subset \FF_0^{m,m}$ are disjoint and also disjoint from $C'$.  Their strict transforms on $X$ are then again disjoint, and their images on $\TT_b$ intersect only along $C$. This yields~\ref{S1S2TTb}.

To prove~\ref{CommDiag}, it remains to show that $\Autz(\TT_{b})\simeq \PGL_2$ and that every automorphism of the $\p^1\!$-bundle $\TT_b\to \p^1\times \p^1$ is trivial. Every element of $\Autz(\TT_{b})$ permutes the fibres of the two $\F_b$-bundles $\mathrm{pr}_1\pi,\mathrm{pr}_2\pi\colon\TT_b\to \p^1$, so $S_1$ and $S_2$ are both invariant, and the same holds for $C=S_1\cap S_2$. Since $\pi(C)=\Delta\subset\p^1\times \p^1$, the image of $\Autz(\TT_{b})\to \Autz(\p^1\times\p^1)$ is the diagonal $\PGL_2$. It then suffices  to see that every automorphism of the $\p^1\!$-bundle $\TT_{b}\to \p^1\times\p^1$ is trivial. This amounts to showing that every automorphism of the $\p^1\!$-bundle $\FF_0^{m,m}\to \p^1\times\p^1$ that fixes $C'$ is trivial. Indeed, every automorphism of the $\p^1\!$-bundle $\FF_0^{m,m}\to \p^1\times\p^1$ preserves $S_1'$ and $S_2'$, so is of the form 
\[[x_0:x_1;y_0:y_1;z_0:z_1]\mapsto [\lambda x_0:x_1;y_0:y_1;z_0:z_1]\] for some $\lambda\in \k^*$. It preserves $C'$ if and only if $\lambda=1$.

We finish the proof by proving~\ref{HatS:OnlyInvariantCurve}. It follows from the construction that $C=S_1\cap S_2\subset \TT_b$ is invariant by $\Autz(\TT_b)$. It remains to show that every curve $\ell\subset \TT_b$ invariant by $\Autz(\TT_b)$ is equal to $C$ when $\car(\k)$ does not divide $b+1$. The action of $\Autz(\TT_b)$ on $\p^1\times \p^1$ being the diagonal action of $\PGL_2$, we find that $\pi(\ell)$ is the diagonal $\Delta$ and thus have to see that $C$ is the only curve invariant by the action of $\Autz(\TT_b)\simeq \PGL_2$ on the surface $V=\pi^{-1}(\Delta)$, which is a $\p^1\!$-bundle $V\to \Delta$. To do this, it suffices to find an isomorphism $V\iso \p^1\times \p^1$ that sends $C$ onto the diagonal. Indeed, the action of $\PGL_2$ on $\p^1\times \p^1$ will then have to be the diagonal action, which only preserves the diagonal.

We restrict the transition function of $\TT_b$ to $V$ and get
$$
\big([x_0:x_1;1:z],z\big)\mapsto \left([x_0:x_1z^{b+2}+x_0(b+1) z^{b+1};1:z],\frac{1}{z}\right)=\left([x_0:x_1z^2+x_0(b+1) z;\frac{1}{z}:1],\frac{1}{z}\right).$$
The curve $C$ corresponds to $x_0=0$ on both charts. The isomorphism $V\iso \p^1\times \p^1$ can then be chosen on the two charts as
\[\begin{array}{rcl}
\big([x_0:x_1;1:z],z\big)&\mapsto& \big([x_1:x_0(b+1)+x_1z],[1:z]\big),\\
\big([x_0:x_1;z:1],z\big)&\mapsto &\big([-(b+1)x_0+x_1z:x_1],[z:1]\big),
\end{array}\]
which is an isomorphism since $b+1\not=0$ (as we assumed that $\car(\k)$ does not divide $b+1$).
\end{proof}

\begin{remark}\label{Rem:CharPSchwarz}
When $\car(\k)$ divides $b+1$, there are actually two curves invariant by $\Autz(\hat{S}_b)\simeq \PGL_2$.
\end{remark}

\begin{remark} \label{rk PGLn mod Mun}
Let $m=b+1$. An element $g=\begin{bsmallmatrix}
\alpha  & \beta \\ \gamma & \delta
\end{bsmallmatrix}\in \PGL_2$ sends $p_0=[1:1;1:0;0:1]\in \FF_0^{m,m}$ to $[1:1;\alpha:\gamma;\beta :\delta ]$. Therefore, $g \cdot p_0=p_0$ if and only if $\beta=\gamma=0$ and $\alpha^m=\delta^m=1$, and so $H:=\mathrm{Stab}_{p_0}\PGL_2=\left\{ \begin{bsmallmatrix}
\alpha & 0\\ 0 &\alpha^{-1}
\end{bsmallmatrix}; \ \alpha^m=1\right \}$. As $\dim (\PGL_2/H)=3$, we see that $\PGL_2/H$ is a dense open orbit for the diagonal action of $\PGL_2$ on $\FF_{0}^{m,m}$. As $\TT_b$ is $\PGL_2$-equivariantly birational to $\FF_{0}^{m,m}$, the same holds for~$\TT_b$. 
\end{remark} 

\subsection{Umemura \texorpdfstring{$\P^1$}{P1}-bundles}\label{Sec:Umemurabundles}
In this section we introduce a new class of non-decomposable $\p^1\!$-bundles on Hirzebruch surfaces. To the best of the authors' knowledge, those appeared for the first time in the work of Umemura~\cite[\S~10]{Ume88}, and that is the reason why we chose to call them Umemura bundles.

\begin{definition}[Umemura bundles]  \label{def Umemura bundle}
Let $a,b\ge 1$ and $c\ge 2$ be such that $c=ak+2$ with $0\le k\le b$. We 
call \emph{Umemura $\p^1\!$-bundle} the $\p^1\!$-bundle $\U_{a}^{b,c} \to \F_a$ 
given by $\U_{a}^{b,c}=Z_{a}^{b,c,P}\to \F_a$ with $P=y_0^ky_1^{b-k}z^{c-1}$ .
\end{definition}

\begin{remark}\label{Rem:UmeExplicit}
Recall (see Notation~\ref{Not:ZabcP}) that $\U_{a}^{b,c}=Z_{a}^{b,c,P}$ is
obtained by the gluing of two copies of $\F_b\times \A^1$  along $\F_b \times \A^1 \setminus \{0\}$ by the automorphism $\nu\in \Aut(\F_b \times \A^1 \setminus \{0\})$,
\[\begin{array}{ccl}\nu\colon([x_0:x_1;y_0:y_1],z) &\mapsto &\left([x_0:x_1z^{c}+x_0 y_0^ky_1^{b-k}z^{c-1};y_0z^a:y_1],\frac{1}{z}\right)\\
&=&\left([x_0:x_1z^{c-ab}+x_0 y_0^ky_1^{b-k}z^{c-ab-1};y_0:y_1z^{-a}],\frac{1}{z}\right),\end{array}
\]
and that  $\U_{a}^{b,c} \to \F_a$ sends $([x_0:x_1;y_0:y_1],z)\in \F_b\times \A^1$ onto, respectively,  $[y_0:y_1;1:z]\in \F_a$ and $[y_0:y_1;z:1]\in \F_a$ on the two charts. It then has  numerical invariants $(a,b,c)$.
\end{remark}

\begin{lemma}\label{Lem:SurjectiveActionUmemura}Let $a,b\ge 1$ and $c\ge 2$ be such that $c=ak+2$ with $0\le k\le b$. The morphism $\pi\colon\U_{a}^{b,c}\to \F_a$ yields a surjective group homomorphism
\[\rho\colon\Autz(\U_{a}^{b,c})\twoheadrightarrow\Autz(\F_a).\]
\end{lemma}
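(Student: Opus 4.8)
The statement asserts that the natural map $\rho\colon\Autz(\U_a^{b,c})\twoheadrightarrow\Autz(\F_a)$ (which exists by Blanchard's lemma, Lemma~\ref{blanchard}) is surjective. The strategy is to exhibit, for a set of generators of $\Autz(\F_a)$, an explicit lift to an automorphism of the $\p^1$-bundle $\U_a^{b,c}\to\F_a$. By Remarks~\ref{Rem:AutF0} and~\ref{Rem:AutFa}, $\Autz(\F_a)$ is generated (for $a\ge1$, which is the relevant range here since $a,b\ge1$) by the image of $\GL_2$ acting on the base $\p^1$ via $[y_0:y_1;z_0:z_1]\mapsto[y_0:y_1;\alpha z_0+\beta z_1:\gamma z_0+\delta z_1]$, together with the elementary automorphisms $[y_0:y_1;z_0:z_1]\mapsto[y_0:y_1+R(z_0,z_1)y_0;z_0:z_1]$ for $R\in\k[z_0,z_1]_a$. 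So it suffices to lift each of these two types.

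First I would handle the elementary automorphisms $\varphi_R\colon[y_0:y_1;z_0:z_1]\mapsto[y_0:y_1+R(z_0,z_1)y_0;z_0:z_1]$. Here I expect the lift to be given directly in the two charts of the gluing description of $\U_a^{b,c}=Z_a^{b,c,P}$ from Notation~\ref{Not:ZabcP} and Remark~\ref{Rem:UmeExplicit}: namely $([x_0:x_1;y_0:y_1],z)\mapsto([x_0:x_1;y_0:y_1+R(z,1)y_0],z)$ on one chart and $([x_0:x_1;y_0:y_1],z)\mapsto([x_0:x_1;y_0:y_1+R(1,z)y_0],z)$ on the other. One must then check that these two chart maps are compatible with the transition function $\nu_{c,P}$ with $P=y_0^ky_1^{b-k}z^{c-1}$ — equivalently, that conjugating $\nu_{c,P}$ by this candidate produces again a transition of the form $\nu_{c,P'}$ with $P'$ in the equivalence class of $P$ (Lemma~\ref{Lemm:EquivalenceClassP}). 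In fact this is exactly the content of Lemma~\ref{Lemm:ActionAutFaParSpace}\ref{ActionFap}: the action of $\varphi_R$ on the moduli point is $P\mapsto\hat P$ with $P(y_0,y_1,z)=\hat P(y_0,y_1+y_0R(z,1),z)$, and one checks that $P=y_0^ky_1^{b-k}z^{c-1}$ is sent to a polynomial equivalent to itself modulo the relation in Lemma~\ref{Lemm:EquivalenceClassP} — hence $\varphi_R\pi\simeq\pi$ as $\p^1$-bundles, giving the desired lift.

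For the $\GL_2$-part acting on the base $\p^1$, the point is that the stabiliser (up to isomorphism of $\p^1$-bundle) of the moduli point $[P]\in\M_a^{b,c}$ corresponding to $\U_a^{b,c}$ contains all of $\GL_2$: by Lemma~\ref{Lemm:ActionAutFaParSpace}\ref{ActionGL2}, $\sigma\in\GL_2$ sends $P=\sum_i y_0^iy_1^{b-i}P_i(z)z^{ai+1}$ (here only the term $i=k$ is present, $P_k=z^{c-ak-1}=z$... — more precisely $P_k(z)$ is the appropriate power) to $\hat P$ with $\hat P_i=\sigma(P_i)$, where $\sigma$ acts on $\k[z]_{\le c-2-ai}$ as in Definition~\ref{GL2 action}. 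The nonzero component sits in the space $\k[z]_{\le c-2-ak}=\k[z]_{\le 0}=\k$, the trivial one-dimensional $\GL_2$-representation (since $c=ak+2$), so it is fixed up to scalar by all of $\GL_2$; hence $[\hat P]=[P]$ in $\M_a^{b,c}$, i.e. $\sigma\cdot\U_a^{b,c}\simeq\U_a^{b,c}$ as $\p^1$-bundles over $\F_a$, which provides a lift of $\sigma$ to $\Autz(\U_a^{b,c})$. Combining the two families of lifts and using that they generate $\Autz(\F_a)$ gives surjectivity of $\rho$.

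**Main obstacle.** The only delicate point is bookkeeping: one must verify that each candidate chart-wise map genuinely glues to a global automorphism, i.e. that the conjugated transition function lands in the same equivalence class (Lemma~\ref{Lemm:EquivalenceClassP}) rather than merely having the same numerical invariants $(a,b,c)$ — two $\p^1$-bundles with invariants $(a,b,c)$ need not be isomorphic when $b\ge1,c\ge2$. This is precisely why the computation reduces to checking that the $\GL_2$-action fixes the relevant moduli point, which works because $c=ak+2$ forces the active summand of $P$ into the trivial $\GL_2$-subrepresentation $\k[z]_{\le0}$; if $c>ak+2$ the argument would fail, consistent with the hypothesis $c-ab<2$ built into the definition. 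So the real work is the identification of the moduli point of $\U_a^{b,c}$ as a $\GL_2$-fixed point together with transversality to the elementary automorphisms, both of which are immediate from Lemma~\ref{Lemm:ActionAutFaParSpace}.
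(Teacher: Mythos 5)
Your proposal is correct and follows essentially the same route as the paper: the paper also reduces to generators of $\Autz(\F_a)$ and uses Lemma~\ref{Lemm:ActionAutFaParSpace} to check that the moduli point of $\U_a^{b,c}$ in $\M_a^{b,c}$ is fixed, the key point being that the only nonzero component $P_k$ lies in $\k[z]_{\le c-2-ak}=\k[z]_{\le 0}=\k$ (so the $\GL_2$-part acts by a scalar) while the elementary automorphisms leave the components with $i\le k$ unchanged and the components with $i>k$ are irrelevant since $c-2-ai<0$. Only a cosmetic slip: with the normalisation $P=\sum_i y_0^iy_1^{b-i}P_i(z)z^{ai+1}$ one has $P_k=1$ (a constant), not $z^{c-ak-1}$, but this does not affect your argument.
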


\begin{proof}
The statement corresponds to showing that the element of $\M_{a}^{b,c}$ corresponding to $\U_{a}^{b,c}$ is fixed by the whole group $\Autz(\F_a)$. We only need to check it for generators of $\Autz(\F_a)$, using Lemma~\ref{Lemm:ActionAutFaParSpace}.

Recall that the polynomial $P$ is given by $P(y_0,y_1,z)=\sum_{i=0}^b y_0^iy_1^{b-i}P_i(z)z^{ai+1}$, with $P_i\in \k[z]_{\le c-2-ai}$ (see Notation~\ref{Not:ZabcP}), where  all $P_i$ are zero except one, namely $P_k$, which is equal to $1$. Lemma~\ref{Lemm:ActionAutFaParSpace}\ref{ActionGL2} shows that $\GL_2$ fixes the class of $\U_{a}^{b,c}$ in $\M_{a}^{b,c}$ since $P_k$ is sent onto $\sigma(P_k)\in \k[z]_{\le c-2-ak}=\k[z]_{\le 0}=\k$. Lemma~\ref{Lemm:ActionAutFaParSpace}\ref{ActionFap} then shows that an automorphism $[y_0:y_1;z_0:z_1])=[ y_0:y_1+y_0 R(z_0,z_1):z_0:z_1]$ of $\F_a$ sends $P$  onto the polynomial $P'$ equivalent to the polynomial $\hat{P}(y_0,y_1,z)$ that satisfies $y_0^ky_1^{b-k}z^{c-1}=P(y_0,y_1,z)=\hat{P}(y_0,y_1+y_0R(z,1),z)$. The polynomial $\hat{P}_k$ is then equal to $P_k=1$, and all the polynomials $\hat{P}_i$ with $i< k$ are zero and thus equal to $P_i$. The other coefficients do not appear in the equivalence class since $c-ai-2<0$ for these.
\end{proof}

\begin{remark} \label{rk aut vert Umemura bundles}
With the notation above, the group of automorphisms of $\p^1\!$-bundles of $\U_{a}^{b,c}$ identifies with the vector group $\bigoplus_{i=1}^{b} k[z_0,z_1]_{ai-c}$ whose action on $\U_{a}^{b,c}$ can be described on the first chart by the following biregular action:  
\[\begin{array}{ccc}
\F_b\times \A^1& \to & \F_b\times \A^1\\
\big([x_0:x_1; y_0:y_1],z\big)&\mapsto& \left([x_0:x_1+x_0 \sum_{i=1}^{b} y_0^i y_1^{b-i} q_i(z,1); y_0:y_1],z\right),\end{array}\]
where $Q=(q_i)_{i=1,\ldots,b} \in \bigoplus_{i=1}^{b} k[z_0,z_1]_{ai-c}$.
This can be computed for instance by using the transition function of $\U_{a}^{b,c}$.
The action on $\pi^{-1}(s_{-a})$, which corresponds to $y_0=0$ on both charts, is then trivial.
\end{remark}

\begin{remark}\label{Ume:ActionGL2}
The group $\GL_2$ acts on $\U_a^{b,c}$ by acting rationally on both charts via, respectively, 
\[\begin{array}{ccc}
\F_b\times \A^1& \hspace{-0.2cm}\to &\hspace{-0.3cm} \F_b\times \A^1\\
([x_0:x_1;y_0:y_1],z)&\hspace{-0.2cm}\mapsto &\hspace{-0.3cm}\left([x_0: x_1\frac{(\beta z+\alpha)^c}{\alpha \delta-\beta \gamma}\!+\!x_0 \frac{\beta(\beta z+\alpha)^{c-1}y_0^k y_1^{b-k}}{\alpha \delta-\beta \gamma} ; y_0 (\beta z+\alpha)^a; y_1],\frac{\delta z +\gamma}{\beta z +\alpha}\right),\\[1ex]
([x_0:x_1;y_0:y_1],z)&\hspace{-0.2cm}\mapsto &\hspace{-0.3cm}\left([x_0: x_1\frac{(\gamma z+\delta)^c}{\alpha \delta-\beta \gamma}\!-\!x_0 \frac{\gamma(\gamma z+\delta)^{c-1}y_0^k y_1^{b-k}}{\alpha \delta-\beta \gamma} ; y_0 (\gamma z+\delta)^a; y_1],\frac{\alpha z +\beta}{\gamma z +\delta}\right)\end{array}\]
as can directly be checked using the transition function.
\end{remark}

\subsection{Invariant fibres}  \label{subsec:invariant fibers}
The following result shows that one can reduce the study of $\p^1\!$-bundles $X\to \F_a$ to the case where the action of $\Autz(X)$ on $\F_a$ is transitive on the set on fibres of $\tau_a\colon \F_a\to \p^1$. This is in particular the case when the action on $\F_a$ yields a surjective group homomorphism $\Autz(X)\twoheadrightarrow\Autz(\F_a)$ (see Remark~\ref{Rem:AutFaP1transitiveFibres}) and holds for decomposable $\p^1\!$-bundles (see Lemma~\ref{Lem:AutDecOnAutFa}), for the $\p^1\!$-bundles $\TT_b\to \p^1\times \p^1$ (see Lemma~\ref{Lemm:AutTTb}), and for Umemura bundles (see Lemma~\ref{Lem:SurjectiveActionUmemura}).

\begin{lemma}\label{Lemm:fibreInvariant}
Let $\pi\colon X\to \F_a$ be a $\p^1\!$-bundle. If there is a point $p\in \p^1$ such that the surface $\tau\pi^{-1}(p)\subset X$ is invariant by $\Autz(X)$ $($where $\tau_a\colon \F_a\to \p^1$ is the standard $\p^1\!$-bundle$)$, then there exist a decomposable bundle $\FF_a^{b,c}\to \F_a$ and a commutative diagram
\[\xymatrix@R=3mm@C=2cm{
    X \ar@{-->}[r]^{\psi} \ar[rd]_{\pi}  & \FF_a^{b,c} \ar[d] \\
    & \F_a\rlap{\,,}
  }\]
  where $\psi$ is a birational map satisfying $\psi\Autz(X)\psi^{-1}\subsetneq \Autz(\FF_a^{b,c})$.
\end{lemma}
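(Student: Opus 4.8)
The plan is as follows. Write $G=\Autz(X)$. By Blanchard's lemma (Lemma~\ref{blanchard}), $\pi$ is $G$-equivariant for a unique regular $G$-action on $\F_a$; since $(\tau_a\pi)^{-1}(p)=\pi^{-1}(f)$ is $G$-invariant, where $f=\tau_a^{-1}(p)$, and $\pi$ is a $G$-equivariant surjection, the fibre $f\subset\F_a$ is $G$-invariant. First I would get rid of jumping fibres: by Proposition~\ref{Prop:NoMoreJumpingfibre} there is a finite chain of $G$-equivariant square birational maps over $\F_a$ (each blowing up the exceptional section of a jumping fibre and contracting its strict transform, which are canonical operations, hence $G$-equivariant) reducing to the case where $\pi\colon X\to\F_a$ has no jumping fibre; the surface over $p$ stays $G$-invariant throughout, being always the preimage of $f$. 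By Proposition~\ref{Prop:EquivABC} (see Remark~\ref{Rem:ZabcHasNumABC}) we may then assume $X=Z_a^{b,c,P}$ for the numerical invariants $(a,b,c)$ of $\pi$. If $X$ were decomposable, then by Lemma~\ref{Lem:AutDecOnAutFa} the map $\pi$ would induce a surjection $G\twoheadrightarrow\Autz(\F_a)$, and the latter acts transitively on the fibres of $\tau_a$ (Remark~\ref{Rem:AutFaP1transitiveFibres}), contradicting the $G$-invariance of $f$. Hence $X$ is non-decomposable, so by Corollary~\ref{Coro:IsoClass} we have $b\ge 1$, $c\ge 2$, and $X\simeq Z_a^{b,c,P}$ with $P=\sum_{i=0}^b y_0^iy_1^{b-i}P_i(z)z^{ai+1}$, $P_i\in\k[z]_{\le c-2-ai}$, not all zero.

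Next, normalise $p$ to be the point of $\p^1$ over which the first chart $\F_b\times\A^1$ of $Z_a^{b,c,P}$ restricts, so that $\pi^{-1}(f)=\F_b\times\{0\}$ in this chart. Let $S=\{x_0=0\}\subset X$ be the section attached to the inclusion $\O_{\F_a}\hookrightarrow\E$ (it is the locus preserved by the transition $\nu_{c,P}$, hence canonical, hence $G$-invariant). Then $C:=S\cap\pi^{-1}(f)$ is a $G$-invariant curve mapping isomorphically to $f$ via $\pi$; it is the $(-b)$-section of $\pi^{-1}(f)\simeq\F_b$ and equals $\{x_0=0,\ z=0\}$ in the first chart. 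I would then take $\psi_1\colon X\dasharrow X_1$ to be the elementary transformation of the $\p^1$-bundle $\pi$ along the section $C$ over the divisor $f$, i.e.\ blow up $C$ and contract the strict transform of $\pi^{-1}(f)$. Since $C$ and $\pi^{-1}(f)$ are $G$-invariant, $\psi_1$ is a $G$-equivariant square birational map over $\mathrm{id}_{\F_a}$. Only the first chart is affected (as $\pi^{-1}(f)$ misses the second chart), and there the transformation is, after the coordinate swap $x_0\leftrightarrow x_1$ of $\F_b$, the map $([x_0:x_1;y_0:y_1],z)\mapsto([x_0:x_1z^{-1};y_0:y_1],z)$; precomposing $\nu_{c,P}$ with this, one finds that $X_1$ is glued by $\nu_{c-1,P}$, i.e.\ $X_1\simeq Z_a^{b,c-1,P'}$ (the top-degree part of each $P_i(z)z^{ai+1}$ being absorbed by the equivalence of Lemma~\ref{Lemm:EquivalenceClassP}), so $X_1$ has no jumping fibre and numerical invariants $(a,b,c-1)$. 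Iterating — at each stage the centre is the $(-b)$-section of the still $G$-invariant fibre over $p$, which is $G$-invariant as the intersection of the canonical section with that fibre — after $c-1$ steps we reach a $\p^1$-bundle with numerical invariants $(a,b,1)$, which is decomposable by Corollary~\ref{Coro:IsoClass}, hence isomorphic to $\FF_a^{b,1}$. Composing all these maps with those from the jumping-fibre reduction gives a square birational map $\psi\colon X\dasharrow\FF_a^{b,1}$ over $\F_a$ with $\psi G\psi^{-1}\subseteq\Autz(\FF_a^{b,1})$ (this is a connected subgroup, being the image of $G$ under $g\mapsto\psi g\psi^{-1}$).

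Finally I would check the strictness $\psi G\psi^{-1}\subsetneq\Autz(\FF_a^{b,1})$. Since $\psi$ is over $\mathrm{id}_{\F_a}$, each element $\psi g\psi^{-1}$ lies over $\rho(g)\in\rho(G)\subseteq\Autz(\F_a)$, which fixes $f$; hence $\psi G\psi^{-1}$ leaves invariant the surface of $\FF_a^{b,1}$ lying over $f$. But $\Autz(\FF_a^{b,1})$ surjects onto $\Autz(\F_a)$ (Lemma~\ref{Lem:AutDecOnAutFa}), which permutes the fibres of $\tau_a$ transitively (Remark~\ref{Rem:AutFaP1transitiveFibres}), so $\Autz(\FF_a^{b,1})$ cannot leave that surface invariant. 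Therefore the inclusion is proper, which completes the proof.

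The main obstacle is the middle step: pinning down the effect of the elementary transformation on the numerical invariants — in particular checking that using the centre $C=S\cap\pi^{-1}(f)$ (rather than a $(+b)$-section of the fibre) makes $c$ \emph{decrease} by $1$, so that iterating genuinely terminates at a decomposable bundle. This is a careful but essentially routine transition-function computation; the one delicate point is keeping track of the coordinate relabelling of the $\F_b$-fibre needed to put $X_1$ back into the normal form $Z_a^{b,c-1,P'}$ of Notation~\ref{Not:ZabcP}. Everything else (the two reductions, the $G$-equivariance, and the strictness) is formal.
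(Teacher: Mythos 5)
Your construction is essentially correct, and it follows a genuinely different route from the paper. The paper removes the invariant fibre, uses Lemma~\ref{Lemm:P1bundleoverA1P1} to make $\tau_a\pi$ a trivial $\F_b$-bundle over the affine complement $\p^1\setminus\{p\}$, and then \emph{re-compactifies}: since $\Autz(X)$ is an algebraic group, the degrees appearing in its action on $\F_b\times\A^1$ (resp.\ on the $\p^1$-factor when $b=0$) are bounded, so the action extends to $\FF_a^{b,c}$ for $c\gg 0$; strictness then follows from transitivity of $\Autz(\FF_a^{b,c})$ on the fibres of $\tau_a$. You instead normalise globally (jumping-fibre removal, then the form $Z_a^{b,c,P}$ with the invariant fibre at $z=0$ of the first chart) and explicitly lower $c$ by equivariant elementary transformations centred at the $(-b)$-section of the invariant fibre; your key claim is right: with $\epsilon\colon[x_0:x_1]\mapsto[x_0:x_1z]$ the elementary transformation along $\{x_0=z=0\}$, one gets $\nu_{c,P}\circ\epsilon^{-1}=\nu_{c-1,P}$, so the invariants drop from $(a,b,c)$ to $(a,b,c-1)$ and the process terminates at the decomposable bundle $\FF_a^{b,1}$. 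The equivariance of each step (the centre being the intersection of the canonical union of $(-b)$-sections, valid since $b\ge1$, with the invariant fibre) and the final strictness argument are the same in spirit as the paper's. Your approach buys an explicit target ($c=1$) and avoids the compactification/boundedness discussion; the paper's is shorter because it never needs the normal form $Z_a^{b,c,P}$ nor the effect of elementary transformations on $c$.

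There is, however, one flawed inference. After replacing $X$ by its jumping-fibre-free model, you argue: ``if $X$ were decomposable, Lemma~\ref{Lem:AutDecOnAutFa} would give a surjection $G\twoheadrightarrow\Autz(\F_a)$, contradicting the invariance of $f$; hence $X$ is non-decomposable.'' This does not follow: at this stage $G$ is only the conjugate of the \emph{original} $\Autz(X)$, which sits inside $\Autz$ of the new model, possibly properly; Lemma~\ref{Lem:AutDecOnAutFa} makes the full group $\Autz(\FF_a^{b,c})$ surject onto $\Autz(\F_a)$, not the subgroup $G$. Indeed the jumping-fibre-free model can perfectly well be decomposable (e.g.\ a bundle obtained from $\FF_a^{b,c}$ by creating a jumping fibre over $p$ reduces back to $\FF_a^{b,c}$). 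The gap is harmless: in that case the lemma follows at once, taking $\psi$ to be the jumping-fibre-removal map composed with the isomorphism to $\FF_a^{b,c}$, with strict inclusion by exactly the transitivity argument you use at the end (the conjugated group preserves the preimage of $f$, while $\Autz(\FF_a^{b,c})$ does not). So you should replace the claimed contradiction by this case distinction; with that change, and noting (as you do) that non-decomposability forces $b\ge1$, $c\ge2$ so that the canonical section and the descent of $c$ are available, the proof is complete. A last cosmetic point: your local description of the elementary transformation (``after the coordinate swap $x_0\leftrightarrow x_1$, the map $[x_0:x_1z^{-1}]$'') is garbled as written, but the asserted outcome $\nu_{c,P}\rightsquigarrow\nu_{c-1,P'}$ is the correct one.
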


\begin{proof}
The surface $S=\F_a\setminus (\tau_a)^{-1}(p)$ is isomorphic to $\p^1\times \A^1$, and $U=\pi^{-1}(S)$ is invariant by $\Autz(X)$. Applying Proposition~\ref{Prop:NoMoreJumpingfibre}, one can perform finitely many $\Autz(X)$-equivariant birational maps and reduce to the case where $\tau_a  \pi\colon U\to \p^1\setminus \{b\}$ is a trivial $\F_b$-bundle. We then find an integer $c\ge 0$ and an inclusion $U\hookrightarrow \FF_a^{b,c}$ such that the action of $\Autz(X)$ extends to a biregular action on $\FF_a^{b,c}$. This will yield a birational map as above, satisfying $\psi\Autz(X)\psi^{-1}\subset \Autz(\FF_a^{b,c})$. Moreover,  equality does not hold since $\Autz(\FF_a^{b,c})$ acts transitively on the set of fibres of $\tau_a\colon \F_a\to \p^1$ (see Lemma~\ref{Lem:AutDecOnAutFa} and Remark~\ref{Rem:AutFaP1transitiveFibres}).

If $b>0$, the action of every element $g\in\Autz(X)$ on $U$ corresponds  to an automorphism of $\F_b\times \A^1$ of the form
\[\begin{array}{rcl}
\big([x_0:x_1;y_0:y_1],z\big)  &\mapsto & \big([x_0:\alpha x_1+x_0\sum_{i=0}^b y_0^iy_1^{b-i} p_i(z);
f_1(y_0,y_1,z),f_2(y_0,y_1,z)],az+b\big)\end{array}\]
for some $\alpha\in \k^*$ and $p_i\in \k[z]$,  where $f_1,f_2\in \k[y_0,y_1,z]$ correspond to the coordinates of the restriction of an automorphism of $\F_a$. The group $\Autz(X)$ being an algebraic group, the degrees of the polynomials $p_i$ are bounded by an integer which does not depend on $g\in \Autz(X)$. It then suffices to take an integer $c>0$ big enough and to send $([x_0:x_1;y_0:y_1],z)\in \F_b\times \A^1$ to $[x_0:x_1;y_0:y_1;z:1]\in \FF_a^{b,c}$ to be able to extend the action of all elements $g\in \Autz(X)$ to $\FF_a^{b,c}$. (This can be checked for instance using the description of $\FF_{a}^{b,c}$ provided in \S~\ref{Sec:P1bundlesHirz}.)

If $b=0$, the action of every element $g\in\Autz(X)$ on $U$ corresponds similarly to an automorphism of $\F_0\times \A^1=\p^1\times \p^1\times \A^1$ of the form
\[\begin{array}{rcl}
\big([x_0:x_1;y_0:y_1],z\big)&  \mapsto  &\big([\alpha(z) x_0+\beta(z)x_1:\gamma(z) x_0+\delta(z)x_1;f_1(y_0,y_1,z),f_2(y_0,y_1,z)],az+b\big),\end{array}
\]
where $\alpha,\beta,\gamma,\delta\in \k[z]$ are such that $\begin{bsmallmatrix}
\alpha & \beta\\
\gamma& \delta\end{bsmallmatrix}\in \PGL_2(\k[z])$ $($\textit{i.e.}~$\alpha\delta-\beta\gamma\in \k^*)$ and where $f_1,f_2\in \k[y_0,y_1,z]$ correspond to the coordinates of the restriction of an automorphism of $\F_a$.

The morphism $U\to \p^1\times \A^1$, $([x_0:x_1;y_0:y_1],z)\mapsto ([x_0:x_1],z)$ then yields an algebraic group homomorphism $\Autz(X)\to \Aut(\p^1\times \A^1)$. The image $H\subset \Aut(\p^1\times \A^1)$ is then a connected algebraic subgroup of $\Bir(\p^1\times \A^1)$ that preserves the set of fibres $\p^1\times \A^1$. There thus exist an integer $c\ge 0$ and an inclusion $\p^1\times \A^1\to \F_c$ which allows the image to extend. We then find  an inclusion $U\hookrightarrow \FF_a^{b,c}$ which allows the action of $\Autz(X)$ to extend.\end{proof}

\begin{lemma} \label{lem:subgroups of PGL2}
Let $H$ be a maximal proper connected subgroup of $\PGL_2 \times \PGL_2$. Then $H$ is $B \times \PGL_2$ or $\PGL_2 \times B$, where $B \subset \PGL_2$ is a Borel subgroup $($conjugate to the group of upper-triangular matrices$)$, or $H$ is isomorphic to $\PGL_2$. In the latter case, $H$ is conjugate to the diagonal embedding of $\,\PGL_2$ in $\PGL_2 \times \PGL_2$.
\end{lemma}

\begin{proof}
  Let $p_1$ and $p_2$ be the two natural projections $\PGL_2 \times \PGL_2 \to \PGL_2$. If $p_i(H) \subsetneq \PGL_2$, then $p_i(H)$ is contained in a Borel subgroup $B$ of $\PGL_2$ (\textit{e.g.}\ by~\cite[\S~30.4, Theorem~(a)]{Hum75}), and so $H$ is $B \times \PGL_2$ or $\PGL_2 \times B$ since $H$ is maximal.

  We now assume  $p_i(H)=\PGL_2$ for $i=1,2$. Let $K$ be the kernel of ${p_1}_{|H}:\; H \to \PGL_2$. As $K$ is a normal subgroup of $H$ and ${p_2}_{|H}$ is onto, $p_2(K)$ is a normal subgroup of $\PGL_2$. As $H$ is a proper subgroup of $\PGL_2 \times \PGL_2$, we must have $p_2(K)=\{1\}$, and so $K=\{1\}$ and ${p_1}_{|H}$ is a bijective morphism of algebraic groups $H\to \PGL_2$. In particular, $\dim(H)=3$ by~\cite[\S~4.1, Theorem]{Hum75}. 

We now show that $H$ is simple. Let $N$ be a normal subgroup of $H$. As ${p_i}_{|H}$ is onto, $p_i(N)$ is a normal subgroup of $\PGL_2$. If $p_i(N)=\PGL_2$ for some $i$, then $N=H$ (as they have the same dimension). Otherwise, $p_1(N)=p_2(N)=\{1\}$, and so $N=\{1\}$. This achieves to prove that $H$ is a simple group.

As $\dim(H)=3$ and $H$ is of rank at most $2$, the classification of simple root systems yields that $H$ is isomorphic to $\SL_2$ or $\PGL_2$; see~\cite[\S~32 and Appendix]{Hum75}. Since ${p_i}_{|H}: H \to \PGL_2$ is a bijective morphism, $H$ cannot be isomorphic to $\SL_2$ (otherwise $\pm I_2$ would be sent to $I_2$), and so $H \simeq \PGL_2$. In this case, $H$ is conjugate to the diagonal embedding of $\PGL_2$ in $\PGL_2 \times \PGL_2$. Indeed, this follows from the fact that a bijective morphism of algebraic groups is an isomorphism in characteristic zero (\textit{e.g.}\ by Zariski's main theorem), together with the fact that all automorphisms of $\PGL_2$ are inner~\cite[Theorem~1.3 and \S~3.6]{SGA3Ep24}.
\end{proof}

\begin{remark}\label{Rem:PGL2positive}
In characteristic $p>0$, the above result is false: we get infinitely many embeddings of $\PGL_2$ into $\PGL_2\times \PGL_2$ with pairwise distinct images, up to conjugation, given by $\begin{bsmallmatrix}
  a & b \\ c & d
 \end{bsmallmatrix}\mapsto \left(\begin{bsmallmatrix}
  a & b \\ c & d
 \end{bsmallmatrix}, \begin{bsmallmatrix}
  a^{p^n} & b^{p^n} \\ c^{p^n} & d^{p^n}
 \end{bsmallmatrix}\right)$ for $n\in \Z$.
\end{remark}

\begin{proposition}  \label{prop decompo and Umemura bundles}
Let $a\ge 0$, and let $\pi\colon X \to \F_a$ be a $\p^1\!$-bundle.
Then, there exist $b,c\in \Z$ such that one of the following holds: 
\begin{enumerate}
\item \label{OKDec}
$X$ is isomorphic to a decomposable $\p^1\!$-bundle $\FF^{b,c}_a \to \F_a$  $($Definition~$\ref{def Fabc})$;
\item \label{OKUme}
$X$ is isomorphic to an Umemura $\p^1\!$-bundle $\U_{a}^{b,c} \to \F_a$  $($Definition~$\ref{def Umemura bundle})$;
\item \label{OKSch}
we have $a=0$, and $(X,\pi)$ is square isomorphic to the $\p^1\!$-bundle $\TT_{b} \to \F_0$ of Definition~$\ref{def hat Schwrzenberger}$; or
\item\label{FixFibreFibre}
there exist a $\p^1\!$-bundle $\tau\colon\F_a\to \p^1$ and a closed point $p\in \p^1$  such that $(\tau\pi)^{-1}(p)$ is invariant by~$\Autz(X)$.
\end{enumerate}
In cases~\ref{OKSch} and~\ref{FixFibreFibre}, there exist a decomposable bundle $\FF_a^{b,c}\to \F_a$ and a commutative diagram
\[\xymatrix@R=3mm@C=2cm{
    X \ar@{-->}[r]^{\psi} \ar[rd]_{\pi}  & \FF_a^{b,c} \ar[d] \\
    & \F_a\rlap{,}
  }\]
  where $\psi$ is a birational map satisfying $\psi\Autz(X)\psi^{-1}\subsetneq \Autz(\FF_a^{b,c})$.
\end{proposition}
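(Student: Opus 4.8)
The plan is to distinguish according to whether $\Autz(X)$ leaves invariant some fibre of $X$ over $\p^1$, and, when it does not, to identify $X$ through the fixed point it determines in the moduli space $\M_a^{b,c}$ under the image $G$ of $\Autz(X)$ in $\Autz(\F_a)$.

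First I would dispose of the easy cases. If there is a $\p^1$-bundle structure $\tau\colon\F_a\to\p^1$ and a point $p\in\p^1$ with $(\tau\pi)^{-1}(p)$ invariant by $\Autz(X)$, we are in case~\ref{FixFibreFibre} and the required commutative diagram with a decomposable bundle is exactly Lemma~\ref{Lemm:fibreInvariant}. So assume no such invariant fibre exists. As $\Autz(X)$ is connected, the action it induces on $\p^1$ (via Blanchard's Lemma~\ref{blanchard}, applied to $\pi$ and to $\tau_a$) fixes pointwise the finite set of points over which the fibre degenerates (Proposition~\ref{Prop:NoMoreJumpingfibre}); since no such fibre is invariant, there is none, i.e.\ $\pi$ has no jumping fibre. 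Then $X$ has numerical invariants $(a,b,c)$ by Proposition~\ref{Prop:EquivABC}, and by Corollary~\ref{Coro:IsoClass} it is isomorphic to $\FF_a^{b,c}$ (case~\ref{OKDec}) as soon as $b=0$, or $c\le 1$, or $X$ is decomposable. Henceforth $b\ge 1$, $c\ge 2$ and $X$ is non-decomposable, so it determines a point $[\pi]\in\M_a^{b,c}$ (Corollary~\ref{Cor:modspace}).

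Next, let $G$ be the image of $\Autz(X)$ in $\Autz(\F_a)$. Any $\tilde g\in\Autz(X)$ lying over $g\in G$ is an isomorphism of $\p^1$-bundles between $\pi$ and $g\pi$, so $g$ fixes $[\pi]$; thus $[\pi]$ is a $G$-fixed point of $\M_a^{b,c}$. Let $\bar G$ be the image of $G$ in $\Aut(\p^1)\simeq\PGL_2$ if $a\ge 1$, resp.\ in $\Autz(\F_0)\simeq\PGL_2\times\PGL_2$ if $a=0$. By hypothesis $\bar G$ fixes no point of $\p^1$ (for either ruling when $a=0$), as such a point would give an invariant fibre. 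Hence $\bar G=\PGL_2$ when $a\ge 1$ (every proper connected subgroup of $\PGL_2$ lies in a Borel, hence fixes a point), while for $a=0$ Lemma~\ref{lem:subgroups of PGL2} leaves only $\bar G=\PGL_2\times\PGL_2$ or $\bar G$ conjugate to the diagonal $\PGL_2$. The first is impossible: by Corollary~\ref{cor action on Mabc}\ref{GL2SVVp} one has $\M_0^{b,c}\simeq\P(\k[y_0,y_1]_b\otimes\k[z]_{\le c-2})$ as a $\GL_2\times\GL_2$-variety, and since $b\ge 1$ both tensor factors are non-trivial irreducible representations (Remark~\ref{Remark rep is Sb(V)}), so there is no $\PGL_2\times\PGL_2$-fixed line. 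In the diagonal case, writing $\bar G=h\Delta h^{-1}$ with $\Delta$ the standard diagonal, the point $h^{-1}\cdot[\pi]$ is $\Delta$-fixed; by Corollary~\ref{cor action on Mabc}\ref{GL2SVVp} and Schur's lemma this forces $c=b+2$ and $h^{-1}\cdot[\pi]=[\TT_b]$, so $(X,\pi)$ is square isomorphic (above $h$) to $\TT_b\to\F_0$ (case~\ref{OKSch}), with the reduction to a decomposable bundle provided by Lemma~\ref{Lemm:AutTTb}.

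It remains to treat $a\ge 1$ with $\bar G=\PGL_2$, where I claim $X\simeq\U_a^{b,c}$ (case~\ref{OKUme}). Since $G$ surjects onto $\PGL_2$, its Levi subgroup is reductive and surjects onto $\PGL_2$, and keeping a simple factor that survives yields a semisimple subgroup $R\subseteq G$, isomorphic to $\SL_2$ or $\PGL_2$, still surjecting onto $\PGL_2$. By the conjugacy of maximal reductive subgroups (Mostow) --- and after conjugating $G$ by an element of $\Aut(\F_a)$, which is harmless as $\U_a^{b,c}$ is $\Aut(\F_a)$-invariant by Lemma~\ref{Lem:SurjectiveActionUmemura} --- we may assume $R$ lies in the standard Levi $\GL_2/\mu_a$ of $\Autz(\F_a)$ (Remark~\ref{Rem:AutFa}), whose preimage in $\GL_2$ contains the standard $\SL_2$. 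By Corollary~\ref{cor action on Mabc}\ref{GL2SV}, $\M_a^{b,c}=\P\big(\bigoplus_{i=0}^{b}y_0^{i}y_1^{b-i}\cdot\k[z]_{\le c-2-ai}\big)$ as a $\GL_2$-variety, the $i$-th summand being the $(c-2-ai)$-th symmetric power of the standard $\GL_2$-representation (Remark~\ref{Remark rep is Sb(V)}). The line over $[\pi]$ is $R$-stable, hence $\SL_2$-stable, hence $\SL_2$-fixed ($\SL_2$ has no non-trivial character), so it lies in the sum of the $\SL_2$-invariants of the summands; this is non-zero only for the index $i$ with $ai=c-2$, and is then the line spanned by $y_0^{i}y_1^{b-i}z^{ai+1}$, i.e.\ the point $[\U_a^{b,c}]$. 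Hence $c=ai+2$ with $0\le i\le b$ and $X\simeq\U_a^{b,c}$. The step I expect to be the main obstacle is exactly this last one: passing from ``$G$ surjects onto $\PGL_2$'' to a reductive subgroup acting through the explicit $\GL_2$-action of Corollary~\ref{cor action on Mabc} requires carefully combining the Levi decomposition, Mostow's conjugacy theorem, and the structure of $\Autz(\F_a)$ from Remarks~\ref{Rem:AutF0}--\ref{Rem:AutFa}; once that is in place, identifying $[\pi]$ is pure representation theory.
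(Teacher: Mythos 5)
Your argument is correct, and its skeleton is the paper's: reduce to no jumping fibre (an invariant jumping fibre would put you in case~\ref{FixFibreFibre}), discard the decomposable/low-invariant cases via Proposition~\ref{Prop:EquivABC} and Corollary~\ref{Coro:IsoClass}, view $[\pi]$ as a fixed point of the image $G\subset\Autz(\F_a)$ on $\M_a^{b,c}$, and split according to the induced subgroup of $\PGL_2$ (resp.\ of $\PGL_2\times\PGL_2$); your $a=0$ case is handled exactly as in the paper, via Lemma~\ref{lem:subgroups of PGL2}, Corollary~\ref{cor action on Mabc}\ref{GL2SVVp} and Schur's lemma, including the square-isomorphism (rather than isomorphism) conclusion in case~\ref{OKSch}.

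Where you genuinely diverge is the case $a\ge 1$. You produce a semisimple $A_1$-subgroup $R\subseteq G$ surjecting onto $\PGL_2$ (Levi decomposition of $G$), conjugate it into the Levi $\GL_2/\mu_a$ of $\Autz(\F_a)$ by Mostow's theorem, replace it by the image of the standard $\SL_2$, and then compute the whole fixed locus $\bigoplus_i y_0^iy_1^{b-i}\left(\k[z]_{\le c-2-ai}\right)^{\SL_2}$, which is exactly the Umemura line. The paper avoids all of this structure theory: for each $\overline\sigma\in\PGL_2$ it takes an arbitrary lift $h=\varphi_1\varphi_2\in G$, with $\varphi_1$ in the $\GL_2$-part above $\overline\sigma$ and $\varphi_2$ in the unipotent part $\k[z_0,z_1]_a$, and observes via Lemma~\ref{Lemm:ActionAutFaParSpace}\ref{ActionFap} that $\varphi_2$ does not change the lowest nonzero coefficient $P_{i_0}$; irreducibility of $\k[z]_{\le c-2-ai_0}$ then forces $c-2=ai_0$ and $P_{i_0}$ constant, and the remaining $P_i$ vanish for degree reasons. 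So the paper's mechanism is elementary and local (only the lowest coefficient, no lifting of $\PGL_2$ into $G$), while yours is heavier but identifies the full fixed point at once; both need characteristic zero through irreducibility of the symmetric powers. Two details in your write-up should be tightened, though neither is a real gap: the phrase ``whose preimage in $\GL_2$ contains the standard $\SL_2$'' should be replaced by the short argument that the derived group of the identity component of the preimage of the conjugated $R$ is a $3$-dimensional connected semisimple subgroup of $\GL_2$, hence equals $\SL_2$ and still maps onto it; and when $c=2$ the factor $\k[z]_{\le c-2}$ is the trivial representation, so in the $a=0$, $\bar G=\PGL_2\times\PGL_2$ subcase you should invoke irreducibility and $\dim\ge 2$ of the tensor product rather than non-triviality of both factors.
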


\begin{proof}
Lemma~\ref{Lemm:fibreInvariant} and Lemma~\ref{Lemm:AutTTb}~\ref{CommDiag} give the existence of the birational map $\psi\colon X\dasharrow  \FF_a^{b,c}$ in cases~\ref{OKSch} and~\ref{FixFibreFibre}, with $\psi\Autz(X)\psi^{-1}\subset \Autz(\FF_a^{b,c})$. Moreover, we cannot have equality since the action of $\Autz(\FF_a^{b,c})\to \Aut(\F_a)$ is surjective (see Lemma~\ref{Lem:AutDecOnAutFa}). It remains to show that we can reduce to the  four cases above.

We denote by $H\subset \Autz(\F_a)$ the image of $\Autz(X)$ by the natural homomorphism $\Autz(X) \to \Autz(\F_a)$ (see Lemma~\ref{blanchard}).  If the preimage of one fibre of $\tau_a\colon \F_a\to\p^1$ is invariant by $\Autz(X)$, we get case~\ref{FixFibreFibre}. We can in particular assume that all fibres of $\tau_a\pi\colon X\to \p^1$ are isomorphic to $\F_b$ for the same $b\ge 0$ (no jumping fibre; see Proposition~\ref{Prop:NoMoreJumpingfibre}). Hence, $X\to\p^1$ has numerical invariants $(a,b,c)$ for some $c\in \Z$, which is positive if $b=0$ (see Proposition~\ref{Prop:EquivABC} and Definition~\ref{Defi:NumInv}). We can moreover assume that $X \to \F_a$ is not a decomposable bundle since otherwise we obtain case~\ref{OKDec}. This implies that $b\ge 1$ and $c\ge 2$ and that $X\to \F_a$ is isomorphic to 
$Z_{a}^{b,c,P}\to \F_a$, where $P(y_0,y_1,z)=\sum_{i=0}^b  {y_0}^i{y_1}^{b-i}P_i(z)z^{ai+1}$ and $P_i(z)\in \k[z]_{\le c-2-ai}$  for $i=0,\ldots,b$ (see Corollary~\ref{Coro:IsoClass}).

For each $h \in \Autz(\F_a)$, we have $h \in H$ if and only if there exists an $\hat{h} \in \Autz(X)$ such that the following diagram commutes: 
\begin{equation}\label{eq:diag_commutes}
\xymatrix@R=3mm@C=2cm{
    X \ar[r]^{\hat h} \ar[d]_\pi  & X \ar[d]^\pi \\
    \F_a \ar[r]_h & \F_a\rlap{.}
  }
  \end{equation}
This is thus equivalent to asking that the class of $X\to \F_a$ in $\M_{a}^{b,c}$ (see Corollary~\ref{Cor:modspace}) is fixed by the action of~$h$.  

We now consider two cases, depending on whether $a\ge 1$ or $a=0$.

{\it Case  $a \geq 1$}. We denote by $i_0$ the smallest integer such that $P_{i_0}\not=0$. For each element $\overline\sigma\in\PGL_2$, there exists an element of $H\subseteq \Aut(\F_a)$ whose action on $\p^1$, via $\tau$, corresponds to $\overline\sigma$ in $\PGL_2$. We can write this element as $\varphi_1 \varphi_2$, where $\varphi_1,\varphi_2\in \Aut(\F_a)$ are given by
\[\begin{array}{cccccc}
\varphi_1& \colon& [y_0:y_1;z_0:z_1]&\mapsto & [y_0:y_1;\alpha z_0+\beta z_1:\gamma z_0+\delta z_1],\\[1ex]
\varphi_2& \colon& [y_0:y_1;z_0:z_1]&\mapsto & [ y_0:y_1+y_0R(z_0,z_1):z_0:z_1],\end{array}\]
where $\sigma=\begin{bsmallmatrix} \alpha&\beta\\ \gamma&\delta\end{bsmallmatrix} \in \GL_2$ represents the class $\overline\sigma\in \PGL_2$ and $R\in \k[z_0,z_1]_a$. Lemma~\ref{Lemm:ActionAutFaParSpace} describes the action of $\varphi_1$ and $\varphi_2$ on the class $[\pi]$. The element $\varphi_2$ does not change the polynomial $P_{i_0}$ (see Lemma~\ref{Lemm:ActionAutFaParSpace}\ref{ActionFap}), so $\sigma(P_{i_0})$ has to be equal to a multiple of $P_{i_0}$, for the action of $\GL_2$ on $\k[z]_{\le c-ai_0}$ given in Definition~$\ref{GL2 action}$ (see Lemma~\ref{Lemm:ActionAutFaParSpace}\ref{ActionGL2}). This implies that the class of $P_{i_0}$ in $\P(\k[z]_{\le c-2-ai_0})$ is fixed by the corresponding action of $\PGL_2$. This happens if and only if $(\k[z]_{\le c-2-ai_0})^{\SL_2}$ is non-zero. As $\k[z]_{\le c-2-ai_0}$ is an irreducible $\SL_2$-representation (see Remark~\ref{Remark rep is Sb(V)}), $(\k[z]_{\le c-2-ai_0})^{\SL_2}$ is non-zero if and only if $\k[z]_{\le c-2-ai_0}$ is the trivial representation; that is, $c-2=a i_0$ and $P_{i_0}$ is a constant polynomial. Since $i_0$ is the smallest integer such that $P_{i_0} \neq 0$, we have $P_i=0$ for all $i<i_0$. Moreover, $P_i=0$ for $i>i_0$ since $c-2-ai<0$. This implies that $P(y_0,y_1,z)= \lambda y_0^{i_0} y_1^{b-i_0}  z^{a i_0+1}$, and so that $X\to \F_a$ is an Umemura bundle (see Definition~\ref{def Umemura bundle}). 

{\it Case  $a = 0$}. Let $a=0$ and $\pi: X\to \p^1\times \p^1$ be a $\p^1\!$-bundle. If $H=\PGL_2 \times \PGL_2=\Autz(\F_0)$, then the moduli space $\M_{0}^{b,c}$ must contain a fixed point for the natural $H$-action, described in Corollary~\ref{cor action on Mabc}\ref{GL2SVVp}. This cannot happen since the $\SL_2 \times \SL_2$-representation $\k[y_0,y_1]_b \otimes k[z]_{\leq c-2}$ is irreducible, and non-trivial when $b\geq 1$ (as we assumed above).

Now assume that $H \subsetneq \PGL_2 \times \PGL_2$ is a proper subgroup. If one of the two projections $H\to \PGL_2$ is not onto, then one fibre of a projection $\p^1\times \p^1\to \p^1$ is invariant, and we get case~\ref{FixFibreFibre}. We then assume that $H$ surjects onto $\PGL_2$ via both projections. By Lemma~\ref{lem:subgroups of PGL2}, $H$ is conjugate to $H_\Delta:=\{(h,h) \ |\ h \in \PGL_2\}$ in $\PGL_2 \times \PGL_2$.

If $H$ is conjugate to $H_\Delta$, then the moduli space $\M_{0}^{b,c}$ contains a fixed point for the natural $H_\Delta$-action. By Corollary~\ref{cor action on Mabc}\ref{GL2SVVp}, $\M_{0}^{b,c}\simeq \P(\Hom((k[y_0,y_1]_b)^*,k[z]_{\leq c-2})$ as an $H_\Delta$-variety, where we identify $H_\Delta = \PGL_2$. Hence $\M_{0}^{b,c}$ contains a fixed point if and only if $\Hom^{\SL_2}((k[y_0,y_1]_b)^*,k[z]_{\leq c-2}) \neq \{0\}$. As the $\SL_2$-representations $(k[y_0,y_1]_b)^*$ and $\k[z]_{\leq c-2}$ are irreducible and of dimension $b+1$ and $c-1$, respectively, it follows from Schur's lemma that $\Hom^{\SL_2}((k[y_0,y_1]_b)^*,k[z]_{\leq c-2})=0$ when $b \neq c-2$. On the other hand, if $b=c-2$, then $\Hom^{\SL_2}((k[y_0,y_1]_b)^*,k[z]_{\leq c-2})=\{ \lambda Id; \lambda \in k\}$, and so $\M_{0}^{b,b+2}$ has a unique fixed point corresponding to the identity; the latter is given by  $P(y_0,y_1,z)=\sum_{i=0}^{b} y_0^i y_1^{b-i} z^{i+1}$ (see Corollary~\ref{cor action on Mabc}\ref{GL2SVVp}) and yields case~\ref{OKSch}.
\end{proof}

\begin{remark}\label{Rem:CharP}
In the proof of Proposition~\ref{prop decompo and Umemura bundles}, the assumption that the base field $\k$ is of characteristic zero is required. Indeed, the results from the representation theory of $\SL_2$ that we use in the proof are not valid in positive characteristic. In positive characteristic, there are actually more $\p^1\!$-bundles to consider.
\end{remark}

\section{\texorpdfstring{$\boldsymbol{\p^1}$}{P\textasciicircum 1}-bundles over \texorpdfstring{$\boldsymbol{\p^2}$}{P\textasciicircum 2}} \label{section: P1 bundles over P2}
The results in \S~\ref{decomposable_over_P2} are valid over an algebraically closed field $\k$ of  arbitrary characteristic, but in \S~\ref{schwarzenberger_over_P2} we need to assume $\car(k) \neq 2$ (due to the fact that we work with a quadratic form and need $2$ to be invertible). In \S~\ref{subsec:four families}, Lemma~\ref{Lem:LargeActionP2} and Proposition~\ref{Prop:P1bundlesP2} both rely  on Proposition~\ref{prop decompo and Umemura bundles}, and so are valid only in characteristic zero, while Lemma~\ref{SubgroupsPGL3} holds in characteristic different from $2$.

\subsection{Decomposable bundles over \texorpdfstring{$\boldsymbol{\p^2}$}{P2}}\label{decomposable_over_P2}

In this section, we give an explicit description of the decomposable $\p^1\!$-bundles over $\p^2$, similar to the one provided in \S~\ref{SubSec:DecFa} for the $\p^1\!$-bundles over the Hirzebruch surfaces. We also give global coordinates on decomposable $\p^1\!$-bundles over $\p^2$.

\begin{definition} \label{def Pb}
Let $b\in \Z$. Define $\PP_b$ to be the quotient of $(\A^2\setminus \{0\})\times (\A^3\setminus \{0\})$ by the action of $(\G_m)^2$ given by
\[\begin{array}{ccc}
(\G_m)^2 \times (\A^2\setminus \{0\})\times (\A^3\setminus \{0\}) & \to & (\A^2\setminus \{0\})\times (\A^3\setminus \{0\})\\
\big((\mu,\rho), (y_0,y_1;z_0,z_1,z_2)\big)&\mapsto& (\mu\rho^{-b} y_0,\mu y_1;\rho z_0,\rho z_1,\rho z_2).\end{array}\]
The class of $(y_0,y_1,z_0,z_1,z_2)$ will be written $[y_0:y_1;z_0:z_1:z_2]$. The projection 
\[\PP_b\to \p^2, \quad [y_0:y_1;z_0:z_1:z_2]\mapsto [z_0:z_1:z_2]\]
identifies $\PP_b$ with 
\[\P\big(\OPP(b) \oplus \OPP\big)=\P\big(\OPP \oplus \OPP(-b)\big)\]
as a $\p^1\!$-bundle over $\p^2$. As before, we get an isomorphism of $\p^1\!$-bundles $\PP_{b}\simeq \PP_{-b}$ by exchanging $y_0$ with $y_1$ and will then often assume $b\ge0$ in the following.
\end{definition}

\begin{lemma}\label{lemm:surjectiveDecP2}
For each $b\in \Z$, the morphism $\pi\colon\PP_b\to \p^2$ yields a surjective group homomorphism
\[\rho\colon\Autz(\PP_b)\twoheadrightarrow\Aut(\p^2)=\PGL_3.\]
\end{lemma}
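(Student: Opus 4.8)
The plan is to prove surjectivity by exhibiting explicitly, for each generator of $\PGL_3$, an automorphism of $\PP_b$ lying above it. First I would note that the existence of the homomorphism $\rho$ follows from Lemma~\ref{blanchard}, since $\pi_*(\O_{\PP_b})=\O_{\p^2}$ (the fibres are $\p^1$'s). So the only content is surjectivity, and since $\PGL_3$ is connected it suffices to lift each element of a generating set, or better, to lift the whole group using the global description of Definition~\ref{def Pb}.

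The key observation is that $\Aut(\p^2)=\PGL_3$ is the quotient of $\GL_3$ acting linearly on $\A^3\setminus\{0\}$, and in Definition~\ref{def Pb} the surface $\PP_b$ is realised as a quotient of $(\A^2\setminus\{0\})\times(\A^3\setminus\{0\})$ where $\GL_3$ acts naturally on the second factor (the $z$-coordinates) and trivially on the first factor (the $y$-coordinates). Thus for $M=(m_{ij})\in \GL_3$ I would define
\[
\widetilde{M}\colon [y_0:y_1;z_0:z_1:z_2]\mapsto \Bigl[y_0:y_1;\textstyle\sum_j m_{0j}z_j:\sum_j m_{1j}z_j:\sum_j m_{2j}z_j\Bigr].
\]
One checks this is well-defined on the quotient: it commutes with the $(\G_m)^2$-action since $\widetilde M$ only touches the $z$'s and the $z$'s all carry the same weight under the $\rho$-factor of $(\G_m)^2$ and weight $0$ under the $\mu$-factor, so scaling $(z_0,z_1,z_2)$ by $\rho$ is preserved. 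It is a morphism (polynomial in both charts) with polynomial inverse $\widetilde{M^{-1}}$, hence an automorphism of $\PP_b$, and by construction $\pi\circ\widetilde M = \bar M\circ \pi$ where $\bar M\in\PGL_3$ is the class of $M$. Since the assignment $M\mapsto\widetilde M$ is a group homomorphism $\GL_3\to\Aut(\PP_b)$ whose image lands in $\Autz(\PP_b)$ (as $\GL_3$ is connected) and maps onto all of $\PGL_3$ after composing with $\rho$, surjectivity of $\rho$ follows.

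There is essentially no hard step here; the only thing to be slightly careful about is the well-definedness check (that $\widetilde M$ descends to the quotient), which is immediate from the fact that the $z$-variables form a $\GL_3$-subrepresentation on which the two tori act by a common character, exactly as in the analogous Lemma~\ref{Lem:AutDecOnAutFa} for $\FF_a^{b,c}$. Alternatively, and even more briefly, one can argue as in the proof of Lemma~\ref{Lem:AutDecOnAutFa}: every $g\in\PGL_3=\Aut(\p^2)$ lifts to a linear automorphism of $\A^3\setminus\{0\}$, hence extends to an automorphism of $(\A^2\setminus\{0\})\times(\A^3\setminus\{0\})$ acting trivially on the $y_0,y_1$ coordinates, and this descends to $\PP_b$ and lies above $g$; this gives a section of $\rho$ over $\PGL_3$ and in particular proves $\rho$ is surjective.
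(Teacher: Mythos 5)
Your proposal is correct and is essentially the paper's own argument: existence of $\rho$ from Blanchard's lemma, and surjectivity by letting $\GL_3$ act linearly on the coordinates $z_0,z_1,z_2$ of the quotient presentation of $\PP_b$ while fixing $y_0,y_1$, exactly as in the analogue for $\FF_a^{b,c}$. The extra details you supply (well-definedness on the quotient, connectedness of $\GL_3$ to land in $\Autz(\PP_b)$) are just the routine verifications the paper leaves implicit.
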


\begin{proof}
The existence of $\rho$ is given by Lemma~\ref{blanchard}. The fact that it is surjective can be seen by making $\GL_3$ act naturally on $\PP_b$, do nothing on $y_0,y_1$, and  act naturally on $z_0,z_1,z_2$. 
\end{proof}

\begin{remark} \label{rk: description auto of P1-bundle of dec bundles over P2}
Assume  $b \geq 1$. The group of automorphisms of $\p^1\!$-bundle of $\PP_b$ identifies with the (connected) group 
 \[
\left \{ \begin{bmatrix} 1 &  0 \\ 
            p & \lambda \end{bmatrix} \in \GL_2(\k[z_0,z_1,z_2])\ \middle| \ \lambda \in k^* \text{ and } p \in \k[z_0,z_1,z_2]_b \right \}\]
whose action on $\PP_b$ is as follows:
$$ (\lambda,p) \cdot [y_0:y_1; z_0:z_1:z_2]=[y_0: \lambda y_1+y_0 p(z_0,z_1,z_2); z_0:z_1:z_2].$$
This can be seen  directly from the global description of $\PP_b$ in Definition~\ref{def Pb}, and by using trivialisations on open subsets isomorphic to $\A^2$.
\end{remark}

\subsection{Schwarzenberger \texorpdfstring{$\boldsymbol{\p^1}$}{P1}-bundles over \texorpdfstring{$\boldsymbol{\p^2}$}{P2}}\label{schwarzenberger_over_P2}
In this subsection, we study the Schwarzenberger $\p^1\!$-bundles $\SS_b\to \p^2$, with $b\ge -1$, given by $\P(\kappa_* \O_{\p^1 \times \p^1}(-b-1,0))\to \p^2$ (see Definition~\ref{def:Schwarz}). As we will observe, only the cases $b\ge 1$ are interesting since $\SS_{-1}$ and $\SS_{0}$ are decomposable (see Corollary~\ref{Cor:SmallIsoSchwarzenberger}).

\begin{lemma}\label{Lemm:TransSchwarz}
Denoting by $U_0,U_1\subset \p^2$ the two open subsets 
\[U_0= \{[X:Y:Z] | X \neq 0\} \simeq \A^2,\quad U_1= \{[X:Y:Z] | Z \neq 0\} \simeq \A^2,\] the restriction of $\SS_b$ on $\P^2 \setminus \{ [0:1:0] \}$ is obtained by gluing $\p^1 \times U_0$ and $\p^1 \times U_1$ along $\p^1 \times (U_0 \cap U_1)$ via the isomorphism given by
$$\begin{array}{rcccccc}
\theta \colon & \p^1 \times U_0 &\dasharrow &\p^1 \times U_1 \\
&\left(\begin{bmatrix} x_0 \\ x_1 \end{bmatrix} ,[1:u:v] \right) & \mapsto & \left(\begin{bmatrix}  \alpha_{11}(u,v) & \alpha_{12}(u,v) \\ \alpha_{21}(u,v) & \alpha_{22}(u,v) \end{bmatrix} \begin{bmatrix} x_0 \\ x_1 \end{bmatrix},\left[\dfrac{1}{v}:\dfrac{u}{v} :1\right]\right),\end{array}$$
where $\alpha_{ij}(u,v)\in \k[u,v]$ are the polynomials satisfying
\[ \begin{bmatrix} \alpha_{11}(s+t,st) & \alpha_{12}(s+t,st) \\
 \alpha_{21}(s+t,st)& \alpha_{22}(s+t,st) \end{bmatrix}=\left\{\begin{array}{ll}\begin{bmatrix}1 & 0 \\ 0 & -st\end{bmatrix}  \text{ if }b=-1,\vspace{0.2cm}\\
 \dfrac{1}{s-t}\begin{bmatrix} s^{b}-t^{b} & st(s^{b-1}-t^{b-1}) \\
 s^{b+1}-t^{b+1}& st(s^{b}-t^{b})\end{bmatrix} \text{ if }b\ge0.\end{array}\right.\]
\end{lemma}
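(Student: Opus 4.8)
The statement is a concrete computation of a transition function for $\SS_b=\P(\kappa_*\O_{\p^1\times\p^1}(-b-1,0))$ over the affine chart $\p^2\setminus\{[0:1:0]\}$, and the plan is to compute the pushforward sheaf $\kappa_*\O_{\p^1\times\p^1}(-b-1,0)$ explicitly on the two standard affine charts $U_0$ and $U_1$ and then compare the resulting trivialisations. First I would set up coordinates: on $\p^1\times\p^1$ use $([y_0:y_1],[z_0:z_1])$, and recall $\kappa([y_0:y_1],[z_0:z_1])=[y_0z_0:y_0z_1+y_1z_0:y_1z_1]$. Over a point $[X:Y:Z]\in\p^2$ with $X\neq 0$, writing $[X:Y:Z]=[1:u:v]$, the fibre $\kappa^{-1}([1:u:v])$ consists of the two points (counted with multiplicity) $([1:s],[1:t])$ with $\{s,t\}$ the roots of $T^2-uT+v$, i.e. $s+t=u$, $st=v$; this is the source of the symmetric-function description in the statement. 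The key algebraic fact is that $\kappa_*\O_{\p^1\times\p^1}(-b-1,0)$, restricted to $U_0\simeq\A^2$, is free of rank two, and one must pick a basis of sections so as to read off the matrix.

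The main technical step: the $\O(-b-1,0)$ twist means we are looking at the pushforward of a line bundle whose sections over a $\kappa$-fibre form a $2$-dimensional space; concretely, $\kappa_*\O_{\p^1\times\p^1}(-b-1,0)$ can be identified on $U_0$ with the sheaf of pairs of polynomials (or rather, by duality, with an appropriate module of ``functions on the fibres''), and a natural trivialising frame over $U_0$ is obtained by taking the two sections corresponding to $1$ and to $s$ (or to $s^b$, $s^{b+1}$ suitably normalised), symmetrised appropriately so they descend to $U_0$. Over $U_1\simeq\A^2$ (where $Z\neq 0$, so $[X:Y:Z]=[\frac1v:\frac uv:1]$, matching the second coordinate in the statement) one does the same with the other affine chart of the $y$-$\p^1$. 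The transition matrix $\theta$ is then obtained by expressing the $U_0$-frame in terms of the $U_1$-frame; because the $y$-coordinate change on $\p^1\times\p^1$ in the relevant factor is $[y_0:y_1]\mapsto [y_1:y_0]$ composed with the twist by $-b-1$, the entries come out as the divided-difference expressions $\frac{s^{k}-t^{k}}{s-t}$ and $st\frac{s^{k-1}-t^{k-1}}{s-t}$. These are symmetric in $s,t$, hence polynomials in $u=s+t$, $v=st$, which is precisely why the $\alpha_{ij}(u,v)$ of the statement are well-defined; the case $b=-1$ is special because $s^{-2}-t^{-2}$ has to be rewritten, producing the diagonal matrix $\mathrm{diag}(1,-st)$ up to the $\frac{1}{s-t}$ normalisation.

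I would organise the write-up as: (1) describe $\kappa$ on the two charts and the fibrewise picture via $s+t=u$, $st=v$; (2) exhibit an explicit trivialisation of $\kappa_*\O_{\p^1\times\p^1}(-b-1,0)$ over $U_0$ and over $U_1$ by choosing explicit frames; (3) compute the change-of-frame matrix, obtaining the divided differences; (4) verify these are polynomial in $(u,v)$ and separate out the $b=-1$ case; (5) check the base-point formula $[1:u:v]\leftrightarrow[\frac1v:\frac uv:1]$ on $U_0\cap U_1$. The main obstacle I anticipate is bookkeeping: choosing the frames so that the matrix comes out in exactly the normalised form stated (in particular getting the $\frac{1}{s-t}$ factor and the precise powers $s^b,s^{b+1}$ in the right entries) requires care about which twist and which affine chart of the fibre one uses, and one has to be consistent about whether $\kappa_*$ of a negative line bundle is being computed directly or via relative duality. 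Once the frames are fixed correctly, steps (3)–(5) are routine polynomial identities (e.g. $s^{b+1}-t^{b+1}=(s+t)(s^b-t^b)-st(s^{b-1}-t^{b-1})$, which is what makes the four entries consistent with a single $2\times2$ cocycle), so the real work is entirely in step (2).
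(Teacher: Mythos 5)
Your plan follows essentially the same route as the paper's proof: trivialise $\O_{\p^1\times\p^1}(-b-1,0)$ on the two charts $T_i=\kappa^{-1}(U_i)\simeq\A^2$, view the pushforward there as the free rank-two $\k[u,v]$-module $\k[s,t]$ (with $u=s+t$, $v=st$), and compute the change of frame through the line-bundle transition $f\mapsto s^{-(b+1)}f(1/s,1/t)$, landing on symmetric divided-difference entries, with $b=-1$ handled separately. The only points the paper makes concrete that you leave as bookkeeping are the choice of frame $\{1,\ s_i-t_i\}$, so that the two coordinates of a section are recovered by (anti)symmetrisation (this is where $\car(\k)\neq 2$ enters, and no relative duality is needed since the twisted bundle is trivial on each $T_i$), and a final elementary change of basis that brings the raw matrix to the normalised form in the statement.
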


\begin{proof}Recall that $\SS_b=\P(\kappa_* \O_{\p^1 \times \p^1}(-m,0))$, where $m=b+1$ and  $\kappa$ is given by
  \[\kappa: \p^1 \times \p^1 \to \P^2,\quad \big([y_0:y_1],[z_0:z_1]\big) \mapsto [y_0 z_0:y_0 z_1+y_1 z_0:y_1z_1]\]
(see Definition~\ref{def:Schwarz}).
The preimages of $U_0,U_1\subset \p^2$ by $\kappa$ are then two open subsets $T_0=\kappa^{-1}(U_0)$ and $T_1=\kappa^{-1}(U_1)$ of $\p^1\times \p^1$ isomorphic to $\A^2$ using the  standard coordinates $s_0=\frac{y_1}{y_0}, t_0=\frac{z_1}{z_0}, s_1=\frac{y_0}{y_1},t_1=\frac{z_0}{z_1}$ in  $\k(\p^1\times \p^1)$: 
\[\begin{array}{l}
T_0= \kappa^{-1}(U_0)= \left\{\big([y_0:y_1],[z_0:z_1]\big)\in \p^1\times \p^1 \mid y_0z_0 \neq 0 \right\} =\Spec\big(\k[s_0,t_0]\big),\\[1ex] 
T_1= \kappa^{-1}(U_1)= \left\{\big([y_0:y_1],[z_0:z_1]\big)\in \p^1\times \p^1 \mid y_1z_1 \neq 0 \right\} =\Spec\big(\k[s_1,t_1]\big).\end{array}
\]
The line bundle $\O_{\p^1 \times \p^1}(-m,0)$ is trivial on $T_0$ and $T_1$,
 so  $\O_{\p^1 \times \p^1}(-m,0)$ is the gluing of two copies of $\A^3$, via the transition function
\[\begin{array}{cccccc}
(\A^1\setminus \{0\}  \times \A^1\setminus \{0\} )\times \A^1 &\iso &(\A^1\setminus \{0\}  \times \A^1\setminus \{0\} )\times \A^1 \\
(s_0,t_0,a_0) & \mapsto & \big(\frac{1}{s_0}, \frac{1}{t_0},a_0 s_0^{m} \big),\end{array}\]
which corresponds to the identifications $a_1=a_0 s_0^{m}$, $s_1=\frac{1}{s_0}$, $t_1=\frac{1}{t_0}$. This transition function implies that a section on $T_0\cap T_1$ correspond on the first chart to an element $f(s_0,t_0)\in \k[{s_0}^{\pm 1},{t_0}^{\pm 1}]$ and on the second chart to $s_1^{-m}f(\frac{1}{s_1},\frac{1}{t_1})\in \k[{s_1}^{\pm 1},{t_1}^{\pm 1}]$.

To compute the transition on $U_0$ and $U_1$, we take standard coordinates $u_0=\frac{Y}{X}$, $v_0=\frac{Z}{X}$, $u_1=\frac{Y}{Z}$, $v_1=\frac{X}{Z}$ on $U_0=\Spec(\k[u_0,v_0])$ and $U_1=\Spec(\k[u_1,v_1])$. We then observe that for $i=1,2$, the morphisms $\kappa_{|_{T_i}}: T_i \to U_i$ corresponds to the injective algebra morphism $\k[U_i]=\k[u_i,v_i] \hookrightarrow \k[T_i]=\k[s_i,t_i]$ that sends $u_i$ and $v_i$ onto $s_i+t_i$ and $s_i t_i$, respectively.

 The space of sections $\kappa_* \O_{\p^1 \times \p^1}(-m,0)(U_i)=\O_{\p^1 \times \p^1}(-m,0)(T_i) \simeq \k[T_i]=\k[s_i,t_i]$ is a free $\k[U_i]$-module of rank $2$ generated by $1$ and $\xi_i=s_i-t_i$. This basis being chosen, it determines a transition function, which is of the form
 \[\begin{array}{ccc}
 U_0\times \A^2 & \dasharrow & U_1\times \A^2\\
 \left(u_0,v_0,\begin{bmatrix} b_0 \\ b_1 \end{bmatrix}\right) & \mapsto & \left(\dfrac{u_0}{v_0},\dfrac{1}{v_0},\begin{bmatrix} \alpha_{11}(u_0,v_0) & \alpha_{12}(u_0,v_0) \\
 \alpha_{21}(u_0,v_0)& \alpha_{22}(u_0,v_0) \end{bmatrix}\cdot\begin{bmatrix} b_0 \\ b_1 \end{bmatrix}\right)\end{array}\]
 for some $\alpha_{ij}\in \k[u_0,{v_0}^{\pm 1}]$. We then take a section on $U_0\cap U_1$, which is given on the first chart by $f_0(u_0,v_0),f_1(u_0,v_0)\in \k[u_0,{v_0}^{\pm 1}]$ and on the second chart by 
 \[\begin{array}{rcl}
 g_0(u_1,v_1)&=&\alpha_{11}\big(\frac{u_1}{v_1},\frac{1}{v_1}\big)f_0\big(\frac{u_1}{v_1},\frac{1}{v_1}\big)+\alpha_{12}\big(\frac{u_1}{v_1},\frac{1}{v_1}\big)f_1\big(\frac{u_1}{v_1},\frac{1}{v_1}\big)\in \k[u_1,{v_1}^{\pm1}],\vspace{0.1cm}\\ 
 g_1(u_1,v_1)&=&\alpha_{21}\big(\frac{u_1}{v_1},\frac{1}{v_1}\big)f_0\big(\frac{u_1}{v_1},\frac{1}{v_1}\big)+\alpha_{22}\big(\frac{u_1}{v_1},\frac{1}{v_1}\big)f_1\big(\frac{u_1}{v_1},\frac{1}{v_1}\big)\in \k[u_1,{v_1}^{\pm1}].\end{array}\] 
The corresponding section on $T_0\cap T_1$ is then given  on the two charts by  
\[\begin{array}{rcll}
f(s_0,t_0)&=&f_0(s_0+t_0,s_0t_0)+(s_0-t_0)f_1(s_0+t_0,s_0t_0)&\in \k[s_0^{\pm1},t_0^{\pm 1}],\\
g(s_1,t_1) &=&s_1^{-m}f\big(\frac{1}{s_1},\frac{1}{t_1}\big)\\
&=& g_0(s_1+t_1,s_1t_1) +(s_1-t_1)g_1(s_1+t_1,s_1t_1)&\in \k[s_1^{\pm1},t_1^{\pm 1}].
\end{array}\]
We then use the equalities 
\[\begin{array}{rcl}
2g_0(s_1+t_1,s_1t_1)&=&g(s_1,t_1)+g(t_1,s_1)\in \k\big[s_1^{\pm1},t_1^{\pm 1}\big],\\[1ex]
2g_1(s_1+t_1,s_1t_1)&=&\frac{g(s_1,t_1)-g(t_1,s_1)}{s_1-t_1}\in \k[s_1^{\pm1},t_1^{\pm 1}],\\[1ex]
 g_0\big(\frac{u_0}{v_1},\frac{1}{v_1}\big)&=&\alpha_{11}(u_0,v_1)f_0(u_0,v_1)+\alpha_{12}(u_0,v_1)f_1(u_0,v_1)\in \k[u_0,{v_1}^{\pm1}],\\[1ex] 
 g_1\big(\frac{u_0}{v_1},\frac{1}{v_1}\big)&=&\alpha_{21}(u_0,v_1)f_0(u_0,v_1)+\alpha_{22}(u_0,v_1)f_1(u_0,v_1)\in \k[u_0,{v_1}^{\pm1}]\end{array}\]
to compute the $\alpha_{ij}$:

\[\begin{array}{rcl}
2g_0\big(\frac{s_0+t_0}{s_0},\frac{1}{s_0t_0}\big)&=&g\big(\frac{1}{s_0},\frac{1}{t_0}\big)+g\big(\frac{1}{t_0},\frac{1}{s_0}\big)=s_0^{m}f(s_0,t_0)+t_0^{m}f(t_0,s_0),\\[1ex]
&=&(s_0^m+t_0^m)f_0(s_0+t_0,s_0t_0)+(s_0^m-t_0^m)(s_0-t_0)f_1(s_0+t_0,s_0t_0),\\[1ex]
2g_1\big(\frac{s_0+t_0}{s_0t_0},\frac{1}{s_0t_0}\big)&=&-\big(g\big(\frac{1}{s_0},\frac{1}{t_0}\big)-g\big(\frac{1}{t_0},\frac{1}{s_0}\big)\big)\frac{s_0t_0}{s_0-t_0}=-\big(s_0^{m}f(s_0,t_0)-t_0^{m}f(t_0,s_0)\big)\frac{s_0t_0}{s_0-t_0},\\[1ex]
&=&-s_0t_0\frac{s_0^m-t_0^m}{s_0-t_0}f_0(s_0+t_0,s_0t_0)-s_0t_0(s_0^m + t_0^m)f_1(s_0+t_0,s_0t_0)
\end{array}\]
 and get
\[ \begin{bmatrix} \alpha_{11}(s_0+t_0,s_0t_0) & \alpha_{12}(s_0+t_0,s_0t_0) \\
 \alpha_{21}(s_0+t_0,s_0t_0)& \alpha_{22}(s_0+t_0,s_0t_0) \end{bmatrix}=\frac{1}{2}\begin{bmatrix} s_0^m+t_0^m & (s_0^m-t_0^m)(s_0-t_0) \\
 -s_0t_0\frac{s_0^m-t_0^m}{s_0-t_0}& -s_0t_0(s_0^m + t_0^m) \end{bmatrix}.\]
If $m=0$, we get simply the matrix defined above. If $m\ge 1$, we change the transition function, by observing that 
\[ \begin{array}{ll}
\begin{bmatrix} 1 &\frac{s+t}{st}\\ 0& 2 \end{bmatrix} \begin{bmatrix} s^m+t^m & (s^m-t^m)(s-t) \\
 -st\frac{s^m-t^m}{s-t}& -st(s^m + t^m) \end{bmatrix}\begin{bmatrix} 2 &s+t\\ 0& -1 \end{bmatrix} \vspace{0.2cm}\\
 =\dfrac{4st}{t-s}\begin{bmatrix} s^{m-1}-t^{m-1} & st(s^{m-2}-t^{m-2}) \\
 s^m-t^m& st(s^{m-1}-t^{m-1})\end{bmatrix}.\end{array}\]
 \end{proof}

We then recover the following result already observed by Schwarzenberger~\cite[Proposition~7]{Sch61}.

\begin{corollary}\label{Cor:SmallIsoSchwarzenberger}
We have the following isomorphisms of $\p^1\!$-bundles: $\SS_{-1} \simeq \PP_1=\P(\OPP(1) \oplus \OPP)$, $\SS_0 \simeq \PP_0=\p^1 \times \P^2$, and $\SS_1 \simeq \P(T_{\P^2})$.
\end{corollary}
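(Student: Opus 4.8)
The plan is to read off each isomorphism directly from the explicit transition matrix computed in Lemma~\ref{Lemm:TransSchwarz}, after extending the description from $\p^2 \setminus \{[0:1:0]\}$ to all of $\p^2$ via the descent lemma (Lemma~\ref{Lem:Extension}); since removing the point $[0:1:0]$ removes no curve and the gluing data is given by polynomials, the extension is automatic. So it suffices to exhibit, for each $b \in \{-1,0,1\}$, a change of trivialisation on $\p^1 \times U_0$ and $\p^1 \times U_1$ (i.e.\ multiplication of the transition matrix on the left by an element of $\GL_2(\k[u_1,v_1])$ and on the right by an element of $\GL_2(\k[u_0,v_0])$, together with a scalar in $\k[u_0,v_0^{\pm1}]$) that brings it to the transition matrix of the claimed decomposable bundle or of $\P(T_{\p^2})$.

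First I would treat $b=-1$ and $b=0$, which are the easy cases. For $b=-1$ the matrix is $\mathrm{diag}(1,-st)$ in the $(s,t)$-substitution, i.e.\ $\mathrm{diag}(1, -v_0)$ after writing $st = s_0 t_0 = Z/X \cdot \text{(something)}$; tracking the coordinate substitution $u_i \mapsto s_i+t_i$, $v_i \mapsto s_i t_i$ carefully, the off-diagonal entries vanish, so the bundle splits as a direct sum of two line bundles, and comparing the twisting exponent with Definition~\ref{def Hirzebruch surfaces}/Definition~\ref{def Pb} (or simply with Lemma~\ref{Lem:ExactSeqFa} restricted to lines) identifies it with $\PP_1 = \P(\OPP(1)\oplus\OPP)$. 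For $b=0$ the matrix $\frac{1}{s-t}\begin{bmatrix} 1-1 & st(s^{-1}-t^{-1}) \\ s-t & st(1-1)\end{bmatrix}$ simplifies to an anti-diagonal matrix with unit entries, which is $\GL_2(\k)$-equivalent to the identity, so $\SS_0 \simeq \p^1 \times \p^2 = \PP_0$. Here one must double-check that the apparent pole along $s=t$ (the ramification conic) actually cancels, which it does after the cofactor simplification — this is the only mild subtlety.

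The case $b=1$ is the substantive one: the transition matrix becomes $\frac{1}{s-t}\begin{bmatrix} s-t & st\cdot 0 \\ s^2-t^2 & st(s-t)\end{bmatrix} = \begin{bmatrix} 1 & 0 \\ s+t & st\end{bmatrix}$, i.e.\ after the substitution $\begin{bmatrix} 1 & 0 \\ u_0 & v_0\end{bmatrix}$ (writing $u_0 = Y/X$, $v_0 = Z/X$). I would then compare this with the transition function of $\P(T_{\p^2})$ computed from the Euler sequence, or more simply invoke Remark~\ref{rk tangent bundle} (referenced in the introduction as giving $\P(T_{\p^2}) \simeq \PGL_3/B$), and check that the two $\p^1$-bundles have the same transition cocycle on $U_0 \cap U_1$, hence are isomorphic as $\p^1$-bundles over $\p^2 \setminus \{[0:1:0]\}$, and then over $\p^2$ by Lemma~\ref{Lem:Extension}. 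Concretely: $T_{\p^2}$ restricted to the affine chart $U_0$ with coordinates $(u_0,v_0)$ is trivialised by $\partial_{u_0}, \partial_{v_0}$, and the change of coordinates to the $U_1$-chart $(u_1,v_1) = (u_0/v_0, 1/v_0)$ has Jacobian $\begin{bmatrix} 1/v_0 & -u_0/v_0^2 \\ 0 & -1/v_0^2\end{bmatrix}$, which up to the scalar $-1/v_0^2$ and a constant row/column operation matches $\begin{bmatrix} 1 & 0 \\ u_0 & v_0 \end{bmatrix}^{-1}$-type data; matching these cocycles up to coboundary is the main calculation.

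The main obstacle I expect is purely bookkeeping: keeping straight the three layers of coordinates — the $\p^1 \times \p^1$ coordinates $(s_i,t_i)$, the symmetric functions $(u_i,v_i)$ on the charts of $\p^2$, and the transition on the projective fibre — and making sure the scalar factors $\k[u_0,v_0^{\pm 1}]^*$ and the $\GL_2$-coboundaries are genuinely regular where they need to be (so that the pole along the ramification conic $\Gamma$ in the raw matrix really is an artifact of the choice of frame). Once the $b=1$ transition is reduced to $\begin{bmatrix} 1 & 0 \\ s+t & st\end{bmatrix}$ the identification with $T_{\p^2}$ is essentially immediate from the Euler sequence, so no serious difficulty remains. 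I would conclude by remarking that this recovers \cite[Prop.~7]{Sch61} and explains the notational shift $\SS_b \leftrightarrow \F_b$ on lines announced after Definition~\ref{def:Schwarz}.
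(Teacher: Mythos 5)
Your proposal is correct and follows essentially the same route as the paper: plug $b=-1,0,1$ into the transition matrix of Lemma~\ref{Lemm:TransSchwarz}, simplify to $\begin{bmatrix}1&0\\0&-v\end{bmatrix}$, $\begin{bmatrix}0&-1\\1&0\end{bmatrix}$, $\begin{bmatrix}1&0\\u&v\end{bmatrix}$, and compare with the transition functions of $\PP_1$, $\PP_0$ and of $\P(T_{\p^2})$ obtained by differentiating the chart change (the paper leaves the extension over $[0:1:0]$ implicit, while you invoke Lemma~\ref{Lem:Extension} explicitly, which is the same mechanism). The only cosmetic slip is that $s_0t_0=v_0=Z/X$ exactly, with no extra factor, but this does not affect the argument.
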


\begin{proof}
Let us note that to obtain the transition function of the tangent bundle $T_{\P^2}$, it suffices to differentiate the map corresponding to the change of coordinates between two affine charts of $\P^2$. It follows that the transition function of the projectivised tangent bundle over $\P^2 \setminus [0:1:0]$ is 
$$\begin{array}{rcccccc}
& \p^1 \times U_0\cap U_1 &\iso &\p^1 \times U_0\cap U_1\\
&\big([x_0:x_1],[1:u:v]\big) & \mapsto & \big([x_0:x_0 u+x_1 v],\big[\frac{1}{v}:\frac{u}{v}:1\big]\big).\end{array}$$ 

Applying Proposition~\ref{Lemm:TransSchwarz}, the transition functions for $b=-1,0,1$ correspond, respectively, to the matrices
$\begin{bsmallmatrix}  1 & 0 \\ 0 & -v \end{bsmallmatrix}$, $\begin{bsmallmatrix}  0 & -1 \\ 1 & 0 \end{bsmallmatrix}$, and $\begin{bsmallmatrix}  1 & 0 \\ u & v \end{bsmallmatrix}$, which gives the result.
\end{proof}
\begin{remark}\label{PGL3:TP2}One can also see $\SS_1 \simeq \P(T_{\P^2})$ as
\[Z=\left.\left\{\left([x_0:x_1:x_2],[y_0:y_1:y_2]\right)\in \p^2 \times \p^2 \right| \sum x_iy_i=0\right\},\]
with $\pi \colon \SS_1\to \p^2$ the projection onto the first factor. We again find the same transition function, by trivialising the $\p^1\!$-bundle over $U_0$ and $U_1$ via
\[\begin{array}{ccc}
U_0\times \p^1&\rightarrow &Z\\
\big([1:u:v], [x_0:x_1]\big)& \mapsto& \big([1:u:v], [-x_0u-x_1v:x_0:x_1]\big),\\[1ex]
U_1\times \p^1&\rightarrow &Z\\
\big([v:u:1], [x_0:x_1]\big)& \mapsto& \big([v:u:1], [-x_1:x_0:-x_0u+x_1v]\big).\end{array}\]
Seeing the equation with vectors, as $\vphantom{x}^t\!x\cdot y=0$, the group $\PGL_3$ then acts via $(x,y)\mapsto (Ax,\vphantom{A}^t\!A^{-1}y)$.

We will see in Lemma~\ref{Lemm:SchwarzJumpLines}\ref{AutoSchwarzb} that $\Autz(\SS_1)\simeq \PGL_3$, so the above action yields all elements of $\Autz(\SS_1)$ (but not of $\Aut(\SS_1)$, as we also have $(x,y)\mapsto (y,x)$).
\end{remark}

\begin{lemma}\label{Lemm:LiftSchwarzIsSchwarzHat}
For each $b\ge 1$, the following hold:
\begin{enumerate}
\item\label{LiftSSisTT}
  The $\p^1\!$-bundle $\TT_b\to \p^1\times \p^1$ is isomorphic to
  \[\SS_{b}\times_{\p^2} \big(\p^1\times \p^1\big)\to \p^1\times \p^1,\] obtained by pulling back the Schwarzenberger bundle $\SS_{b}\to\p^2$ via the double cover $\kappa\colon\p^1\times \p^1\to \p^2$. In particular, $\TT_b$ is isomorphic to \[\p\big(\kappa^*\big(\kappa_* \O_{\p^1 \times \p^1}(-b-1,0)\big)\big)\to \p^1\times \p^1.\]
\item\label{OneCurveSchwarz}
The only curve invariant by $\Autz(\TT_b)$ is the curve $C\subset \TT_b$, being the preimage of the curve $D\subset \SS_b$ given on the two charts of Lemma~$\ref{Lemm:TransSchwarz}$ by
\[\begin{array}{l}
\left\{\left([x_0 :x_1] ,([1:2t:t^2]) \right)\in \p^1\times U_0 \mid x_0+tx_1=0\right\},\\[1ex]
\left\{\left([x_0 :x_1] ,([t^2:2t:1]) \right)\in \p^1\times U_1 \mid x_0-tx_1=0\right\}.\end{array}\]
\end{enumerate}
\end{lemma}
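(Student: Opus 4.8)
The statement asserts two things about the Schwarzenberger bundle $\SS_b\to\p^2$ for $b\ge 1$: (i) its pullback along $\kappa\colon\p^1\times\p^1\to\p^2$ is the bundle $\TT_b$ of Definition~\ref{def hat Schwrzenberger}; and (ii) the unique $\Autz(\TT_b)$-invariant curve of $\TT_b$ — which by Lemma~\ref{Lemm:AutTTb}\ref{HatS:OnlyInvariantCurve} already exists and is $C=S_1\cap S_2$ — is the preimage under $\kappa^*$ of the explicit curve $D\subset\SS_b$ written in the statement. The strategy is to prove (i) by a direct transition-function computation, comparing the formula from Lemma~\ref{Lemm:TransSchwarz} with the gluing $\nu_{c,P}$ defining $Z_0^{b,b+2,P}=\TT_b$ via Notation~\ref{Not:ZabcP} and Definition~\ref{def hat Schwrzenberger}; then (ii) will follow from (i) together with Lemma~\ref{Lemm:AutTTb}\ref{HatS:OnlyInvariantCurve}, once one checks that the curve $D$ indeed pulls back to $C$.

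For (i): the double cover is $\kappa([y_0:y_1],[z_0:z_1])=[y_0z_0:y_0z_1+y_1z_0:y_1z_1]$, and over $\p^2\setminus\{[0:1:0]\}$ we have the two charts $U_0,U_1$ of Lemma~\ref{Lemm:TransSchwarz} with transition matrix whose value at $[1:s+t:st]$ is $\tfrac{1}{s-t}\begin{bmatrix} s^b-t^b & st(s^{b-1}-t^{b-1})\\ s^{b+1}-t^{b+1} & st(s^b-t^b)\end{bmatrix}$. Pulling back along $\kappa$ amounts to substituting $u=\tfrac{y_1}{y_0}+\tfrac{z_1}{z_0}$ — equivalently introducing the two sheet-coordinates $s,t$ which on $T_0=\kappa^{-1}(U_0)$ are $s=\tfrac{y_1}{y_0}$, $t=\tfrac{z_1}{z_0}$. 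So one gets a $\p^1$-bundle over $\p^1\times\p^1$ whose transition function between the standard charts (over $z_0\neq 0$ and $z_1\neq 0$, using the variable $z$ for $\tfrac{z_1}{z_0}$ and the $\F_b$-variable built from $\tfrac{y_1}{y_0}$) is explicitly the above matrix with $s=y$ (the $\F_b$-fibre variable) and $t=z$. I would then put this $\GL_2(\k[y,z,z^{-1}])$-valued matrix into the normal form $\nu_{c,P}$ of Notation~\ref{Not:ZabcP}: after clearing the common factor and conjugating by the permitted automorphisms of $\F_b\times\A^1\to\p^1\times\A^1$ (exactly the moves of Lemma~\ref{Lemm:EquivalenceClassP}), one should land on $c=b+2$ and $P=\sum_{i=0}^b y_0^i y_1^{b-i}z^{i+1}$, i.e.\ the defining data of $\TT_b$. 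In fact this computation is essentially the same as the one already carried out in the proof of Lemma~\ref{Lemm:AutTTb}, where the transition $\theta=\varphi_0\theta'(\varphi_1)^{-1}$ produces $x_1z^{m+1}+x_0z\cdot\frac{y_0^m z^m-y_1^m}{y_0z-y_1}$ with $m=b+1$, matching $Z_0^{b,b+2,P}$; the point is just to recognise the Schwarzenberger matrix of Lemma~\ref{Lemm:TransSchwarz} as $\kappa^*$ of that data, which is a finite check on the two charts. To finish I would extend over the remaining fibre (above $[0:1:0]$, whose $\kappa$-preimage is the fibre over one point of $\p^1\times\p^1$) using the descent/extension Lemma~\ref{Lem:Extension}, since the $\p^1$-bundle is determined by its restriction to the complement of a finite set of fibres, and pulled-back bundles are bundles (cf.\ the pullback argument in Lemma~\ref{lemma:resolution}). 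The identification $\TT_b=\p(\kappa^*\kappa_*\O_{\p^1\times\p^1}(-b-1,0))$ then follows from $\SS_b=\p(\kappa_*\O_{\p^1\times\p^1}(-b-1,0))$ and the fact that projectivisation commutes with pullback.

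For (ii): by Lemma~\ref{Lemm:AutTTb}\ref{HatS:OnlyInvariantCurve} the only $\Autz(\TT_b)$-invariant curve is $C$, which is characterised there as the intersection of $x_0=0$ with $\pi^{-1}(\Delta)$ on both charts, where $\Delta\subset\p^1\times\p^1$ is the diagonal. Under the isomorphism of part~\ref{LiftSSisTT}, $\pi^{-1}(\Delta)$ is the $\kappa$-preimage of $\Gamma=\kappa(\Delta)$, the branch conic $\{Y^2=4XZ\}$; parametrising $\Gamma$ as $[1:2t:t^2]$ (resp.\ $[t^2:2t:1]$) in the two charts $U_0,U_1$ recovers exactly the points listed in the statement of $D$. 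It remains to trace the subvariety $x_0=0$ through the chart identifications of part~\ref{LiftSSisTT}: in the Schwarzenberger charts of Lemma~\ref{Lemm:TransSchwarz} the section corresponding to $\iota(\O)$ in the exact sequence — equivalently the "$x_0=0$" locus in the $\TT_b$ coordinates after the explicit change of variables from the proof of Lemma~\ref{Lemm:AutTTb} — cuts out precisely the line $x_0+tx_1=0$ (resp.\ $x_0-tx_1=0$) inside the fibre over the point of $\Gamma$ with parameter $t$. Concretely this is the bookkeeping of following the curve $C$ through $\varphi_0,\varphi_1$ and the isomorphism $V\iso\p^1\times\p^1$ exhibited at the end of the proof of Lemma~\ref{Lemm:AutTTb}; one checks the signs work out to the stated equations. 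Thus $C=\kappa^*D$ and the proof is complete.

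\textbf{Main obstacle.} The genuine content is entirely in part~\ref{LiftSSisTT}, and the hard part is the bookkeeping: matching the Schwarzenberger transition matrix of Lemma~\ref{Lemm:TransSchwarz} — which is given in the symmetric-function coordinates $[1:s+t:st]$ on $\p^2$ — against the $\nu_{c,P}$ normal form on $\F_b\times\A^1$, keeping careful track of which $\GL_2$-conjugations and scalar/row-column operations (precisely those allowed by Lemma~\ref{Lemm:EquivalenceClassP}) are needed to reach $P=\sum y_0^i y_1^{b-i}z^{i+1}$, rather than some equivalent representative. Since the analogous computation already appears inside the proof of Lemma~\ref{Lemm:AutTTb} (the factor $\frac{y_0^m z^m-y_1^m}{y_0 z-y_1}=\sum_{j=0}^{m-1}y_0^j y_1^{m-1-j}z^j$ is exactly what is needed), the cleanest route is to not redo it but to verify that $\kappa^*$ of the data in Lemma~\ref{Lemm:TransSchwarz} literally equals the transition computed there — this reduces the obstacle to a single explicit identity of $2\times2$ matrices with entries in $\k[y,z,z^{-1}]$, which is routine.
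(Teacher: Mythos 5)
This is essentially the paper's own proof: part (1) is done by exactly the transition-function computation you outline, where the one step you leave implicit is the choice of chart identifications $\iota_0,\iota_1\colon \p^1\times T_i\hookrightarrow \F_b\times\A^1$ sending $([x_0:x_1],([1:s],[1:t]))$ to $([-x_0-tx_1:x_1;1:s],t)$ (resp.\ with $+tx_1$ on the other chart), after which the Schwarzenberger matrix of Lemma~\ref{Lemm:TransSchwarz} becomes precisely $\nu_{b+2,P}$ with $P=\sum_{i=0}^b y_0^iy_1^{b-i}z^{i+1}$ and one extends over the two missing points by Lemma~\ref{Lem:Extension}; part (2) is then, as you say, Lemma~\ref{Lemm:AutTTb}\ref{HatS:OnlyInvariantCurve} plus pushing the locus $x_0=0$ over the diagonal through these same $\iota_i$, which is exactly where the equations $x_0\pm tx_1=0$ of $D$ come from. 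Two small corrections to your write-up: the required identification is this $t$-dependent fibrewise change of coordinates, so it is neither a move of Lemma~\ref{Lemm:EquivalenceClassP} (those preserve the section $x_0=0$) nor a literal equality with the matrix computed in the proof of Lemma~\ref{Lemm:AutTTb}, and finding it is the actual content of the computation; moreover $\kappa^{-1}([0:1:0])$ consists of the two points $([0:1],[1:0])$ and $([1:0],[0:1])$, not a fibre, which is precisely why Lemma~\ref{Lem:Extension} applies.
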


\begin{proof}
\ref{LiftSSisTT}
We take as usual coordinates $([y_0:y_1],[z_0:z_1])$ on $\p^1\times \p^1$ and denote by $T_0,T_1\subset \p^1\times \p^1$ the open subsets given by, respectively,  $y_0z_0\not=0$ and $y_1z_1\not=0$.

The restriction of $\pi\colon \SS_{b}\times_{\p^2} (\p^1\times \p^1)\to \p^1\times \p^1$ to $T_0\cup T_1=\p^1\times \p^1 \setminus \{([0:1],[1:0]),([1:0],[0:1])\}$ is given by gluing $\p^1\times T_0$ and $\p^1\times T_1$ along $\p^1\times T_0\cap T_1$ by the isomorphism $\theta\in \Aut(\p^1\times T_0\cap T_1)$ induced  by 
$$\begin{array}{ccccccc}
 \p^1 \times T_0 &\dasharrow &\p^1 \times T_1 \\[1ex]
 \left(
 \begin{bmatrix} x_0 \\ x_1 \end{bmatrix} ,\big([1:s],[1:t]\big) \right) & \mapsto & \left(\dfrac{1}{s-t}\begin{bmatrix} s^{b}-t^{b} & st(s^{b-1}-t^{b-1}) \\
 s^{b+1}-t^{b+1}& st(s^{b}-t^{b})\end{bmatrix}\cdot \begin{bmatrix} x_0 \\ x_1 \end{bmatrix},\left(\left[\dfrac{1}{s}:1\right], \left[\dfrac{1}{t}:1\right]\right)\right)\end{array}$$
 (see Lemma~\ref{Lemm:TransSchwarz}). We then define two open embeddings $\iota_i\colon \p^1\times T_i\hookrightarrow \F_b\times \A^1$, $i=0,1$:
\[\begin{array}{ccc}
\p^1\times T_0 & \stackrel{\iota_0}{\hookrightarrow} & \F_b\times \A^1\\
\big([x_0 :x_1],([1:s],[1:t]) \big) &\mapsto & \big([-x_0-tx_1:x_1;1:s],t\big),\\[1ex]
\p^1\times T_1 & \stackrel{\iota_1}{\hookrightarrow} & \F_b\times \A^1\\
\big([x_0 :x_1],([s:1],[t:1]) \big) &\mapsto & \big([-x_0+tx_1:x_1;s:1],t\big)
\end{array}\]
and compute $\iota_1 \theta (\iota_0)^{-1}\in \Bir(\F_b \times \A^1)$:
\[\begin{array}{ccccccc}
\F_b\times \A^1 & \iota_1 \theta (\iota_0)^{-1} & \F_b\times \A^1\\[1ex]
\big([x_0:x_1;1:y],z\big)&\mapsto& \left(\big[\frac{y^{b}}{z} x_0: \frac{y^{b+1}-z^{b+1}}{y-z}x_0+z^{b+1}x_1;\frac{1}{y}:1\big],\frac{1}{z}\right)\\
 & = & \left([x_0:x_1z^{b+2}+x_0 z\sum_{i=0}^b y^iz^{b-i};1:y],\frac{1}{z}\right)
 \end{array}\]
This then yields  $\nu_{b,P} \iota_0=\iota_1 \theta$, where $\nu_{b,P}\in\Aut(\F_b\times \A^1\setminus \{0\})$ is given by 
\[
\nu_{b,P}\colon \big([x_0:x_1;y_0:y_1],z\big) \mapsto \left([x_0:x_1z^{b+2}+x_0 P(y_0,y_1,z);y_0:y_1],\frac{1}{z}\right),\]
with $P(y_0,y_1,z)=\sum_{i=0}^b y_0^iy_1^{b-i}P_i(z)z$ and $P_i(z)=z^i\in \k[z]_{\le b}$ for $i=0,\ldots,b$. This shows that the $\p^1\!$-bundle is isomorphic to $\TT_b\to \p^1\times \p^1$ over $T_0\cup T_1$ (see Definition~\ref{def hat Schwrzenberger}) and thus over the whole $\p^1\times \p^1$ by Lemma~\ref{Lem:Extension}. This achieves the proof of~\ref{LiftSSisTT}.

\ref{OneCurveSchwarz}  By Lemma~\ref{Lemm:AutTTb}\ref{HatS:OnlyInvariantCurve}, there is a unique curve $C\subset \TT_b$ invariant by $\Autz(\TT_b)$ which corresponds to the intersection of $\pi^{-1}(\Delta)$ with the surface $x_0=0$ in both charts $\p^1\times \A^1$. It then corresponds to the curve given in $\p^1\times T_0$ and $\p^1\times T_1$ by 
\[\begin{array}{l}
\big\{\big([x_0 :x_1] ,([1:t],[1:t]) \big)\in \p^1\times T_0 \mid x_0+tx_1=0\big\},\\[1ex]
\big\{\big([x_0 :x_1] ,([t:1],[t:1]) \big)\in \p^1\times T_1 \mid x_0-tx_1=0\big\}.\end{array}\]
Sending this curve into $\SS_b$ yields a bijection $C\to D$, where $D\subset \SS_b$ is given locally as above.
\end{proof}

\begin{lemma}\label{Lemm:SchwarzJumpLines}
Let $\pi\colon \SS_b\to \p^2$ be the $b$-th Schwarzenberger $\p^1\!$-bundle, with $b\ge 1$, and let $\Gamma\subset \p^2$ be the conic which is the ramification locus of $\kappa\colon \p^1\times \p^1\to \p^2$ $($see Definition~$\ref{def:Schwarz})$.
\begin{enumerate}
\item\label{FnLineSchwarz}
If $L\subset \p^2$ is a line, the $\p^1\!$-bundle $\pi^{-1}(L)\stackrel{\pi}{\to} L\simeq \p^1$ is isomorphic to 
\[\begin{array}{lll}
\F_b\to \p^1 & \text{ if $L$ is a tangent line to $\Gamma$;}\\
\F_0\to \p^1 & \text{ if $L$ is a not a tangent line to $\Gamma$ and $b$ is even;\; or}\\
\F_1\to \p^1 & \text{ if $L$ is a not a tangent line to $\Gamma$ and $b$ is odd.}\end{array}
\]
\item\label{AutoSchwarzb}
  The action of $\Autz(\SS_b)$ on $\p^2$ yields a group isomorphism
  \[
\Autz(\SS_b)\iso \left\{\begin{array}{llll}
\Aut(\p^2,\Gamma)&=&\{g\in \Aut(\p^2)\mid g(\Gamma)=\Gamma\}\simeq \PGL_2&\text{ if }b\ge 2;\; or\\
\Aut(\p^2)&\simeq & \PGL_3&\text{ if }b= 1.\end{array}\right.\]
Moreover, every automorphism of the $\p^1\!$-bundle $\SS_b\to \p^2$ is trivial.
\end{enumerate}
\end{lemma}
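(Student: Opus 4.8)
My plan is to treat the two assertions in turn, both times reducing to the concrete transition function of $\SS_b$ over $\p^2\setminus\{[0:1:0]\}$ computed in Lemma~\ref{Lemm:TransSchwarz}, and using the relation $\TT_b=\SS_b\times_{\p^2}(\p^1\times\p^1)$ of Lemma~\ref{Lemm:LiftSchwarzIsSchwarzHat}.

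\emph{Proof of \ref{FnLineSchwarz}.} Since $\Aut(\p^2,\Gamma)\simeq\PGL_2$ acts transitively on the tangent lines of $\Gamma$ and also transitively on the non-tangent lines (the latter because $\PGL_2\simeq\PSO_3$ acts transitively on the complement of the dual conic in the dual $\p^2$), it suffices, once \ref{AutoSchwarzb} is known — or rather once one knows $\Aut(\p^2,\Gamma)$ lifts to act on $\SS_b$, which is clear from the construction of $\SS_b$ via $\kappa$ — to compute $\pi^{-1}(L)$ for one tangent line and one non-tangent line. For a tangent line I would take the line $L_0$ tangent to $\Gamma$ at $[1:0:0]$ (or rather use the explicit tangent line to the conic $Y^2=4XZ$), pull back the matrix of Lemma~\ref{Lemm:TransSchwarz} by restricting $(u,v)$ to the corresponding affine parametrisation, and read off that the transition matrix becomes (after the equivalence $A\sim MAM'$ over $\k[z],\k[z^{-1}]$) diagonal of the form $\mathrm{diag}(z^{-b},1)$ up to units, giving $\F_b$; alternatively this is immediate from Lemma~\ref{Lemm:LiftSchwarzIsSchwarzHat}\ref{LiftSSisTT}, because the preimage $\kappa^{-1}(L_0)$ of a tangent line is a fibre of one of the two projections $\p^1\times\p^1\to\p^1$, and Lemma~\ref{Lemm:AutTTb}\ref{S1S2TTb} says $\mathrm{pr}_i\pi\colon\TT_b\to\p^1$ is an $\F_b$-bundle, so $\pi^{-1}(L_0)\simeq\F_b$. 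For a non-tangent line $L$, the curve $\kappa^{-1}(L)\subset\p^1\times\p^1$ is a smooth $(1,1)$-curve, i.e. a section isomorphic to $\p^1$ mapping $2:1$ to $L$; restricting the transition matrix of $\TT_b$ of Lemma~\ref{Lemm:LiftSchwarzIsSchwarzHat} to this curve gives an explicit $\GL_2(\k[z,z^{-1}])$ transition whose associated degree I would compute to be $0$ or $1$ according to the parity of $b$. The cleanest way to pin down the parity is a determinant/degree count: the determinant of the Schwarzenberger transition matrix is (up to units) $v^{\,b+1}=v^{m}$ in the coordinates of Lemma~\ref{Lemm:TransSchwarz}, and restricting to a non-tangent line the branch points of $\kappa^{-1}(L)\to L$ contribute, via Lemma~\ref{Lem:ExactSeqFa} applied fibrewise on $L$, the invariant $b \bmod 2$; so $\pi^{-1}(L)\simeq\F_{b\bmod 2}$.

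\emph{Proof of \ref{AutoSchwarzb}.} Blanchard's lemma (Lemma~\ref{blanchard}) gives a homomorphism $\rho\colon\Autz(\SS_b)\to\Aut(\p^2)=\PGL_3$. For surjectivity onto the claimed image: the group $\Aut(\p^2,\Gamma)\simeq\PGL_2$ acts on $\p^1\times\p^1$ covering its action on $\p^2$ through $\kappa$ (this is the diagonal $\PGL_2\subset\PGL_2\times\PGL_2$ up to the outer automorphism, interchanging the two rulings), hence lifts to $\TT_b=\kappa^*\SS_b$ by Lemma~\ref{Lemm:AutTTb}\ref{CommDiag}, and by Galois descent for the $2:1$ cover $\kappa$ (or directly, since $\SS_b=\P(\kappa_*\O_{\p^1\times\p^1}(-b-1,0))$ and $\kappa_*$ is $\PGL_2$-equivariant) this action descends to $\SS_b$; so $\Aut(\p^2,\Gamma)\subseteq\rho(\Autz(\SS_b))$. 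When $b=1$ we additionally have $\SS_1\simeq\P(T_{\p^2})$ (Corollary~\ref{Cor:SmallIsoSchwarzenberger}), on which all of $\PGL_3$ acts (Remark~\ref{PGL3:TP2}), so $\rho$ is onto $\PGL_3$. For injectivity of $\rho$ — equivalently, that every automorphism of the $\p^1$-bundle $\SS_b\to\p^2$ is trivial — I would argue: such an automorphism fixes the structure map, hence pulls back to an automorphism of the $\p^1$-bundle $\TT_b\to\p^1\times\p^1$ (because $\kappa$ is faithfully flat, any $\p^1$-bundle automorphism of $\SS_b$ induces one of $\TT_b$, and conversely a $\PGL_1$-bundle is determined by its class in $H^1(\p^2,\mathrm{PGL}_2)$); but Lemma~\ref{Lemm:AutTTb}\ref{CommDiag} says every automorphism of the $\p^1$-bundle $\TT_b\to\p^1\times\p^1$ is trivial, and the two non-trivial deck automorphisms are excluded since they do not cover the identity on $\p^1\times\p^1$; hence the automorphism of $\SS_b$ is trivial. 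Thus $\rho$ is injective with image $\Aut(\p^2,\Gamma)\simeq\PGL_2$ (for $b\ge2$) resp. $\PGL_3$ (for $b=1$), which also forces the image to be exactly $\Aut(\p^2,\Gamma)$ when $b\ge2$: any $g\in\rho(\Autz(\SS_b))$ must send $\Gamma$ to $\Gamma$, because $\Gamma$ is characterised intrinsically by \ref{FnLineSchwarz} as the locus of lines $L$ with $\pi^{-1}(L)\simeq\F_b$ rather than $\F_{0}$ or $\F_1$, and for $b\ge2$ these are distinguished (as $b\ne0,1$), so the set of $\Gamma$-tangent lines, hence $\Gamma$ itself, is $\rho(\Autz(\SS_b))$-invariant.

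\emph{Main obstacle.} The delicate point is the parity computation in \ref{FnLineSchwarz}: showing that restricting the Schwarzenberger transition cocycle to a non-tangent line produces $\F_0$ exactly when $b$ is even, and not merely that it produces $\F_0$ or $\F_1$. The degree count via $\det$ gives the class of the restricted vector bundle modulo $2$ essentially for free once one checks the determinant of the transition matrix has odd order in the local parameter precisely when $m=b+1$ is odd, i.e. $b$ even; but one has to be careful that the equivalence operations $A\sim MAM'$ with $M,M'$ defined over the respective polynomial rings preserve this parity, which they do since $\det M,\det M'$ are units. A secondary subtlety is making the descent of the $\PGL_2$-action along $\kappa$ precise; I would circumvent it by simply exhibiting the action directly on $\SS_b=\P(\kappa_*\O_{\p^1\times\p^1}(-b-1,0))$, using that $\PGL_2$ acts on $\p^1\times\p^1$ preserving $\O_{\p^1\times\p^1}(-b-1,0)$ up to the line bundle $\O(0,-b-1)$ and the ruling-swap, and that $\kappa_*$ is equivariant because the ruling-swap commutes with $\kappa$.
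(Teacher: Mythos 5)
Your treatment of part \ref{AutoSchwarzb} and of the tangent-line case of part \ref{FnLineSchwarz} follows the paper's own route (lift to $\TT_b=\SS_b\times_{\p^2}(\p^1\times\p^1)$, use Lemma~\ref{Lemm:AutTTb}, use $\SS_1\simeq\P(T_{\P^2})$ for $b=1$, and recover $\Gamma$ from the jumping lines when $b\ge 2$), and that part is fine. The genuine gap is in the non-tangent-line case of \ref{FnLineSchwarz}. Your ``determinant/degree count'' only controls the \emph{parity} of the splitting: if $\E|_L\simeq\O(d_1)\oplus\O(d_2)$, the power of the local parameter in $\det$ of the restricted transition matrix computes $d_1+d_2$, hence $d_1-d_2 \bmod 2$, but it cannot distinguish $\F_0$ from $\F_2$ (or $\F_1$ from $\F_3$). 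Nothing in your argument bounds the gap $|d_1-d_2|$, so the conclusion $\pi^{-1}(L)\simeq\F_{b\bmod 2}$ does not follow; the phrase ``Lemma~\ref{Lem:ExactSeqFa} applied fibrewise on $L$'' does not supply such a bound. Worse, the determinant is misstated: for the matrix of Lemma~\ref{Lemm:TransSchwarz} one computes $\det=(st)^b=v^b$, not $v^{b+1}$; with your value $v^{m}$ the parity you extract would come out opposite to the statement you are proving (for $b$ even you would get an odd total degree, i.e.\ $\F_{\mathrm{odd}}$). The explicit restriction ``which I would compute'' is precisely the missing content.

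To close the gap you must actually pin down the restricted transition matrix (or the splitting). The paper does this by reducing, via transitivity of $\Aut(\p^2,\Gamma)$ on non-tangent lines, to the line $u=0$ and evaluating the matrix of Lemma~\ref{Lemm:TransSchwarz} modulo $s+t$: since $s+t$ divides $\frac{s^n-t^n}{s-t}$ for $n$ even and $\frac{s^n-t^n}{s-t}\equiv\pm(st)^{\frac{n-1}{2}}\pmod{s+t}$ for $n$ odd, the matrix at $u=0$ is antidiagonal with entries $\pm v^{b/2}$ when $b$ is even and diagonal with entries $\pm v^{(b-1)/2},\pm v^{(b+1)/2}$ when $b$ is odd, which gives $\F_0$ resp.\ $\F_1$ directly (and confirms $\det=\pm v^b$). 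An acceptable alternative, closer to your second idea, is to use flat base change: $\E|_L\simeq(\kappa_L)_*\bigl(\O_{\p^1\times\p^1}(-b-1,0)|_{\kappa^{-1}(L)}\bigr)$, where $\kappa_L\colon\kappa^{-1}(L)\to L$ is a double cover of smooth rational curves and $\O(-b-1,0)$ restricts on the $(1,1)$-curve $\kappa^{-1}(L)$ to $\O_{\p^1}(-b-1)$; the pushforward of a line bundle on $\p^1$ under a degree-$2$ cover $\p^1\to\p^1$ splits with gap at most $1$, with parity determined by the degree, which yields $\F_0$ or $\F_1$ as claimed. Either way, an actual computation beyond the determinant is required.
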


\begin{proof}Lemma~\ref{Lemm:LiftSchwarzIsSchwarzHat} implies that $\hat\pi\colon\TT_b\to \p^1\times \p^1$ is isomorphic to $ \SS_{b}\times_{\p^2} (\p^1\times \p^1)\to \p^1\times \p^1.$ Lemma~\ref{Lemm:AutTTb}\ref{CommDiag} then yields $\Autz(\TT_b)\simeq \PGL_2$, with an action on $\p^1\times \p^1$ which is the diagonal one, and thus corresponds, via $\kappa\colon \p^1\times \p^1\to \p^2$, to $\Aut(\p^2,\Gamma)\simeq \PGL_2$. This implies, in particular, that the image of the group homomorphism $\Autz(\SS_b)\to \Aut(\p^2)$ (given by Lemma~\ref{blanchard}) contains $\Aut(\p^2,\Gamma)\simeq \PGL_2$. 

Taking a line $L\subset \p^2$ tangent to $\Gamma$, the preimage $\kappa^{-1}(L)\subset \p^1\times \p^1$ consists of two fibres $f_1,f_2$ of the two projections, exchanged by the involution associated to the double cover $\kappa$. In particular, $\pi^{-1}(L)$ is isomorphic to $\hat\pi^{-1}(f_1)\simeq \hat\pi^{-1}(f_2)\simeq \F_b$ since $\TT_b$ has numerical invariants $(0,b,b+2)$.

To finish the proof of~\ref{FnLineSchwarz}, we take a line $L\subset \p^2$ not tangent to $\Gamma$ and show that $\pi^{-1}(L)$ is isomorphic to $\F_0$ or $\F_1$, depending on whether $b$ is even or odd. Using the action of $\Autz(\SS_b)$, we can choose the line of equation $Y=0$. Using the notation of Proposition~\ref{Lemm:TransSchwarz}, the restriction of $\SS_b\to \p^2$ on $\P^2 \setminus \{ [0:1:0] \}$ is given by gluing $\p^1 \times U_0$ and $\p^1 \times U_1$ with
$$\begin{array}{rcccccc}
\theta \colon & \p^1 \times U_0 &\dasharrow &\p^1 \times U_1 \\
&\left(\begin{bmatrix} x_0 \\ x_1 \end{bmatrix} ,[1:u:v] \right) & \mapsto & \left(\begin{bmatrix}  \alpha_{11}(u,v) & \alpha_{12}(u,v) \\ \alpha_{21}(u,v) & \alpha_{22}(u,v) \end{bmatrix} \begin{bmatrix} x_0 \\ x_1 \end{bmatrix},\big[\frac{1}{v}:\frac{u}{v} :1\big]\right),\end{array}$$
where $a_{ij}(u,v)\in \k[u,v]$ are the polynomials satisfying
\[ \begin{bmatrix} \alpha_{11}(s+t,st) & \alpha_{12}(s+t,st) \\
 \alpha_{21}(s+t,st)& \alpha_{22}(s+t,st) \end{bmatrix}=
 \frac{1}{s-t}\begin{bmatrix} s^{b}-t^{b} & st(s^{b-1}-t^{b-1}) \\
 s^{b+1}-t^{b+1}& st(s^{b}-t^{b})\end{bmatrix}.\]

We then observe that $s+t$ divides $\frac{s^n-t^n}{s-t}$ if $n$ is even (by replacing $t$ with $-s$), and that $\frac{s^n-t^n}{s-t}\equiv s^{n-1}\equiv \pm (st)^{\frac{n-1}{2}}\pmod{s+t}$ if $n$ is odd. This yields

\[ \begin{bmatrix} \alpha_{11}(0,v) & \alpha_{12}(0,v) \\
 \alpha_{21}(0,v)& \alpha_{22}(0,v) \end{bmatrix}=
 \left\{\begin{array}{ll}
\begin{bmatrix} 0 & \pm v^{\frac{b}{2}} \\
\pm v^{\frac{b}{2}}& 0\end{bmatrix} & \text{ if $b$ is even,}\vspace{0.1cm}\\
\begin{bmatrix} \pm v^{\frac{b-1}{2}} & 0 \\
 0& \pm v^{\frac{b+1}{2}}\end{bmatrix} & \text{ if $b$ is odd} \end{array}\right.
\]
and achieves the proof of~\ref{FnLineSchwarz}.

To prove~\ref{AutoSchwarzb}, we study the group homomorphism $\rho\colon \Aut(\SS_b)\to \Aut(\p^2)$. As we observed, the image contains $\Aut(\p^2,\Gamma)\simeq \PGL_2$. It is then equal to $\Aut(\p^2,\Gamma)$ if $b\ge 2$ (this follows from~\ref{FnLineSchwarz}). If $b=1$, then $\rho$ is surjective since $\SS_1 \simeq \P(T_{\P^2})$ (see Corollary~\ref{Cor:SmallIsoSchwarzenberger} and Remark~\ref{PGL3:TP2}). To finish the proof of~\ref{AutoSchwarzb}, it remains to show that every automorphism $\alpha$ of the $\p^1\!$-bundle $\SS_b\to \p^2$ is trivial. The lift of $\alpha$ yields an element $\hat\alpha\in \Aut(\TT_b)$ which acts trivially on $\p^1\times \p^1$ and thus is trivial by Lemma~\ref{Lemm:AutTTb}\ref{CommDiag}.
\end{proof}

\begin{remark} \label{rk: embedding PGL2 into PGL3}
Let $\Gamma \subset \p^2$ be the smooth conic of Definition~\ref{def:Schwarz}. Then the natural embedding of $\Aut(\p^2,\Gamma) \simeq \PGL_2$ in $\Aut(\p^2)=\PGL_3$ is the one induced from the injective group homomorphism   
\begin{equation}
  \label{embedding PGL2 in PGL3}
\GL_2(k) \to \GL_3(k),\quad \begin{bmatrix}
a & b \\ c & d
\end{bmatrix} \mapsto \frac{1}{ad-bc}\begin{bmatrix}
a^2 &ab & b^2\\
2ac &ad+bc & 2bd\\
c^2& cd& d^2
\end{bmatrix}.
\end{equation}
Also, $\PGL_2$ acts on $\p^2$ with two orbits, which are $\Gamma$ and its complement. Indeed, the morphism $\kappa$ of Definition~\ref{def:Schwarz} is $\PGL_2$-equivariant, where $\PGL_2$ acts diagonally on $\p^1 \times \p^1$, and as $\P^1 \times \p^1$ is the union of two orbits (namely, the diagonal and its complement), the result follows.
\end{remark}

\begin{remark} \label{rk PGLn mod Dn}
Let $m=b+1 \geq 2$. By Lemma~\ref{Lemm:SchwarzJumpLines}, the group $\PGL_2$ acts on $\SS_b$, and the $(2:1)$-cover $\hat \SS_b \to \SS_b $ is $\PGL_2$-equivariant. Hence, by Remark~\ref{rk PGLn mod Mun}, $\SS_b$ has an open $\PGL_2$-orbit, say $\PGL_2/F$, where $F$ is a finite subgroup scheme that fits into an exact sequence $0 \to H \to F \to \Z/2\Z \to 0$ with $H=\left\{ \begin{bsmallmatrix}
\alpha & 0\\ 0 &\alpha^{-1}
\end{bsmallmatrix}; \ \alpha^m=1\right \}$. Actually, using \eqref{embedding PGL2 in PGL3}, an explicit computation of the stabiliser of a general point in the $\p^1$-fibre over $[0:1:0] \in \p^2$ yields that $F$ is conjugate to $\left \langle H,\ \begin{bsmallmatrix}
0 & 1 \\ -1 & 0
\end{bsmallmatrix} \right \rangle$; see~\cite[Lemma~4.5]{Ume88} for details.  
\end{remark} 

\begin{remark}  \label{rk tangent bundle} Writing $X=\P(T_{\P^2})$, Lemma~\ref{Lemm:SchwarzJumpLines} shows that the action of $\Autz(X)$ on $\p^2$ yields an isomorphism $\Autz(X)\iso\PGL_3$. This classical observation can also be seen as follows. Let $B$ be a Borel subgroup of $G=\PGL_3$, and let $P$ be a maximal parabolic subgroup of $G$ containing $B$. Then the structure morphism $X \to \P^2$ identifies with the projection $G/B \to G/P$ (see \textit{e.g.}~\cite[Example~2.1.8]{IP99}).
  Therefore, by~\cite[Theorem~1]{Dem77}, we have $\Autz(X)=\Autz(G/B)=G$.
\end{remark}

\subsection{Reduction to \texorpdfstring{$\boldsymbol{\P^1}$}{P1}-bundles of the four families} \label{subsec:four families}
We first show that the Schwarzenberger and decomposable $\p^1\!$-bundles over $\p^2$ are those which have large automorphism groups.

\begin{lemma}\label{Lem:LargeActionP2}
Let $\pi\colon X \to \P^2$ be a $\p^1\!$-bundle, let $\Gamma\subset \p^2$ be the conic of equation $Y^2=4XZ$, and let $H\subset \Aut(\p^2)$ be the image of $\Autz(X)$. Then, the following hold:
\begin{enumerate}
\item\label{SurjectiveP2}
$H=\Aut(\p^2)$ if and only if $\pi\colon X \to \P^2$ is a decomposable bundle or is isomorphic to the projectivised tangent bundle.
\item\label{noSurjectiveP2}
$\PGL_2\simeq \Aut(\p^2,\Gamma)\subset H$ if and only if $\pi\colon X \to \P^2$ is  a decomposable bundle or is isomorphic to a Schwarzenberger bundle $\SS_b\to \p^2$ for some $b\ge 1$.
\end{enumerate}
\end{lemma}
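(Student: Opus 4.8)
\textbf{Proof sketch for Lemma \ref{Lem:LargeActionP2}.}

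The plan is to treat both statements by a common strategy: reduce to the already-understood situation of $\p^1$-bundles over $\F_1$ via the blow-up of a point, and use the classification provided by Proposition~\ref{prop decompo and Umemura bundles} on $\F_1$. First I would dispose of the easy ``if'' directions. If $\pi\colon X\to\p^2$ is decomposable, it is isomorphic to some $\PP_b$, and Lemma~\ref{lemm:surjectiveDecP2} gives $H=\Aut(\p^2)=\PGL_3$, which a fortiori contains $\Aut(\p^2,\Gamma)\simeq\PGL_2$. If $X\simeq\P(T_{\p^2})$, then Remark~\ref{rk tangent bundle} (or Lemma~\ref{Lemm:SchwarzJumpLines}\ref{AutoSchwarzb} together with Corollary~\ref{Cor:SmallIsoSchwarzenberger}, since $\SS_1\simeq\P(T_{\p^2})$) gives $H=\PGL_3$. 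If $X\simeq\SS_b$ with $b\ge 1$, then Lemma~\ref{Lemm:SchwarzJumpLines}\ref{AutoSchwarzb} gives $H=\Aut(\p^2,\Gamma)\simeq\PGL_2$ when $b\ge 2$ and $H=\PGL_3$ when $b=1$; in both cases $\Aut(\p^2,\Gamma)\subset H$. This settles the ``if'' parts of both \ref{SurjectiveP2} and \ref{noSurjectiveP2}.

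For the ``only if'' directions, assume first that $\Aut(\p^2,\Gamma)\subset H$, i.e.\ a copy of $\PGL_2$ acting on $\p^2$ with orbits $\Gamma$ and its complement (Remark~\ref{rk: embedding PGL2 into PGL3}) lifts to $\Autz(X)$. I would pull back $\pi$ along the double cover $\kappa\colon\p^1\times\p^1\to\p^2$ of Definition~\ref{def:Schwarz} to get a $\p^1$-bundle $\hat X=X\times_{\p^2}(\p^1\times\p^1)\to\p^1\times\p^1$, on which the diagonal $\PGL_2\subset\PGL_2\times\PGL_2=\Autz(\F_0)$ acts (compatibly with the $\PGL_2\subset H$, since $\kappa$ is $\PGL_2$-equivariant). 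Now apply Proposition~\ref{prop decompo and Umemura bundles} with $a=0$ to $\hat X\to\p^1\times\p^1$. The image of $\Autz(\hat X)$ in $\Autz(\F_0)$ contains the diagonal $\PGL_2$, which surjects onto each factor $\PGL_2$; hence case~\ref{FixFibreFibre} of that proposition (an invariant fibre of a ruling) is excluded, and also there is no invariant fibre coming from a non-surjective projection. So $\hat X$ is either decomposable (case~\ref{OKDec}, i.e.\ some $\FF_0^{b,c}$), or square isomorphic to $\TT_b$ (case~\ref{OKSch}); the Umemura case~\ref{OKUme} requires $a\ge 1$ and does not occur. If $\hat X\simeq\TT_b$, then by Lemma~\ref{Lemm:LiftSchwarzIsSchwarzHat}\ref{LiftSSisTT} the bundle $\hat X$ is the pullback of $\SS_b$; I would then argue that $X$ itself must be $\SS_b$ (the descent of the bundle along $\kappa$ is determined, using Lemma~\ref{Lem:Extension} away from the branch locus together with the $\PGL_2$-equivariance to handle $\Gamma$). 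If instead $\hat X$ is decomposable, I expect to conclude that $X$ is decomposable over $\p^2$ — this is the delicate point and I return to it below. This proves \ref{noSurjectiveP2}.

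For \ref{SurjectiveP2}, suppose $H=\PGL_3$. Since $\PGL_3\supset\Aut(\p^2,\Gamma)\simeq\PGL_2$, part~\ref{noSurjectiveP2} already gives that $X$ is decomposable or some $\SS_b$ with $b\ge 1$. It remains to rule out $\SS_b$ for $b\ge 2$ and to note $\SS_1\simeq\P(T_{\p^2})$ (Corollary~\ref{Cor:SmallIsoSchwarzenberger}); but for $b\ge 2$, Lemma~\ref{Lemm:SchwarzJumpLines}\ref{AutoSchwarzb} says the image of $\Autz(\SS_b)$ in $\Aut(\p^2)$ is exactly $\Aut(\p^2,\Gamma)\simeq\PGL_2\subsetneq\PGL_3$, a contradiction with $H=\PGL_3$. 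Hence $X$ is decomposable or $\P(T_{\p^2})$, as claimed.

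\textbf{The main obstacle.} The hardest step is the implication ``$\hat X=X\times_{\p^2}(\p^1\times\p^1)$ decomposable $\Rightarrow$ $X$ decomposable over $\p^2$''. Pulling back can a priori simplify a bundle, so one must show no genuinely non-split $X$ over $\p^2$ becomes split after the étale-in-codimension-one base change $\kappa$. I would handle this by a transition-function computation: trivialise $\pi$ over $U_0,U_1\subset\p^2$ as in Lemma~\ref{Lemm:TransSchwarz} and express the class of $\pi$ by a matrix in $\PGL_2(\k[U_0\cap U_1])$; the hypothesis $H=\PGL_3$ (or $\Aut(\p^2,\Gamma)\subset H$, which already forces, via Proposition~\ref{prop decompo and Umemura bundles} applied on a blow-up $\F_1\to\p^2$ of a point not on $\Gamma$, strong constraints on $\pi$) pins down this matrix up to the equivalence $A\sim MAM'$, $M\in\GL_2(\k[U_i])$, and one checks the only possibilities are the ones giving $\PP_b$. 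Alternatively — and this is the route I would actually take to keep the argument short — I would first blow up a general point $p\in\p^2$ (with $p\notin\Gamma$), obtaining an $\Autz(X)$-equivariant square birational map to a $\p^1$-bundle over $\F_1$ (this uses the $\PGL_2$- or $\PGL_3$-action to move $p$ and the Descent Lemma~\ref{Lem:GoingdownMinimalsurfaces}), then invoke Proposition~\ref{prop decompo and Umemura bundles} on $\F_1$ directly, where the Umemura case is explicitly available, and read off from the resulting list which bundles over $\F_1$ can contract back down to a $\p^1$-bundle over $\p^2$ with the required large automorphism group. This second route avoids the delicate descent-along-$\kappa$ argument entirely, at the cost of bookkeeping with the $\F_1$-classification.
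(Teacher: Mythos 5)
Your overall strategy (easy directions via Lemmas~\ref{lemm:surjectiveDecP2} and~\ref{Lemm:SchwarzJumpLines}, then pulling back along the double cover $\kappa$ and applying Proposition~\ref{prop decompo and Umemura bundles} with $a=0$) is exactly the paper's, and your deduction of \ref{SurjectiveP2} from \ref{noSurjectiveP2} is fine. But the step you yourself flag as ``the main obstacle'' --- if $\hat X=X\times_{\p^2}(\p^1\times\p^1)$ is decomposable, conclude that $X$ is decomposable over $\p^2$ --- is a genuine gap, and neither of your two proposed fixes closes it. The point is that $X$ is not determined by $\hat X$ alone: $X=\hat X/\sigma$ where $\sigma$ is a lift to $\hat X$ of the involution exchanging the factors of $\p^1\times\p^1$, and descent along a degree-two cover depends on this lift, not just on the pulled-back bundle (so your appeal to Lemma~\ref{Lem:Extension} plus equivariance, and likewise your sketch for the $\TT_b$ case, is not by itself an argument --- in the $\TT_b$ case it does work, but only because all automorphisms of the $\p^1$-bundle $\TT_b\to\p^1\times\p^1$ are trivial, so the lift of the exchange is unique; this is precisely Lemma~\ref{Lemm:AutTTb}\ref{CommDiag}, which the paper invokes). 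What the paper does in the decomposable case is classify the possible involutions $\sigma$ on $\FF_0^{m,\pm m}$ covering the exchange, reducing to three normal forms, and then rule out the two ``twisted'' ones by computing that their quotients are singular (locally $\Spec \k[x^2,xy,y^2,z]$), contradicting smoothness of $X$; only the untwisted lift remains, giving $X\simeq\P(\O_{\p^2}\oplus\O_{\p^2}(-m))$. Nothing in your sketch replaces this analysis.

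Your preferred alternative route also fails where it is most needed, namely for \ref{noSurjectiveP2}. After blowing up a point $p\notin\Gamma$, the only part of $\Aut(\p^2,\Gamma)\simeq\PGL_2$ that survives to act on the resulting $\p^1$-bundle over $\F_1$ is the stabiliser of $p$, whose neutral component is a one-dimensional torus; with so small a group, Proposition~\ref{prop decompo and Umemura bundles} gives no information, since its case~\ref{FixFibreFibre} (invariant fibre) cannot be excluded and its moduli-space fixed-point arguments require the image in $\Autz(\F_1)$ to be large. Even for \ref{SurjectiveP2}, where the stabiliser of $p$ does surject onto $\Autz(\F_1)$, you would still have to determine the descents along $\F_1\to\p^2$ of all Umemura bundles $\U_1^{b,c}$ (the paper only computes this for $c=2$, in Lemma~\ref{Lem:UMeF1P2}) and rule them out when $H=\PGL_3$, which is additional work not sketched. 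The first route you mention (pinning down a transition matrix in $\PGL_2(\k[U_0\cap U_1])$ up to equivalence) is stated as a hope rather than carried out, so as written the ``only if'' direction of \ref{noSurjectiveP2} remains unproved.
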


\begin{proof}
If $\pi\colon X\to \p^2$ is a decomposable bundle, then $H=\Aut(\p^2)$ (see Lemma~\ref{lemm:surjectiveDecP2}). The same holds if $\pi\colon X\to \p^2$ is isomorphic to $\SS_1 \simeq \P(T_{\P^2})$ (see Corollary~\ref{Cor:SmallIsoSchwarzenberger} and Lemma~\ref{Lemm:SchwarzJumpLines}). If $\pi\colon X\to \p^2$ is isomorphic to $\SS_b$ for some $b\ge 2$, then $H=\Aut(\p^2,\Gamma)$ (see Lemma~\ref{Lemm:SchwarzJumpLines}\ref{AutoSchwarzb}). 

It then remains  to assume  $\Aut(\p^2,\Gamma)\subset H$ and deduce from it that $\pi\colon X\to \p^2$ is decomposable or isomorphic to $\SS_b\to \p^2$ for some $b\ge 1$. We use the double cover $\kappa\colon \p^1\times \p^1\to \p^2$ ramified over $\Gamma$ given in  Definition~\ref{def:Schwarz} and define a $\p^1\!$-bundle $\hat\pi\colon \hat{X}=X\times_{\p^2} (\p^1\times \p^1)\to \p^1\times \p^1$. By construction, the image of $\Autz(\hat{X})\to \Autz(\p^1\times \p^1)$ contains the group $\PGL_2$ embedded diagonally, namely the subgroup of $\Autz(\p^1\times \p^1)$ preserving the diagonal $\Delta$, the branch locus of $\kappa$. This implies that $\hat\pi\colon \hat{X}\to \p^1\times \p^1$ is either a decomposable $\p^1\!$-bundle or square isomorphic to $\hat{S}_b$ for some $b\ge 1$ (see Proposition~\ref{prop decompo and Umemura bundles}). In the latter case, the isomorphism of $\p^1\times \p^1$ that comes in the square has to preserve the diagonal and lifts to $\Autz(\hat{X})$ (see Lemma~\ref{Lemm:AutTTb}\ref{CommDiag}),  so $\hat\pi\colon \hat{X}\to \p^1\times \p^1$ is in fact isomorphic to $\hat{S}_b$. Denoting by $\sigma\in \Aut(\hat{X})$ the involution induced by the automorphism of $\p^1\times\p^1$ exchanging the two factors, we find $X=\hat{X}/\sigma$.

We first assume that $\hat{X} \to \PQ$ is a decomposable $\p^1\!$-bundle, say $\FF_{0}^{m_1,m_2}$. For $i=1,2$, the fibres of $\mathrm{pr}_i\circ \hat\pi\colon \FF_{0}^{m_1,m_2}\to \p^1$ are the Hirzebruch surfaces $\F_{\lvert m_i\rvert}$, where $\mathrm{pr}_i\colon \p^1\times \p^1\to \p^1$ is the $i$-th projection. As $\sigma$ exchanges the fibres, we get $m_1=\pm m_2$. We now prove that up to conjugation by an isomorphism of $\p^1\!$-bundles, we have one of the following situations for the action of $\sigma$ on $\hat X$:
\[\begin{array}{ccccc}
\sigma: &\FF_0^{m,-m} &\to &\FF_0^{m,-m} \\
         & [x_0:x_1;y_0:y_1;z_0:z_1] &\mapsto &[x_0:x_1; z_0:z_1; y_0: y_1],& m\ge 0;\\[1ex]
         \sigma: &\FF_0^{m,-m} &\to &\FF_0^{m,-m} \\
         & [x_0:x_1;y_0:y_1;z_0:z_1] &\mapsto &[x_0:-x_1 ; z_0:z_1; y_0: y_1],& m\ge 0; \text{ or }\\[1ex]
\sigma: &\FF_0^{m,m} &\to &\FF_0^{m,m} \\
         & [x_0:x_1;y_0:y_1;z_0:z_1] &\mapsto &[x_1:x_0 ; z_0:z_1; y_0: y_1], & m\ge 0.
\end{array}\]

If $m_1=m_2=0$, then $\FF_0^{0,0}$ is isomorphic to $(\p^1)^3$, so $\sigma$ is of the form $[x_0:x_1;y_0:y_1;z_0:z_1]\mapsto$ \mbox{$[\iota(x_0:x_1);z_0:z_1; y_0: y_1]$} for some element $\iota \in \PGL_2$, which is either the identity or of order $2$. Applying conjugation, we get one of the above cases (the last two cases being conjugate).

We then assume  $m_1=m_2=m>0$. The union of $-m$-curves in the fibres of $\mathrm{pr}_i\circ \hat\pi\colon \FF_{0}^{m,m}\to \p^1$ is then given by $x_0=0$ and $x_1=0$. This implies that $\sigma$ is of the form $[x_0:x_1;y_0:y_1;z_0:z_1]\mapsto$ $[x_1:\xi x_0;z_0:z_1; y_0: y_1]$ for some $\xi\in \k^*$. Conjugating by an automorphism  $[x_0:x_1;y_0:y_1;z_0:z_1]\mapsto [\mu x_0:x_1;y_0:y_1;z_0:z_1]$, we can assume $\xi=1$. This gives the third case.

The remaining cases are when $m_1=-m_2=m>0$. The union of $-m$-curves in the fibres of $\mathrm{pr}_i\circ \hat\pi$ is then given by $x_0=0$, for $i=1,2$. This implies that $\sigma$ is of the form $[x_0:x_1;y_0:y_1;z_0:z_1]\mapsto$ $[x_0:\mu x_1+x_0P(y_0,y_1,z_0,z_1);z_0:z_1; y_0: y_1]$, where $\mu\in \k^*$ and $P$ is of bidegree $m,m$ in $y_0,y_1,z_0,z_1$. As $\sigma$ is an involution, we get $\mu=\pm 1$ and $\mu P(y_0,y_1,z_0,z_1)+P(z_0,z_1,y_0,y_1)=0$. In both cases, conjugating by the automorphism $[x_0:x_1;y_0:y_1;z_0:z_1]\mapsto [x_0:x_1 \pm \frac{1}{2}x_0 P(y_0,y_1,z_0,z_1);y_0:y_1;z_0:z_1]$, we can assume that $P=0$ and obtain one of the first two cases.

We now study the three cases and show that only the first case can appear, in which case $X$ is a decomposable $\p^1\!$-bundle over $\p^2$.
In the first case, we obtain that $X=\FF_{0}^{m,-m}/\sigma=\P(\O_{\P^2}(-m) \oplus \O_{\P^2})$ is a decomposable $\p^1\!$-bundle over $\P^2$. In the second case, we consider $U:=[1:x;1:y;1:z] \simeq \A^3$, which is a $\sigma$-stable affine open subset of $\FF_{0}^{m, -m}$. The corresponding action on $\A^3$ is $(x,y,z)\mapsto (-x,z,y)$, which is conjugate to $(x,y,z)\mapsto (-x,-y,z)$. In the third case, we consider $V:=[x-1:x+1;1:y+z;1:-y+z] \simeq \A^3$, which is a $\sigma$-stable affine open subset of $\FF_{0}^{m, m}$. The action is again given by $(x,y,z)\mapsto (-x,-y,z)$. In both cases, the quotient is singular since the invariant algebra is $\k[x^2,xy,y^2,z]=\k[X_1,X_2,X_3,X_4]/(X_1X_3-(X_2)^2)$, which is impossible since $X$ is smooth.

It remains to study the case where $\hat{X}\to \p^1\times \p^1$ is isomorphic to $\TT_{b}\to \p^1\times \p^1$. By Lemma~\ref{Lemm:LiftSchwarzIsSchwarzHat},  $\hat{X}$ is equal to the pull-back of $\SS_{b}\to\p^2$, so there is an involution $\sigma'\in \Aut(\hat{X})$, acting on $\p^1\times \p^1$ as the exchange of the two factors $\tau\in \Aut(\p^1\times \p^1)$, such that $\hat{X}/\sigma'=\SS_b$. Since every automorphism of the $\p^1\!$-bundle $\TT_b\to \p^1\times \p^1$ is trivial (see Lemma~\ref{Lemm:AutTTb}\ref{CommDiag}), $\sigma=\sigma'$ is the unique lift of $\tau$, which shows that $\pi\colon X\to \p^2$ is isomorphic to $\SS_b\to \p^2$.
\end{proof}

\begin{remark} \label{Rem:kCold}
If $\k=\C$, then Lemma~\ref{Lem:LargeActionP2}\ref{SurjectiveP2} is a particular case of a classical result due to Van de Ven~\cite[Theorem]{VdV72}, and Lemma~\ref{Lem:LargeActionP2}\ref{noSurjectiveP2} follows from a result due to Vall\`es~\cite[Theorem]{Val00}.
\end{remark}

The following result is well known to specialists. Here we give a very short proof relying on recent results by Laurent.

\begin{lemma}\label{SubgroupsPGL3}
If $G\subsetneq \Aut(\p^2)=\PGL_3$ is a proper connected algebraic subgroup that does not fix any point of $\p^2$, then one of the following holds: 
\begin{enumerate}
\item \label{PGL2case}
$G=\Aut(\p^2,\Gamma)\simeq \PGL_2$, where $\Gamma\subset \p^2$ is a smooth conic; or
\item\label{SL2GL2case}
$G$ is a subgroup of $\Aut(\p^2,L)\simeq \GL_2\ltimes \k^2$, where $L\subset \p^2$ is a line, equal either to $\Aut(\p^2,L)$ or to $\SL_2\ltimes \k^2\subset \Aut(\p^2,L)$.
\end{enumerate}
Moreover, in both cases, $G$ acts on $\P^2$ with two orbits: the smooth conic $\Gamma$ in case~\ref{PGL2case} or the line $L$ in case~\ref{SL2GL2case}, and its open complement.
\end{lemma}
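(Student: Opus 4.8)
The plan is to analyse a proper connected algebraic subgroup $G \subsetneq \PGL_3$ via its action on $\p^2$ and on the dual plane $(\p^2)^\vee$, using the structure theory of connected algebraic groups. First I would observe that $G$ is contained in a maximal proper connected subgroup of $\PGL_3$, so it suffices to classify those and then check that a subgroup of a maximal one which still acts with no fixed point must be one of the listed groups. The maximal connected subgroups of $\PGL_3$ are well understood: besides $\PGL_3$ itself, they are (up to conjugacy) the maximal parabolic subgroup $P = \Aut(\p^2, p)$ stabilising a point $p$ (equivalently, by duality, the stabiliser $\Aut(\p^2, L)$ of a line $L$), and the image of the irreducible representation $\PGL_2 \hookrightarrow \PGL_3$, which is exactly $\Aut(\p^2,\Gamma)$ for a smooth conic $\Gamma$ (cf. Remark~\ref{rk: embedding PGL2 into PGL3}). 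This last claim is the input from representation theory / the classification of reductive subgroups of $\mathrm{SL}_3$ that I would invoke, or alternatively prove by hand: a connected reductive subgroup of $\PGL_3$ of positive dimension not equal to $\PGL_3$ either fixes a point or a line (if it has a non-trivial radical, i.e. a central torus, its weight decomposition on $\k^3$ gives an invariant line or plane) or is semisimple of rank one, hence a quotient of $\mathrm{SL}_2$ acting irreducibly on $\k^3$, forcing $G = \Aut(\p^2,\Gamma)$.

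\textbf{Key steps.} (1) Reduce to the case where $G$ is maximal among proper connected subgroups: if $G$ is contained in $\Aut(\p^2,\Gamma)\simeq\PGL_2$, then since $\PGL_2$ has no proper connected subgroup acting on $\p^2$ without a fixed point (its proper connected subgroups are contained in Borel subgroups, which fix a point of $\Gamma$, hence of $\p^2$), we get $G = \Aut(\p^2,\Gamma)$, giving case~\ref{PGL2case}. (2) If $G$ is contained in $\Aut(\p^2,L)$ for a line $L$, write $\Aut(\p^2,L) \simeq \GL_2 \ltimes \k^2$ (the $\GL_2$ acting linearly on $L \simeq \p^1$ and on the pencil of lines through the fixed point dual to $L$, wait---more precisely the stabiliser of a line is the stabiliser of a point in the dual plane, so $\Aut(\p^2,L) = \{ M \in \PGL_3 : M \text{ fixes the functional cutting out } L\}$, which is a semidirect product $\GL_2 \ltimes \k^2$ with $\k^2$ the unipotent radical). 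Then analyse which connected subgroups $G$ of $\GL_2 \ltimes \k^2$ avoid having a fixed point on $\p^2$: the fixed points of $\Aut(\p^2,L)$-elements lie either on $L$ or at the "centre" point off $L$, and a short argument with the projection $G \to \GL_2$ and the induced action on $L \simeq \p^1$ together with the translation part shows that $G$ must surject onto $\GL_2$ or onto $\SL_2$ and must contain the full unipotent radical $\k^2$ (otherwise an invariant proper subspace produces a fixed point). This yields exactly the two possibilities in case~\ref{SL2GL2case}. (3) Finally, handle the case $G$ maximal: by the structure theorem above, a maximal proper connected $G$ is either a parabolic (hence fixes a point or a line---the point case is excluded by hypothesis, the line case falls into step (2) with $G = \Aut(\p^2,L)$) or is $\Aut(\p^2,\Gamma)$, falling into step (1).

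\textbf{Main obstacle.} The delicate point is step (2): carefully ruling out the intermediate subgroups of $\GL_2 \ltimes \k^2$ that would fix a point of $\p^2$. One must show that if $G \subseteq \GL_2 \ltimes \k^2$ has no fixed point on $\p^2$, then its image in $\GL_2$ contains $\SL_2$ and $G \supseteq \k^2$. The "no fixed point on $L$" condition forces the image in $\PGL_2 = \Aut(L)$ to be all of $\PGL_2$ (a proper connected subgroup of $\PGL_2$ is in a Borel, hence fixes a point of $L \subseteq \p^2$); lifting, the image in $\GL_2$ contains a conjugate of $\SL_2$. Then, writing the unipotent radical $\k^2$ as a $\GL_2$-module (the standard representation, up to twist), the subgroup $G \cap \k^2$ is an $\SL_2$-submodule, so is $0$ or all of $\k^2$; if it is $0$ then $G \cong \SL_2$ or $\GL_2$ sitting as a Levi, which fixes the point of $\p^2$ dual to $L$ inside $L^c$, wait --- one checks a Levi $\GL_2 \subset \Aut(\p^2,L)$ fixes exactly one point (the vertex) not on $L$, contradicting the hypothesis; hence $G \cap \k^2 = \k^2$ and $G = \GL_2 \ltimes \k^2$ or $\SL_2 \ltimes \k^2$, as claimed. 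The only real work is bookkeeping the $\PGL$ versus $\GL$ distinction and confirming that $\SL_2 \ltimes \k^2$ and $\GL_2 \ltimes \k^2$ genuinely act without fixed point on $\p^2$, which is a direct computation with the explicit matrices.
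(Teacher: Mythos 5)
Your proposal is correct and follows essentially the same route as the paper: reduce to the maximal proper connected subgroups of $\PGL_3$ (parabolics or the conic stabiliser $\Aut(\p^2,\Gamma)\simeq\PSO_3\simeq\PGL_2$), dispose of proper connected subgroups of $\PGL_2$ via the Borel fixed-point theorem, and inside $\Aut(\p^2,L)\simeq\GL_2\ltimes\k^2$ show that the image of $G$ in $\GL_2$ must be $\GL_2$ or $\SL_2$ and that $G\cap\k^2$ is $0$ or $\k^2$. The only divergence is the sub-case $G\cap\k^2=0$, where you implicitly invoke conjugacy of Levi complements (vanishing of $H^1(\SL_2,\k^2)$, valid in characteristic zero) to place $G$ as a Levi fixing the vertex off $L$, whereas the paper argues more elementarily that the preimage of $-\mathrm{Id}$ is a central involution of $G$ whose fixed locus is $L$ together with an isolated point, which $G$ must therefore fix.
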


\begin{proof}
If $G$ is solvable, then it acts on $\P^2$ with a fixed point by the Borel fixed-point theorem (see~\cite[\S~21.2, Theorem]{Hum75}). If $G$ is semisimple, then by~\cite[Proposition~3.4.1]{Lau}, the only possibility is case~\ref{PGL2case}.  If $G$ is not solvable nor semisimple, then by~\cite[Proposition~3.4.4]{Lau}, the only possibility is case~\ref{SL2GL2case}.
The last sentence of the statement can be checked with a direct computation.
\end{proof}

\begin{proposition}\label{Prop:P1bundlesP2}
Let $\pi\colon X \to \P^2$ be a $\p^1\!$-bundle. If $\,X$ is not decomposable or isomorphic to a Schwarzenberger bundle $\SS_b\to \p^2$, for some $b\ge 1$, then there exists a $p\in \p^2$ such that the fibre $f=\pi^{-1}(p)$ is $\Autz(X)$-invariant. The blow-up $\eta\colon \hat X\to X$ of $f$ yields a commutative diagram with $\Autz(X)$-equivariant maps 
\[\xymatrix@R=4mm@C=2cm{
    \hat X \ar[r]^{\eta} \ar[d]_{\hat\pi}  & X \ar[d]^\pi \\
    \F_1 \ar[r]_{\tau} & \p^2\rlap{,}
}\]
  where $\hat{X}\to \F_1$ is a $\p^1\!$-bundle over $\F_1$ and $\tau$ is the blow-up of $p$.
\end{proposition}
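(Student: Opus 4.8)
The plan is to analyze the image $H\subseteq\Aut(\p^2)=\PGL_3$ of $\Autz(X)$ under the homomorphism of Lemma~\ref{blanchard}, reduce to the case where $H$ fixes a point $p\in\p^2$, and then obtain the diagram by blowing up the fibre $\pi^{-1}(p)$. Since $X$ is neither decomposable nor a Schwarzenberger bundle, Lemma~\ref{Lem:LargeActionP2} gives $H\neq\PGL_3$ and ensures that $H$ contains no subgroup conjugate to $\Aut(\p^2,\Gamma)\simeq\PGL_2$. So if $H$ fixed no point of $\p^2$, Lemma~\ref{SubgroupsPGL3} would force $H$ to preserve a line $L\subseteq\p^2$ and to contain the subgroup $P:=\SL_2\ltimes\k^2\subseteq\Aut(\p^2,L)$ of special affine transformations of $\p^2\setminus L\simeq\A^2$ (its case~\ref{SL2GL2case}, the case~\ref{PGL2case} being excluded by the previous sentence). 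The first and main task is therefore to rule out this \textbf{line case}; granting this, $H$ fixes a point $p\in\p^2$, and hence $f:=\pi^{-1}(p)$ is $\Autz(X)$-invariant.

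To rule out the line case I would use the jumping lines of $X$: say a line $M\subseteq\p^2$ is a jumping line if $\pi^{-1}(M)\to M$ is not isomorphic to the Hirzebruch surface $\F_m$ obtained over the generic line. The set $J\subseteq(\p^2)^\vee$ of jumping lines is closed, proper, and $H$-invariant, hence $P$-invariant. Identifying the lines $\neq L$ with the affine lines of $\p^2\setminus L\simeq\A^2$, on which the special affine group acts transitively, one sees that $P$ acts transitively on $(\p^2)^\vee\setminus\{[L]\}$; therefore $J=\emptyset$ or $J=\{[L]\}$. If $J=\emptyset$ then $X$ is uniform, so by Van de Ven's theorem \cite[Th.]{VdV72} (see Remark~\ref{Rem:kCold}) $X$ is decomposable or isomorphic to $\P(T_{\p^2})=\SS_1$, contrary to hypothesis. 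The remaining case $J=\{[L]\}$ — a $\p^1$-bundle over $\p^2$ with a single jumping line — is the point I expect to be the \emph{main obstacle}. I would treat it by a case analysis on the rank-two bundle $\mathcal{E}$ with $X=\P(\mathcal{E})$: if $\mathcal{E}$ is non-semistable, saturating a maximal destabilising sub-line-bundle yields $0\to\OPP(d)\to\mathcal{E}\to\mathcal{I}_Z(c_1-d)\to0$ with $Z$ finite, and then either $Z=\emptyset$ and the extension splits (because $H^1(\p^2,\OPP(k))=0$ for all $k$), so $X$ is decomposable, or $Z\neq\emptyset$ and every line through a point of $Z$ is a jumping line, so $J$ contains a pencil and cannot be a single point; if $\mathcal{E}$ is semistable one uses the classical fact that its jumping locus, when non-empty, is a curve in $(\p^2)^\vee$ (see e.g.~\cite{OSS11}), again never a single point. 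In all cases we get a contradiction, so the line case does not occur.

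It remains to construct the diagram when $H$ fixes $p\in\p^2$. The curve $f=\pi^{-1}(p)\simeq\p^1$ is smooth with normal bundle $N_{f/X}\simeq\pi^*N_{p/\p^2}\simeq\O_{\p^1}^{\oplus 2}$, since $\pi$ is smooth and $\p^2$ is smooth at $p$. Let $\eta\colon\hat X\to X$ be its blow-up; by the universal property of blowing-up, $\Autz(X)$ lifts to $\hat X$ and $\eta$ is equivariant, and the blow-up $\tau\colon\F_1=\mathrm{Bl}_p\p^2\to\p^2$ of $p$ is $\Autz(X)$-equivariant because $H$ fixes $p$. Then $\pi$ induces a $\p^1$-bundle $\hat\pi\colon\hat X\to\F_1$ making the square commute: over $\p^2\setminus\{p\}$ nothing changes and $\hat X=X$, $\F_1=\p^2$; over an affine chart $\A^2\ni p$ on which $\pi$ is trivial one has $X|_{\A^2}\simeq\A^2\times\p^1$ and $f=\{p\}\times\p^1$ is cut out by the two coordinates on $\A^2$, so blowing up $X|_{\A^2}$ along $f$ gives $(\mathrm{Bl}_p\A^2)\times\p^1\to\mathrm{Bl}_p\A^2$, a trivial $\p^1$-bundle over the corresponding chart of $\F_1$. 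These two pieces cover $\F_1$ and realise $\hat X\to\F_1$ as a $\p^1$-bundle (Lemma~\ref{Lem:Extension} can be used to glue and to propagate the $\Autz(X)$-action), and all maps in the diagram are equivariant by construction. This last part is routine; the crux of the proof is the exclusion of the line case.
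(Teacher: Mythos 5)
Your reduction to the line case (via Lemmas~\ref{blanchard}, \ref{Lem:LargeActionP2} and \ref{SubgroupsPGL3}) and your final local construction of the $\p^1$-bundle $\hat X\to\F_1$ are correct and agree with the paper; the non-semistable branch of your jumping-line argument is also sound. The gap is in the semistable branch: the ``classical fact'' that the jumping locus of a semistable rank-two bundle on $\p^2$ is a curve in the dual plane whenever it is non-empty holds only for $c_1$ even (Barth); for $c_1$ odd the jumping lines have expected codimension $2$ in $(\p^2)^\vee$ and can form a non-empty \emph{finite} set. Concretely, take $W\subset\p^2$ two distinct points and a general extension $0\to\O_{\p^2}\to F\to\mathcal{I}_W(1)\to 0$ (Serre construction, $F$ locally free, stable, $c_1(F)=1$, $c_2(F)=2$): a line disjoint from $W$ or through exactly one point of $W$ has splitting type $(1,0)$, while the line through both points has type $(2,-1)$, so $F$ has exactly one jumping line. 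Thus the configuration $J=\{[L]\}$, which is precisely what you must exclude, does occur for semistable bundles with odd $c_1$, and your argument gives no contradiction there; repairing it would require either Hulek's jumping lines of the second kind or a genuine use of the $\SL_2\ltimes\k^2$-invariance of the bundle, neither of which you provide. (A secondary issue: Van de Ven's and Barth's theorems are invoked from sources working over $\C$, while the paper works over an arbitrary algebraically closed field of characteristic zero and deliberately avoids this machinery, reproving Van de Ven only as a corollary in Remark~\ref{Rem:kCold}.)

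For comparison, the paper rules out the line case with no vector-bundle theory on $\p^2$: it chooses a point $p\notin L$, whose stabilizer $H_0\simeq\GL_2$ or $\SL_2$ acts transitively on $\p^2\setminus(L\cup\{p\})$, blows up $p$ and its fibre to obtain a $\p^1$-bundle over $\F_1$ on which $H_0$ acts with no invariant fibre of $\F_1\to\p^1$; Proposition~\ref{prop decompo and Umemura bundles} then forces this bundle to be decomposable or an Umemura bundle, whose neutral automorphism group surjects onto $\Autz(\F_1)$ (Lemmas~\ref{Lem:AutDecOnAutFa} and~\ref{Lem:SurjectiveActionUmemura}); identifying this group with a subgroup of $\Autz(X)$ via Lemma~\ref{Lem:Extension}, one concludes that $\Autz(X)$ acts transitively on $\p^2\setminus\{p\}$, contradicting the invariance of $L$. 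You would either need to import this argument or genuinely close the semistable odd-$c_1$ case before your proof is complete.
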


\begin{proof}
Let $H \subset \PGL_3$ be the image of the natural group homomorphism $\Autz(X) \to \Aut(\P^2)$. The assumption implies that $H$ does not contain any group $\Aut(\p^2,\Gamma)$ for some smooth conic $\Gamma$ (see Lemma~\ref{Lem:LargeActionP2}).

If $H$ fixes a point $p\in \p^2$, we blow up simultaneously $p$ in $\P^2$ and $f=\pi^{-1}(p)$ in $X$. Computing explicitly these blow-ups in a trivializing local chart containing $p_0$, we see that $\hat{X} \to \F_1$ is again a $\p^1\!$-bundle. Moreover, since $p$ and $f$ are $\Autz(X)$-stable in $\P^2$ and $X$, respectively, the group $\Autz(X)$ acts on $\hat{X}$. 

It remains to show that the case where no point of $\p^2$ is fixed by $H$ is impossible. By Lemma~\ref{SubgroupsPGL3}, this case can only happen if $H=\Aut(\p^2,L)\simeq \GL_2\ltimes (\k^2,+)$ or $H \simeq \SL_2\ltimes (\k^2,+) \subsetneq \Aut(\p^2,L)$, where $L\subset \p^2$ is a line. We take a point $p \in \p^2\setminus L$ and denote by $G_0$ and $H_0$ the subgroups of $G=\Autz(X)$ and $H$ stabilizing $p$ (note that $H_0\simeq \GL_2$ or $H_0\simeq \SL_2$). Blowing up the point $p$ and its fibre yields a $\p^1\!$-bundle $\hat{X} \to \F_1$ equipped with a $G_0$-action. As the group $H_0$ acts transitively on $\p^2 \setminus (L\cup \{p\})\simeq \A^2\setminus \{0\}$, it acts transitively on the exceptional divisor of the blow-up in $\F_1$, and thus $H_0$ acts transitively on $\p^1$ via the structure morphism $\F_1 \to \p^1$. Therefore, by Proposition~\ref{prop decompo and Umemura bundles}, the $\p^1\!$-bundle $\hat{X} \to \F_1$ is a decomposable or an Umemura $\p^1\!$-bundle. In both cases, the natural group homomorphism $\hat{G}=\Autz(\hat{X}) \to \Aut(\F_1)$ is onto (see Lemmas~\ref{Lem:AutDecOnAutFa} and~\ref{Lem:SurjectiveActionUmemura}), and so $H_0$ acts transitively on the complement of the exceptional section of $\F_1$ (see Remark~\ref{Rem:AutFa}). Moreover, by Lemma~\ref{Lem:Extension}, the group $\hat{G}$ identifies with a subgroup of $G$; in particular, $G$ must act transitively on $\P^2 \setminus \{p\}$, contradicting the equality $H=\Aut(\p^2,L)$. 
\end{proof}

\section{The classification}\label{Sec:Classif}
In this section, we first reduce to four families of $\p^1\!$-bundles, in Proposition~\ref{Prop:FourCases}, which uses Propositions~\ref{prop decompo and Umemura bundles} and~\ref{Prop:P1bundlesP2} and is thus valid only in characteristic zero. We then study equivariant square birational maps between the four families, in \S\S~\ref{FirstLinks}--\ref{RigidityDecP2};
the content of these subsections is valid over any algebraically closed field of characteristic different from $2$. We then apply the results obtained in \S\S~\ref{SubSecRed}--\ref{RigidityDecP2} to get the main result in \S~\ref{LastStep}, valid only in characteristic zero.

\subsection{Reduction to the four families}\label{SubSecRed}
As a first step towards Theorem~\ref{Thm:MainA}, we prove the following. 

\begin{proposition}\label{Prop:FourCases}
Let $\pi\colon X\to S$ be a $\p^1\!$-bundle over a smooth projective rational surface $S$. 
There exists an $\Autz(X)$-equivariant square birational map $(X,\pi)\dasharrow (X',\pi')$ such that $(X',\pi')$ is isomorphic to one of the following:
\begin{center}\begin{tabular}{llllllll}
$(a)$&a decomposable &$\p^1\!$-bundle& $\FF_a^{b,c}$&\hspace{-0.3cm}$\longrightarrow$& \hspace{-0.2cm}$\F_a$& for some $a,b\ge 0$, $c\in \Z$;\\
$(b)$& a decomposable &$\p^1\!$-bundle &$\PP_b$&\hspace{-0.3cm}$\longrightarrow$& \hspace{-0.2cm}$\p^2$& for some $b\ge 0$;\\
$(c)$& an Umemura &$\p^1\!$-bundle &$\U_a^{b,c}$&\hspace{-0.3cm}$\longrightarrow$& \hspace{-0.2cm}$\F_a$& for some $a,b\ge 1, c\ge 2$;\\
    $(d)$ &a Schwarzenberger\! &$\p^1\!$-bundle &$\SS_b$&\hspace{-0.3cm}$\longrightarrow$& \hspace{-0.2cm}$\p^2$& for some $b\ge 1$.
  \end{tabular}
\end{center}
\end{proposition}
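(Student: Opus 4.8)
The plan is to reduce an arbitrary $\p^1$-bundle to one over a minimal rational surface (a Hirzebruch surface $\F_a$ or $\p^2$) and then apply the structural results already proven for those two cases, chasing the $\Autz(X)$-equivariance through each step.

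\emph{Step 1: Reduce to a minimal base.} Starting with $\pi\colon X\to S$, since $S$ is a smooth projective rational surface there is a birational morphism $\eta\colon S\to S_0$ where $S_0$ is $\p^2$ or a Hirzebruch surface $\F_a$ (contracting $(-1)$-curves; one must be slightly careful if an intermediate surface is $\F_1$, but one may always end on a minimal model). Apply the Descent Lemma (Lemma~\ref{Lem:GoingdownMinimalsurfaces}), with $U\subset S_0$ the maximal open over which $\eta$ is an isomorphism: it produces a $\p^1$-bundle $\pi_0\colon X_0\to S_0$ and an $\Autz(X)$-equivariant square birational map $(X,\pi)\dasharrow (X_0,\pi_0)$. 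So from now on the base is $\F_a$ (with $a\ge 0$, using $\F_a\simeq\F_{-a}$) or $\p^2$.

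\emph{Step 2: The case of $\F_a$.} Apply Proposition~\ref{prop decompo and Umemura bundles} to $\pi_0\colon X_0\to \F_a$. It gives four alternatives: either $X_0$ is already a decomposable bundle $\FF_a^{b,c}$ (case $(a)$), or an Umemura bundle $\U_a^{b,c}$ (case $(c)$), or $a=0$ and $(X_0,\pi_0)$ is square isomorphic to $\TT_b\to\F_0$, or some fibre $(\tau_a\pi_0)^{-1}(p)$ is $\Autz(X_0)$-invariant. In the last two cases the same proposition furnishes an $\Autz(X_0)$-equivariant birational map $X_0\dasharrow\FF_a^{b,c}$ with $\psi\Autz(X_0)\psi^{-1}\subset\Autz(\FF_a^{b,c})$, landing us in case $(a)$. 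Composing with the map from Step~1 gives the desired equivariant square birational map to one of $(a)$, $(c)$.

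\emph{Step 3: The case of $\p^2$.} Apply Proposition~\ref{Prop:P1bundlesP2} to $\pi_0\colon X_0\to \p^2$. If $X_0$ is decomposable we are in case $(b)$ (using $\PP_b\simeq\PP_{-b}$ to take $b\ge 0$); if $X_0$ is a Schwarzenberger bundle $\SS_b$ with $b\ge 1$ we are in case $(d)$. Otherwise the proposition gives an $\Autz(X_0)$-invariant fibre $f=\pi_0^{-1}(p)$, and blowing it up yields an $\Autz(X_0)$-equivariant square birational map to a $\p^1$-bundle $\hat X_0\to\F_1$; now apply Step~2 to $\hat X_0\to\F_1$, which sends us into one of $(a)$, $(c)$. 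Again compose everything with Step~1. The main subtlety throughout — and the only place requiring care — is the bookkeeping of equivariance: each invoked lemma is stated to be $\Autz$-equivariant in the precise sense of Definition~\ref{Defi:SquareEtc}(3), and equivariant square birational maps compose, so the composite of all the reductions is an $\Autz(X)$-equivariant square birational map from $(X,\pi)$ to one of the four listed bundles, which is exactly the statement. I expect no genuine obstacle here, since all the hard work (removal of jumping fibres, the moduli-space analysis, the $\PGL_3$ subgroup classification) has been done in the preceding sections; Proposition~\ref{Prop:FourCases} is essentially their assembly.
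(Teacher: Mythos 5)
Your proposal is correct and follows essentially the same route as the paper: descent to a minimal base via Lemma~\ref{Lem:GoingdownMinimalsurfaces}, then Proposition~\ref{prop decompo and Umemura bundles} over $\F_a$ (with cases \ref{OKSch}--\ref{FixFibreFibre} absorbed into the decomposable case) and Proposition~\ref{Prop:P1bundlesP2} over $\p^2$, reducing the remaining case to a $\p^1$-bundle over $\F_1$ already handled. The only difference is that you spell out the equivariance bookkeeping and the composition of the reductions, which the paper leaves implicit.
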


\begin{proof}
Using the descent lemma (Lemma~\ref{Lem:GoingdownMinimalsurfaces}), we can assume  $S=\p^2$ or  $S=\F_a$ for some $a\ge 0$.

In the case where $S=\F_a$, we apply Proposition~\ref{prop decompo and Umemura bundles} to reduce to the case of decomposable or Umemura bundles.

In the case where $S=\p^2$, we apply Proposition~\ref{Prop:P1bundlesP2} to reduce to the case of decomposable Schwarzenberger bundles, or to the case of $\F_1$, already studied.
\end{proof}

\subsection{First links}\label{FirstLinks}

\begin{remark}\label{Rem:Overview5}
  We now study equivariant square birational maps from a $\p^1\!$-bundle $X\to S$ to another \mbox{$\p^1\!$-bundle} $X'\to S'$, both in the above families.
  The action being equivariant, we get a birational map $\eta\colon S\dasharrow S'$ which conjugates the image $H\subset \Autz(S)$ of $\Autz(X)$ to a subgroup of $\Autz(S')$. Hence, $\eta$ is a sequence of blow-ups of points fixed by $H$ followed by a sequence of contractions of curves invariant by $H$. The nature of the pair $(H,S)$ given above implies that no point of $S$ is fixed and that the only $(-1)$-curve invariant by $H$ is the exceptional curve of $\F_1$. We then only need to consider this case and study the $\p^1\!$-bundle over $\p^2$ obtained (which is given by the descent lemma (Lemma~\ref{Lem:GoingdownMinimalsurfaces})) and square isomorphisms/birational maps of $\p^1\!$-bundles (doing nothing on $S$).
\end{remark}

We then observe that no decomposable $\p^1\!$-bundle over $\F_1$ has a maximal group of automorphisms.

\begin{lemma}\label{Lem:ExDecF1P2}
For each $b\ge 0$ and each $c\in \Z$, the rational map
\[\begin{array}{rccc}
\varphi\colon &\FF_1^{b,c} & \dasharrow & \PP_{b-c}\\
& [x_0:x_1;y_0:y_1;z_0:z_1] & \mapsto & [x_0y_0^{c}:x_1;y_0z_0:y_1:y_0z_1]
\end{array}\]
is a square birational map above a birational morphism $\eta\colon \F_1\to \p^2$ corresponding to the blow-up of $\,[0:1:0]$. Moreover, $\varphi\Autz(\FF_1^{b,c})\varphi^{-1}\subsetneq\Autz(\PP_{b-c})$.
\end{lemma}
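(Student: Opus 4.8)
The plan is to verify directly, using the explicit global coordinates of Definitions~\ref{def Fabc} and~\ref{def Pb}, that $\varphi$ is well-defined, birational, square over the blow-down $\eta\colon\F_1\to\p^2$ of the point $[0:1:0]$, and that it conjugates $\Autz(\FF_1^{b,c})$ into a proper subgroup of $\Autz(\PP_{b-c})$.

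First I would check that $\varphi$ is well-defined as a rational map, i.e.\ that the formula respects the $(\G_m)^3$-action used to define $\FF_1^{b,c}$ and lands in $\PP_{b-c}$. On $\FF_1^{b,c}$ the variables $(x_0,x_1,y_0,y_1,z_0,z_1)$ have weights (reading off Definition~\ref{def Fabc} with $a=1$) $(\lambda\mu^{-b},\lambda\rho^{-c},\mu\rho^{-1},\mu,\rho,\rho)$, while on $\PP_{b-c}$ the variables $(y_0,y_1;z_0:z_1:z_2)$ have weights $(\mu\rho^{-(b-c)},\mu;\rho,\rho,\rho)$. A short computation shows that under $(x_0,x_1,y_0,y_1,z_0,z_1)\mapsto(x_0y_0^{c},x_1;y_0z_0,y_1,y_0z_1)$ the three scaling relations of $\FF_1^{b,c}$ map to two scaling relations of $\PP_{b-c}$ (after eliminating the parameter $\lambda$, which becomes irrelevant), so $\varphi$ descends to a rational map between the quotients. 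The locus where $\varphi$ is not obviously defined is $y_0=0$; on the open set $y_0\neq0$ it is a genuine morphism, and composing with $\pi\colon\PP_{b-c}\to\p^2$, $[x_0y_0^c:x_1;y_0z_0:y_1:y_0z_1]\mapsto[y_0z_0:y_1:y_0z_1]$, one sees $\pi\varphi=\eta\pi_1$ where $\eta\colon[y_0:y_1;z_0:z_1]\mapsto[y_0z_0:y_1:y_0z_1]$ is exactly the blow-down of $\F_1$ contracting the section $\s{-1}=\{y_0=0\}$ to $[0:1:0]\in\p^2$; in particular $\eta$ restricts to an isomorphism on the complement of $\s{-1}$, which has finite complement in $\p^2$. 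Then the descent lemma (Lemma~\ref{Lem:GoingdownMinimalsurfaces}, or directly Lemma~\ref{Lem:Extension}) shows $\varphi$ is a square birational map over $\eta$: away from $\s{-1}$ it is an isomorphism of $\p^1$-bundles onto $\pi^{-1}(\p^2\setminus\{[0:1:0]\})$, since over $y_0\neq 0$ one can invert $\varphi$ explicitly by $[x_0:x_1;y_0:y_1;z_0:z_1]\leftarrow[x_0y_0^{-c}:x_1;\ldots]$ using the standard $\A^2$-charts.

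Next, for the equivariance, since $\varphi$ restricts to an isomorphism of $\p^1$-bundles $\pi_1^{-1}(\F_1\setminus\s{-1})\iso\pi^{-1}(\p^2\setminus\{[0:1:0]\})$, Lemma~\ref{Lem:Extension} applied to any $g\in\Autz(\FF_1^{b,c})$ and its descent to $\Aut(\F_1)$ (which fixes $\s{-1}$, hence its image point $[0:1:0]$, by Remark~\ref{Rem:AutFa}) shows $\varphi g\varphi^{-1}$ extends to an automorphism of $\PP_{b-c}$; thus $\varphi\Autz(\FF_1^{b,c})\varphi^{-1}\subseteq\Autz(\PP_{b-c})$. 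It remains to see the inclusion is strict. Here the point is that $\Autz(\PP_{b-c})\twoheadrightarrow\PGL_3=\Aut(\p^2)$ is surjective by Lemma~\ref{lemm:surjectiveDecP2}, whereas the image of $\Autz(\FF_1^{b,c})$ in $\Aut(\p^2)$ under the composite $\Autz(\FF_1^{b,c})\to\Aut(\F_1)\to\Aut(\p^2)$ (the first map being the surjection of Lemma~\ref{Lem:AutDecOnAutFa}, the second induced by $\eta$) lands in the stabiliser of $[0:1:0]$, a proper subgroup of $\PGL_3$. Since $\varphi$ is above $\eta$, the conjugated group $\varphi\Autz(\FF_1^{b,c})\varphi^{-1}$ acts on $\p^2$ through this proper stabiliser, hence cannot be all of $\Autz(\PP_{b-c})$.

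The only genuinely fiddly step is the weight bookkeeping showing $\varphi$ is well-defined and birational, together with writing down the inverse on charts; everything else is a formal consequence of the descent lemma and the surjectivity statements already proved. I expect no conceptual obstacle: the structure of the argument is exactly the "reduction from $\F_1$ to $\p^2$ by blowing down $\s{-1}$" that the paper has set up in Lemma~\ref{Lem:GoingdownMinimalsurfaces} and Remark~\ref{Rem:Overview5}, and the strictness of the inclusion is forced by the fact that $\PP_{b-c}$ has the full $\PGL_3$ acting while $\FF_1^{b,c}$ only sees the parabolic fixing a point.
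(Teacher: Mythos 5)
Your proposal is correct and follows essentially the same route as the paper: it exhibits $\eta$ as the blow-up of $[0:1:0]$, observes that $\varphi$ is a square birational map over $\eta$ restricting to an isomorphism of $\p^1$-bundles away from $\s{-1}$ (so that the descent/extension lemmas give $\varphi\Autz(\FF_1^{b,c})\varphi^{-1}\subset\Autz(\PP_{b-c})$), and obtains strictness by comparing the transitive action of $\Autz(\PP_{b-c})$ on $\p^2$ (Lemma~\ref{lemm:surjectiveDecP2}) with the fact that the conjugated group fixes $[0:1:0]$. The explicit weight bookkeeping you carry out is just a verification of what the paper dismisses as "by construction of $\varphi$", not a different argument.
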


\begin{proof}
The birational morphism $\eta\colon \F_1\to \p^2$, $[y_0:y_1;z_0:z_1]\mapsto [y_0z_0:y_1:y_0z_1]$ is the blow-up of $[0:1:0]$, and by the construction of $\varphi$, we find that this  is a square birational map above $\tau$. We then write $U=\p^2\setminus \{[0:1:0]\}$ and $\hat{U}=\tau^{-1}(U)\subset \F_1$ and observe that $\varphi$ induces an isomorphism $\hat\pi^{-1}(\hat{U})\iso \pi^{-1}(U)$. By Lemma~\ref{Lem:GoingdownMinimalsurfaces}, $\varphi$ is the unique square birational map above $\tau$ having this property (up to composition by an isomorphism of $\p^1\!$-bundles at the target) and is $\Autz(\FF_1^{b,c})$-equivariant, which yields $\varphi\Autz(\FF_1^{b,c})\varphi^{-1}\subset\Autz(\PP_{b-c})$. We moreover have $\varphi\Autz(\FF_1^{b,c})\varphi^{-1}\subsetneq\Autz(\PP_{b-c})$ since $\Autz(\PP_{b-c})$ acts transitively on $\p^2$ (see Lemma~\ref{lemm:surjectiveDecP2}), but every element of $\varphi\Autz(\FF_1^{b,c})\varphi^{-1}$ acts on $\p^2$ by fixing $[0:1:0]$.
\end{proof}

\subsection{Reduction of birational maps to elementary links}
We show here that every birational map of $\p^1\!$-bundles between the four types in Proposition~\ref{Prop:FourCases} is a sequence of elementary links.

\begin{lemma}\label{Lemm:DecElLinks}
Let $G$ be a connected algebraic group acting on a $\p^1\!$-bundle $X\to S$, let $H\subset \Autz(S)$ be the image of $G$ under this action, and assume either that  no curve of $S$ is invariant by $H$ or that $(H,S)$ is one of the following pairs: $(\Autz(\F_a),\F_a)$ with $a\ge 1$, $(\Autz(\p^1\times \p^1,\Delta),\p^1\times \p^1)$,  or $(\Aut(\p^2,\Gamma),\p^2)$, where $\Gamma=\{ [X:Y:Z] \mid Y^2=4XZ\}\subset \p^2$ and $\Delta\subset \p^1\times \p^1$ is the diagonal.

If $\pi'\colon X'\to S$ is a $\p^1\!$-bundle and $\varphi\colon X\dasharrow X'$ is a $G$-equivariant birational map of $\p^1\!$-bundles $($as in Definition~$\ref{Defi:SquareEtc})$ which is not an isomorphism, then we have a sequence of $\p^1\!$-bundles $\pi_i\colon X_i\to S$, $i=0,\ldots,n$, with $\pi_0=\pi$ and $\pi_n=\pi'$, and we can write $\varphi=\varphi_n\circ \cdots \circ \varphi_1$, where $\varphi_i\colon X_i\dasharrow X_{i+1}$ is the blow-up of an irreducible curve $\ell_i\subset X_i$, followed by the contraction of the strict transform of ${\pi_i}^{-1}(\pi_i(\ell))$, and where $\pi_i|_{\ell_i}\colon \ell_i\to \pi_i(\ell_i)$ yields an isomorphism between $\ell_i$ and either $s_{-a}\subset \F_a$ with $a\ge 1$, $\Delta\subset \p^1\times \p^1$, or  $\Gamma\subset \p^2.$

Moreover, taking $n$ minimal, the sequence $\varphi_1,\dots,\varphi_n$ is unique up to isomorphisms of $\,\p^1$-bundles.
\end{lemma}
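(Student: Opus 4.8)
The plan is to analyze a $G$-equivariant birational map $\varphi\colon X\dasharrow X'$ of $\p^1$-bundles over a fixed surface $S$ by resolving it and tracking which curves of $S$ can appear as images of the centers of the blow-ups and contractions. First I would recall the standard factorization of a birational map between $\p^1$-bundles over a fixed surface into a sequence of \emph{elementary transformations} (``elementary links''): each such link is the blow-up of an irreducible curve $\ell\subset X_i$ that maps isomorphically to its image $\pi_i(\ell)\subset S$, followed by the contraction of the strict transform of the ruled surface ${\pi_i}^{-1}(\pi_i(\ell))$. This is the classical description of birational maps of $\p^1$-bundles; the key point is that since $\varphi$ does nothing on $S$, each center must be a section of $\pi_i$ over an irreducible curve of $S$, and the entire map is composed of such links because the restriction of $\varphi$ to the preimage of the generic point of $S$ is an automorphism of $\p^1_{\k(S)}$, which factors into elementary matrices.

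The main content of the lemma is then to constrain \emph{which} curves $C = \pi_i(\ell_i) \subset S$ can occur. Here I would use the $G$-equivariance: each center $\ell_i$ of an elementary link must be invariant under $G$ (since $G$ acts on $X_i$ compatibly with the fixed action on $S$, and the link is the unique minimal-length factorization step so $G$ must permute—hence fix—the centers). Therefore $C = \pi_i(\ell_i)$ is an $H$-invariant curve in $S$. I would then invoke the hypothesis on the pair $(H,S)$: if no curve of $S$ is $H$-invariant, then there are no elementary links and $\varphi$ is an isomorphism, contradicting the assumption; in the three listed cases, the only irreducible $H$-invariant curves are respectively $s_{-a}\subset \F_a$ (the unique negative section, invariant by Remark~\ref{Rem:AutFa}), $\Delta\subset \p^1\times\p^1$ (the unique curve invariant by the diagonal $\PGL_2$), and $\Gamma\subset \p^2$ (the unique curve invariant by $\Aut(\p^2,\Gamma)$, see Remark~\ref{rk: embedding PGL2 into PGL3}). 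Moreover in each of these cases $C$ is isomorphic to $\p^1$, so $\ell_i\simeq \p^1$ maps isomorphically onto $C$, giving exactly the three allowed isomorphism types for $\pi_i|_{\ell_i}$.

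For the uniqueness statement, I would argue as follows: in the classical factorization of a birational map between $\p^1$-bundles over a fixed surface, the locus where $\varphi$ fails to be an isomorphism in codimension one is a disjoint union of ruled surfaces ${\pi}^{-1}(C_j)$ over $H$-invariant curves $C_j$, and the multiplicity with which each $C_j$ appears is an invariant of $\varphi$. With $n$ taken minimal, the sequence of centers is forced (up to the order of commuting links and up to post-composition with $\p^1$-bundle isomorphisms) because each $\varphi_i$ must strictly decrease this codimension-one non-isomorphism locus; two distinct minimal factorizations would produce the same multiset of centers, and one checks inductively that the first link is determined up to isomorphism of $\p^1$-bundles, then applies the inductive hypothesis to $\varphi\circ\varphi_1^{-1}$.

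The hard part will be the bookkeeping of the uniqueness claim: making precise the ``invariant'' measuring how far $\varphi$ is from an isomorphism, and verifying that minimality of $n$ forces each elementary link to be of the decreasing type rather than an elementary transformation followed by its inverse. In practice I expect the cleanest route is to work over the generic point $\eta$ of $S$, where $\varphi$ becomes an automorphism of $\p^1_{\k(S)}$ given by a matrix $M\in \PGL_2(\k(S))$; the ruled surfaces over which $\varphi$ is not an isomorphism, together with their multiplicities, are read off from the valuations of the entries of $M$ along the $H$-invariant curves, and the elementary links correspond to the standard way of reducing $M$ to the identity by multiplying by elementary matrices supported on those curves. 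Minimality of $n$ then corresponds to a reduced word, whose length and letters are uniquely determined. Writing this out carefully, with the $G$-equivariance ensuring all the relevant data is supported on the finitely many $H$-invariant curves allowed by the hypothesis, completes the proof.
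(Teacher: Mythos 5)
Your overall strategy is the same as the paper's: describe $\varphi$ over the generic point of $S$ by a matrix $M\in\GL_2(\k(S))$, note that the locus of $S$ over which $\varphi$ fails to be an isomorphism is cut out by $\det M$, and factor $M$ by elementary operations supported along the $H$-invariant curve. However, two steps as written have genuine problems. The opening appeal to a ``classical'' factorization of an \emph{arbitrary} birational map of $\p^1$-bundles over a surface into elementary links, with irreducible centres mapping isomorphically onto curves of $S$ and all intermediate spaces again (Zariski locally trivial) $\p^1$-bundles, is not something you can simply cite: over a surface, unlike over a curve, this requires proof, and the paper never uses such a general statement. Instead it first pins down the non-isomorphism locus and only then proves the factorization along that specific curve, by a short local computation: clear denominators so that the entries of $M$ lie in $\k[u,v]$ with no common factor, observe that $\det M=\lambda u^n$ where $u=0$ is the invariant curve, use the Smith normal form over the PID $\k[v]$ to split off a factor $\begin{bmatrix}1&0\\0&u\end{bmatrix}$, and induct on $n$. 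That computation is precisely what shows that the centre is a curve mapping isomorphically onto the invariant curve, that each intermediate space is again a $\p^1$-bundle over $S$, and (since at each stage the centre is the base locus of the remaining map) the uniqueness claim for $n$ minimal. In your proposal this is exactly the part deferred as ``the hard part \dots bookkeeping'', so the main content of the lemma is sketched but not actually established.

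Second, your justification that the centres are $G$-invariant (``the link is the unique minimal-length factorization step so $G$ must permute---hence fix---the centers'') is circular: uniqueness is part of what is to be proved, and $G$ acts on the intermediate bundles $X_i$ only after you know the earlier links are equivariant, i.e. that their centres are invariant. The correct and simpler route, which the paper takes, is to observe directly that the subset $K\subset S$ over which $\varphi$ is not an isomorphism is closed, a union of irreducible curves (the zero locus of $\det$ in each trivializing chart), and $H$-invariant because $\varphi$ is $G$-equivariant; the hypothesis on $(H,S)$ then forces $K=\emptyset$ (so $\varphi$ is an isomorphism) or $K=s_{-a}$, $\Delta$, or $\Gamma$, and every centre of the factorization automatically lies over $K$. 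With that reordering, and with the valuation/matrix-reduction argument of your final paragraph actually carried out, your plan coincides with the paper's proof; as it stands, the existence of the decomposition and the uniqueness statement remain unproved.
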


\begin{proof}
Taking an open subset $U\subset S$ isomorphic to $\A^2$, the $\p^1\!$-bundle $X$ is trivial, so corresponds to $\p^1\times\A^2$. On this chart, the birational map $\varphi$ is of the form
\[\big([x_0:x_1],(u,v))\mapsto ([a(u,v)x_0+b(u,v)x_1:c(u,v)x_0+d(u,v)x_1],(u,v)\big)\]
for some $a,b,c,d\in \k(u,v)$ with $ad-bc\not=0$. Choose  $a,b,c,d\in \k[u,v]$ with no common factor; the polynomials $a,b,c,d$ are unique, up to multiplication by an element of $\k^*$. The zero locus of the determinant $P=ad-bc$ thus corresponds  exactly to the subset of $U$ over which $\varphi$ is not an isomorphism.

Denoting by $K\subset S$ the subset over which $\varphi$ is not an isomorphism, we find that $K$ is a union of closed irreducible curves. The map $\varphi$ being $G$-equivariant, $K$ is invariant by $H$. The assumption made on $(H,S)$ implies that either $K=\emptyset$, in which case $\varphi$ is an isomorphism, or $(K,S)$ is one of the three cases
$(s_{-a},\F_a)$, $(\Delta, \p^1\times \p^1)$, or  $(\Gamma, \p^2).$

In the three cases, we can choose, for each point $p\in K$, an open set $U\subset S$ and an isomorphism $U\iso \A^2$ which sends $K\cap \A^2$ onto the line $u=0$. Writing $\varphi$ with $a,b,c,d\in \k[u,v]$ as above, we find that $ad-bc=\lambda u^n$ for some integer $n\ge 1$ and $\lambda\in \k^*$. As $u$ does not divide all polynomials $a,b,c,d$, the matrix $M(u,v)=\begin{bsmallmatrix} a& b \\ c & d \end{bsmallmatrix}$ is such that $M_0=M(0,v)$ has rank $1$. The ring $\k[v]$ being a principal ideal domain,
we can use the Smith normal form and find $A,B\in \GL_2(\k[v])$ (that are in fact products of elementary matrices since $\k[v]$ is Euclidean) such that $AM_0B=\begin{bsmallmatrix} e& 0 \\ 0 & 0 \end{bsmallmatrix}$ for some $e\in \k[v]\setminus \{0\}$. Replacing $M$ with $AMB$, we then obtain $b=ub'$ and $d=ud'$ for some $b',d'\in \k[u,v]$ and get $M=M'R$ with $M'=\begin{bsmallmatrix} a& b' \\ c & d' \end{bsmallmatrix}$ and $R=\begin{bsmallmatrix} 1 &0 \\ 0 & u \end{bsmallmatrix}$. The base locus of $\varphi$ is then given, in these coordinates, by $u=0$ and $x_0=0$, corresponding to a curve $\ell\subset X$ such that $\pi$ yields an isomorphism $\pi|_{\ell}\colon \ell \to K$. The blow-up of this curve followed by the contraction of the strict transform of the surface $\pi^{-1}(K)$ is locally given by the matrix $R$. As $\det(M')=\lambda u^{n-1}$, we proceed by induction and obtain the result.
\end{proof}

\subsection{Links between decomposable bundles over Hirzebruch surfaces}

\begin{lemma}\label{Lem:InvariantDec}
Let $a,b,c\in \Z$ with $a,b\ge0$. The curves of $\FF_a^{b,c}$ invariant by $\Autz(\FF_a^{b,c})$ are given as follows: 
\begin{enumerate}
\item\label{Invl0}
The curve $l_{00}$ given by $x_0=y_0=0$ is invariant if and only if $ab>0$ or $ac<0$.
\item\label{Invl1}
The curve $l_{10}$ given by $x_1=y_0=0$ is invariant if and only if $ac>0$.
\item\label{Onlyl0l1}
No irreducible curve $\ell \subset \FF_a^{b,c}$ with $\ell\not=l_{00}$ and $\ell\not=l_{10}$ is invariant.
\end{enumerate}
\end{lemma}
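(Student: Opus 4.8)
The plan is to combine two structural facts about $G:=\Autz(\FF_a^{b,c})$. First, $\FF_a^{b,c}$ is toric, so $G$ contains the $3$-dimensional torus $T$ acting by $[x_0:x_1;y_0:y_1;z_0:z_1]\mapsto[x_0:\alpha x_1;y_0:\beta y_1;z_0:\gamma z_1]$ (Remark~\ref{FabcToric}); in particular every $G$-invariant curve is $T$-invariant, hence is one of the finitely many intersections of two of the six coordinate divisors $\{x_i=0\}$, $\{y_j=0\}$, $\{z_k=0\}$. Second, $\pi$ gives an exact sequence $1\to P\to G\to\Autz(\F_a)\to 1$ with surjection $\rho$ (Lemma~\ref{Lem:AutDecOnAutFa}), where $P$ is the group of automorphisms of the $\p^1$-bundle described in Remark~\ref{rk aut decompo bundles over Fa}; moreover the proof of Lemma~\ref{Lem:AutDecOnAutFa} yields, for each $g\in\Autz(\F_a)$, a lift $\tilde g\in G$ acting trivially on $x_0,x_1$. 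Writing $h=(h\tilde g^{-1})\cdot\tilde g$ with $g=\rho(h)$ and $h\tilde g^{-1}\in P$, we see that a curve $\ell$ is $G$-invariant if and only if it is $P$-invariant and fixed by $\tilde g$ for every $g\in\Autz(\F_a)$.

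For item~(3), let $\ell$ be an irreducible $G$-invariant curve. By the above it is a $T$-invariant curve, so it lies in a fibre $\pi^{-1}(q)$ with $q$ a torus-fixed point of $\F_a$ (the curves $\{y_j=0\}\cap\{z_k=0\}$), or it projects onto a torus-invariant fibre of $\tau_a\colon\F_a\to\p^1$ (the curves $\{x_i=0\}\cap\{z_k=0\}$), or it projects onto one of the two torus-invariant sections $\s{-a},\s{a}$ of $\tau_a$ (the curves $\{x_i=0\}\cap\{y_j=0\}$). Since $\rho$ is onto, $\pi(\ell)$ is $\Autz(\F_a)$-invariant; but $\Autz(\F_a)$ fixes no point of $\F_a$, acts transitively on $\F_a$ when $a=0$, and has $\s{-a}=\{y_0=0\}$ as its unique invariant curve when $a\ge1$ (Remarks~\ref{Rem:AutF0} and~\ref{Rem:AutFa}). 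Hence the first two cases are impossible and in the third we must have $a\ge1$ and $\pi(\ell)=\s{-a}$; among $\{x_i=0\}\cap\{y_j=0\}$ only $l_{00}$ and $l_{10}$ project onto $\s{-a}$, so $\ell\in\{l_{00},l_{10}\}$. This proves~(3), and also shows that neither $l_{00}$ nor $l_{10}$ is $G$-invariant when $a=0$, consistently with the conditions in~(1),(2).

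For~(1) and~(2) we may assume $a\ge1$. Both $l_{00}$ and $l_{10}$ lie in the surface $\pi^{-1}(\s{-a})$, and each is fixed by every lift $\tilde g$: such a $\tilde g$ acts trivially on $x_0,x_1$ (so it fixes $\{x_0=0\}$ and $\{x_1=0\}$) and it fixes $\s{-a}=\{y_0=0\}$ since this is the unique invariant section of $\F_a$. By the first paragraph, each of $l_{00},l_{10}$ is therefore $G$-invariant if and only if it is $P$-invariant. An element of $P$ has the form $[x_0:x_1]\mapsto[p_1x_0+p_2x_1:p_3x_0+p_4x_1]$ with $p_1,p_4\in\k$, $p_2\in\k[y_0,y_1,z_0,z_1]_{-b,c}$ and $p_3\in\k[y_0,y_1,z_0,z_1]_{b,-c}$ (Remark~\ref{rk aut decompo bundles over Fa}). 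Restricting to $\{y_0=0\}$ and setting $y_1=1$, $p_2$ becomes an element of $\k[z_0,z_1]_c$ — ranging over the whole space if $b=0$, and identically $0$ if $b\ge1$ — while $p_3$ becomes an element of $\k[z_0,z_1]_{-c}$, ranging over the whole space; and $\k[z_0,z_1]_d=0$ precisely when $d<0$. As $l_{10}=\{x_1=0\}\cap\pi^{-1}(\s{-a})$ is $P$-invariant iff $p_3$ always restricts to $0$ on $\{y_0=0\}$, and $l_{00}=\{x_0=0\}\cap\pi^{-1}(\s{-a})$ is $P$-invariant iff $p_2$ does, this gives: $l_{10}$ is invariant iff $c>0$, i.e. iff $ac>0$; and $l_{00}$ is invariant iff $b\ge1$ or $c<0$, i.e. iff $ab>0$ or $ac<0$ — which is exactly~(1) and~(2).

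The main obstacle is the weight/bidegree bookkeeping for $P$: one must correctly determine which of the spaces $\k[y_0,y_1,z_0,z_1]_{-b,c}$ and $\k[y_0,y_1,z_0,z_1]_{b,-c}$ vanish and exactly what their restrictions to $\{y_0=0\}$ are, since the entire numerical content of~(1),(2) is encoded there. The other delicate point is the reduction ``$G$-invariance $\iff$ $P$-invariance'' for $l_{00}$ and $l_{10}$, which is precisely where the hypothesis $a\ge1$ is used: for $a=0$ the lift of the $\PGL_2$ acting on the $y$-ruling of $\F_0$ moves $\{y_0=0\}$, hence moves $l_{00}$ and $l_{10}$, so neither is invariant.
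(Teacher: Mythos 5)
Your proof is correct, and its verification route differs from the paper's in the details, so let me compare. For part (3) the paper does not invoke the toric structure: it only uses the surjection $\Autz(\FF_a^{b,c})\twoheadrightarrow\Autz(\F_a)$ to force $\pi(\ell)=s_{-a}$ and $a>0$, and then the single $\G_m$-action $[x_0:tx_1;\dots]$ to force $\ell\subset\{x_0=0\}\cup\{x_1=0\}$; your use of the full torus $T$ and the classification of $T$-invariant curves reaches the same list (note only that not every pair of coordinate divisors meets in a curve, e.g.\ $\{x_0=0\}\cap\{x_1=0\}=\emptyset$, but your enumeration of the three relevant types is exactly the set of $12$ invariant curves, so this is harmless). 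For parts (1)–(2) the paper argues case by case: invariance of $l_{00}$ and $l_{10}$ is obtained geometrically (invariance of $\pi^{-1}(s_{-a})\simeq\F_c$ and, when $b>0$, of the union of negative sections $S_{-b}=\{x_0=0\}$), while non-invariance is shown by exhibiting explicit $\G_a$-subgroups $[x_0:x_1+tx_0y_1^bz_1^{-c}]$ and $[x_0+tx_1z_0^c:x_1]$. You instead reduce ``$\Autz$-invariance'' to ``invariance under the group of $\p^1$-bundle automorphisms'' via the lifts acting trivially on $x_0,x_1$ (which is legitimate, since $h=(h\tilde g^{-1})\tilde g$ and the lifts preserve $\{x_0=0\}$, $\{x_1=0\}$ and $\pi^{-1}(s_{-a})$ when $a\ge1$), and then settle both directions at once by the bidegree computation for $p_2\in\k[y_0,y_1,z_0,z_1]_{-b,c}$ and $p_3\in\k[y_0,y_1,z_0,z_1]_{b,-c}$ restricted to $y_0=0$; the paper's $\G_a$-subgroups are exactly the unipotent matrices realizing your ``ranging over the whole space'' claim. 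What your approach buys is a uniform if-and-only-if criterion with no case analysis (and it makes transparent where $a\ge1$ enters); what the paper's buys is brevity and a more geometric picture of why $l_{00}$ is invariant when $b>0$ (it is the intersection of two invariant surfaces). Only be sure, as you implicitly do, that the prescribed $p_2$, $p_3$ are actually realized by elements of the bundle-automorphism group (take the triangular unipotent matrices), since the ``only if'' directions rest on that.
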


\begin{proof}
By Lemma~\ref{Lem:AutDecOnAutFa}, the morphism $\pi\colon \FF_a^{b,c}\to \F_a$ yields a surjective group homomorphism $\Autz(\FF_a^{b,c})\twoheadrightarrow\Autz(\F_a)$. Hence, if $\ell\subset\FF_a^{b,c}$ is a curve invariant by $\Autz(\FF_a^{b,c})$, then $\pi(\ell)=s_{-a}$ and $a>0$ (see Remarks~\ref{Rem:AutF0} and~\ref{Rem:AutFa}). Moreover, $\G_m$ acts on $\FF_a^{b,c}$ via $[x_0:x_1;y_0:y_1;z_0,z_1]\mapsto [x_0:tx_1;y_0:y_1;z_0:z_1]$, so any point of $\ell$ should satisfy $x_0=0$ or $x_1=0$ (otherwise we get a whole fibre of a point of $s_{-a}$ which is contained in $\ell$); hence $\ell$ has to be equal to $l_{00}$ or $l_{10}$. This yields~\ref{Onlyl0l1}.

We can now assume  $a>0$ and show~\ref{Invl0} and~\ref{Invl1} by proving when $l_{00}$ and $l_{10}$ are invariant. The surface $\pi^{-1}(s_{-a})\simeq \F_c$ being invariant, the curve $l_{00}$ is invariant when $c<0$ and $l_{10}$ is invariant when $c>0$. Moreover, the fibres of the $\F_b$-bundle $\FF_a^{b,c}\to \p^1$ (given in Remark~\ref{Rem:Fbbundle}) are exchanged by $\Autz(\FF_a^{b,c})$. If $b>0$, the surface $S_{-b}$ given by $x_0=0$ is the union of the negative sections and is then invariant, so $l_{00}=S_{-b}\cap \pi^{-1}(s_{-a})$ is invariant.

It remains to show that $l_{10}$ and $l_{00}$ are not invariant in the other cases. If $c\le 0$, the group $\G_a$ acts on $\FF_a^{b,c}$ via $(t,[x_0:x_1;y_0:y_1;z_0:z_1])\mapsto ([x_0:x_1+tx_0y_1^bz_1^{-c};y_0:y_1;z_0:z_1])$, 
so $l_{10}$ is not invariant. If $b=0$ and $c\ge 0$, then $\G_a$ acts on $\FF_a^{b,c}$ via $(t,[x_0:x_1;y_0:y_1;z_0:z_1])\mapsto ([x_0+t x_1z_0^c:x_1;y_0:y_1;z_0:z_1])$,
so $l_{00}$ is not invariant.
\end{proof}

\begin{lemma}\label{Lem:InvariantLinksDec}\item
\begin{enumerate}
\item\label{LinkDec}For all $a,b,c\in \Z$ with $a,b\ge 0$, the blow-up of the curve $l_{00}\subset \FF_a^{b,c}$ given by $x_0=y_0=0$ followed by the contraction of the strict transform of the surface $\pi^{-1}(\s{-a})$ onto $l_{10}\subset \FF_a^{b+1,c+a}$ given by $x_1=y_0=0$ yields a birational map
\[\begin{array}{rccc}
\varphi\colon &\FF_a^{b,c}&\dasharrow &\FF_a^{b+1,c+a}\\
&\big([x_0:x_1;y_0:y_1;z_0:z_1]\big) &\mapsto & \big([x_0:x_1y_0;y_0:y_1;z_0:z_1]\big).\end{array}\]
 We then have  $\varphi\Autz(\FF_a^{b,c})\varphi^{-1}\subset \Autz(\FF_a^{b+1,c+a})$ if and only if $ab>0$ or $ac<0$, and we have $\varphi^{-1}\Autz(\FF_a^{b+1,c+a})\varphi\subset \Autz(\FF_a^{b,c})$ if and only if $a(c+a)>0$.
\item\label{OnlyLinksDec}
For all $a,b,c\in \Z$ with $a,b\ge0$, every $\Autz(\FF_a^{b,c})$-equivariant birational map of $\,\p^1\!$-bundles $\FF_a^{b,c}\dasharrow X$ is a composition of birational maps as in~\ref{LinkDec} $($and of their inverses$)$ and of isomorphisms of $\,\p^1\!$-bundles.
\end{enumerate}
\end{lemma}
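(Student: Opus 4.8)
The plan is to prove part~\ref{LinkDec} by a direct local computation, and part~\ref{OnlyLinksDec} by combining Lemma~\ref{Lemm:DecElLinks} with Lemma~\ref{Lem:InvariantDec}. For~\ref{LinkDec}, first I would check that the given formula $\varphi\colon\FF_a^{b,c}\dasharrow\FF_a^{b+1,c+a}$ is well-defined with respect to the two $(\G_m)^3$-quotient presentations (Definition~\ref{def Fabc}): the weight bookkeeping shows $x_1y_0$ carries the correct weight for the variable ``$x_1$'' in $\FF_a^{b+1,c+a}$ (the exponent of $\mu$ goes from $-b$ to $-b-1$ and the exponent of $\rho$ from $-c$ to $-c-a$). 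Then, working in the chart $\F_b\times\A^1$ of Remark~\ref{Rem:Fbbundle} (or, better, in the trivialising $\A^2$-charts of $\F_a$ used in Remark~\ref{rk aut decompo bundles over Fa}), I would verify that $\varphi$ is exactly the blow-up of $l_{00}=\{x_0=y_0=0\}$ followed by the contraction of the strict transform of $\pi^{-1}(\s{-a})$; concretely, in a chart where $\s{-a}$ is $\{y_0=0\}$ the map is $(x_0,x_1,y_0)\mapsto(x_0,x_1y_0,y_0)$ up to the remaining coordinates, which is the standard blow-up/anti-contraction model, and the image of the contracted surface is $\{x_1=y_0=0\}=l_{10}\subset\FF_a^{b+1,c+a}$. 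The equivariance statements then follow from Lemma~\ref{Lem:InvariantDec}: $\varphi$ conjugates $\Autz(\FF_a^{b,c})$ into $\Aut(\FF_a^{b+1,c+a})$ precisely when $l_{00}$ is $\Autz(\FF_a^{b,c})$-invariant, i.e.\ (Lemma~\ref{Lem:InvariantDec}\ref{Invl0}) when $ab>0$ or $ac<0$; and $\varphi^{-1}$ conjugates $\Autz(\FF_a^{b+1,c+a})$ back when $l_{10}\subset\FF_a^{b+1,c+a}$ is invariant, i.e.\ (Lemma~\ref{Lem:InvariantDec}\ref{Invl1} applied with $c$ replaced by $c+a$) when $a(c+a)>0$. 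One also notes that once $\varphi$ is an equivariant birational map of $\p^1$-bundles, Lemma~\ref{Lem:Extension} identifies its target up to isomorphism, so it really lands in $\FF_a^{b+1,c+a}$ and not merely in some abstract $\p^1$-bundle over $\F_a$.

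For part~\ref{OnlyLinksDec}, let $\varphi\colon\FF_a^{b,c}\dasharrow X$ be an $\Autz(\FF_a^{b,c})$-equivariant birational map of $\p^1$-bundles that is not an isomorphism. By Lemma~\ref{Lem:AutDecOnAutFa} the image $H$ of $\Autz(\FF_a^{b,c})$ in $\Autz(\F_a)$ is all of $\Autz(\F_a)$, so if $a=0$ no curve of $\F_0$ is $H$-invariant and by Lemma~\ref{Lemm:DecElLinks} there are no nontrivial links at all; hence $a\ge1$ and $(H,S)=(\Autz(\F_a),\F_a)$, which is one of the admissible pairs of Lemma~\ref{Lemm:DecElLinks}. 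That lemma then writes $\varphi$ as a minimal composition $\varphi_n\circ\cdots\circ\varphi_1$ of elementary links, each $\varphi_i$ being the blow-up of an irreducible curve $\ell_i\subset X_i$ with $\pi_i|_{\ell_i}$ an isomorphism onto $\s{-a}\subset\F_a$, followed by the contraction of the strict transform of $\pi_i^{-1}(\s{-a})$. I would then argue inductively: each $X_i$ is itself a $\p^1$-bundle over $\F_a$ carrying a $G$-action with image $\Autz(\F_a)$ (equivariance is preserved along the sequence by Lemma~\ref{Lemm:DecElLinks}), so by Proposition~\ref{prop decompo and Umemura bundles} — or, more cheaply, by the discussion of $\p^1$-bundles over $\F_a$ with surjective action, which forces no jumping fibre and hence numerical invariants, together with Corollary~\ref{Coro:IsoClass} — the curve $\ell_i$ lying over the invariant $\s{-a}$ must be one of the two invariant curves $l_{00}$ or $l_{10}$ of Lemma~\ref{Lem:InvariantDec}; here one uses that an irreducible curve mapping isomorphically onto $\s{-a}$ and contained in an equivariant link is necessarily $\Autz$-invariant, since the link is equivariant. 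But each such link with $\ell_i=l_{00}$ is exactly a map of type~\ref{LinkDec}, and each with $\ell_i=l_{10}$ is the inverse of one (by the symmetry $x_0\leftrightarrow x_1$, $\FF_a^{b,c}\simeq\FF_a^{-b,-c}$, recorded in Definition~\ref{def Fabc}). Minimality of $n$ rules out immediately cancelling a link with its inverse, so $\varphi$ is a composition of maps of type~\ref{LinkDec} and their inverses, possibly post-composed with an isomorphism of $\p^1$-bundles to match the chosen model of $X$.

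The main obstacle I anticipate is the bookkeeping in part~\ref{OnlyLinksDec}: one must be careful that the intermediate $\p^1$-bundles $X_i$ are again decomposable bundles of the form $\FF_a^{b',c'}$ (so that Lemma~\ref{Lem:InvariantDec} applies verbatim to them). This is where knowing that an equivariant $\p^1$-bundle over $\F_a$ with surjective action on $\F_a$ has no jumping fibre and thus well-defined numerical invariants $(a,b',c')$ is essential; combined with the fact that the link $\varphi_i$ changes $(b',c')$ by the explicit rule $(b',c')\mapsto(b'+1,c'+a)$ coming from~\ref{LinkDec}, this keeps us inside the decomposable family at every stage — note that a decomposable bundle stays decomposable under such a link because the curve $l_{00}$ (resp.\ $l_{10}$) is a toric orbit closure (Remark~\ref{FabcToric}), so the link is a toric modification. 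A secondary, purely technical point is verifying the precise form of $\varphi$ in~\ref{LinkDec}: one should double-check the chart computation so that the contracted surface is $\pi^{-1}(\s{-a})$ (and not some other surface) and that its image is the asserted curve $l_{10}$ with the asserted coordinate description.
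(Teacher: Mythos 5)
Your proposal is correct and follows essentially the same route as the paper: part~(1) by the local/fibrewise computation identifying $\varphi$ as the elementary transformation centred at $l_{00}$ combined with Lemma~\ref{Lem:InvariantDec} for the two equivariance criteria, and part~(2) by decomposing via Lemma~\ref{Lemm:DecElLinks} into elementary links whose centres, lying over $\s{-a}$ and being invariant, must be $l_{00}$ or $l_{10}$, hence give the links of~(1) or their inverses (with the $b=0$ case absorbed by the isomorphism exchanging $x_0$ and $x_1$). Your extra care about the intermediate bundles staying decomposable of the form $\FF_a^{b',c'}$ is precisely what the paper leaves implicit (it follows from the explicit target of each link in~(1)), so the two arguments coincide in substance.
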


\begin{proof}
As we can check in local coordinates, the birational map $\F_b\dasharrow \F_{b+1}$ given by $[x_0:x_1;y_0:y_1] \mapsto$ $[x_0:x_1y_0;y_0:y_1]$
is the composition of the blow-up of the point $[0:1;0:1]$, followed by the contraction of the strict transform of $y_0=0$ onto the point $[1:0;0:1]$. Doing this within a family yields $\varphi$. We then have  $\varphi\Autz(\FF_a^{b,c})\varphi^{-1}\subset \Autz(\FF_a^{b+1,c+a})$ if and only if  $\Autz(\FF_a^{b,c})$ preserves $l_{00}$ and $\varphi^{-1}\Autz(\FF_a^{b+1,c+a})\varphi\subset \Autz(\FF_a^{b,c})$ if and only if $\Autz(\FF_a^{b+1,c+a})$ preserves $l_{10}$. Hence, \ref{LinkDec} follows from Lemma~\ref{Lem:InvariantDec} for $b\ge 0$. 

It remains to prove~\ref{OnlyLinksDec}. By Lemma~\ref{Lemm:DecElLinks}, we only need to consider elementary links $\FF_a^{b,c}\dasharrow X$, obtained by blowing up an invariant curve $\ell\subset \FF_a^{b,c}$, followed by contracting $\pi^{-1}(\pi(\ell))$. The only curves in $\FF_a^{b,c}$ that are invariant are $l_{00}$ and $l_{10}$ (see Lemma~\ref{Lem:InvariantDec}), and the links associated to these are given in~\ref{LinkDec}: if we start with $l_{00}$, then it is equal to $\varphi$ as in~\ref{LinkDec}; if we start with $l_{10}$, then it is equal to $\varphi^{-1}$ as in~\ref{LinkDec} when $b\ge 1$ and is the composition of the isomorphism $\FF_a^{0,c}\simeq \FF_a^{0,-c}$ exchanging $x_0$ and $x_1$ with a link $\varphi$ as in~\ref{LinkDec} if $b=0$.
\end{proof}

We recall that the notions of stiff and superstiff $\p^1\!$-bundle, used in the next result and later, were defined in the introduction (Definition~\ref{def:max}).

\begin{corollary}\label{Cor:MaxDecBundlesFa}
Let $a,b\ge 0$ and $c\in \Z$ be such that $c\le 0$ when $b=0$. Then, $\Autz(\FF_a^{b,c})$ is maximal if and only if $a\not=1$ and one of the following holds:
\begin{enumerate}
\item \label{FFazeroSuper}
$a=0$; \textit{i.e.}~$\FF_a^{b,c}$ is a decomposable $\p^1\!$-bundle over $\F_0=\p^1\times \p^1$;
\item \label{FFbczeroSuper}
$b=c=0$; \textit{i.e.}~$\FF_a^{b,c}$ is isomorphic to $\FF_a^{0,0}\simeq \F_a\times \p^1$;
\item \label{FFabcOnlyMax}
$-a<c<ab$.
\end{enumerate}
Moreover, $\FF_a^{b,c}$ is superstiff in cases~\ref{FFazeroSuper} and~\ref{FFbczeroSuper} and is not stiff in case~\ref{FFabcOnlyMax}. 

More precisely, denoting by $r$ the smallest integer such that $c-ra\le 0$, we find $r\le b$ and get an infinite sequence of elementary links
\[\FF_a^{b-r,c-ra}\dasharrow \dots \dasharrow \FF_a^{b,c}\dasharrow \FF_a^{b+1,c+a}\dasharrow \dots \dasharrow\FF_a^{b+n,c+an}\dasharrow \dots\]
which conjugates $\Autz( \FF_a^{b,c})$ to  $\Autz(\FF_a^{b+s,c+as})$ for each integer $s\ge -r$. This gives all $\Autz(\FF_a^{b,c})$-equivariant square birational maps from $\FF_a^{b,c}$ to another $\p^1\!$-bundle. 
\end{corollary}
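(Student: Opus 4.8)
The plan is to first dispose of the case $a=1$, then to establish, for $a\neq 1$, which decomposable bundles $\FF_a^{b,c}$ have maximal $\Autz$, and finally to describe the equivariant links explicitly. For $a=1$, Lemma~\ref{Lem:ExDecF1P2} gives an $\Autz(\FF_1^{b,c})$-equivariant square birational map $\FF_1^{b,c}\dasharrow \PP_{b-c}$ with a strict inclusion of automorphism groups, so $\Autz(\FF_1^{b,c})$ is never maximal; this handles the ``$a\neq1$'' necessary condition. From now on assume $a\neq 1$.

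Next I would analyse the equivariant square birational maps out of $\FF_a^{b,c}$. By Remark~\ref{Rem:Overview5}, any such map factors as a square isomorphism composed with a birational map of $\p^1$-bundles (since no point of $\F_a$ is fixed by $\Autz(\F_a)$ when $a\neq 1$, using Remarks~\ref{Rem:AutF0},~\ref{Rem:AutFa}, and $\F_a$ has no invariant $(-1)$-curve for $a\neq 1$). By Lemma~\ref{Lemm:DecElLinks} the pair $(\Autz(\FF_a^{b,c}),\Autz(\F_a))$ satisfies the hypotheses (either $a=0$ and no curve of $\F_0$ is invariant, or $a\geq 2$ and $(H,S)=(\Autz(\F_a),\F_a)$), so every equivariant birational map of $\p^1$-bundles is a composition of elementary links centred at invariant curves. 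Then Lemma~\ref{Lem:InvariantDec} identifies the invariant curves as $l_{00}$ and $l_{10}$ only, and Lemma~\ref{Lem:InvariantLinksDec} shows the associated links are exactly the $\varphi\colon\FF_a^{b,c}\dasharrow\FF_a^{b+1,c+a}$ (and its inverse), together with the isomorphism $\FF_a^{0,c}\simeq\FF_a^{0,-c}$ when $b=0$. So the whole orbit of $\FF_a^{b,c}$ under equivariant square birational maps is the chain $\{\FF_a^{b+s,c+as}\}$, and $\Autz(\FF_a^{b,c})$ is maximal if and only if for every such $s$ the inclusion of groups along the link is an equality.

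The combinatorial heart is then to translate the equality-of-groups criteria of Lemma~\ref{Lem:InvariantLinksDec}\ref{LinkDec} along the whole chain. The link $\FF_a^{b+s,c+as}\dasharrow\FF_a^{b+s+1,c+a(s+1)}$ conjugates $\Autz$ into the target iff $a(b+s)>0$ or $a(c+as)<0$, and conjugates $\Autz$ of the target back into the source iff $a(c+a(s+1))>0$. For $a=0$ all these conditions are automatically satisfied (links degenerate: in fact there are no invariant curves, so the only equivariant square birational maps are the square isomorphisms $\FF_0^{b,c}\simeq\FF_0^{b,-c}\simeq\FF_0^{-b,c}$, giving superstiffness after checking $\FF_0^{b,c}$ is the only class with these invariants, via Remark~\ref{Rem:NumInvAreInv}); this gives case~\ref{FFazeroSuper}. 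For $b=c=0$, $a\geq 2$: here $l_{00}$ is invariant ($ab=0$, $ac=0$ — wait, need $ab>0$ or $ac<0$, both fail), so no link starts at $\FF_a^{0,0}$, and $l_{10}$ requires $ac>0$ which also fails; hence no equivariant birational map of $\p^1$-bundles except isomorphisms, and one checks $\FF_a^{0,0}\simeq\F_a\times\p^1$ is the unique class with invariants $(a,0,0)$, giving superstiffness, case~\ref{FFbczeroSuper}. For $a\geq 2$ and $(b,c)\neq(0,0)$: I would show that $\Autz(\FF_a^{b,c})$ is maximal iff it is conjugate to all its neighbours along the chain, which after unwinding the inequalities (using that $c\leq 0$ when $b=0$) becomes exactly $-a<c<ab$ — the point being that if $c\leq -a$ the bundle is equivariantly birational ``downwards'' to something with a strictly larger group (eventually reaching $\FF_a^{0,c'}$ type with an extra $\G_a$), and if $c\geq ab$ it is equivariantly birational ``upwards'' similarly, while in the strip $-a<c<ab$ both $l_{00}$ and $l_{10}$ are invariant, every link in the chain is an isomorphism of groups, so $\Autz$ is maximal but not stiff. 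Finally, letting $r$ be minimal with $c-ra\leq 0$, I would check $r\leq b$ (from $c<ab$) and that the displayed chain $\FF_a^{b-r,c-ra}\dasharrow\cdots\dasharrow\FF_a^{b+n,c+an}\dasharrow\cdots$ realises all equivariant square birational maps, with group conjugation an equality at each step inside the range $s\geq -r$.

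The main obstacle I anticipate is the bookkeeping in the last step: carefully matching the sign conditions ``$a(b+s)>0$ or $a(c+as)<0$'' and ``$a(c+a(s+1))>0$'' across all $s\geq -r$ against the clean statement $-a<c<ab$, and correctly handling the boundary indices (where $b+s=0$ and the isomorphism $\FF_a^{0,c'}\simeq\FF_a^{0,-c'}$ intervenes, and where the normalisation $c\leq 0$ if $b=0$ must be respected). One must also verify that outside the strip the group genuinely grows — i.e. exhibit the strictly larger automorphism group at the end of the chain — rather than merely that some link fails to be invertible equivariantly; this uses the explicit $\G_a$-actions from the proof of Lemma~\ref{Lem:InvariantDec} on $\FF_a^{0,c}$.
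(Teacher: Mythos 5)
Your overall route is the paper's: dispose of $a=1$ via Lemma~\ref{Lem:ExDecF1P2}, use the surjection $\Autz(\FF_a^{b,c})\twoheadrightarrow\Autz(\F_a)$ together with Lemmas~\ref{Lemm:DecElLinks}, \ref{Lem:InvariantDec} and~\ref{Lem:InvariantLinksDec} to reduce every equivariant square birational map to a chain of elementary links centred at $l_{00}$ or $l_{10}$, and settle cases \ref{FFazeroSuper}--\ref{FFbczeroSuper} by the absence of invariant curves. That reduction is correct and is exactly what the paper does.

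The step that would fail is your sketch of non-maximality outside the strip $-a<c<ab$: the directions of your chains are reversed. For $c\le -a$ you propose to move ``downwards'' towards some $\FF_a^{0,c'}$; but the downward link out of $\FF_a^{b,c}$ is centred at $l_{10}$, which by Lemma~\ref{Lem:InvariantDec}\ref{Invl1} is invariant only when $ac>0$, so for $c\le -a<0$ there is no equivariant downward link at all. For $c\ge ab$ (hence $b\ge 1$) you propose to move ``upwards''; but there both conditions of Lemma~\ref{Lem:InvariantLinksDec}\ref{LinkDec} hold at every step ($ab'>0$ and $a(c'+a)>0$), so each upward link conjugates $\Autz$ onto the \emph{full} automorphism group of the target and the group never grows. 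The correct directions are the opposite ones: when $c\ge ab$, descend along $\FF_a^{b,c}\dasharrow\FF_a^{b-1,c-a}\dasharrow\cdots\dasharrow\FF_a^{0,c-ab}$ (each step equivariant since $a(c-ja)>0$ for $j<b$), and observe that in $\FF_a^{0,c-ab}$ the image curve $l_{00}$ is no longer invariant --- this is where the extra $\G_a$-action from the proof of Lemma~\ref{Lem:InvariantDec} lives --- so the inclusion of the conjugated group is strict; when $c\le -a$, ascend one step to $\FF_a^{b+1,c+a}$ (equivariant since $ac<0$) and use that $l_{10}$ is not invariant there because $a(c+a)\le 0$. Relatedly, your criterion ``maximal iff conjugate to all its neighbours'' is too local: for $c\ge ab$ and $b\ge 2$ every adjacent link is an equality of groups, and the strictness only appears at the bottom of the descending chain, so one must track the whole reachable chain; likewise, inside the strip it is not true that $l_{10}$ is invariant at every member (it fails at $\FF_a^{b-r,c-ra}$, which is precisely why the chain stops there), so the maximality argument should be phrased, as in the paper, by checking that the displayed chain exhausts all equivariant maps and that each link in it is an equality of groups.
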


\begin{proof}
If $a=1$, then $\Autz(\FF_a^{b,c})$ is not maximal, by Lemma~\ref{Lem:ExDecF1P2}. We can thus assume  $a\not=1$. In this case, as we have a surjective group homomorphism $\Autz(\FF_a^{b,c})\twoheadrightarrow \Autz(\F_a)$, every $\Autz(\FF_a^{b,c})$-equivariant square birational map starting from $\FF_a^{b,c}$ is in fact the composition of an element of $\Autz(\FF_a^{b,c})$ with an $\Autz(\FF_a^{b,c})$-equivariant birational map of $\p^1\!$-bundles. These are compositions of links given in Lemma~\ref{Lem:InvariantLinksDec}\ref{LinkDec} and isomorphisms of $\p^1\!$-bundles, as explained in Lemma~\ref{Lem:InvariantLinksDec}\ref{OnlyLinksDec}.

If $a=0$ or $b=c=0$, then every $\Autz(\FF_a^{b,c})$-equivariant birational map of $\p^1\!$-bundles starting from $\FF_a^{b,c}$ is in fact an isomorphism of $\p^1\!$-bundles (see Lemma~\ref{Lem:InvariantLinksDec}). This shows that $\FF_a^{b,c}$ is superstiff and thus that $\Autz(\FF_a^{b,c})$ is maximal in these cases, corresponding to~\ref{FFazeroSuper} and~\ref{FFbczeroSuper}.

We then assume $a\ge 2$ and suppose  $(b,c)\not=(0,0)$. 

Let us show that $\Autz(\FF_a^{b,c})$ is not maximal when $c\ge ab$. Note that $b>0$ in this case, by assumption (we suppose $c<0$ when $b=0$). Lemma~\ref{Lem:InvariantLinksDec} yields an $\Autz(\FF_a^{b,c})$-equivariant birational map $\psi\colon\FF_a^{b,c}\dasharrow \FF_a^{0,c-ab}$ given as the composition of the links $\FF_a^{b,c}\dasharrow \FF_a^{b-1,c-a}{\dasharrow} \dots {\dasharrow}\FF_a^{1,c-a(b-1)}{\dasharrow} \FF_a^{0,c-ab}$. By construction, $\psi \Autz(\FF_a^{b,c})\psi^{-1}\subset \Autz(\FF_a^{0,c-ab})$ preserves the curve $l_{00}$, which then implies  that $\psi \Autz(\FF_a^{b,c})\psi^{-1}\subsetneq \Autz(\FF_a^{0,c-ab})$ by Lemma~\ref{Lem:InvariantDec}\ref{Invl0}.

We now prove that $\Autz(\FF_a^{b,c})$ is not maximal when $c\le -a$. In this case, we use the $\Autz(\FF_a^{b,c})$-equivariant link $\varphi\colon \FF_a^{b,c}\dasharrow \FF_a^{b+1,c+a}$ given by 
Lemma~\ref{Lem:InvariantLinksDec}, which is made such that $\varphi \Autz(\FF_a^{b,c})\varphi^{-1}\subset \Autz(\FF_a^{b+1,c+a})$ preserves the curve $l_{10}$ and thus yields  $\varphi \Autz(\FF_a^{b,c})\varphi^{-1}\subsetneq \Autz(\FF_a^{b+1,c+a})$ by Lemma~\ref{Lem:InvariantDec}\ref{Invl0}.

We can now assume  $-a<c<ab$. Denoting by $r$ smallest integer such that $c-ra\le 0$, we find $r\le b$ and get an infinite sequence of elementary links
\[\FF_a^{b-r,c-ra}\dasharrow \cdots \dasharrow \FF_a^{b,c}\dasharrow \FF_a^{b+1,c+a}\dasharrow \dots \dasharrow\FF_a^{b+n,c+an}\dasharrow \dots\]
which conjugates $\Autz( \FF_a^{b,c})$ to  $\Autz(\FF_a^{b+s,c+as})$ for each integer $s\ge -r$ (see Lemma~\ref{Lem:InvariantLinksDec}). Every $\Autz(\FF_a^{b,c})$-equivariant birational map of $\p^1\!$-bundles $\FF_a^{b,c}\dasharrow X$ is a composition of birational maps such as these and of isomorphisms of $\p^1\!$-bundles since $l_{10}$ is not invariant by $\Autz(\FF_a^{b-r,c-ra})$ (see Lemma~$\ref{Lem:InvariantDec}\ref{Invl1})$ as $c-ra \le 0$ and $b-r\ge 0$.
\end{proof}

\subsection{Links between Umemura bundles over Hirzebruch surfaces}
We can similarly treat the case of Umemura $\p^1\!$-bundles. Recall that such bundles $\U_{a}^{b,c}\to \F_a$ are defined by positive integers  $a,b\ge 1$ and $c\ge 2$ such that $c=ak+2$ with $0\le k\le b$ (see Definition~\ref{def Umemura bundle} and Remark~\ref{Rem:UmeExplicit} for more details).

We start with the case where $a=1$ and study the $\p^1\!$-bundle $\V_b\to \p^2$ obtained from $\U_1^{b,2}\to \F_1$ as follows. 

\begin{lemma}\label{Lem:UMeF1P2}
Let $\eta\colon \F_1\to \p^2$, $[y_0:y_1;z_0:z_1]\mapsto [y_0z_0:y_1:y_0z_1]$ be the blow-up of $\,[0:1:0]$, which induces an isomorphism between $\hat U=\eta^{-1}(U)$ and $U=\p^2\setminus \{[0:1:0]\}$. 
\begin{enumerate}\item\label{ExistUniqueV1}
For each integer $b\ge 1$, there exist a $\p^1\!$-bundle $\pi\colon\V_b\to \p^2$, unique up to isomorphism of $\,\p^1$-bundles, and a birational morphism $\psi\colon \U_1^{b,2}\to \V_b$ such that the following hold:
\begin{enumerate}
\item\label{psisq}
$\psi$ is a square birational map over $\eta$;
\item\label{psiisoU}
$\psi$ induces an isomorphism $\hat\pi^{-1}(\hat{U})\iso \pi^{-1}(U)$;
\item\label{psiBlowUp}
  $\psi$ is the blow-up of the smooth rational curve $\pi^{-1}([0:1:0])\subset \V_b$.
\end{enumerate}
\item\label{JumpingV1}
If $L\subset \p^2$ is a line, the $\p^1\!$-bundle $\pi^{-1}(L)\stackrel{\pi}{\to} L\simeq \p^1$ is isomorphic to 
\[\begin{array}{lll}
\F_b\to \p^1 & \text{ if $L$ is a line through $[0:1:0]$;}\\
\F_{\lvert b-2\rvert}\to \p^1 & \text{ if $L$ is a line not passing through  $[0:1:0]$}.\end{array}
\]
\item\label{AutV1inP2}
The image in $\Aut(\p^2)$ of $\Autz(\V_b)$ is equal to $\Aut(\p^2)$ if $b=1$ and to $\Aut(\p^2,[0:1:0])$ if $b\ge 2$.
\item\label{AutoV1lift}
We have $\psi\Autz(\U_1^{b,2})\psi^{-1}\subset \Autz(\V_b)$, with equality if $b\ge 2$.
\end{enumerate}
\end{lemma}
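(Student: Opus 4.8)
\emph{Part \ref{ExistUniqueV1}.} The natural move is to apply the descent lemma (Lemma~\ref{Lem:GoingdownMinimalsurfaces}) to the birational morphism $\eta$ and to the $\p^1$-bundle $\hat\pi\colon\U_1^{b,2}\to\F_1$: this directly produces $\pi\colon\V_1^b\to\p^2$, a square birational map $\psi$ over $\eta$ inducing an isomorphism $\hat\pi^{-1}(\hat U)\iso\pi^{-1}(U)$, the uniqueness of $\psi$ up to an isomorphism of $\p^1$-bundles at the target, and the $\Autz(\U_1^{b,2})$-equivariance of $\psi$; so \ref{psisq}, \ref{psiisoU}, and the inclusion $\psi\Autz(\U_1^{b,2})\psi^{-1}\subseteq\Autz(\V_1^b)$ needed in \ref{AutoV1lift} all come for free. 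The real content of \ref{ExistUniqueV1} is \ref{psiBlowUp}, for which I would use the description of $\psi$ in the remark following Lemma~\ref{Lem:GoingdownMinimalsurfaces}: it only depends on the isomorphism class of the surface $Z=\hat\pi^{-1}(E)$ over the exceptional curve $E=\eta^{-1}([0:1:0])=\s{-1}\subset\F_1$, and one must check $Z\simeq\p^1\times\p^1$. Since $\U_1^{b,2}$ has numerical invariants $(1,b,2)$ (Remark~\ref{Rem:UmeExplicit}), it is $\P(\mathcal E)$ for a rank two bundle fitting in $0\to\O_{\F_1}\to\mathcal E\to\O_{\F_1}(-b\s1+2f)\to0$ (Definition~\ref{Defi:NumInv}); restricting to $\s{-1}$ and using $\s{-1}\cdot(-b\s1+2f)=2$ gives $0\to\O_{\p^1}\to\mathcal E|_{\s{-1}}\to\O_{\p^1}(2)\to0$, and a short computation — directly with the transition function of $\U_1^{b,2}$, or by checking $H^0(\mathcal E|_{\s{-1}})=0$ — shows this extension is non-split, so $\mathcal E|_{\s{-1}}\simeq\O_{\p^1}(1)^{\oplus2}$ and $Z\simeq\p^1\times\p^1$. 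By the cited description $\psi$ is then a birational morphism contracting $Z$ along the ruling transverse to $\hat\pi$, i.e.\ the blow-up of the smooth rational curve it maps $Z$ onto; as $\pi\psi=\eta\hat\pi$ and $\eta$ contracts $E$ to $[0:1:0]$, that curve is $\pi^{-1}([0:1:0])$.

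\emph{Part \ref{JumpingV1}.} Here I use that $\psi$ is an isomorphism outside $Z$ and that $Z$ lies over $\s{-1}$. If $[0:1:0]\in L$, the strict transform $\widetilde L\subset\F_1$ is a fibre of $\tau_1$, and restricting the sequence above to $\widetilde L$ gives $0\to\O_{\p^1}\to\mathcal E|_{\widetilde L}\to\O_{\p^1}(-b)\to0$, which splits since $\mathrm{Ext}^1_{\p^1}(\O(-b),\O)=H^1(\p^1,\O(b))=0$ for $b\ge1$; hence $\hat\pi^{-1}(\widetilde L)\simeq\F_b$. As $C:=\pi^{-1}([0:1:0])$ is a Cartier divisor in the smooth surface $\pi^{-1}(L)$ and $\psi$ is the blow-up of $C$, the strict transform of $\pi^{-1}(L)$ — which is $\hat\pi^{-1}(\widetilde L)$ — is isomorphic to $\pi^{-1}(L)$, so $\pi^{-1}(L)\simeq\F_b$. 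If $[0:1:0]\notin L$, then $\widetilde L$ is a section of $\tau_1$ of class $\s1$ disjoint from $\s{-1}$, and restricting the sequence gives $0\to\O_{\p^1}\to\mathcal E|_{\widetilde L}\to\O_{\p^1}(2-b)\to0$, which splits since $\mathrm{Ext}^1_{\p^1}(\O(2-b),\O)=H^1(\p^1,\O(b-2))=0$ for $b\ge1$; so $\hat\pi^{-1}(\widetilde L)\simeq\F_{|b-2|}$, and since $\psi$ is an isomorphism near $\widetilde L$ we get $\pi^{-1}(L)\simeq\F_{|b-2|}$.

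\emph{Parts \ref{AutV1inP2} and \ref{AutoV1lift}.} Let $H\subseteq\Aut(\p^2)$ be the image of $\Autz(\V_1^b)$. By Lemma~\ref{Lem:SurjectiveActionUmemura}, $\Autz(\U_1^{b,2})\twoheadrightarrow\Autz(\F_1)\simeq\Aut(\p^2,[0:1:0])$; together with the equivariance of $\psi$ this gives $\Aut(\p^2,[0:1:0])\subseteq H$, and since this is a maximal parabolic subgroup of $\PGL_3$, either $H=\Aut(\p^2,[0:1:0])$ or $H=\Aut(\p^2)$. If $b\ge2$, then $b\ne|b-2|$, so by Part \ref{JumpingV1} the set of lines $L$ with $\pi^{-1}(L)\simeq\F_b$ is exactly the pencil of lines through $[0:1:0]$; this set is $H$-invariant (an automorphism of $\V_1^b$ acts on $\p^2$ preserving the isomorphism class of $\pi^{-1}(L)$) and a pencil through a point determines that point, so $H=\Aut(\p^2,[0:1:0])$. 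For the reverse inclusion in \ref{AutoV1lift} when $b\ge2$: every $g\in\Autz(\V_1^b)$ fixes $[0:1:0]$, hence preserves $C=\pi^{-1}([0:1:0])$, and since $\psi$ is the blow-up of $C$ it lifts to $\psi^{-1}g\psi\in\Autz(\U_1^{b,2})$, giving $\Autz(\V_1^b)\subseteq\psi\Autz(\U_1^{b,2})\psi^{-1}$ and hence equality. If $b=1$, Part \ref{JumpingV1} gives $\pi^{-1}(L)\simeq\F_1$ for all lines, so the above argument yields nothing; instead one identifies $\V_1^1$ directly, either by an explicit computation on the charts of $\U_1^{1,2}$ or by Van de Ven's classification of uniform rank two bundles \cite{VdV72}, as a decomposable $\p^1$-bundle or $\P(T_{\p^2})$, whence $H=\Aut(\p^2)$ by Lemma~\ref{lemm:surjectiveDecP2} or Lemma~\ref{Lemm:SchwarzJumpLines}.

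The step I expect to be the main obstacle is establishing \ref{psiBlowUp}, and more precisely the claim $Z=\hat\pi^{-1}(\s{-1})\simeq\p^1\times\p^1$: one really needs the restriction of the defining extension to $\s{-1}$ to be non-split (if it split one would get $Z\simeq\F_2$ and $\psi$ would \emph{not} be a morphism), and the transition-function reduction here is short but easy to get wrong through a sign or chart-convention slip, so it is safest to double-check it via $H^0(\mathcal E|_{\s{-1}})=0$. The second delicate point is the $b=1$ sub-case of \ref{AutV1inP2}, which, unlike $b\ge2$, is not detected by the jumping-fibre count and requires the separate identification of $\V_1^1$.
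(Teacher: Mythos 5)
Your skeleton is essentially the paper's: the descent lemma handles \ref{ExistUniqueV1}\ref{psisq}--\ref{psiisoU}, uniqueness and the inclusion in \ref{AutoV1lift}, and everything then hinges on showing $\hat\pi^{-1}(\s{-1})\simeq\p^1\times\p^1$; parts \ref{JumpingV1}--\ref{AutoV1lift} are deduced as in the paper for $b\ge 2$. Your argument for \ref{JumpingV1} is in fact a clean variant worth keeping: restricting the defining extension to the strict transform of $L$ in $\F_1$ and using $\mathrm{Ext}^1$-vanishing works uniformly in $L$, whereas the paper computes the transition function of $\V_1^b$ over $U_0\cap U_1$ and uses the $\Autz(\V_1^b)$-action to reduce to the line $Y=0$; the intersection numbers you use are correct.

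There are, however, two places where the decisive step is not actually carried out, and in one of them your proposed shortcut is false. For the non-splitness of $0\to\O_{\p^1}\to\E|_{\s{-1}}\to\O_{\p^1}(2)\to 0$: the test ``$H^0(\E|_{\s{-1}})=0$'' can never succeed, since both candidates $\O\oplus\O(2)$ and $\O(1)\oplus\O(1)$ have $4$-dimensional $H^0$; the right cohomological test is $H^0(\E|_{\s{-1}}\otimes\O(-2))=0$, or one simply does the transition-function computation, which is what the paper does: restricting the transition of $\U_1^{b,2}$ to the chart $y_1\neq 0$ gives the matrix $\left[\begin{smallmatrix}1&0\\ z&z^2\end{smallmatrix}\right]$, reduced to $z\cdot I$ by elementary operations, so the bundle is trivial over the whole open set $W=\{y_1\neq0\}\supset\s{-1}$. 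This trivialization also yields \ref{psiBlowUp} at once, because over $W$ the map $\psi$ becomes $(\text{blow-up of the point in }\p^2)\times\mathrm{id}_{\p^1}$; by contrast, your route through the remark after Lemma~\ref{Lem:GoingdownMinimalsurfaces} only gives that $\psi$ is a morphism contracting $Z\simeq\p^1\times\p^1$ along the other ruling, and the further assertion that such a contraction onto a smooth curve in a smooth threefold ``is the blow-up'' is true but requires an argument (invertibility of the pulled-back ideal plus the universal property, or a smooth blow-down criterion) that you omit. Second, the $b=1$ case of \ref{AutV1inP2} is only gestured at: the paper settles it by computing the transition function of $\V_1^1$ over $U_0\cap U_1$ and recognizing that of $\SS_1\simeq\P(T_{\p^2})$ from Corollary~\ref{Cor:SmallIsoSchwarzenberger}; you propose either this computation (without doing it) or Van de Ven's theorem, which is stated over $\C$ and deliberately avoided by the paper (cf.\ Remark~\ref{Rem:kCold}). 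As written, \ref{psiBlowUp} and the $b=1$ half of \ref{AutV1inP2} are therefore not proved.
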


\begin{proof}
The existence of a unique birational map $\psi\colon \U_1^{b,2}\dasharrow \V_b$ satisfying~\ref{ExistUniqueV1}\ref{psisq}-\ref{psiisoU} follows from the descent lemma (Lemma~\ref{Lem:GoingdownMinimalsurfaces}). We now prove that $\psi$ also satisfies~\ref{psiBlowUp}, which implies that $\psi$ is a birational morphism. To do this, we denote by $W\subset \F_1$ the open subset given by $y_1\not=0$ and show that $\hat\pi \colon \U_1^{b,2}\to \F_1$ is trivial over $W$. As $W$ contains the exceptional curve $s_{-1}\subset \F_1$ of $\eta$ (given by $y_0=0$), this will show that one can contract $\hat\pi^{-1}(s_{-1})\simeq \p^1\times \p^1$ and obtain a birational morphism having the desired properties. To show the triviality of $\hat\pi$ over $W$, we take the transition function of $\U_1^{b,2}$, given by $\nu\in \Aut(\F_b \times \A^1 \setminus \{0\})$ as follows
\[\begin{array}{ccl}\nu\colon\big([x_0:x_1;y_0:y_1],z\big) &\mapsto &\left([x_0:x_1z^{2}+x_0 y_1^{b}z;y_0z:y_1],\frac{1}{z}\right).\end{array}\]
The intersection of $W$ with each chart is isomorphic to $\p^1\times \A^2$, via the inclusion \mbox{$\p^1\times \A^2\hookrightarrow \F_b\times \A^1$} given by  $([x_0:x_1],y,z)\mapsto [x_0:x_1;y:1],z)$, and the transition function becomes 
$([x_0:x_1],y,z) \mapsto\linebreak ([x_0:x_1z^{2}+x_0 z],yz,z^{-1})$, which yields a trivial $\p^1\!$-bundle since $\begin{bsmallmatrix}0 &1 \\ -1 &z^{-1}\end{bsmallmatrix}\cdot\begin{bsmallmatrix}1 &0 \\ z &z^2\end{bsmallmatrix}\cdot\begin{bsmallmatrix}1 &-z \\ 0 &1\end{bsmallmatrix}=\begin{bsmallmatrix}z &0 \\ 0 &z\end{bsmallmatrix}$. This achieves the proof of~\ref{ExistUniqueV1}.

\ref{JumpingV1}-\ref{AutV1inP2}-\ref{AutoV1lift} The existence and
unicity of $\psi$ and $\V_b$ being proven, we then observe that $\psi\Autz(\U_1^{b,2})\psi^{-1}\subset \Autz(\V_b)$ also follows from the descent lemma (see Lemma~\ref{Lem:GoingdownMinimalsurfaces}). This shows in particular that the subgroup $H_b\subset \Aut(\p^2)$, being the image of $\Autz(\V_b)$, contains the group $\Aut(\p^2,[0:1:0])\simeq \GL_2\rtimes \k^2$.

We now take the open subsets $U_0,U_1\subset \p^2$ given by $U_0= \{[X:Y:Z] | X \neq 0\} \simeq \A^2$ and $U_1= $ $\{[X:Y:Z] | Z \neq 0\} \simeq \A^2$ and observe that $\hat{U}_i=\eta^{-1}(U_i)\simeq U_i$ for $i=0,1$. Hence, $\pi^{-1}(U_i)\simeq \hat\pi^{-1}(\hat{U}_i)$, and the transition function is computed as follows: a point $[1:u:v]\in U_0\cap U_1$ corresponds to the point $[1:u;1:v]\in \F_1$, and thus its preimage to  $([x_0:x_1;1:u],v)\in \F_b\times \A^1$ on the first chart is sent onto $([x_0:x_1v^2+x_0u^bv;v:u],v^{-1})=([x_0:x_1v^{2-b}+x_0u^bv^{1-b};1:v^{-1}],v^{-1})$ on the second chart. The transition function is then given by
$$\begin{array}{ccc}
\p^1 \times U_0 &\dasharrow &\p^1 \times U_1 \\
\big([x_0:x_1],[1:u:v]\big)&\mapsto & \big([x_0:x_1v^{2-b}+x_0u^bv^{1-b}],[\frac{1}{v}:\frac{u}{v}:1]\big).\end{array}$$
 For $b=1$, we find the transition function of $\SS_1 \simeq \P(T_{\P^2})$ (see Corollary~\ref{Cor:SmallIsoSchwarzenberger}), which yields $\V_b\simeq \SS_1$ and thus $H_1=\Aut(\p^2)$ (see Remark~\ref{PGL3:TP2} or Lemma~\ref{Lemm:SchwarzJumpLines}\ref{AutoSchwarzb}). In particular, $\psi\Autz(\U_1^{b,2})\psi^{-1}\subsetneq \Autz(\V_b)$ in this case. Assertions~\ref{AutV1inP2} and~\ref{AutoV1lift} are then proven for $b=1$.
 
We now prove~\ref{JumpingV1} (for each $b\ge 1$). We observe that if $L$ passes through $[0:1:0]$, its strict transform on $\F_1$ is a fibre $f$ of the $\p^1\!$-bundle $\F_1\to \p^1$, so $\hat\pi^{-1}(f)\simeq \F_b$. Since $\pi^{-1}(L)\simeq \hat\pi^{-1}(f)$ (because $\psi$ is a blow-up of a curve), we obtain $\pi^{-1}(L)\simeq \F_b$. We now take  a line $L$ not passing through $[0:1:0]$, use the action of $\Autz(\V_b)$ to restrict to the case where $L$ is the line given by $Y=0$, and set $u=0$ in the transition function above to get $\pi^{-1}(L)\simeq \F_{\lvert b-2\rvert}$.

Assertion~\ref{AutV1inP2} for $b\ge 2$ is now given as follows: $[0:1:0]$ has to be fixed by $H_b$ because of~\ref{JumpingV1}, and $\Aut(\p^2,[0:1:0])\subset H_b$ was already proven.

It remains to show~\ref{AutoV1lift}, and then to prove that every element  $g\in\Autz(\V_b)$ belongs to $\psi^{-1}g\psi\in\Autz(\U_1^{b,2})$ when $b\ge 2$. Assertion~\ref{AutV1inP2} implies that $g$ preserves the curve $\pi^{-1}([0:1:0])$, so this follows from~\ref{ExistUniqueV1}\ref{psiBlowUp}.\end{proof}

\begin{lemma}\label{Lem:InvariantUme}
Let $a,b\ge 1$ and $c\ge 2$ be such that $c=ak+2$ with $0\le k\le b$. The curves of $\,\U_a^{b,c}$ invariant by $\Autz(\U_a^{b,c})$ are given as follows: 
\begin{enumerate}
\item\label{Invl0Um}
The curve $l_{00}$ given by $x_0=y_0=0$ on both charts  is invariant.
\item\label{Invl1Um}
The curve $l_{10}$ given by $x_1=y_0=0$ on both charts is invariant if and only if $k>0$ $($\textit{i.e.}~when $c>2)$.
\item\label{OnlyInvUm}
These are the two curves $($respectively, the only curve$)$ of $\,\U_a^{b,c}$ invariant by $\Autz(\U_a^{b,c})$ if $c>2$ $($respectively, when $c=2)$.
\end{enumerate}
\end{lemma}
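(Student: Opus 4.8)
The plan is to mimic, for Umemura bundles, the argument already used for decomposable bundles in Lemma~\ref{Lem:InvariantDec}, exploiting the explicit description of the automorphism group of $\U_a^{b,c}$ obtained in Lemmas~\ref{Lem:SurjectiveActionUmemura} and Remark~\ref{rk aut vert Umemura bundles}, together with the $\GL_2$-action of Remark~\ref{Ume:ActionGL2}. First I would reduce to the surface $\F_a$: by Lemma~\ref{Lem:SurjectiveActionUmemura} the morphism $\pi\colon\U_a^{b,c}\to\F_a$ induces a surjection $\Autz(\U_a^{b,c})\twoheadrightarrow\Autz(\F_a)$, so any $\Autz(\U_a^{b,c})$-invariant curve $\ell$ must project onto an $\Autz(\F_a)$-invariant curve of $\F_a$; since $a\ge 1$, the only such curve is $s_{-a}$ (Remark~\ref{Rem:AutFa}), hence $\pi(\ell)=s_{-a}$. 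Next, the torus $\Gm$ acting via $[x_0:x_1;y_0:y_1]\mapsto[x_0:tx_1;y_0:y_1]$ (the action of Remark~\ref{rk aut vert Umemura bundles} with $Q=0$, rescaled — or more simply the subtorus of $\Autz(\FF_a^{b,c})$ that survives in $\Autz(\U_a^{b,c})$) fixes the two sections $x_0=0$ and $x_1=0$ over $s_{-a}$ and moves every other point of a fibre of a point of $s_{-a}$; so an invariant curve $\ell$ with $\pi(\ell)=s_{-a}$ must be contained in $\{x_0=0\}\cup\{x_1=0\}$, forcing $\ell=l_{00}$ or $\ell=l_{10}$. Over $s_{-a}$, which in both charts is $y_0=0$, the curves $l_{00}$ and $l_{10}$ are precisely the sections $x_0=0$ and $x_1=0$ of the surface $\pi^{-1}(s_{-a})\simeq\F_{c-ab}$ (see the transition function in Remark~\ref{Rem:UmeExplicit}, which restricts on $y_0=0$ to $([x_0:x_1;0:y_1],z)\mapsto([x_0:x_1z^{c-ab};0:y_1z^{-a}],\tfrac1z)$, i.e. a decomposable $\p^1$-bundle over $\p^1$). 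This already proves~\ref{OnlyInvUm}.

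It then remains to decide which of $l_{00},l_{10}$ is actually invariant. For $l_{00}$: the surface $S_{-b}=\{x_0=0\}\subset\U_a^{b,c}$ is the union of the $(-b)$-curves of the $\F_b$-fibres of $\U_a^{b,c}\to\p^1$ (it is invariant by the transition formula, which fixes $x_0=0$), hence $S_{-b}$ is invariant by $\Autz(\U_a^{b,c})$; and $\pi^{-1}(s_{-a})$ is invariant as noted. Therefore $l_{00}=S_{-b}\cap\pi^{-1}(s_{-a})$ is invariant, giving~\ref{Invl0Um}. For $l_{10}$: this is the curve $x_1=0$ over $s_{-a}$ inside the decomposable surface $\pi^{-1}(s_{-a})\simeq\F_{c-ab}$, on which the fibrewise automorphism group of $\U_a^{b,c}$ acts; by Remark~\ref{rk aut vert Umemura bundles} the fibrewise action on $\pi^{-1}(s_{-a})$ (where $y_0=0$) is trivial, so $l_{10}$ is invariant precisely when it is invariant under $\Aut$ of the decomposable surface $\F_{c-ab}$ induced by the $\GL_2$-action of Remark~\ref{Ume:ActionGL2} restricted to $y_0=0$. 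I would read off from that formula that the $\GL_2$-action sends $x_1$ to a combination $x_1\cdot(\text{unit}) + x_0\cdot(\text{something proportional to }y_0^k y_1^{b-k})$, and this correction term vanishes identically on $y_0=0$ iff $k>0$; thus $l_{10}$ is invariant iff $k>0$, i.e. iff $c>2$, which is~\ref{Invl1Um}. Conversely, when $k=0$ (so $c=2$ and $P=y_1^b z^{c-1}$) the explicit $\Gm$-part of the $\GL_2$-action in Remark~\ref{Ume:ActionGL2} with, say, $\gamma\ne 0$, sends a general point of $l_{10}$ off the surface $x_1=0$, so $l_{10}$ is not invariant — one concrete unipotent or semisimple element suffices to exhibit this.

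The main obstacle I anticipate is purely bookkeeping: extracting cleanly, from the two-chart transition function and the $\GL_2$-action formulas of Remark~\ref{Ume:ActionGL2} (which are written only as rational actions on $\F_b\times\A^1$), the restriction to the invariant locus $y_0=0$ and checking that $l_{10}$ is moved exactly when $k=0$. One must be careful that the element of $\Autz(\U_a^{b,c})$ realising a given element of $\GL_2\subset\Autz(\F_a)$ is the composition of the $\GL_2$-action of Remark~\ref{Ume:ActionGL2} with a fibrewise automorphism from Remark~\ref{rk aut vert Umemura bundles}, and that the latter is trivial on $\pi^{-1}(s_{-a})$, so it does not affect the question on $l_{10}$. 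Once this is set up, everything else is the same structure as the proof of Lemma~\ref{Lem:InvariantDec}, and I would present it in that parallel form, invoking Lemma~\ref{Lem:SurjectiveActionUmemura}, Remark~\ref{Rem:AutFa}, Remark~\ref{rk aut vert Umemura bundles} and Remark~\ref{Ume:ActionGL2} at the appropriate points.
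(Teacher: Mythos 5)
The central gap is in your argument for part~\ref{OnlyInvUm}: the $\Gm$ you invoke to force an invariant curve over $s_{-a}$ into $\{x_0=0\}\cup\{x_1=0\}$ does not exist in $\Autz(\U_a^{b,c})$. The fibrewise scaling $[x_0:x_1;y_0:y_1]\mapsto[x_0:tx_1;y_0:y_1]$ on both charts fails to commute with the transition function of Remark~\ref{Rem:UmeExplicit}: conjugating replaces the term $x_0P$ by $t^{\pm1}x_0P$, so it is an automorphism only for $t=1$. Consistently, Remark~\ref{rk aut vert Umemura bundles} says the automorphisms of the $\p^1$-bundle form a vector (unipotent) group, so no ``subtorus of $\Autz(\FF_a^{b,c})$ survives'' and ``$Q=0$, rescaled'' is just the identity; this is exactly where $\U_a^{b,c}$ differs from the toric $\FF_a^{b,c}$, so the mechanism of Lemma~\ref{Lem:InvariantDec} cannot be transported as such. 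One does get a torus acting on $\pi^{-1}(s_{-a})$ from the diagonal matrices of Remark~\ref{Ume:ActionGL2}, but its action there is $([x_0:x_1],z)\mapsto([x_0:\tfrac{\delta^{c-1}}{\alpha}x_1],\tfrac{\delta}{\alpha}z)$, and in the critical case $c=2$ (precisely where you must show $l_{00}$ is the \emph{only} invariant curve) this action fixes whole one-parameter families of curves of type $x_1z_0=\lambda x_0z_1$; so part~\ref{OnlyInvUm} is not established. The paper instead restricts the upper-triangular part of the $\GL_2$-action of Remark~\ref{Ume:ActionGL2} to the single fibre over $z=0$ of $s_{-a}$, notes that the points of an invariant curve lying in that fibre must be fixed by this connected group, computes the fixed points ($[0:1]$ always, $[1:0]$ only when $k>0$), and concludes because $l_{00}$ and $l_{10}$ are orbits.

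Two further problems. First, $\pi^{-1}(s_{-a})$ is not $\F_{c-ab}$: when you restrict the second form of the transition to $y_0=0$ you must renormalize $y_1z^{-a}$ back to $1$, and since $x_0$ has weight $-b$ under that rescaling the transition becomes $\mathrm{diag}(1,z^{c})$, so $\pi^{-1}(s_{-a})\simeq\F_c$ when $k>0$ (and $\simeq\F_0$ when $k=0$, where the term $x_0y_1^{b}z^{c-1}$ does \emph{not} vanish on $y_0=0$). This is not cosmetic: the clean reason $l_{10}$ is invariant for $k>0$ is that it is the unique negative section of $\F_c$ inside the invariant surface $\pi^{-1}(s_{-a})$, an argument unavailable for $\F_{c-ab}$, which can be $\F_0$. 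Second, reducing the invariance of $l_{10}$ to the $\GL_2$-action plus the fibrewise group is logically incomplete, since these two do not generate $\Autz(\U_a^{b,c})$: the lifts of the additive part $y_1\mapsto y_1+R(z_0,z_1)y_0$ of $\Autz(\F_a)$ are left out. Your non-invariance argument for $l_{10}$ when $k=0$ (one $\GL_2$-element with $\gamma\neq0$ moves it off $x_1=0$ while staying in $y_0=0$) is correct, and your part~\ref{Invl0Um} agrees with the paper's proof.
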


\begin{proof}
Since $a\ge 1$, the curve $s_{-a}\subset \F_a$ is invariant by $\Autz(\F_a)$; hence the surface $\pi^{-1}(s_{-a})$ is given by $y_0=0$ on both charts. The fibres of the $\F_b$-bundle $\U_a^{b,c}\to \p^1$ are exchanged by $\Autz(\U_a^{b,c})$. Since $b\ge 1$, the surface $S_{-b}$ given by $x_0=0$ is the union of the negative sections and is then invariant. This yields~\ref{Invl0Um}.

Recall that the transition function of $\U_a^{b,c}$ is given by
\[\begin{array}{ccl}\nu\colon\big([x_0:x_1;y_0:y_1],z\big) &\mapsto &\left([x_0:x_1z^{c}+x_0 y_0^ky_1^{b-k}z^{c-1};y_0z^a:y_1],\frac{1}{z}\right)\end{array}\]
(see Remark~\ref{Rem:UmeExplicit}).
If $k>0$, the surface $\pi^{-1}(s_{-a})$, corresponding to $y_0=0$, is  isomorphic to $\F_c$, and the curve $l_{10}$ given by $x_1=y_0=0$ on both charts corresponds to the curve $s_{-c}\subset \F_c$ (with $c>0$) and is thus invariant.

It remains to see that $l_{10}$ is not invariant if $k=0$ and that no curve distinct from $l_{10}$ or $l_{00}$ can be invariant.

\ref{OnlyInvUm} Let $\ell\subset \U_a^{b,c}$ be an invariant curve. As the morphism $\pi\colon \U_a^{b,c}\to \F_a$ yields a surjective  group homomorphism $\Autz(\U_{a}^{b,c})\twoheadrightarrow\Autz(\F_a)$ (see Lemma~\ref{Lem:SurjectiveActionUmemura}), and because $a\ge 1$, we have $\pi(\ell)=s_{-a}$. We then use the fact that $\ell$ has to be invariant by the $\GL_2$-action given explicitly in Remark~\ref{Ume:ActionGL2}. We consider the action of the upper-triangular group by taking $\gamma=0$ and obtain that the image of $([x_0:x_1;0:1],0)$, on the first chart, is equal to $\big([x_0:\frac{x_1\alpha^{c-1}}{\delta};0:1],0\big)$ if $k>0$ and to $\big([x_0:\frac{x_1\alpha^{c-1}}{\delta}-\frac{x_0\alpha^{c-2}\beta}{\delta};0:1],0\big)$ if $k=0$. We then find that either $([0:1;0:1],0)\in \ell$  or $([1:0;0:1],0)\in \ell$ and $k>0$. In the first case, we get $\ell=l_{00}$ since $l_{00}$ is an orbit. In the second case, $l_{10}$ is an orbit and $\ell=l_{10}$. This achieves the proof.
\end{proof}

\begin{lemma}\label{Lem:InvariantLinksUme}\quad
  
\begin{enumerate}
\item\label{LinkUme}
For each Umemura $\p^1\!$-bundle $\U_a^{b,c}\to \F_a$, the blow-up of the curve $l_{00}\subset \U_a^{b,c}$ followed by the contraction of the strict transform of the surface $\pi^{-1}(\s{-a})$ onto the curve $\ell_{10}$ yields a birational map
$\varphi\colon \U_a^{b,c}\dasharrow \U_a^{b+1,c+a}$ satisfying $\varphi \Autz(\U_a^{b,c})\varphi^{-1}=\Autz(\U_a^{b+1,c+a})$.
\item\label{SpecialUmeDec}
For each $a\ge 1$, we have a birational map $\varphi\colon\U_a^{1,a+2}\dasharrow \FF_a^{0,2}$ such that $\varphi(\Autz(\U_a^{1,a+2}))\varphi^{-1}\subsetneq\Autz(\FF_a^{0,2})$.
\end{enumerate}
\end{lemma}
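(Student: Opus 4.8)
The plan is to prove Lemma~\ref{Lem:InvariantLinksUme} in direct analogy with the decomposable case (Lemma~\ref{Lem:InvariantLinksDec}), exploiting the explicit transition functions for Umemura bundles recorded in Remark~\ref{Rem:UmeExplicit} and the description of the invariant curves in Lemma~\ref{Lem:InvariantUme}.

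For part~\ref{LinkUme}, first I would write down the local model. As in the proof of Lemma~\ref{Lem:InvariantLinksDec}\ref{LinkDec}, the elementary transformation of $\F_b$ centred at a point of the negative section, $[x_0:x_1;y_0:y_1]\mapsto[x_0:x_1y_0;y_0:y_1]$, realizes the blow-up of a point on $s_{-b}$ followed by the contraction of the fibre through it, giving $\F_{b+1}$. Performing this in the family $\F_b\times\A^1$ over each of the two charts of $\U_a^{b,c}=Z_a^{b,c,P}$ with $P=y_0^ky_1^{b-k}z^{c-1}$, I would compute the new transition function. The key calculation: conjugating $\nu_{c,P}$ by the monomial maps $([x_0:x_1;y_0:y_1],z)\mapsto([x_0:x_1y_0;y_0:y_1],z)$ (and its companion on the other chart, adjusted by the $z^a$ factor appearing in the $y$-coordinate) should turn the exponent $b$ of the fibrewise Hirzebruch surface into $b+1$, multiply $P$ by $y_0$, i.e.\ send $y_0^ky_1^{b-k}z^{c-1}$ to $y_0^{k+1}y_1^{(b+1)-(k+1)}z^{c-1}$, and turn $z^c$ into $z^{c+a}$; one checks $c+a = a(k+1)+2$, so the target is exactly the Umemura bundle $\U_a^{b+1,c+a}$. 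I would then note, as in Lemma~\ref{Lem:InvariantLinksDec}, that $\varphi$ conjugates $\Autz(\U_a^{b,c})$ into $\Autz(\U_a^{b+1,c+a})$ because $l_{00}$ is invariant (Lemma~\ref{Lem:InvariantUme}\ref{Invl0Um}), and conversely $\varphi^{-1}$ conjugates $\Autz(\U_a^{b+1,c+a})$ into $\Autz(\U_a^{b,c})$ because the curve $l_{10}\subset\U_a^{b+1,c+a}$ (the image curve) is invariant: here $c+a = a(k+1)+2 > 2$ since $a,k+1\ge1$, so Lemma~\ref{Lem:InvariantUme}\ref{Invl1Um} applies. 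This gives equality $\varphi\Autz(\U_a^{b,c})\varphi^{-1}=\Autz(\U_a^{b+1,c+a})$.

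For part~\ref{SpecialUmeDec}, note $\U_a^{1,a+2}$ has $b=1$, $k=1$, $P=y_0z^{a+1}$. Applying the link $\varphi$ of part~\ref{LinkUme} in reverse, i.e.\ blowing up $l_{10}\subset\U_a^{1,a+2}$ (which is invariant since $c=a+2>2$) and contracting $\pi^{-1}(s_{-a})$, decreases $b$ to $0$; I expect the transition function to become that of a $Z_a^{0,c',P'}$, and since $b=0$ forces $P'=0$ (Corollary~\ref{Coro:IsoClass}), the target is decomposable. Tracking the numerical invariants: $b$ drops by $1$ and $c$ drops by $a$, giving $c'=2$, hence $\FF_a^{0,2}$. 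Then $\varphi\Autz(\U_a^{1,a+2})\varphi^{-1}\subset\Autz(\FF_a^{0,2})$ by the descent lemma (Lemma~\ref{Lem:GoingdownMinimalsurfaces}), and the inclusion is strict because $\varphi\Autz(\U_a^{1,a+2})\varphi^{-1}$ preserves the image curve of the contracted surface, which must be $l_{00}\subset\FF_a^{0,2}$, while $\Autz(\FF_a^{0,2})$ does not preserve $l_{00}$ by Lemma~\ref{Lem:InvariantDec}\ref{Invl0} (here $a\ge1$, $b=0$, $c=2>0$, so $ab=0$ and $ac>0$, so $l_{00}$ is not invariant). Alternatively one can invoke Lemma~\ref{Lemm:DecElLinks} together with the classification of invariant curves.

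The main obstacle I expect is the bookkeeping in the transition-function computation for part~\ref{LinkUme}: one must carry out the conjugation of $\nu_{c,P}$ by the two chart-wise elementary maps carefully, keeping track of which copy of $\A^1$ (the $z=0$ or $z=\infty$ chart) is being used and of the asymmetry introduced by the $y_0\mapsto y_0z^a$ term in $\nu$, in order to confirm that the result is precisely $\nu_{c+a,P'}$ with $P'=y_0^{k+1}y_1^{b-k}z^{c+a-1}$ up to the equivalence of Lemma~\ref{Lemm:EquivalenceClassP}, rather than merely a bundle with the same numerical invariants $(a,b+1,c+a)$. Once the transition function is pinned down, the statements about the automorphism groups follow formally from Lemma~\ref{Lem:InvariantUme} and the surjectivity of $\Autz(\U_a^{b,c})\to\Autz(\F_a)$ (Lemma~\ref{Lem:SurjectiveActionUmemura}), exactly as in the decomposable case.
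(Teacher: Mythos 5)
Your proposal is correct and follows essentially the paper's own route. For part~\ref{LinkUme} the paper does exactly what you describe: it conjugates the transition function $\nu_a^{b,c}$ of Remark~\ref{Rem:UmeExplicit} by the chartwise elementary map $([x_0:x_1;y_0:y_1],z)\mapsto([x_0:x_1y_0;y_0:y_1],z)$, checks $\varphi_b\nu_a^{b,c}\varphi_b^{-1}=\nu_a^{b+1,c+a}$ (so the new polynomial is $y_0^{k+1}y_1^{b-k}z^{c+a-1}$ — as you state in your final paragraph; your earlier ``$z^{c-1}$'' is only a slip, since the $y_0\mapsto y_0z^a$ factor also raises the exponent of $z$ in $P$), and gets equality of the conjugated groups from the invariance of $l_{00}$, $\pi^{-1}(s_{-a})$ and $l_{10}$ via Lemma~\ref{Lem:InvariantUme}; part~\ref{SpecialUmeDec} is likewise obtained by running the same construction with $b=0$ and identifying the target as $\FF_a^{0,2}$ through Proposition~\ref{Prop:EquivABC}. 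The one genuine divergence is the strictness of the inclusion in~\ref{SpecialUmeDec}: the paper compares torus ranks ($\Autz(\FF_a^{0,2})$ contains a three-dimensional torus by Remark~\ref{FabcToric}, which $\Autz(\U_a^{1,a+2})$ does not, by Remark~\ref{rk aut vert Umemura bundles}), whereas you note that the conjugated group must preserve the image curve $l_{00}\subset\FF_a^{0,2}$ of the contracted surface, which is not $\Autz(\FF_a^{0,2})$-invariant by Lemma~\ref{Lem:InvariantDec}\ref{Invl0}; both arguments are valid, and yours matches the argument used in Corollary~\ref{Cor:MaxDecBundlesFa}. One small correction: the inclusion $\varphi\Autz(\U_a^{1,a+2})\varphi^{-1}\subset\Autz(\FF_a^{0,2})$ is not a consequence of the descent lemma (which concerns a birational morphism of base surfaces, while here the base $\F_a$ is unchanged); it follows, exactly as in your part~\ref{LinkUme}, from the link being centred at the invariant curve $l_{10}$ and the invariant surface $\pi^{-1}(s_{-a})$, hence equivariant, together with connectedness of $\Autz(\U_a^{1,a+2})$.
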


\begin{proof}\ref{LinkUme}
For each $a\ge 1, b\ge 0$, and $0\le k\le b$, we write $c=ak+2$,  denote by $\nu_a^{b,c}\in \Aut(\F_b\times \A^1\setminus \{0\})$ the birational map
\[\begin{array}{rccc}
&\F_b\times \A^1 & \dasharrow & \F_{b}\times \A^1\\
\nu_a^{b,c}\colon&\big([x_0:x_1;y_0:y_1],z\big) &\mapsto &\left([x_0:x_1z^{c}+x_0 y_0^ky_1^{b-k}z^{c-1};y_0z^a:y_1],\frac{1}{z}\right),\end{array}\]
and observe that $\nu_a^{b,c}$ is the transition function of $\U_a^{b,c}$ if $b\ge 1$ (see Remark~\ref{Rem:UmeExplicit}).  We then denote by $\varphi_b$ the birational map 
\[\begin{array}{rccc}
&\F_b\times \A^1 & \dasharrow & \F_{b+1}\times \A^1\\
\varphi_b\colon & ([x_0:x_1;y_0:y_1],z) & \mapsto & ([x_0:x_1y_0;y_0:y_1],z)\end{array}\]
and observe that $\varphi_b\nu_a^{b,c}\varphi_b^{-1}=\nu_a^{b+1,c+a}$. If $b\ge 1$, the blow-up of $l_{00}\subset \U_a^{b,c}$, followed by the contraction of the strict transform of the surface $\pi^{-1}(s_{-a})$, yields a birational map given in the two charts by $\varphi_b$. This then corresponds  to a birational map $\U_a^{b,c}\dasharrow \U_a^{b+1,c+a}$, which is the blow-up of the curve $l_{00}\subset \U_a^{b,c}$ followed by the contraction of the strict transform of the surface $\pi^{-1}(\s{-a})$. Since $l_{00}$ and $\pi^{-1}(\s{-a})$ are invariant by $\Autz(\U_a^{b,c})$ (see Lemma~\ref{Lem:InvariantUme}), we get $\varphi \Autz(\U_a^{b,c})\varphi^{-1}\subset\Autz(\U_a^{b+1,c+a})$. We then observe that $\varphi^{-1}$ is the blow-up of $l_{10}\subset \U_a^{b+1,c+a}$ followed by the contraction of the strict transform of the surface $\pi^{-1}(\s{-a})$. As $l_{10}$ is invariant by $\Autz(\U_a^{b+1,c+a})$, we obtain $\varphi \Autz(\U_a^{b,c})\varphi^{-1}=\Autz(\U_a^{b+1,c+a})$. This  achieves the proof of~\ref{LinkUme}.

We now consider the above construction in the case $b=0$ (which yields $k=0$ and $c=2$). The transition function $\nu_{a}^{b+1,c+a}$ still corresponds to the transition function of $\U_a^{b+1,c+a}=\U_a^{1,a+2}$, but the transition function $\nu_{a}^{b,c}$ corresponds to a transition function on a $\p^1\!$-bundle over $\F_a$ with numerical invariants $(a,b,c)$, which is therefore decomposable and isomorphic to $\FF_a^{b,c}=\FF_a^{0,2}$ (see Proposition~\ref{Prop:EquivABC}\ref{Existabc}). The maps $(\varphi_b)^{-1}$ on both charts then yield  an elementary link $\varphi\colon\U_a^{1,a+2}\dasharrow \FF_a^{0,2}$ centred at $l_{10}$, which is then $\Autz(\U_a^{1,a+2})$-equivariant. We moreover have $\varphi(\Autz(\U_a^{1,a+2}))\varphi^{-1}\subsetneq\Autz(\FF_a^{0,2})$ since $\Autz(\FF_a^{0,2})$ contains a torus of dimension $3$ (see Remark~\ref{FabcToric}), which is not the case for $\Autz(\U_a^{1,a+2})$ (as follows from Remark~\ref{rk aut vert Umemura bundles}). This then achieves  the proof of~\ref{SpecialUmeDec}.
\end{proof}

\begin{corollary}\label{Cor:MaxUmeBundlesFa}
Let $\U_a^{b,c}$ be an Umemura bundle. Then, $\Autz(\U_a^{b,c})$ is maximal if and only if one of the following holds:
\begin{enumerate}
\item
$a\ge 2$ and $c-ab<2$;
\item
$a=1$ and $c-ab<1$.
\end{enumerate}
 In this case, $\U_a^{b,c}$ is not stiff. More precisely, denoting by $k$ the integer such that $c=ak+2$, we find $0\le k<b$ and get a sequence of birational maps
\[\U_a^{b-k,2}\stackrel{\varphi_{-k}}{\dasharrow} \U_a^{b-k+1,2+a}\dots \stackrel{\varphi_{-1}}{\dasharrow}\U_a^{b,c}\stackrel{\varphi_0}{\dasharrow} \cdots \stackrel{\varphi_{n-1}}{\dasharrow} \U_a^{b+n,c+na} \stackrel{\varphi_n}{\dasharrow}\cdots\]such that $\varphi_n \Autz(\U_a^{b+n,2+na})\varphi_n^{-1}=\Autz(\U_a^{b+n+1,2+(n+1)a})$ for each $n\ge -k$. If $a\ge 2$, this gives all $\Autz(\U_a^{b,c})$-equivariant square birational maps from $\U_a^{b,c}$ to another $\p^1\!$-bundle. If $a=1$, we add the birational morphism $\U_a^{b-k,2}=\U_1^{b-k,2}\to \V_{b-k,1}$ of Lemma~$\ref{Lem:UMeF1P2}$.
\end{corollary}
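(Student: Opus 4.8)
The strategy is to combine the classification of invariant curves and elementary links for Umemura bundles (Lemmas~\ref{Lem:InvariantUme} and~\ref{Lem:InvariantLinksUme}) with the reduction of equivariant birational maps to elementary links (Lemma~\ref{Lemm:DecElLinks}), exactly as in the proof of Corollary~\ref{Cor:MaxDecBundlesFa} for decomposable bundles. The key point is that the morphism $\pi\colon\U_a^{b,c}\to\F_a$ induces a surjection $\Autz(\U_a^{b,c})\twoheadrightarrow\Autz(\F_a)$ (Lemma~\ref{Lem:SurjectiveActionUmemura}), so every $\Autz(\U_a^{b,c})$-equivariant square birational map starting from $\U_a^{b,c}$ is a composition of an element of $\Autz(\U_a^{b,c})$ with an equivariant \emph{birational map of $\p^1$-bundles}; the latter is by Lemma~\ref{Lemm:DecElLinks} a composition of elementary links centred at invariant curves. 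Since $a\ge 1$, the pair $(H,\F_a)=(\Autz(\F_a),\F_a)$ is one of the admissible pairs in Lemma~\ref{Lemm:DecElLinks}, so that lemma applies, and the only curves of $\F_a$ over which such a link can be centred is $s_{-a}$; the invariant curves in the total space lying over $s_{-a}$ are exactly $l_{00}$ and (when $c>2$) $l_{10}$ by Lemma~\ref{Lem:InvariantUme}.

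\textbf{Key steps.} First I would set $k$ to be the unique integer with $c=ak+2$ (so $0\le k\le b$ by the definition of an Umemura bundle), and observe that the links of Lemma~\ref{Lem:InvariantLinksUme}\ref{LinkUme} give the infinite chain
\[\U_a^{b-k,2}\stackrel{\varphi_{-k}}{\dasharrow}\cdots\stackrel{\varphi_{-1}}{\dasharrow}\U_a^{b,c}\stackrel{\varphi_0}{\dasharrow}\U_a^{b+1,c+a}\stackrel{\varphi_1}{\dasharrow}\cdots,\]
with each $\varphi_n$ (for $n\ge -k$) centred at $l_{00}$ and satisfying $\varphi_n\Autz(\U_a^{b+n,2+na})\varphi_n^{-1}=\Autz(\U_a^{b+n+1,2+(n+1)a})$; note the left end of the chain is $\U_a^{b-k,2}$ because $c+na=a(k+n)+2$ hits $2$ precisely at $n=-k$. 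Next, to characterise maximality, I would analyse the two possible "backward" moves from $\U_a^{b-k,2}$: the link $\varphi_{-k}^{-1}$ centred at $l_{10}$ is \emph{not available} because $l_{10}$ is not invariant when $c=2$ (i.e.\ $k=0$), by Lemma~\ref{Lem:InvariantUme}\ref{Invl1Um}. Hence for $a\ge 2$ the chain above is "closed" on the left and there is no further equivariant birational map of $\p^1$-bundles out of $\U_a^{b-k,2}$, so maximality holds iff the right-hand end is genuinely infinite, equivalently iff $\U_a^{b,c}$ is itself a genuine (non-decomposable) Umemura bundle with $k<b$; translating $k=(c-2)/a<b$ into $c-ab<2$ (and using that $k=b$, i.e.\ $c-ab=2$, makes $\U_a^{b,c}$ square birational to the decomposable $\FF_a^{0,2}$ via Lemma~\ref{Lem:InvariantLinksUme}\ref{SpecialUmeDec}, whose group is strictly larger, hence $\Autz(\U_a^{b,c})$ not maximal) gives condition~(1). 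For $a=1$ one has the additional contraction $\U_1^{b-k,2}\to\V_1^{b-k}$ of Lemma~\ref{Lem:UMeF1P2}, which enlarges the group when $b-k\ge 2$ but is an isomorphism of groups only in the relevant boundary behaviour; analysing when $\Autz(\V_1^{b-k})$ is itself maximal — it fixes the point $[0:1:0]$ and is therefore never maximal for $b-k\ge2$ but gives the listed exception — yields that $\Autz(\U_1^{b,c})$ is maximal iff $c-b<1$, i.e.\ $k<b$ \emph{and} we are not in the case where contracting to $\V_1$ produces a maximal group; writing this out gives condition~(2).

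\textbf{Main obstacle.} The delicate point, and the one requiring the most care, is the $a=1$ case: there the downward link from $\U_1^{b-k,2}$ is \emph{not} an elementary link in the sense of Lemma~\ref{Lemm:DecElLinks} but the genuine contraction $\psi\colon\U_1^{b-k,2}\to\V_1^{b-k}$ of Lemma~\ref{Lem:UMeF1P2}, which comes from a birational morphism $\F_1\to\p^2$ rather than a self-map of the base; one must therefore supplement the "links over $s_{-a}$" analysis with the descent-lemma picture (Lemma~\ref{Lem:GoingdownMinimalsurfaces}, cf.\ Remark~\ref{Rem:Overview5}) to be sure no \emph{other} equivariant square birational map exists, and one must correctly track when $\Autz(\V_1^{b-k})$ is strictly larger than $\psi\Autz(\U_1^{b-k,2})\psi^{-1}$ (Lemma~\ref{Lem:UMeF1P2}\ref{AutoV1lift}: equality for $b-k\ge2$) versus when $\V_1^{b-k}$ itself still admits equivariant maps to something bigger. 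Concretely, $\Autz(\U_1^{b,c})$ is maximal exactly when $k<b$ and $b-k\le1$ fails to buy anything, which after bookkeeping is $c-b<1$; the boundary value $c-b=1$ (i.e.\ $k=b-1$, so $\U_1^{b-k,2}=\U_1^{1,2}$, whose associated $\V_1^1\simeq\SS_1\simeq\P(T_{\p^2})$ has the full $\PGL_3$ and is superstiff) is precisely where maximality is lost. Once this boundary analysis is pinned down, the statement — including the explicit chain and the extra morphism $\U_1^{b-k,2}\to\V_1^{b-k}$ in the $a=1$ case — follows by assembling Lemmas~\ref{Lem:InvariantUme}, \ref{Lem:InvariantLinksUme}, \ref{Lem:UMeF1P2} and~\ref{Lemm:DecElLinks}.
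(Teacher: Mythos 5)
Your proposal is correct and follows essentially the same route as the paper's proof: the chain of links from Lemma~\ref{Lem:InvariantLinksUme}, non-maximality when $c-ab=2$ by descending to $\U_a^{1,a+2}$ and using the link to $\FF_a^{0,2}$, closure of the chain at $\U_a^{b-k,2}$ because $l_{10}$ is not invariant there (Lemma~\ref{Lem:InvariantUme}), and for $a=1$ the contraction to $\V_1^{b-k}$ of Lemma~\ref{Lem:UMeF1P2} with the boundary case $b-k=1$, where $\V_1^1\simeq\SS_1$ forces non-maximality. The only step you flag but leave implicit, and which the paper spells out, is why nothing further can be done from $\V_1^{b-k}$ when $b-k\ge 2$: its automorphism group acts on $\p^2$ as $\Aut(\p^2,[0:1:0])$, which leaves no curve of $\p^2$ invariant, so Lemma~\ref{Lemm:DecElLinks} forces every equivariant birational map of $\p^1$-bundles over $\p^2$ to be an isomorphism, while at the level of the base the only non-trivial equivariant move is the blow-up of the fixed point, leading back to $\U_1^{b-k,2}$.
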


\begin{proof}
We denote as usual by $k$ the integer satisfying $0\le k\le b$ such that $c=ak+2$ and find that $c-ab< 2\Leftrightarrow k<b$. 

If $c-ab\ge 2$, then $k=b$, so $c=ab+2$. We construct an $\Autz(\U_a^{b,c})$-equivariant birational map of $\p^1\!$-bundles $\U_a^{b,c}\dasharrow \U_a^{1,a+2}$ which is the composition of birational maps $\U_a^{b,c}=\U_a^{b,ab+2}\dasharrow \U_a^{b-1,a(b-1)+2}\dasharrow \dots\dasharrow \U_a^{1,a+2}$ (see Lemma~\ref{Lem:InvariantLinksUme}\ref{LinkUme}). Since $\Autz(\U_a^{1,a+2})$ is not maximal (see Lemma~\ref{Lem:InvariantLinksUme}\ref{SpecialUmeDec}), neither is $\Autz(\U_a^{b,c})$.

We now assume $c-ab< 2$, which means $k<b$. Lemma~\ref{Lem:InvariantLinksUme}\ref{LinkUme} yields a sequence of birational maps \[\U_a^{b-k,2}\stackrel{\varphi_{-k}}{\dasharrow} \U_a^{b-k+1,2+a}\dots \stackrel{\varphi_{-1}}{\dasharrow}\U_a^{b,c}\stackrel{\varphi_0}{\dasharrow} \cdots \stackrel{\varphi_{n-1}}{\dasharrow} \U_a^{b+n,c+na} \stackrel{\varphi_n}{\dasharrow}\cdots\]
such that $\varphi_n \Autz(\U_a^{b+n,2+na})\varphi_n^{-1}=\Autz(\U_a^{b+n+1,2+(n+1)a})$ for each $n\ge -k$. 

If $a\ge 2$, every $\Autz(\U_a^{b,c})$-equivariant birational map of $\p^1\!$-bundles $\U_a^{b,c}\dasharrow X$ is a composition of birational maps as these and of isomorphisms of $\p^1\!$-bundles since $l_{10}$ is not invariant by $\Autz(\U_a^{b-k,2})$ (see Lemma~$\ref{Lem:InvariantUme})$. We then get the result in this case ($a\ge 2$). 

It remains to do the case where $a=1$, which yields $c=k+2$ and $c-ab=k-b+2$. If $c-ab=1$, then $b-k=1$, which implies that $\Autz(\U_a^{b-k,2})$ is not maximal (see Lemma~\ref{Lem:UMeF1P2}) and thus neither is $\Autz(\U_a^{b,c})$. If $c-ab<1$, then $b-k>2$. We thus get a birational morphism $\psi\colon\U_a^{b-k,2}\to \V_{b-k}$ which satisfies $\psi\Autz(\U_1^{b-k,2})\psi^{-1}= \Autz(\V_{b-k})$ (see Lemma~\ref{Lem:UMeF1P2}). It remains to show that we cannot get any further link. At the level of surfaces, the only $\Aut(\F_1)$-equivariant birational maps $\F_1\dasharrow S$, where $S$ is a smooth projective surface, are isomorphisms or blow-ups $\F_1\to \p^2$ of a point of $\p^2$. We then only need to show that there is no square birational map $\V_{b-k}\dasharrow X$, to a $\pi$-bundle $X\to \p^2$, which is not a square isomorphism. We can reduce to the case of birational of $\p^1\!$-bundles (doing nothing on $\p^2$) and then use  Lemma~\ref{Lemm:DecElLinks}, and only need to observe that no curve of $\p^2$ is invariant by the action of $\Autz(\V_{b-k})$, which acts as $\Aut(\p^2,[0:1:0])$. The result then follows from Lemma~\ref{Lemm:DecElLinks}.
\end{proof}

\subsection{Links between Schwarzenberger bundles}

\begin{lemma}\label{Lem:SchwarzOnlyOneCurve}
Let $b\ge 1$, and let $\pi\colon \SS_b\to \p^2$ be the $b$-th Schwarzenberger $\p^1\!$-bundle. If $\,b=1$, no curve of $\SS_b$ is invariant by $\Autz(\SS_b)$. If $b\ge 2$, there is a unique curve invariant by $\Autz(\SS_b)$, which is given on the two charts of Lemma~$\ref{Lemm:TransSchwarz}$ by
\[\begin{array}{l}
\left\{\left([x_0 :x_1] ,([1:2t:t^2]) \right)\in \p^1\times U_0 \mid x_0+tx_1=0\right\},\\[1ex]
\left\{\left([x_0 :x_1] ,([t^2:2t:1]) \right)\in \p^1\times U_1 \mid x_0-tx_1=0\right\}.\end{array}\]
\end{lemma}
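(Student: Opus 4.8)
The statement concerns the curves of $\SS_b$ invariant under $\Autz(\SS_b)$, so the natural strategy is to transfer the question to $\TT_b$ via the double cover $\kappa\colon \p^1\times\p^1\to\p^2$, where we have already pinned down the unique invariant curve in Lemma~\ref{Lemm:AutTTb}\ref{HatS:OnlyInvariantCurve} (and described it explicitly in the Schwarzenberger charts in Lemma~\ref{Lemm:LiftSchwarzIsSchwarzHat}\ref{OneCurveSchwarz}). First I would recall from Lemma~\ref{Lemm:LiftSchwarzIsSchwarzHat}\ref{LiftSSisTT} that $\TT_b\simeq \SS_b\times_{\p^2}(\p^1\times\p^1)$, and denote by $q\colon \TT_b\to \SS_b$ the induced $(2:1)$-cover; it is $\Autz(\SS_b)$-equivariant once we restrict the $\PGL_2\simeq\Aut(\p^2,\Gamma)$ inside $\Autz(\SS_b)$ (Lemma~\ref{Lemm:SchwarzJumpLines}\ref{AutoSchwarzb}) to $\Autz(\TT_b)$, which by Lemma~\ref{Lemm:AutTTb}\ref{CommDiag} is exactly $\PGL_2$ acting diagonally on $\p^1\times\p^1$.

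\textbf{Case $b=1$.} Here $\SS_1\simeq\P(T_{\p^2})$ and $\Autz(\SS_1)\simeq\PGL_3$ acts on $\p^2$ transitively (Corollary~\ref{Cor:SmallIsoSchwarzenberger}, Lemma~\ref{Lemm:SchwarzJumpLines}\ref{AutoSchwarzb}). Any invariant curve $\ell\subset\SS_1$ would have image $\pi(\ell)\subset\p^2$ invariant under $\PGL_3$; since $\PGL_3$ has no invariant curve in $\p^2$, we would need $\pi(\ell)=\p^2$, impossible for a curve, or $\ell$ would be a section over $\p^2$, contradicting the fact that the stabilizer of a point acts on the fibre $\p^1$ without fixed point (the point-stabilizer in $\PGL_3$ is a maximal parabolic acting on $T_{\p^2}$ at that point, i.e. on the $\p^1$-fibre, with no fixed point — concretely, from Remark~\ref{PGL3:TP2} the fibre over $[x]$ is $\{y\mid {}^t\!x\cdot y=0\}$, a line, on which the parabolic acts irreducibly). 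So no invariant curve exists.

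\textbf{Case $b\ge2$.} Now $\Autz(\SS_b)\simeq\Aut(\p^2,\Gamma)\simeq\PGL_2$. For existence: the curve $D\subset\SS_b$ described in the statement is the image $q(C)$ of the invariant curve $C\subset\TT_b$ of Lemma~\ref{Lemm:AutTTb}\ref{HatS:OnlyInvariantCurve}; since $q$ is $\PGL_2$-equivariant and $C$ is $\PGL_2$-invariant, $D$ is $\PGL_2$-invariant. (One should check $D$ is a well-defined closed curve, i.e. that $C$ maps isomorphically or at least surjectively onto $D$ — this is already noted at the end of the proof of Lemma~\ref{Lemm:LiftSchwarzIsSchwarzHat}, where $C\to D$ is a bijection.) For uniqueness: let $\ell\subset\SS_b$ be invariant under $\Autz(\SS_b)\simeq\PGL_2$. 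Its preimage $q^{-1}(\ell)\subset\TT_b$ is a $\PGL_2$-invariant curve (a priori with one or two components), hence each of its irreducible components is invariant under $\Autz(\TT_b)\simeq\PGL_2$ provided the components are not permuted; if two components are swapped, a subgroup of index $2$ is still $\PGL_2$ (since $\PGL_2$ has no subgroup of index $2$), so in fact each component is invariant. By Lemma~\ref{Lemm:AutTTb}\ref{HatS:OnlyInvariantCurve}, every such component equals $C$, so $q^{-1}(\ell)=C$ and therefore $\ell=q(C)=D$.

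\textbf{Expected main obstacle.} The delicate point is the $b=1$ argument that there is genuinely no invariant curve, since here we cannot invoke Lemma~\ref{Lemm:AutTTb}\ref{HatS:OnlyInvariantCurve} to conclude (the unique invariant curve of $\TT_1$ maps to $D\subset\SS_1$, which \emph{is} invariant under the diagonal $\PGL_2\subset\PGL_3$ but \emph{not} under the full $\PGL_3$). One has to exploit that $\Autz(\SS_1)\simeq\PGL_3$ is strictly larger than $\PGL_2$ and acts with no invariant curve on the base and irreducibly on each fibre; equivalently, one argues on $\P(T_{\p^2})=\PGL_3/B\to\PGL_3/P$ that $B$ has no invariant curve in this generalized flag variety other than the obvious torus-fixed loci, none of which is $G$-invariant. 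A clean way is: an invariant curve would give a $G$-invariant divisor or a $G$-invariant section, but $\Pic(\P(T_{\p^2}))=\Z^2$ with $G$ acting trivially, and no effective invariant divisor class is supported on an irreducible invariant curve (a section would force a $\PGL_3$-splitting of $T_{\p^2}$, which does not exist, and the fibral curves of $\pi$ are not invariant since $\PGL_3$ acts transitively on $\p^2$). I would write this out carefully, as it is the only step not reducible to the already-established $\TT_b$ statement.
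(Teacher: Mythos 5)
Your proposal is correct and is essentially the paper's proof: both cases are settled by transferring the question to $\TT_b$ through the double cover and invoking the uniqueness of the invariant curve $C\subset \TT_b$ (Lemmas~\ref{Lemm:AutTTb}\ref{HatS:OnlyInvariantCurve} and~\ref{Lemm:LiftSchwarzIsSchwarzHat}\ref{OneCurveSchwarz}), your preimage argument for $b\ge 2$ (every component of $q^{-1}(\ell)$ is invariant because $\PGL_2$ is connected, hence equals $C$, so $\ell=q(C)=D$) being only a cosmetic variant of the paper's argument, which instead restricts to the invariant surface $\pi^{-1}(\Gamma)$ and identifies it $\PGL_2$-equivariantly with $\hat\pi^{-1}(\Delta)\subset\TT_b$. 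One correction for $b=1$: this case is a one-liner rather than the delicate point you anticipate — since $\Autz(\SS_1)\to\Aut(\p^2)=\PGL_3$ is surjective and $\PGL_3$ acts transitively on $\p^2$, the image $\pi(\ell)$ of an invariant curve would be a nonempty proper closed invariant subset of $\p^2$ (a point or a curve), which cannot exist; in particular your alternative that ``$\ell$ would be a section over $\p^2$'' never arises (a curve is one-dimensional while a section of $\pi$ is a surface), your trichotomy omits the case where $\pi(\ell)$ is a point (killed by the same transitivity), and none of the Picard-group or fibrewise machinery of your final paragraph is needed.
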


\begin{proof}
Let $\rho\colon\Autz(\SS_b)\to \Aut(\p^2)$ be the group homomorphism induced by $\pi$.
If $b=1$, then $\rho$ is surjective (see Lemma~\ref{Lemm:SchwarzJumpLines}\ref{AutoSchwarzb}), so there is no curve of $\SS_b$ which is invariant. Now suppose $b\ge 2$, in which case $\rho$ yields an isomorphism $\Autz(\SS_b)\iso \Aut(\p^2,\Gamma)\simeq \PGL_2$ (again by Lemma~\ref{Lemm:SchwarzJumpLines}\ref{AutoSchwarzb}). Every curve of $\SS_b$ is then contained in the invariant surface $X=\pi^{-1}(\Gamma)\subset \SS_b$. To understand the action of $\Autz(\SS_b)\simeq\PGL_2$ on~$X$, we use the corresponding action on the $\p^1\!$-bundle $\hat\pi\colon \TT_b=\SS_{b}\times_{\p^2} (\p^1\times \p^1)\to \p^1\times \p^1$, obtained by Lemma~\ref{Lemm:LiftSchwarzIsSchwarzHat}. The pull-back of $X$ on $\TT_b$ is the surface $\hat{X}=\hat\pi(\Delta)\subset \TT_b$ (where $\Delta\subset \p^1\times \p^1$ is the diagonal), isomorphic to $X$ via a $\PGL_2$-equivariant isomorphism. By Lemma~\ref{Lemm:LiftSchwarzIsSchwarzHat}, there is a unique curve in $\hat{X}$ invariant by $\Autz(\TT_b)\simeq \PGL_2$, which is sent onto the curve of $X$ given locally as above.
\end{proof}

\begin{lemma}\label{Lem:SchwarzLinks}
  Let $b\ge 2$, and let $\pi\colon \SS_b\to \p^2$ be the $b$-th Schwarzenberger $\p^1\!$-bundle. There is a birational involution $\varphi\colon \SS_b\dasharrow \SS_b$ such that $\varphi\Autz(\SS_b)\varphi^{-1}=\Autz(\SS_b)$. Moreover, every $\Autz(\SS_b)$-equivariant birational map of $\,\p^1\!$-bundle $\SS_b\dasharrow X$ is either an isomorphism of $\,\p^1\!$-bundles or a composition of $\varphi$ with an isomorphism of $\,\p^1\!$-bundles.
\end{lemma}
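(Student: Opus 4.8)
The plan is to exploit the double cover $\kappa\colon \p^1\times\p^1\to\p^2$ together with the results already obtained on $\TT_b\to\p^1\times\p^1$. First I would use Lemma~\ref{Lemm:SchwarzJumpLines}\ref{AutoSchwarzb} to identify $\Autz(\SS_b)\simeq\Aut(\p^2,\Gamma)\simeq\PGL_2$ (since $b\ge 2$), so that the image $H\subset\Aut(\p^2)$ of $\Autz(\SS_b)$ is exactly $\Aut(\p^2,\Gamma)$. Then any $\Autz(\SS_b)$-equivariant square birational map $\SS_b\dasharrow X$ induces a birational map $\eta\colon\p^2\dasharrow S'$ conjugating $H$ into $\Autz(S')$; as in Remark~\ref{Rem:Overview5}, $\eta$ is a sequence of blow-ups of $H$-fixed points followed by contractions of $H$-invariant curves, but $\Aut(\p^2,\Gamma)\simeq\PGL_2$ fixes no point of $\p^2$ (Remark~\ref{rk: embedding PGL2 into PGL3}: the two orbits are $\Gamma$ and its complement) and contracting $\Gamma$ would produce a singular surface, so $\eta$ must be an isomorphism of $\p^2$; composing with an automorphism, I reduce to birational maps of $\p^1$-bundles $\SS_b\dasharrow X$ over $\p^2$. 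Then Lemma~\ref{Lemm:DecElLinks} applies with $(H,S)=(\Aut(\p^2,\Gamma),\p^2)$: such a map decomposes as a minimal sequence of elementary links $\varphi_i$, each the blow-up of an invariant curve $\ell_i$ mapping isomorphically onto $\Gamma$, followed by contraction of the preimage of $\Gamma$.

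Next I would pin down the unique such link starting from $\SS_b$. By Lemma~\ref{Lem:SchwarzOnlyOneCurve} the only $\Autz(\SS_b)$-invariant curve is the curve $D\subset\SS_b$ described there (equivalently the curve of Lemma~\ref{Lemm:LiftSchwarzIsSchwarzHat}\ref{OneCurveSchwarz}), lying in $\pi^{-1}(\Gamma)$ and mapping isomorphically onto $\Gamma$. So $\varphi_1$ must be the blow-up of $D$ followed by the contraction of (the strict transform of) $\pi^{-1}(\Gamma)$. The key computation is to identify the target: I would work over the chart(s) of Lemma~\ref{Lemm:TransSchwarz} (or, perhaps more cleanly, pull back to $\TT_b$ via $\kappa$ and do the elementary transformation along the diagonal surface $\hat\pi^{-1}(\Delta)$, using that $\TT_b$ has numerical invariants $(0,b,b+2)$), perform the blow-up/contraction explicitly, and check that the resulting $\p^1$-bundle over $\p^2$ is again $\SS_b$. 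The natural expectation is that this elementary transformation along $D$ is an involution $\varphi$ up to isomorphism of $\p^1$-bundles: performing it twice returns to the original, because the curve $D'$ one blows up on the new copy of $\SS_b$ is exactly the image of the contracted surface, and re-doing the link contracts back. Equivariance $\varphi\Autz(\SS_b)\varphi^{-1}=\Autz(\SS_b)$ follows since $D$ and $\pi^{-1}(\Gamma)$ are $\Autz(\SS_b)$-invariant, so $\varphi\Autz(\SS_b)\varphi^{-1}\subset\Autz(\SS_b)$, and then equality holds because $\varphi^{-1}$ is centred at the invariant curve $D'$ of the target, giving the reverse inclusion; alternatively one notes $\varphi^2$ is an isomorphism of $\p^1$-bundles.

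Finally, for the last sentence of the statement, I would argue that after applying $\varphi$ (or not) one cannot continue with a further nontrivial link: on the second copy of $\SS_b$ the only invariant curve is again the analogue of $D$, and the link centred there is $\varphi^{-1}$, which only brings us back. Formally, by Lemma~\ref{Lemm:DecElLinks} with $n$ minimal, the sequence $\varphi_1,\dots,\varphi_n$ is unique up to isomorphisms of $\p^1$-bundles; since at each stage the only available centre is the unique invariant curve and its link is the inverse of the previous one, minimality forces $n\le 1$. Hence every $\Autz(\SS_b)$-equivariant birational map of $\p^1$-bundles $\SS_b\dasharrow X$ is either an isomorphism of $\p^1$-bundles or $\varphi$ composed with one. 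I expect the main obstacle to be the explicit blow-up/contraction computation identifying the target as $\SS_b$ and verifying the involution property; the chart bookkeeping in Lemma~\ref{Lemm:TransSchwarz} is somewhat heavy, but routing through $\TT_b$ and the transition function of Definition~\ref{def hat Schwrzenberger} should make it manageable, analogously to the proof of Lemma~\ref{Lemm:AutTTb}.
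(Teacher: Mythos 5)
Your proposal follows essentially the same route as the paper: the unique invariant curve $D$ from Lemma~\ref{Lem:SchwarzOnlyOneCurve}, the elementary link given by blowing up $D$ and contracting the strict transform of $\pi^{-1}(\Gamma)$, equivariance from the invariance of the centre, and Lemma~\ref{Lemm:DecElLinks} to reduce any birational map of $\p^1$-bundles to at most one such link. The only difference is that the paper actually carries out the verification you defer: it writes the map explicitly on the two charts of Lemma~\ref{Lemm:TransSchwarz} and checks a matrix identity showing it commutes with the transition function, which simultaneously identifies the target as $\SS_b$ and exhibits $\varphi$ as an involution.
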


\begin{proof}
By Lemma~\ref{Lem:SchwarzOnlyOneCurve}, there is a unique curve $D\subset \SS_b$ which is invariant by $\Autz(\SS_b)$ and satisfies $\pi(D)=\Gamma$.  We consider the  birational involutions 
\[\begin{array}{rcccccc}
\varphi_0 \colon & \p^1 \times U_0 &\dasharrow &\p^1 \times U_0 \\
&\big([x_0:x_1] ,[1:u:v] \big) & \mapsto & \big([-ux_0-2vx_1:2x_0+ux_1],[1:u:v]\big),\\[1ex]
\varphi_1 \colon & \p^1 \times U_1 &\dasharrow &\p^1 \times U_1 \\
&\big([x_0:x_1] ,[v:u:1] \big) & \mapsto & \big([ux_0-2vx_1:2x_0-vx_1],[v:u:1]\big)\end{array}\]
which correspond locally to the blow-up of $D$, followed by the contraction of the strict transform of $\pi^{-1}(\Gamma)$, in the two charts (see Lemma~\ref{Lem:SchwarzOnlyOneCurve} for the equation of $D$). We then check that $\varphi_1\theta=\theta\varphi_0$, where $\theta\colon \p^1 \times U_0\dasharrow \p^1 \times U_1$ is the transition function of $\SS_b$ given in Lemma~\ref{Lemm:TransSchwarz}. This follows from the equality 
\[\begin{array}{ll}
\begin{bmatrix}  -s-t & 2 \\ -2st& s+t \end{bmatrix}\begin{bmatrix} s^{b}-t^{b} & st(s^{b-1}-t^{b-1}) \\
 s^{b+1}-t^{b+1}& st(s^{b}-t^{b})\end{bmatrix}= \begin{bmatrix} s^{b}+t^{b} & st(s^{b-1}+t^{b-1}) \\
 s^{b+1}+t^{b+1}& st(s^{b}+t^{b})\end{bmatrix}\\[2ex]
=\begin{bmatrix} s^{b}-t^{b} & st(s^{b-1}-t^{b-1}) \\
 s^{b+1}-t^{b+1}& st(s^{b}-t^{b})\end{bmatrix}\begin{bmatrix}  s+t & 2st \\ -2& -s-t \end{bmatrix}\end{array}\]
 and then yields  a birational map of $\p^1\!$-bundles $\varphi\colon \SS_b\dasharrow \SS_b$, given by the blow-up of $D$, followed by the contraction of the strict transform of $\pi^{-1}(\Gamma)$ onto $D$.

By Lemma~\ref{Lemm:DecElLinks}, every birational map of $\p^1\!$-bundles $\SS_b\to X$ which is not an isomorphism is a composition of $\varphi$ with an isomorphism of $\p^1\!$-bundles.
\end{proof}

\begin{corollary}\label{Cor:RigiditySch}
Let $b\ge 1$. Then, $\Autz(\SS_b)$ is maximal and $\SS_b$ is stiff. It is moreover superstiff if and only if $b=1$.
\end{corollary}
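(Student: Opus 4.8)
The plan is to prove Corollary~\ref{Cor:RigiditySch} by combining the structural results already established for Schwarzenberger bundles (Lemma~\ref{Lemm:SchwarzJumpLines}, Lemma~\ref{Lem:SchwarzOnlyOneCurve}, Lemma~\ref{Lem:SchwarzLinks}) with the general reduction of equivariant square birational maps to elementary links (Lemma~\ref{Lemm:DecElLinks}). The key point is that for $\SS_b$ the image $H\subset\Aut(\p^2)$ of $\Autz(\SS_b)$ is either $\Aut(\p^2)=\PGL_3$ (when $b=1$) or $\Aut(\p^2,\Gamma)\simeq\PGL_2$ (when $b\ge 2$), by Lemma~\ref{Lemm:SchwarzJumpLines}\ref{AutoSchwarzb}. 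In either case, no point of $\p^2$ is fixed by $H$ and the pair $(H,\p^2)$ is covered by the hypotheses of Lemma~\ref{Lemm:DecElLinks} (for $b\ge 2$ it is exactly $(\Aut(\p^2,\Gamma),\p^2)$, and for $b=1$ no curve of $\p^2$ is $H$-invariant). Hence any $\Autz(\SS_b)$-equivariant square birational map $(\SS_b,\pi)\dasharrow(X',\pi')$ decomposes, at the level of surfaces, as blow-ups of $H$-fixed points (none exist) followed by contractions of $H$-invariant $(-1)$-curves. For $b=1$ there are no $H$-invariant curves at all, and for $b\ge 2$ the only $H$-invariant curve in $\p^2$ is the smooth conic $\Gamma$, which is not a $(-1)$-curve. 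So the surface map is an isomorphism, and the square birational map is a composition of an automorphism of $\p^2$ lifted to $\Autz(\SS_b)$ with a birational map of $\p^1$-bundles $\SS_b\dasharrow X'$.

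Next I would invoke the classification of such birational maps of $\p^1$-bundles. For $b=1$, Lemma~\ref{Lem:SchwarzOnlyOneCurve} says no curve of $\SS_1$ is invariant by $\Autz(\SS_1)$, so by Lemma~\ref{Lemm:DecElLinks} every equivariant birational map of $\p^1$-bundles starting from $\SS_1$ is an isomorphism of $\p^1$-bundles. Combined with the previous paragraph, this shows $\SS_1$ is superstiff, hence a fortiori stiff and $\Autz(\SS_1)$ maximal. For $b\ge 2$, Lemma~\ref{Lem:SchwarzLinks} gives the birational involution $\varphi\colon\SS_b\dasharrow\SS_b$ with $\varphi\Autz(\SS_b)\varphi^{-1}=\Autz(\SS_b)$, and states that every $\Autz(\SS_b)$-equivariant birational map of $\p^1$-bundles $\SS_b\dasharrow X$ is either an isomorphism of $\p^1$-bundles or a composition of $\varphi$ with such an isomorphism. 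In particular the target is always isomorphic (as a $\p^1$-bundle, up to the square) to $\SS_b$ itself, and the conjugate group is always $\Autz(\SS_b)$; this gives maximality of $\Autz(\SS_b)$ and stiffness of $\SS_b$. It fails to be superstiff precisely because $\varphi$ is not an isomorphism of $\p^1$-bundles — one checks this from the explicit formulas in Lemma~\ref{Lem:SchwarzLinks}: the base locus of $\varphi$ is exactly the invariant curve $D\subset\SS_b$, which is nonempty, so $\varphi$ is a genuine birational map and $\SS_b$ is not superstiff for $b\ge 2$.

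Concretely, the write-up would run: first record that $\Autz(\SS_b)$ is maximal. Take any $\Autz(\SS_b)$-equivariant square birational map $\psi\colon(\SS_b,\pi)\dasharrow(X',\pi')$; by Remark~\ref{Rem:Overview5} (or directly by Lemma~\ref{Lem:GoingdownMinimalsurfaces} applied to the induced birational map of surfaces, using that $H$ fixes no point and no $(-1)$-curve is $H$-invariant) we reduce to $\psi$ being a composition of a lift of an element of $\Aut(\p^2)$ (which lies in $\Autz(\SS_b)$ since $\rho$ is onto its image and automorphisms of the $\p^1$-bundle are trivial by Lemma~\ref{Lemm:SchwarzJumpLines}\ref{AutoSchwarzb}) with a birational map of $\p^1$-bundles. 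Then apply Lemma~\ref{Lem:SchwarzLinks} ($b\ge 2$) or Lemma~\ref{Lemm:DecElLinks} together with Lemma~\ref{Lem:SchwarzOnlyOneCurve} ($b=1$): in all cases $\psi\Autz(\SS_b)\psi^{-1}=\Autz(X')$, so $\Autz(\SS_b)$ is maximal; moreover $(X',\pi')\simeq(\SS_b,\pi)$ up to square isomorphism, so $\SS_b$ is stiff. Finally, for $b=1$ every such $\psi$ is an isomorphism of $\p^1$-bundles, so $\SS_1$ is superstiff, whereas for $b\ge 2$ the involution $\varphi$ of Lemma~\ref{Lem:SchwarzLinks} is an equivariant birational map of $\p^1$-bundles that is not an isomorphism (its indeterminacy locus maps onto $\Gamma$), so $\SS_b$ is not superstiff.

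The main obstacle is not any single deep step — all the heavy lifting has been done in the preceding lemmas — but rather making the reduction to birational maps of $\p^1$-bundles airtight: one must carefully argue that the surface-level birational map $\eta\colon\p^2\dasharrow S'$ underlying an equivariant square birational map is necessarily an isomorphism. This requires knowing that $H$ has no fixed point on $\p^2$ (so no blow-up can be performed equivariantly) and that the only $H$-invariant curves ($\emptyset$ when $b=1$, the conic $\Gamma$ when $b\ge 2$) are never $(-1)$-curves (so no contraction can be performed equivariantly) — and then that $S'$, being a minimal rational surface receiving a birational morphism from $\p^2$, must equal $\p^2$. Once that is in place, the rest is bookkeeping with the already-proven Lemmas~\ref{Lem:SchwarzLinks} and~\ref{Lem:SchwarzOnlyOneCurve}.
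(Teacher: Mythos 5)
Your proof is correct and follows essentially the same route as the paper: the paper's own proof is exactly "no invariant curve for $b=1$ plus Lemma~\ref{Lemm:DecElLinks}, and Lemma~\ref{Lem:SchwarzLinks} for $b\ge 2$", with the surface-level reduction you spell out being delegated to Remark~\ref{Rem:Overview5}. One small imprecision: for $b\ge 2$ the automorphism $\eta\in\Aut(\p^2)$ underlying the square birational map need not lie in the image $\Aut(\p^2,\Gamma)$ of $\Autz(\SS_b)$, so it need not ``lift to $\Autz(\SS_b)$'' as your parenthetical claims; but this is not needed — one simply absorbs $\eta$ into a square isomorphism (replace $\pi'$ by $\eta^{-1}\pi'$ on the target), reducing to a birational map of $\p^1$-bundles over the identity, and the conclusion is unchanged.
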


\proof
If $b=1$, the $\p^1\!$-bundle has no invariant curves (see Remark~\ref{PGL3:TP2}) (it is actually a homogeneous variety), and we conclude by Lemma~\ref{Lemm:DecElLinks}. If $b\ge2$, we apply Lemma~\ref{Lem:SchwarzLinks}.
\endproof

\subsection{Rigidity for decomposable bundles over \texorpdfstring{$\p^2$}{P2}}\label{RigidityDecP2}

\begin{lemma}\label{Lem:DecP2NoCurve}
Let $b\ge 0$, and let $\pi\colon \PP_b\to \p^2$ be a decomposable $\p^1\!$-bundle. 
\begin{enumerate}
\item\label{PPbNoInvCurve}
$\PP_b$ does not contain any $\Autz(\SS_b)$-invariant curve.
\item \label{AutPPbSuperstiff}
 $\Autz(\PP_b)$ is maximal and $\PP_b$ is superstiff.
\end{enumerate}
\end{lemma}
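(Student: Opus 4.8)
The plan is to prove \ref{PPbNoInvCurve} first and then deduce \ref{AutPPbSuperstiff} from it together with the reduction machinery already set up. For \ref{PPbNoInvCurve}, recall from Lemma~\ref{lemm:surjectiveDecP2} that the structure morphism $\pi\colon\PP_b\to\p^2$ induces a surjection $\Autz(\PP_b)\twoheadrightarrow\Aut(\p^2)=\PGL_3$. Hence any $\Autz(\PP_b)$-invariant curve $\ell\subset\PP_b$ would have image $\pi(\ell)$ an $\Aut(\p^2)$-invariant curve in $\p^2$; but $\PGL_3$ acts transitively on $\p^2$, so there is no such curve, and therefore no invariant curve in $\PP_b$. (One should note the case $b=0$ is $\p^1\times\p^2$, where the same argument applies verbatim; for $b\ge 1$ one can alternatively argue using the explicit description of the automorphisms of the $\p^1$-bundle in Remark~\ref{rk: description auto of P1-bundle of dec bundles over P2}, but the transitivity argument already suffices.)

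For \ref{AutPPbSuperstiff}, the strategy is to invoke the general reduction of equivariant square birational maps to elementary links. By Remark~\ref{Rem:Overview5}, any $\Autz(\PP_b)$-equivariant square birational map $(\PP_b,\pi)\dasharrow(X',\pi')$ induces a birational map $\eta\colon\p^2\dasharrow S'$ conjugating the image $H=\PGL_3$ of $\Autz(\PP_b)$ in $\Autz(\p^2)$ to a subgroup of $\Autz(S')$; since $\PGL_3$ fixes no point of $\p^2$ and (being all of $\Aut(\p^2)$) admits no equivariant blow-up or contraction, $\eta$ must be an isomorphism, in fact the identity up to relabelling, so we are reduced to $\Autz(\PP_b)$-equivariant birational maps of $\p^1$-bundles over $\p^2$. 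Now apply Lemma~\ref{Lemm:DecElLinks}: since $\p^2$ carries no $\PGL_3$-invariant curve, the hypothesis "no curve of $S$ is invariant by $H$" is satisfied, so every $\Autz(\PP_b)$-equivariant birational map of $\p^1$-bundles $\PP_b\dasharrow X'$ is an isomorphism of $\p^1$-bundles. This gives exactly that $\PP_b$ is superstiff, and in particular $\Autz(\PP_b)$ is maximal.

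I do not expect a serious obstacle here; the content is essentially bookkeeping once Lemmas~\ref{lemm:surjectiveDecP2}, \ref{Lemm:DecElLinks} and Remark~\ref{Rem:Overview5} are in place. The one point requiring a little care is confirming that the hypothesis of Lemma~\ref{Lemm:DecElLinks} is met in the form "no curve of $S$ invariant by $H$" rather than one of the three exceptional pairs $(\Autz(\F_a),\F_a)$, $(\Autz(\p^1\times\p^1,\Delta),\p^1\times\p^1)$, $(\Aut(\p^2,\Gamma),\p^2)$ — but since $H=\PGL_3$ acts transitively on $\p^2$, the first alternative of that lemma's hypothesis holds outright, and the conclusion (every equivariant birational map of $\p^1$-bundles is an isomorphism) is immediate because the link-decomposition it produces must be empty. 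Thus the proof is short: \ref{PPbNoInvCurve} by transitivity of $\PGL_3$ on $\p^2$, and \ref{AutPPbSuperstiff} by combining this with Remark~\ref{Rem:Overview5} and Lemma~\ref{Lemm:DecElLinks}.
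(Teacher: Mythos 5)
Your proof is correct and follows essentially the same route as the paper's: part \ref{PPbNoInvCurve} from the surjection $\Autz(\PP_b)\twoheadrightarrow\PGL_3$ of Lemma~\ref{lemm:surjectiveDecP2} together with transitivity on $\p^2$, and part \ref{AutPPbSuperstiff} by using transitivity to reduce the base map to the identity and then invoking Lemma~\ref{Lemm:DecElLinks}. The only cosmetic point is that in \ref{PPbNoInvCurve} the image $\pi(\ell)$ of an invariant curve could a priori be a point rather than a curve, but transitivity of $\PGL_3$ on $\p^2$ excludes that case just as well.
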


\begin{proof}
Assertion~\ref{PPbNoInvCurve} follows from the fact that $\Autz(\PP_b)$ surjects onto $\Aut(\p^2)$ (see Lemma~\ref{lemm:surjectiveDecP2}).

\ref{AutPPbSuperstiff} Let $\varphi\colon \PP_b\dasharrow X$ be an $\Autz(\PP_b)$-square birational map of $\p^1\!$-bundles, over $\eta\colon \p^2\dasharrow X$ (where $X$ is a smooth projective rational surface). We want to show that $\varphi$ is a square isomorphism. The action of $\Autz(\PP_b)$ on $\p^2$ being transitive, the birational map $\eta$ is an isomorphism, so we can assume  $X=\p^2$, and that $\eta$ is the identity. We then apply Lemma~\ref{Lemm:DecElLinks} to get that $\varphi$ is an isomorphism.
\end{proof}

\subsection{Last step}\label{LastStep}
We have all the ingredients needed to prove the main theorems of this paper stated in the introduction. 

\begin{proof}[Proof of Theorems~\ref{Thm:MainA} and~\ref{Thm:MainB}]
These are simply a consequence of Proposition~\ref{Prop:FourCases}, together with Corollaries~\ref{Cor:MaxDecBundlesFa},~\ref{Cor:MaxUmeBundlesFa},~\ref{Cor:RigiditySch} and Lemma~\ref{Lem:DecP2NoCurve}.
\end{proof}

\begin{remark}\label{Rmk:DifferentMaximlities}
Our original motivation was to study the maximal connected algebraic subgroups of the Cremona group $\Bir(\P^3)$. These are studied in the article~\cite{second_paper}. Most of the families appearing in our classification  in fact give maximal  connected algebraic subgroups of the Cremona group, even if some sporadic cases (like $\SS_2$ and $\PP_1$)  disappear as they are conjugate to bigger subgroups, with a birational map which does not preserve any $\p^1\!$-bundle structure.
\end{remark}

\begin{remark}
If we assume the characteristic of the algebraically closed field $\k$ to be positive, the strategy to prove Theorems~\ref{Thm:MainA} and~\ref{Thm:MainB} should be analogous, but some of the results that we use in characteristic zero are no longer valid in positive characteristic; see \textit{e.g.}\ Remarks~\ref{Rem:PGL2positive} and~\ref{Rem:CharP}. As a consequence, it seems that new $\p^1\!$-bundles $X \to S$ analogous to Umemura bundles and Schwarzenberger bundles, and such that $\Autz(X)$ is maximal, could show up in the classification. The positive characteristic case will be studied by the authors in a future work.
\end{remark}

\end{document}